\numberwithin{equation}{section}
\newtheorem{theorem}{Theorem}
\newtheorem{definition}{Definition}[section]
\newtheorem{proposition}{Proposition}[section]
\newtheorem{lemma}{Lemma}[section]
\newtheorem{corollary}{Corollary}[section]
\newtheorem{rmk}{Remark}[section]
\newenvironment{remark}{\begin{rmk}\rm}{\end{rmk}}
\newtheorem{notat}{Notation}
\newenvironment{notation}{\begin{notat}\rm}{\end{notat}}
\renewcommand{\ne}{\not =}
\newcommand{\obra}[3]{{\textbf #1}: {``\emph{#2}''.\/}  {#3}.}
\title[Truncated Local Uniformization]{Truncated Local Uniformization of Formal Integrable Differential Forms}
\author{F. Cano}
\address{Universidad de Valladolid}
\email{fcano@agt.uva.es}
\author{M. Fern\'{a}ndez-Duque}
\address{Universidad Autónoma de México}
\email{mfduque@im.unam.mx}
\date{ \today}
\begin{document}
\maketitle

\begin{abstract} We prove the existence of Local Uniformization for rational codimension one foliations along rational rank one valuations, in any ambient dimension. This result is consequence of the Truncated Local Uniformization of integrable formal differential $1$-forms, that we also state and prove in the paper. Thanks to the truncated approach, we perform a classical inductive procedure, based both in the control of the Newton Polygon and in the po\-ssi\-bi\-li\-ty of avoiding accumulations of values, given by the existence of suitable Tschirnhausen transformations.
\end{abstract}
\tableofcontents

\section{Introduction}
In this work, we obtain Local Uniformization for rational foliations in any am\-bient dimension, along a rational valuation of rank one, as well as a Truncated Local Uniformization of formal integrable differential $1$-forms.

Let us consider a birational class $\mathcal C$ of projective varieties over a characteristic zero base field $k$. That is, we take a field extension $K/k$, where $K$ is the field of rational functions $K=k(M)$ of any projective model $M\in {\mathcal C}$. A {\em rational foliation $\mathcal F$ over $K/k$} is any one-dimensional $K$-vector subspace of the Kähler differentials $\Omega_{K/k}$, whose elements satisfy Frobenius integrability condition $\omega\wedge d\omega=0$.

 In this paper we prove:
\begin{theorem}
 \label{teo:mainmain}
 Let ${\mathcal F}\subset \Omega_{K/k}$ be a rational foliation over $K/k$ and consider a rank one $k$-rational valuation ring $R$ of $K$. There is a birational projective model $M$ of $K$ such that $\mathcal F$ is simple at the center $P$ of $R$ in $M$.
\end{theorem}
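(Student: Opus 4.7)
The plan is to reduce the birational statement to the Truncated Local Uniformization of integrable formal $1$-forms, which the paper establishes as its main technical result. I start by picking any projective model $M_0$ of $K/k$ and letting $P_0$ denote the center of $R$ in $M_0$; after one preliminary birational modification one may assume $(M_0,P_0)$ is regular. A generator of ${\mathcal F}$, after clearing denominators, provides a $1$-form $\omega\in\Omega_{{\mathcal O}_{M_0,P_0}/k}$ which upon completion yields an integrable formal $1$-form on $\widehat{\mathcal O}_{M_0,P_0}$. Because $R$ is of rank one and $k$-rational, its value group embeds in $\mathbb Q$ and its residue field is $k$, so the Newton polygon of $\omega$ along $R$ is a genuine combinatorial invariant.

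Next, I apply the Truncated Local Uniformization: given a truncation level $N\in\mathbb N$, there is a finite sequence of blowing-ups along smooth centers contained in the successive centers of $R$, after which the transform of $\omega$ is in simple pre-normal form modulo the ideal of elements of valuation $\geq N$. The inductive machinery here is driven by the evolution of the Newton polygon of $\omega$, together with Tschirnhausen transformations in characteristic zero that keep the polygon decreasing in a well-founded order and prevent accumulation of values on its vertices. Simpleness of an integrable $\omega$ depends only on its leading terms with respect to the valuation, so once $N$ is taken larger than the order of contact of $\omega$ with the exceptional normal crossings divisor, any higher-order remainder is irrelevant and the transform is actually simple at the new center $P_N$.

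The main obstacle I foresee is turning the local/formal conclusion into a global projective model $M$ of $K$ with the required property, i.e.\ checking that the finitely many blow-ups produced by the truncated statement can be realised as birational modifications of projective models. This uses in an essential way the rank one assumption, which guarantees that the sequence of local modifications along $R$ is of finite length and can be globalised to a projective $M$ in which the center $P$ of $R$ is precisely the point at which the formal normal form was achieved. The $k$-rationality of $R$ rules out residual extensions and pathologies in the value group that would otherwise spoil both the Newton polygon argument and the descent from formal to algebraic; with these in hand, the simple normal form of $\omega$ at $P_N$ translates into simpleness of ${\mathcal F}$ at the center $P$ of $R$ in $M$.
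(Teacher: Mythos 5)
There is a genuine gap in the core reduction step, and the paper devotes the whole of its Section~\ref{Local Uniformization of  Rational Foliations} to precisely what your proposal dismisses in one sentence. You assert that ``once $N$ is taken larger than the order of contact of $\omega$ with the exceptional normal crossings divisor, any higher-order remainder is irrelevant and the transform is actually simple.'' For an integrable $1$-form, there need not be such a finite $N$: it can happen that for \emph{every} truncation value $\gamma$ there is an allowed transformation after which $\omega$ becomes $\gamma$-final \emph{recessive} (explicit value $>\gamma$), without ever becoming $\gamma$-final dominant. This is the ``infinite-value'' phenomenon the authors flag already in the Introduction — it occurs even for polynomial $1$-forms such as Euler's equation in dimension two — and it does not occur for $\omega=df$ with $f$ rational, which is exactly why the function case is easier. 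So the truncated theorem alone, applied with a single large $N$, does not yield a simple (or even pre-simple) point; the recessive branch must be analyzed separately. The paper's actual proof does this by going back inside the induction: after one strict $\gamma$-preparation one lands in a \emph{stable} situation, and the crucial extra ingredient is that the horizontal coefficient $h=\omega(\partial/\partial z)$ lies in ${\mathcal O}_{\mathcal A}\subset K$, hence has a \emph{finite} value $\nu(h)$. This finiteness is what rules out a non-truncated fixed critical height $\chi_0\geq 2$ (Proposition~\ref{pro:nontruncated reduction to height one}), and the remaining case $\chi_0=1$ produces a pre-simple trace point via the resonance conditions \textbf{r2a}/\textbf{r2b}. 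Your argument contains no substitute for this step.

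Two secondary issues. First, rank one does \emph{not} imply that the value group embeds in $\mathbb{Q}$; it embeds in $\mathbb{R}$, and its rational rank $r$ can be any positive integer — indeed $r$ is the number of independent parameters $\boldsymbol{x}$ throughout the paper, so conflating rank one with $\mathbb{Q}$-valued values would break the whole Newton-polygon formalism. Second, even in the dominant case the truncated theorem only delivers a \emph{pre-simple} point; passing to an actually simple point requires the global elimination of resonances from \cite{FeD}, which must be invoked explicitly. The globalization worry you raise at the end (realizing the local blow-ups inside projective models) is real but routine and is not where the difficulty lies.
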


This is a result of Local Uniformization in the sense of Zariski \cite{Zar1}, where the objects to be considered are  rational foliations. Let us note that the case of a rational function $\phi\in K$ is included, when we consider the differential $\omega=d\phi$; in this case we obtain a classical Local Uniformization of the divisor $\operatorname{div}(\phi)$.

The reduction of singularities of codimension one foliations is an open problem in dimension bigger of equal than four. We have positive answers by Seidenberg \cite{Sei} in 1968, for the two-dimensional case,  and by Cano \cite{Can2} in 2004, for the three-dimensional case. This is in contrast with Hironaka's results  \cite{Hir1} of 1964, that provide a reduction of singularities for algebraic varieties in characteristic zero and any dimension.

We have to deal with the possibility of having nonzero objects with infinite value. This is  a reason for making the hard part of the proof in terms of formal objects
and in a ``truncated way''. The infinite value for a nonzero formal function only comes when it is a non rational function, see for instance the works on the implicit ideals \cite{Her-O-S-T, Cut-E}. However, when we are considering $1$-differential forms, the property of having infinite value may appear even with differential forms having polynomial coefficients. This is the case of the well known Euler's Equation in dimension two.

The meaning of ``simple integrable $1$-differential form'' has been established in previous works. In dimension two by Seidenberg \cite{Sei} and in any dimension by Cerveau-Mattei \cite{Cer-M}, Mattei-Moussu \cite{Mat-M}, Mattei  \cite{Mat},  Cano-Cerveau \cite{Can-C} and Cano \cite{Can2} among others. The definition contains the case of the differential of a monomial (normal crossings) and several versions of saddle-nodes.

As it is standard in reduction of singularities, a normal crossings divisor is present in the definition of ``final'' points after reduction of singularities. It can be an exceptional divisor created along the reduction of singularities process, or also an originally prescribed divisor (for instance, in arguments working by induction). Let us recall the local definition of simple point for the case of a  function. A function $f$ is simple with respect to a normal crossings divisor $x_1x_2\cdots x_r=0$ if one of the following properties holds:
\begin{itemize}
\item The function is a monomial in $\boldsymbol{x}$ times a unit $U$. That is $f=U\boldsymbol{x^a}$.
\item There is a new local coordinate $y$ and a unit $U$ such that $f= Uy^b\boldsymbol{x^a}$.
\end{itemize}
In the first case, that we call the {\em corner case}, the zeroes of $f$ are contained in the divisor; in the second case, that we call {\em trace case}, the  set of zeroes of $ f $ contains $y=0$.  When $f$ is a rational function, we can avoid trace points along the valuation ring $R$; otherwise,  the value of $f$ would be infinity. Nevertheless, it is possible to get trace points for a rational differential $1$-form; in the two dimensional case, this indicates the presence of a formal non-rational invariant curve.

 The definition of a simple formal integrable differential $1$-form $\omega$ is compatible with the above one given for functions, in the sense that that if $f$ is not a unit and it is simple, then $df$ will be simple. The precise definition, the existence of normal formal forms and other properties may be found in \cite{Can1,Can2,Can-C,FeD}, where ``simple points'' are ``pre-simple points'' with a diophantine additional condition, that is automatically satisfied in the case of functions. By a paper of Fernández-Duque \cite{FeD}, it is possible to get only simple singularities when we start with pre-simple ones in any dimension; in other words, we can globally obtain the diophantine condition that makes the difference between pre-simples and simples. This means that it is only necessary to obtain pre-simple points in  Theorem \ref{teo:mainmain}. We recall that $\omega$ is {\em pre-simple } with respect to the divisor $x_1x_2\cdots x_r=0$, if it can be written in a logarithmic way as $\omega=f\omega^*$, where
\[
\omega^*=
\sum_{i=1}^r a_i\frac{dx_i}{x_i}+\sum_{j=1}^{m-r}b_jdy_j
\]
and one of the following properties holds:
\begin{itemize}
\item[a)] There is a unit among the coefficients $a_1,a_2,\ldots,a_r$.
\item[b)] There is a unit among the coefficients $b_j$ or there is a coefficient $a_i$ whose linear part is not a linear combination of $x_1,x_2,\ldots,x_r$.
\end{itemize}
In case a), we have a {\em pre-simple corner}; otherwise, we  have a {\em pre-simple trace point.}

There are several results concerning the reduction of singularities of codimension one singular foliations and dynamical systems given by vector fields in dimension bigger that two. For vector fields over three dimensional ambient spaces: we have  Local Uniformization type results in \cite{Can1,Can-R-S}, a global reduction of singularities in the real case by Panazzolo \cite{Pan} and in terms of stacks and orbifolds, by Panazzolo-McQuillan \cite{Pan-McQ}. In general dimension, there are papers of Belotto \cite{Bel2,Bel3} where he performs reduction of singularities of ideals and varieties conditioned to simple foliations. Related problems are the monomialization of morphisms by Cutkosky \cite{Cut}, the reduction of first integrals of dynamical systems \cite{Bel1} or the works of Abramovich \cite{Abr}.  Some of the ``extra'' technical difficulties in that problems are of the same nature as some of those we encounter in the case of foliations. Surprisingly, this is also true with respect to the known results for the three dimensional case of schemes in positive or mixed characteristic \cite{Cos-P1, Cos-P2}.

In this paper, we consider $k$-rational  valuations of rank one. In classical Zariski's approach, this is the case where the problem is concentrated:  one can pass  to  the case of a general valuation, following for instance the paper \cite{Nov-S}. In the case or rational foliations, there are new difficulties when we consider general valuations, that we plan to solve in a forthcoming paper.

Anyway, the $k$-rational valuations of rank one have a geometrical and dynamical interpretation in the real case, that has been considered in \cite{Can-M-R}. The fact of being $k$-rational means that each time we blow-up, the center of the valuation is a $k$-rational point, hence a ``true'' point. In the real case, we have valuations of this type given by transcendental non oscillating curves, see \cite{Can-M-S1,Can-M-S2,Can-M-R}. The property of being of rank one means that we cannot decompose the valuation;  it may be interpreted by saying that the curve has no flat contact with any hypersurface.

Let us give a few comments on the technical structure of this paper. First of all, as it is ubiquitous in problems of local uniformization, for instance for the positive characteristic case \cite{Tei}, we have to deal with the possibility of having ``bad accumulation of values''. In zero characteristic, the classical Tschirnhausen transformation is usually the tool that allows us to avoid the accumulation of values.  A big part of the paper is devoted to facilitate the use of Tschirnhausen transformations in the case of integrable forms.

Instead of giving a direct proof of Theorem \ref{teo:mainmain}, we prove a statement of  Truncated Local Uniformization and we derive Theorem \ref{teo:mainmain} from it. This has several advantages. The first one is that we can deal with infinite value objects and in fact the result is valid for formal differential forms.  Moreover, the structure of our induction process is simplified, since it is possible to ``decompose'' a formal series with respect to groups of variables and then we can apply induction to the coefficients.

Let us explain the Truncated Local Uniformization statements.
Let $\Gamma$ be the value group of the valuation $\nu$ associated to $R$. We know that $\Gamma\subset{\mathbb R}$, since we are dealing with a rank one valuation. The truncated statements are relative to a ``truncation value'' $\gamma\in \Gamma$.

Recall that we are always working with respect to a list \[
\boldsymbol{x}=(x_1,x_2,\ldots,x_r)
\] of parameters that represents the prescribed divisor, that we call the {\em independent parameters}. We require  the values  $\nu(x_i)$ to be a ${\mathbb Q}$-basis of $\Gamma\otimes_{\mathbb Z}{\mathbb Q}$. The $\mathbb Q$-dimension $r$ of $\Gamma\otimes_{\mathbb Z}{\mathbb Q}$ is the so-called {\em rational rank of the valuation}. We complete $\boldsymbol{x}$ with a list of ``dependent parameters''
\[\boldsymbol{y}=(y_1,y_2,\ldots,y_{m-r})
\]
to obtain a regular system of parameters of the local ring ${\mathcal O}_{M,P}$ of $M$ at the center $P$ of the valuation (this will always be possible, up to blow-ups). In this way we have a {\em parameterized local model ${\mathcal A}=({\mathcal O}_{\mathcal A}, \boldsymbol{x}, \boldsymbol{y})$}, where ${\mathcal O}_{\mathcal A}={\mathcal O}_{M,P}$. Most of the technical results in the paper are stated in terms of parameterized local models.

Let us give the definition of {\em truncated $\gamma$-final} formal functions and  {\em truncated $\gamma$-final} formal differential $1$-forms.
Given a formal function
\[
f=\sum_{I}\boldsymbol{x}^{I}f_I(\boldsymbol{y}),
\]
we define the {\em explicit value} $\nu_{{\mathcal A}}(f)$ by $\nu_{{\mathcal A}}(f)=\min\{\nu(\boldsymbol{x}^{I});\, f_I(\boldsymbol{y})\ne 0\}$.  We say that {\em $f$ is $\gamma$-final} if one of the following properties holds:
\begin{itemize}
\item{\em Recessive case:} $\nu_{{\mathcal A}}(f)>\gamma$.
\item{\em Dominant case:} $\nu_{{\mathcal A}}(f)\leq\gamma$ and $f_{I_0}(0)\ne 0$, where $\nu(\boldsymbol{x}^{I_0})=\nu_{{\mathcal A}}(f)$.
\end{itemize}
The dominant case is very close to the definiton of corner singularity: in fact, by combinatorial blow-ups, concerning only the indepedent parameters $\boldsymbol{x}$, we can obtain the additional  property that $f=\boldsymbol{x}^{I_0}U$, where $U$ is a unit. Now, let us consider a formal $1$-differential form $\omega$,  that we write in a logarithmic way as
\[
\omega=\sum_{i=1}^rf_i\frac{dx_i}{x_i}+\sum_{j=1}^{m-r}g_jdy_j.
\]
The {\em explicit value $\nu_{{\mathcal A}}(\omega)$} is defined by
$
\nu_{{\mathcal A}}(\omega)=\min\{\{\nu_{{\mathcal A}}(f_i)\}_{i=1}^r,
\{\nu_{{\mathcal A}}(g_j)\}_{j=1}^{m-r}\}
$. We say that $\omega$ is {\em $\gamma$-final} if one of the following properties holds:
\begin{itemize}
\item{\em Recessive case:} $\nu_{{\mathcal A}}(\omega)>\gamma$.
\item{\em Dominant case:} $\nu_{{\mathcal A}}(\omega)\leq\gamma$ and there is a coefficient $f_i$ with $\nu_{{\mathcal A}}(f_i)=\nu_{{\mathcal A}}(\omega)$ such that $f_i$ is $\gamma$-final dominant.
\end{itemize}
The proof of the $\gamma$-truncated local uniformization goes by induction on the number $m-r$ of dependent variables $\boldsymbol{y}$. In particular, we need a truncated version of
 Frobenius integrability condition $\omega\wedge d\omega=0$, compatible with the induction procedure. We say that $\omega$ satisfies the {\em $\gamma$-truncated Frobenius integrability condition} if
\[
\nu_{{\mathcal A}}(\omega\wedge d\omega)\geq 2\gamma.
\]
Here we see one of the advantages of the case of a formal function $f$.  The differential $df$ trivially satisfies the integrability condition. Moreover, when we decompose $f$ as a power series in the last dependent variable, the coefficients are also formal functions that also satisfies the integrability condition. Note that Section 7 is devoted to the preparation procedure. It should not be necessary for the case of a formal  function, since in this case the preparation is a direct consequence of the induction hypothesis.

The Truncated Local Uniformization may be stated as follows:
\begin{theorem}
 \label{teo:truncatedmainmain}
 Consider a nonsingular algebraic variety $M$ over a characteristic zero field $k$ with field of rational functions $K=k(M)$, and let $R$ be a rank one $k$-rational valuation ring of $K$. Fix a value $\gamma$ in the value group of $R$.  For any formal differential $1$-form $\omega$ in the center $P$ of $R$ in $M$ satisfying the $\gamma$-truncated Frobenius integrability condition,
  there is a birational regular morphism $\pi:M'\rightarrow M$, that is a composition of blow-ups with non-singular centers, such that the transform $\pi^*\omega$ of $\omega$ is $\gamma$-final at the center $P'$ of $R$ in $M'$.
\end{theorem}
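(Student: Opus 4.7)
The plan is to prove Theorem \ref{teo:truncatedmainmain} by induction on the number $e = m - r$ of dependent parameters, working always in a parameterized local model $\mathcal{A} = (\mathcal{O}_{\mathcal{A}}, \boldsymbol{x}, \boldsymbol{y})$. For the base case $e = 0$, the form reads $\omega = \sum_{i=1}^{r} f_i(\boldsymbol{x})\, dx_i/x_i$. Because the values $\nu(x_i)$ form a $\mathbb{Q}$-basis of $\Gamma \otimes_{\mathbb{Z}} \mathbb{Q}$, distinct monomials in $\boldsymbol{x}$ have distinct values, so the explicit value $\nu_{\mathcal{A}}(f_i)$ is realized by a single monomial. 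A combinatorial sequence of blow-ups of strata of $x_1 \cdots x_r = 0$ then transforms each $f_i$ into either a monomial times a unit or an element of explicit value above $\gamma$; dominance of at least one $f_i$ then makes $\omega$ $\gamma$-final.

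For the inductive step, I would fix $z = y_e$ and decompose $\omega = \omega' + g\, dz$, where $\omega'$ involves only $dx_i$ and $dy_j$ with $j < e$, then expand $\omega' = \sum_{k \geq 0} \omega'_k z^k$ and $g = \sum_{k \geq 0} g_k z^k$ with coefficients depending only on $(\boldsymbol{x}, y_1, \dots, y_{e-1})$. Applying the induction hypothesis to the restriction $\omega|_{z=0} = \omega'_0$, which inherits the truncated Frobenius condition and has $e - 1$ dependent parameters, renders $\omega'_0$ $\gamma$-final. I would then invoke the preparation procedure developed in Section~7 to bring $\omega$ into a form where the Newton polygon with respect to $z$, truncated at $\gamma$, has a well-defined principal vertex and characteristic direction.

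The heart of the argument is to define a main invariant, essentially the principal vertex of the $\gamma$-truncated Newton polygon of $\omega$ in the variable $z$ together with a secondary contact invariant, and to show that it strictly decreases under a permissible blow-up. Two kinds of blow-ups are relevant: combinatorial ones among $\boldsymbol{x}$, which modify the polygon monomially and preserve the parameterized model; and blow-ups with center contained in $\{z = 0\}$, typically preceded by a Tschirnhausen substitution $z \mapsto z + \phi(\boldsymbol{x}, y_1, \dots, y_{e-1})$ chosen to annihilate the subdominant coefficient. The truncated integrability relation $\nu_{\mathcal{A}}(\omega \wedge d\omega) \geq 2\gamma$ supplies the algebraic constraints on the $\omega'_k$ and $g_k$ that guarantee the existence of a suitable $\phi$ and the strict drop of the invariant.

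The main obstacle I expect is ensuring that the truncated Frobenius condition and the parameterized local model structure are preserved throughout the algorithm. Each blow-up and each Tschirnhausen change must be compatible with the truncation $\gamma$, and the invariant must be well-founded and stable under composition. This is precisely where the \emph{truncation} paradigm is crucial: corrections of value above $\gamma$ can be freely discarded, which both keeps the induction finite and sidesteps the accumulation-of-values phenomenon that motivates the rewriting in terms of formal objects. A well-founded termination argument on the invariant, combined with the base-case monomialization, then yields the desired birational morphism $\pi : M' \to M$ after finitely many blow-ups with nonsingular centers.
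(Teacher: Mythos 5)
Your overall architecture is aligned with the paper: induction on the number of dependent variables, decomposition by powers of the last dependent variable, Newton polygon tracking, Tschirnhausen substitutions, with the truncated integrability condition feeding the inductive step. But there are several genuine gaps between your sketch and a working proof.

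First, the object to which the induction hypothesis must be applied is not just the restriction $\omega|_{z=0}=\omega_0'$ but \emph{every} level coefficient $\eta_s$ in the expansion $\omega=\sum_s z^s(\eta_s + h_s\,dz/z)$. The preparation of the entire Newton polygon requires making each relevant $\eta_s$ final, and the catch is that the individual $\eta_s$ do \emph{not} automatically satisfy a $\gamma$-truncated integrability condition. The paper has to extract, from $\nu_{\mathcal{A}}(\omega\wedge d\omega)\geq 2\gamma$, a level-by-level bound $\nu_{\mathcal{A}}(\eta_s\wedge d\eta_s)\geq 2\delta$ with a $\delta$ that depends on the explicit values of the neighboring $\eta_{s\pm j}$ (Lemma~\ref{lema:aproximacion}), and then run a careful ``planning'' argument (Subsection~\ref{planning polygons}) to bootstrap from an approximate preparation to a strict one. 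Treating only $\omega_0'$ misses the entire preparation machinery.

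Second, your termination claim is wrong as stated: the critical height of the Newton polygon is only non-increasing under the allowed transformations (Proposition~\ref{prop:stability critical height}), not strictly decreasing. Stabilization at height $\chi\geq 1$ is genuinely possible, and the bulk of the paper (Section~\ref{Control by the Critical Height}) is devoted to analyzing exactly that. Stabilization forces the resonance conditions \textbf{r1}, \textbf{r2a}, \textbf{r2b}; one must show \textbf{r1} occurs at most once, that the resonances imply ramification index one (so the Tschirnhausen change is available), and then construct the Tschirnhausen coordinate change explicitly using the truncated De Rham--Saito division (Proposition~\ref{prop:trucateddivision}) for $\chi\geq 2$ and a truncated logarithmic Poincar\'e lemma (Proposition~\ref{prop:logpoincare2}, Corollary~\ref{cor:logpoincare3}) for $\chi=1$. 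Your phrase ``the truncated integrability relation supplies the algebraic constraints\dots that guarantee the existence of a suitable $\phi$'' papers over precisely this: without the cohomological division and Poincar\'e statements, there is no reason the needed $\phi$ exists. Relatedly, the blow-up structure is not single blow-ups centered in $\{z=0\}$ but Puiseux's packages with nontrivial ramification index $d$, and the control of $d$ is itself part of what the resonance analysis delivers.

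Finally, saying that ``corrections of value above $\gamma$ can be freely discarded'' oversells the benefit of truncation. The accumulation of values is still a live threat \emph{below} $\gamma$; the paper resolves it via a uniform bound on the number of planning operations (Lemma~\ref{lema:finitudplanning}) and by tracking an explicit gain $\epsilon>0$ per Tschirnhausen step (Lemma~\ref{lema:avance epsilon}, Proposition~\ref{prop:epsilonalturauno}), both of which require the integrability input and neither of which is a formal consequence of working modulo $\gamma$.
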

We first give a proof of the ``local statement''  Theorem \ref{teo:formalforms} in Section \ref{Truncated Local Uniformization}. To see that Theorem \ref{teo:formalforms} implies Theorem \ref{teo:truncatedmainmain}, it is enough to assure that the local centers of blow-ups can be made global non-singular ones, just by performing additional blowing-ups external to the centers of the valuation. This is a standard argument of reduction of singularities that we do not detail in the text. Theorem \ref{teo:formalforms} will be a consequence of Theorem \ref{teo:inductivestatement}, that is a stronger inductive version of it. 

 We devote Sections
\ref{Truncated Local Uniformization},
\ref{Statements of Strict Preparation},
\ref{Preparation Process} and
\ref{Control by the Critical Height} to the proof of the Theorem \ref{teo:inductivestatement}. In Section
\ref{Local Uniformization of  Rational Foliations},
we show how Theorem \ref{teo:inductivestatement} implies Theorem \ref{teo:mainmain}, with arguments directly related to the proof of  Theorem \ref{teo:inductivestatement}.

Let us give an idea of the structure of the proof of Theorem \ref{teo:inductivestatement}.
 We do induction on the number of dependent variables in $\boldsymbol{y}$ that appear in the expression of $\omega$. If  no dependent variables appear, we are done, since $\omega$ is automatically $\gamma$-final. We rename the dependent variables that appear in the expression of $\omega$ as
$
(\boldsymbol{y}_{\leq \ell},z)=(y_1,y_2,\ldots,y_\ell,z) 
$
and we assume that we have $\gamma$-truncated local uniformization when only $\ell$ dependent variables appear. Then, we make a decomposition of $\omega$ as
\[
\omega=\sum_{s\geq 0}z^s\omega_s,\quad \omega_s=\eta_s+h_s\frac{dz}{z},
\quad \eta_s=\sum_{i=1}^rf_{is}\frac{dx_i}{x_i}+\sum_{j=1}^\ell g_{js}dy_j,
\]
where $f_{is},g_{js},h_s\in k[[\boldsymbol{x}, \boldsymbol{y}]]$. Each $\omega_s$ is the $s$-level of $\omega$. Note that we could apply induction to $h_s$ and $\eta_s$, but for this we need to control the explicit value of $\eta_s\wedge d\eta_s$. Note also that $h_0=0$, since $\omega$ has no poles along $z=0$; this feature is interesting in  some of our preparation arguments. Now, we draw a Newton Polygon ${\mathcal N}_\omega\subset{\mathbb R}^2_{\geq 0}$ from the cloud of points
$
(s,\nu_{{\mathcal A}}(\omega_s))
$, for $s\geq 0$.
The importance of the Newton Polygon in the induction step is due to the following remark:

\vspace{5pt}
``If ${\mathcal N}_\omega$ has the only vertex $(\rho,0)$ and $\omega_0=\eta_0$ is $\gamma$-final, then $\omega$ is $\gamma$-final''.

\vspace{5pt}
\noindent
The objective is to obtain $\omega$ with the above property, or such that $\nu_{{\mathcal A}}(\omega)>\gamma$, after suitable transformations ${\mathcal A}\rightarrow{\mathcal A}'$ between parameterized local models.

 Let us say a word about the transformations we use. They are of three types:
 \begin{itemize}
 \item Blow-ups in the independent variables.
 \item Nested coordinate changes.
 \item Puiseux's packages.
 \end{itemize}
 
  The blow-ups in the independent variables are blow-ups with centers $x_i=x_j=0$ given by two independent variables. They are combinatorial along the valuation; we can use them, for instance, to principalize ideals given by monomials in the variables $\boldsymbol{x}$.

The nested coordinate changes do not affect to the ambient space. They have the form $z'=z+f$, where $f\in k[[\boldsymbol{x},\boldsymbol{y}]]\cap {\mathcal O}_{\mathcal A}$ with $\nu_{{\mathcal A}}(f)\geq \nu(z)$. These coordinate changes are necessary in order to avoid problems of accumulation of values.

 We have already considered {Puiseux's packages} in previous works \cite{Can-R-S}. They are related with Perron's transformations, the key polynomials of a valuation and binomial ideals, see for instance \cite{Dec-M-S, Tei, Zar1}. In dimension two, they are close to the Puiseux's pairs of a plane branch. We can understand them through the rational contact function $\Phi=z^d/\boldsymbol{x^p}$, which satisfies $\nu(\Phi)=0$. The Puiseux's package can be interpreted as a local uniformization  of the hypersurface
 \[
 z^d=\lambda\boldsymbol{x^p},
 \]
 where $\lambda\in k$ is such that $\nu(\Phi-\lambda)>0$. The new variable $z'$ is $z'=\Phi-\lambda$. The number $d>0$ is called the {\em ramification index}
 of the Puiseux's package. When $d=1$, the above hypersurface is non-singular and thae equations defining the transformation have an appropriate form;  in fact,  this is the case we encounter at the end of the proof.

 Thus, we try to control the evolution of the Newton polygon after performing the above transformations and taking into account the induction hypothesis. We proceed in two steps. First, we perform a {\em $\gamma$-preparation of $\omega$}.
Second, once $\omega$ is $\gamma$-prepared, we provide a control of the evolution of the {\em critical height $\chi_{\mathcal A}(\omega)$}, under Puiseux's packages and nested coordinate changes followed by $\gamma$-preparations.

 Roughly speaking, the $\gamma$-preparation of $\omega$ consists in obtaining a situation where the relevant levels are $\rho$-final, with respect to the abscissa $\rho$ determined by the polygon. That is, the ``important'' part of the level may be read in the coordinates $\boldsymbol{x}$. In order to obtain the $\gamma$-preparation, we need to apply induction to some of the forms $\eta_s$. The truncated integrability properties of $\eta_s$ do not allow to get a direct $\gamma$-preparation. We perform first an {\em approximate preparation}, thanks to the properties of truncated integrability of the differential parts $\eta_s$ of the  levels.
We complete the preparation, that we call {$\gamma$-strict preparation}, thanks to some additional properties that are consequence of the hypothesis of truncated integrability and De Rham-Saito type results of truncated division. This part of the proof goes along Sections
\ref{Truncated Local Uniformization},
\ref{Statements of Strict Preparation} and
\ref{Preparation Process}.

Once we have obtained a $\gamma$-preparation, we devote Section \ref{Control by the Critical Height} to the control of the behaviour of the Newton Polygon under  Puiseux's packages and nested coordinate changes, followed by new $\gamma$-preparations. Roughly speaking, we do what is ne\-ce\-ssa\-ry in order to obtain a situation modelled on the behaviour of Newton Polygon of plane branches, when we perform a sequence of blow-ups associated to a Puiseux's pair. The shape of Newton Polygon is described by means of numerical invariants. The most important for us is the {\em critical height $\chi_{\mathcal A}(\omega)$}. The {\em critical segment} is the vertex or segment of contact between the Newton Polygon and the lines of slope $-1/\nu(z)$. The {\em critical vertex} is the highest vertex in the critical segment, and $\chi_{\mathcal A}(\omega)$ is the ordinate of the critical vertex.

The new critical height, after a Puiseux's package, is lower or equal that the preceding one. If we are able to obtain strict inequalities, we are done. Thus, We have to look at the cases when the critical height stabilizes at $\chi=\chi_{{\mathcal A}'}(\omega)$ for any ``normalized transformation'' $({\mathcal A},\omega)\rightarrow ({\mathcal A}',\omega)$. The stabilization implies the presence of resonance conditions, that we call \textbf{r1}, \textbf{r2a} and \textbf{r2b}-$\boldsymbol{\upsilon}$. The resonance \textbf{r1} occurs only when $\chi=1$ and we can show that it happens ``at most once''. The resonances \textbf{r2} imply that the ramification index, after the necessary $\gamma$-preparations, is equal to one; this is a necessary property for our arguments. In the case when $\chi=2$, we arrive to show quite directly the existence of a Tschirnhausen coordinate change that allows us to ``cross the limit imposed by $\gamma$''. In the case $\chi=1$, we follow the same general ideas, but it is necessary for us to use truncated cohomological results, namely a generalized and truncated version of Poincaré's Lemma that we include in Section \ref{Truncated Cohomological Statements}.

{\em Acknowledgements: The authors are grateful with O. Piltant, M. Spivakovsky and B. Teissier  for many fruitful conversations on the subject. This work has been supported by the Spanish Research Project  MTM2016-77642-C2-1-P. }

%%%%%%%%%%%%%%%%%%
%%%%%%%%%%%%%%%%%%%%%%%%%
%%%%%%%%%%%%%%%%%%%%%%%%%
\section{Formal Differential Forms in the Center of a Valuation}
 Let  $K/k$ be a field of rational functions over a base field $k$ of characteristic zero. Along all this paper, we consider a rational $k$-valuation ring $k\subset R\subset K$ of rank one.  That is, the following properties hold:
 \begin{itemize}
   \item The natural mapping $k\rightarrow \kappa$ over the residual field $\kappa$ of $R$ is an isomorphism. In particular, for any birational model $M$ of $K$ the center of $R$ in $M$  is a $k$-rational point of $M$.
   \item The value group $\Gamma=K^*/R^*$ of the valuation $\nu$ associated to $R$ is isomorphic to a subgroup of $({\mathbb R},+)$. Once for all, we fix an immersion $\Gamma\subset {\mathbb R}$.
 \end{itemize}
 \begin{definition}  {\em A locally parameterized model ${\mathcal A}$}  is given by ${\mathcal A}=({\mathcal O}_{\mathcal A},\boldsymbol{x},\boldsymbol{y})$, where ${\mathcal O}_{\mathcal A}$ is a regular local ring and
 \[
 (\boldsymbol{x},\boldsymbol{y})=(x_1,x_2,\ldots,x_r, y_{1},y_{2},\ldots,y_{m-r})
 \]
 is a regular system of parameters of ${\mathcal O}_{\mathcal A}$ satisfying the following properties:
 \begin{itemize}
 \item[1)] There is a projective model $M$ of $K$ such that ${\mathcal O}_{\mathcal A}={\mathcal O}_{M,P}$, where $P$ is the center of $R$ in $M$.
 \item[2)] The values $\nu(x_1),\nu(x_2),\ldots,\nu(x_r)$ give a ${\mathbb Q}$-basis of $\Gamma\otimes_{\mathbb Z}{\mathbb Q}$.
 \end{itemize}
The parameters $\boldsymbol{x}$ are {\em the independent parameters} and $\boldsymbol{y}$ are the {\em dependent parameters} of $\mathcal A$.
\end{definition}

\begin{remark} Locally parameterized models exist, see \cite{Can-R-S}.
\end{remark}

 Let us consider a locally parameterized model $\mathcal A=({\mathcal O}_{\mathcal A}, \boldsymbol{x},\boldsymbol{y})$.

 Denote by $
\Omega^1_{{\mathcal O}_{\mathcal A}/k}[\log \boldsymbol{x}]
$ the ${\mathcal O}_{\mathcal A}$-module of $\boldsymbol{x}$-logarithmic Kähler differentials, that is the logarithmic Kähler differentials with respect to the monomial $x_1x_2\cdots x_r$. We recall that
\[
\Omega^1_{{\mathcal O}_{\mathcal A}/k}[\log \boldsymbol{x}]\supset \Omega^1_{{\mathcal O}_{\mathcal A}/k}
\]
and both are free modules of finite rank $m=\dim M$, see for instance \cite{Eis}.
 Let $\Omega^1_{\mathcal A}$ denote the $\widehat{\mathcal O}_{\mathcal A}$-module
\[
\Omega^1_{\mathcal M}=\Omega^1_{{\mathcal O}_{\mathcal A}/k}[\log \boldsymbol{x}]
\otimes_{{\mathcal O}_{\mathcal A}} \widehat{\mathcal O}_{\mathcal A}\ ,
\]
where $\widehat{\mathcal O}_{\mathcal A}$ is the completion of ${\mathcal O}_{\mathcal A}$ with respect to its maximal ideal ${\mathfrak m}_{\mathcal A}$. The elements of $\Omega^1_{\mathcal A}$ are called {\em $\mathcal A$-formal logarithmic differential $1$-forms} or simply  {\em formal logarithmic differential $1$-forms}, it the reference to $\mathcal A$ is obvious.
\begin{remark} The modules $\Omega^1_{{\mathcal O}_{\mathcal A}/k}[\log \boldsymbol{x}]$ and $\Omega^1_{\mathcal A}$ depend only on the ideal $(\boldsymbol{x^1})$ generated by the monomial $\boldsymbol{x^1}=x_1x_2\cdots x_r$.
\end{remark}

For any $p\geq 0$, the free $\widehat{\mathcal O}_{\mathcal A}$-module $\Omega^p_{\mathcal A}$ is the $p$-th exterior power
\[
\Omega^p_{\mathcal A}=\Lambda^p \Omega^1_{\mathcal A}\quad
(\Omega^0_{\mathcal A}=\widehat{\mathcal O}_{\mathcal A}).
\]
The elements of $\Omega^p_{\mathcal A}$ are the {\em $\mathcal A$-formal logarithmic differential $p$-forms}. Let $\Omega^{\bullet}_{\mathcal A} $ denote  the direct sum $\oplus_{p \geq 0}\Omega^p_{\mathcal A}$. We have a well defined exterior derivative
\[
d: \Omega^p_{\mathcal A}\longrightarrow \Omega^{p+1}_{\mathcal A},
\]
as well as an exterior product $\alpha,\beta\mapsto \alpha\wedge\beta$
in $\Omega^{\bullet}_{\mathcal A}$.
\begin{remark}
 \label{rk:definicionofnuf}
 Note that the valuation $\nu$ is defined for the elements $f\in K\setminus \{0\}$. In particular, we have $\nu(f)<\infty$ for any $0\ne f\in {\mathcal O}_{\mathcal A}$, since ${\mathcal O}_{\mathcal A}\subset K$. For formal functions $f\in\Omega^0_{\mathcal A}\setminus {\mathcal O}_{\mathcal A}$, the value $\nu(f)$ is not defined.
\end{remark}
\section{Explicit Values and Final Truncated Differential Forms}
Let us consider a  parameterized local model ${\mathcal A}=({\mathcal O}_{\mathcal A};\boldsymbol{x},\boldsymbol{y})$.
\subsection{Explicit Values}
\label{subsec:explicitvalues}
Given  $\delta\in {\mathbb R}_{\geq 0}$, we define the ideals ${\mathcal I}^\delta_{\mathcal A}$ and ${\mathcal I}^{\delta+}_{\mathcal A}$ of ${\mathcal O}_{{\mathcal A}}$ by
\[
{\mathcal I}^\delta_{\mathcal A}=({\boldsymbol x}^I;\;  \nu({\boldsymbol x}^I)\geq \delta)
\quad\mbox{and}\quad
{\mathcal I}^{\delta+}_{\mathcal A}=({\boldsymbol x}^I;\;  \nu({\boldsymbol x}^I)>\delta),
\]
where ${\boldsymbol x}^I=x_1^{i_1}x_2^{i_2}\cdots x_r^{i_r}$, for $I=(i_1,i_2,\ldots,i_r)\in {\mathbb Z}_{\geq 0}^r$.
We obtain a filtration of ${\mathcal O}_{{\mathcal A}}$ that we call the {\em explicit filtration}. It depends only on the ideal  $(\boldsymbol{x^1})$. In the same way, the {\em explicit filtration} of $\widehat{\mathcal O}_{{\mathcal A}}=\Omega^0_{\mathcal A}$ is given by the family of ideals
\[
\widehat{\mathcal I}^\delta_{\mathcal A}={\mathcal I}^\delta_{\mathcal A}\Omega^0_{\mathcal A}\quad
\mbox{and}
\quad
\widehat{\mathcal I}^{\delta+}_{\mathcal A}={\mathcal I}^{\delta+}_{\mathcal A}\Omega^0_{\mathcal A}\ .
\]

\begin{remark} Let ${\mathcal V}_{\mathcal A}\subset \Gamma\subset {\mathbb R}$ be the set of values
$\nu({\boldsymbol x}^I)$ for $I\in {\mathbb Z}_{\geq 0}^r$. It is a well ordered subset of ${\mathbb R}_{\geq 0}$. In particular, for any $\delta\in {\mathbb R}_{\geq 0}$ there are unique $\delta_0,\delta_1\in {\mathcal V}_{\mathcal A}$ with $\delta\leq \delta_0\leq\delta_1$ such that
\[
{\mathcal I}^\delta_{\mathcal A}={\mathcal I}^{\delta_0}_{\mathcal A}\quad  \mbox{and}\quad
{\mathcal I}^{\delta+}_{\mathcal A}={\mathcal I}^{\delta_1}_{\mathcal A}.
\]
We have that $\delta=\delta_0<\delta_1$ if $\delta\in {\mathcal V}_{\mathcal A}$  and $\delta<\delta_0=\delta_1$ if $\delta\notin {\mathcal V}_{\mathcal A}$. Thus the family of ideals
$
\{{\mathcal I}^\delta_{\mathcal A}; \delta\in {\mathcal V}_{\mathcal A}\}
$
gives the explicit filtration. Let us also note that for any $0\ne f\in\Omega^0_{\mathcal A}$, we have
$\min\{\delta\in {\mathcal V}_{\mathcal A};\; f\notin \widehat{\mathcal I}^{\delta+}_{\mathcal A}\}=\max\{\delta\in {\mathcal V}_{\mathcal A};\; f\in \widehat{\mathcal I}^\delta_{\mathcal A}\}
$.
\end{remark}
\begin{definition} The {\em explicit value} $\nu_{\mathcal A}(f)$
of $f\in \Omega^0_{\mathcal A}$, with $f\ne 0$, is
\[
\nu_{\mathcal A}(f)=\min\{\delta\in {\mathcal V}_{\mathcal A};\; f\notin \widehat{\mathcal I}^{\delta+}_{\mathcal A}\}=\max\{\delta\in {\mathcal V}_{\mathcal A};\; f\in \widehat{\mathcal I}^\delta_{\mathcal A}\}.
\]
We put $\nu_{\mathcal A}(0)=\infty$ and   $
\nu_{\mathcal A}(S)=\min\{\nu_{\mathcal A}(f);\, f\in S\}
$, for any subset $S\subset \widehat{\mathcal O}_{{\mathcal A}}$.
\end{definition}
\begin{remark}Assume that ${\mathcal O}_{\mathcal A}={\mathcal O}_{M,P}$, where $M$ is a projective model of $K$ and $P$ is the center of $R$ in $M$.
Since $P$ is a $k$-rational point of $M$, there is a natural identification
$
\Omega^0_{{\mathcal A}}=k[[\boldsymbol{x},\boldsymbol{y}]]
$,
where $k[[\boldsymbol{x},\boldsymbol{y}]]$ stands for the formal series ring in the variables $\boldsymbol{x},\boldsymbol{y}$ with coefficients in $k$. Write $f\in \Omega^0_{\mathcal A}$ as a formal series
\[
f=\sum_{I}f_{I}\boldsymbol{x}^I, \quad  f_{I}\in k[[\boldsymbol{y}]].
\]
We have that $\nu_{\mathcal A}(f)=\min\{\nu(\boldsymbol{x}^I);\; f_I\ne 0\}$. For any $f\in {\mathcal O}_{{\mathcal A}}$ we have $\nu_{\mathcal A}(f)\leq \nu(f)$. However, $\nu(f)$ is not defined for $f\in \Omega^0_{{\mathcal A}}\setminus {\mathcal O}_{{\mathcal A}}$, see Remark  \ref{rk:definicionofnuf}.
\end{remark}

Let $N$ be a  free $\Omega^0_{{\mathcal A}}$-module of finite rank.
We extend the explicit filtration to $N$ by considering the family of submodules
$\widehat{\mathcal I}^\delta_{\mathcal A}N$
 and
$\widehat{\mathcal I}^{\delta+}_{\mathcal A}N$.
The {\em explicit value $\nu_{\mathcal A}(a)$ of $a\in N$} is
\[
\nu_{\mathcal A}(a)=\min\{\delta\in {\mathcal V}_{\mathcal A}; a\notin {\mathcal I}^{\delta+}_{\mathcal A}N\}.
\]
 If $e_1,e_2,\ldots, e_m$ is a $\Omega^0_{{\mathcal A}}$-basis of $N$ and
\[
a=f_1e_1+f_2e_2+\cdots+f_me_m,
\]
we have that $\nu_{\mathcal A}(a)=\nu_{\mathcal A}(\{f_1,f_2,\ldots,f_m\})=\nu_{\mathcal A}({\mathfrak a})$, where ${\mathfrak a}=\sum_{i=1}^mf_i\Omega^0_{{\mathcal A}}$. Note that the ideal ${\mathfrak a}\subset\Omega^0_{\mathcal A}$ does not depend on the choice of the basis.

From now on, we denote by $\nu_{\mathcal A}$ the explicit order in the free $\Omega^0_{{\mathcal A}}$-modules $\Omega^p_{\mathcal A}$.

We have the following standard valuative properties:
\begin{itemize}
\item[1)] For any $\alpha,\beta\in \Omega^p_{\mathcal A}$  we have $\nu_{\mathcal A}(\alpha+\beta)\geq \nu_{\mathcal A}(\alpha)+\nu_{\mathcal A}(\beta)$ and equality holds if $\nu_{\mathcal A}(\alpha)\ne\nu_{\mathcal A}(\beta)$.
\item[2)] For any $f\in \Omega^0_{{\mathcal A}}$ and $\alpha\in \Omega^p_{\mathcal A}$ we have $\nu_{\mathcal A}(f\alpha)=\nu_{\mathcal A}(f)+\nu_{\mathcal A}(\alpha)$.
\item[3)] For any $\alpha\in \Omega^p_{\mathcal A}$ and $\beta\in \Omega^q_{\mathcal A}$  we have $\nu_{\mathcal A}(\alpha\wedge\beta)\geq \nu_{\mathcal A}(\alpha)+\nu_{\mathcal A}(\beta)$.
\end{itemize}
Next, we give the definition of the main technical objects in this paper:
\begin{definition}
\label{def:gammatruncatedformalfoliatiedspace} Let ${\mathcal A}$ be a locally parameterized model. Consider $\omega\in \Omega^1_{\mathcal A}$ and a real number $\gamma\in {\mathbb R}$.
We say that $({\mathcal A},\omega)$ is a {\em $\gamma$-truncated formal foliated space} if and only if $\omega$ satisfies the $\gamma$-truncated integrability condition
\begin{equation*}
\nu_{\mathcal A}(\omega\wedge d\omega)\geq 2\gamma.
\end{equation*}
\end{definition}
A $\gamma$-truncated formal foliated space $({\mathcal A},\omega)$ is also $\gamma'$-truncated for any $\gamma'\leq \gamma$. If $\omega$ satisfies Frobenius integrability condition $\omega\wedge d\omega=0$, then $({\mathcal A},\omega)$ is a $\gamma$-truncated foliated space, for any $\gamma\in {\mathbb R}$; in this case, we say that $({\mathcal A},\omega)$ is a {\em formal foliated space}. In particular, we have a  formal foliated space $({\mathcal A},df)$ associated to a given formal function $f\in \Omega^0_{\mathcal A}$.

\subsection{Final truncated differential forms}
For any $\delta\in \mathcal{V}_{\mathcal A}$, let ${\mathcal G}_{\mathcal A}^\delta$ denote the quotient
$
{\mathcal G}_{\mathcal A}^\delta= \widehat{\mathcal I}^{\delta}_{\mathcal A}/ \widehat{\mathcal I}^{\delta+}_{\mathcal M}
$.
The graded algebra ${\mathcal G}_{\mathcal A}=\oplus_\delta {\mathcal G}^\delta_{\mathcal A}$,
associated to the explicit filtration of $\Omega^0_{\mathcal A}$, can be identified with a weighted polynomial algebra
\[
{\mathcal G}_{\mathcal A}\simeq \Omega^0_{{\mathcal A}} / (\boldsymbol{x}) [X_1,X_2,\ldots,X_r] \ ,
\]
where we attach the weight $\nu(x_i)$ to each variable $X_i$. Take $f\in \Omega^0_{\mathcal A}$. For any $\delta\leq \nu_{\mathcal A}(f)$, the {\em explicit $\delta$-initial form $\operatorname{In}^\delta_{\mathcal A}(f)$} is
\[
\operatorname{In}^\delta_{\mathcal A}(f)=f+ \widehat{\mathcal I}^{\delta+}_{\mathcal A} \in {\mathcal G}^\delta_{\mathcal A}.
\]
We have $\operatorname{In}^\delta_{\mathcal A}(f)\ne 0$ if and only if $\delta=\nu_{\mathcal A}(f)$.

Note that the $0$-degree ring ${\mathcal G}^0_{\mathcal A}=\Omega^0_{{\mathcal A}} / (\boldsymbol{x})$ of ${\mathcal G}_{\mathcal A}$ is a complete regular local ring with maximal ideal $\overline{\mathfrak{m}}_{{\mathcal A}} = \widehat{\mathfrak{m}}_{{\mathcal A}} / (\boldsymbol{x})$ and residual field naturally isomorphic to $k$. In particular, we have that
\[
{\mathcal G}_{\mathcal A}/\overline{\mathfrak{m}}_{{\mathcal A}} {\mathcal G}_{\mathcal A}\simeq k[X_1,X_2,\ldots,X_r].
\]
Let us introduce the definition of final formal functions.
\begin{definition}
\label{def:truncated_final_function}
Consider a real number $\gamma\in {\mathbb R}$ and a formal function $f\in \Omega^0_{\mathcal A}$. We say that $({\mathcal A},f)$ is {\em $\gamma$-final recessive} if $\delta>\gamma$, where $\delta=\nu_{\mathcal A}(f)$. We say that
$({\mathcal A},f)$ is {\em $\gamma$-final dominant} if $\delta\leq\gamma$ and we have that
\[
\operatorname{In}^\delta_{\mathcal A}(f)\notin \overline{\mathfrak m}_{\mathcal A}{\mathcal G}_{\mathcal A}.
\]
We say that $({\mathcal A},f)$ is {\em $\gamma$-final} if it is $\gamma$-final dominant or $\gamma$-final recessive.
\end{definition}
Let us introduce some notations to facilitate the generalization of the above Definition \ref{def:truncated_final_function} to higher order forms.  We denote by ${\mathcal C}_{\mathcal A}^0(f)$ the ${\mathcal G}^0_{\mathcal A}$-submodule of ${\mathcal G}^\delta_{\mathcal A}$ generated by $\operatorname{In}^\delta_{\mathcal A}(f)$, when $\delta=\nu_{\mathcal A}(f)$. We also denote by ${\mathfrak c}^0_{\mathcal A}(f)$ the image of ${\mathcal C}^0_{\mathcal A}(f)$ under the natural morphism
\begin{equation}
\label{eq:cornermorphism}
{\mathcal G}^\delta_{\mathcal A}\rightarrow {\mathcal G}^\delta_{\mathcal A}/ \overline{\mathfrak{m}}_{{\mathcal A}} {\mathcal G}^\delta_{\mathcal A}\subset
{\mathcal G}_{\mathcal A}/ \overline{\mathfrak{m}}_{{\mathcal A}} {\mathcal G}_{\mathcal A}=k[X_1,X_2,\ldots,X_r].
\end{equation}
Thus $({\mathcal A},f)$ is $\gamma$-final dominant if and only if $\delta\leq \gamma$ and ${\mathfrak c}^0_{\mathcal M}(f)\ne 0$.

\begin{remark} Consider a pair $({\mathcal A},f)$ where $0\ne f\in \Omega^0_{\mathcal A}=k[[\boldsymbol{x},\boldsymbol{y}]]$. Let $\delta$  denote the value $\nu_{\mathcal A}(f)$.
We can write $f$ as
\[
f= \boldsymbol{x}^{I_0}f_{I_0}+\sum_{\nu(\boldsymbol{x}^I)>\delta } \boldsymbol{x}^If_I,
\quad  \nu(\boldsymbol{x}^{I_0})=\delta,
\]
where $f_{I_0}, f_I \in k[[\boldsymbol{y}]]$.
The pair $({\mathcal A},f)$ is $\gamma$-final dominant if and only if $\delta\leq\gamma$ and $f_{I_0}$
is a unit in $k[[\boldsymbol{y}]]$.
\end{remark}

Consider an element $\omega \in \Omega^1_{\mathcal A}$ and write it as
\begin{equation*}
  \omega = \sum_{i=1}^rf_i\frac{dx_i}{x_i}+\sum_{j=1}^{m-r}g_jdy_j.
\end{equation*}
We know that
$
  \nu_{\mathcal A}(\omega)=\nu_{\mathcal A}(\{f_1,f_2,\ldots,f_r,g_1,g_2,\ldots,g_{m-r}\})
$.
If $\delta=\nu_{\mathcal A}(\omega)$, we denote by
  ${\mathcal C}^1_{\mathcal A}(\omega)$ the ${\mathcal G}^0_{\mathcal A}$-submodule of ${\mathcal G}^\delta_{\mathcal A}$ generated by the initial forms
  \[
  \operatorname{In}_{\mathcal A}^\delta(f_i)\ ,\quad i=1,2,\ldots,r.
  \]
   Let ${\mathfrak c}^1_{\mathcal A}(\omega)$ be the image of ${\mathcal C}^1_{\mathcal A}(\omega)$ under the natural morphism in Equation \eqref{eq:cornermorphism}.
We
extend Definition \ref{def:truncated_final_function} to formal differential $1$-forms
$\omega\in\Omega^1_{\mathcal A}$ as follows:
  \begin{definition}
   Fix $\gamma \in \Gamma$. Take $\omega\in\Omega^1_{\mathcal A}$ with $\nu_{\mathcal A}(\omega)=\delta$. We say that $({\mathcal A},\omega)$ is
  \begin{itemize}
    \item {\em $\gamma$-final dominant}, if $\delta\leq \gamma$ and
    ${\mathfrak c}_{\mathcal A}^1(\omega)\ne 0$.
    \item {\em $\gamma$-final recessive}, if $\delta>\gamma$.
  \end{itemize}
   We say that $({\mathcal A},\omega)$ is {\em $\gamma$-final} if it is $\gamma$-final dominant or $\gamma$-final recessive.
  \end{definition}

\subsection{Explicit values under Differentiation}  We compare here the explicit value of $d\alpha\in \Omega^{p+1}_{\mathcal A}$ with the explicit value of $\alpha\in \Omega^{p}_{\mathcal A}$.

\begin{lemma}
\label{lema:explicitvalueofaderivative}
Let $\partial:\Omega^0_{\mathcal A}\rightarrow \Omega^0_{\mathcal A}$ be a $k$-derivation of the form
\[
\partial=\sum_{i=1}^ra_ix_i\frac{\partial}{\partial x_i}+\sum_{j=1}^{m-r}b_j\frac{\partial}{\partial y_j}, \quad a_i, b_j\in\Omega^0_{\mathcal A}.
\]
For any $f\in \Omega^0_{\mathcal A}$, we have
$\nu_{\mathcal A}(\partial f)\geq \nu_{\mathcal A}(f)$.
\end{lemma}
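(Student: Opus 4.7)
The plan is to reduce the statement to a direct verification that the logarithmic derivation $\partial$ preserves each ideal of the explicit filtration, i.e.\ $\partial(\widehat{\mathcal I}^\delta_{\mathcal A})\subseteq \widehat{\mathcal I}^\delta_{\mathcal A}$ for every $\delta\in{\mathcal V}_{\mathcal A}$. Once this is known, taking $\delta=\nu_{\mathcal A}(f)$ immediately gives $\nu_{\mathcal A}(\partial f)\geq \nu_{\mathcal A}(f)$ from the definition of explicit value as the largest $\delta$ with $f\in \widehat{\mathcal I}^\delta_{\mathcal A}$.

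The key algebraic observation is that the independent part of $\partial$ is written in \emph{logarithmic} form, $a_i x_i\partial/\partial x_i$, rather than in the plain partial form $a_i\partial/\partial x_i$. Acting on a monomial $\boldsymbol{x}^I=x_1^{i_1}\cdots x_r^{i_r}$ one computes
\[
\partial(\boldsymbol{x}^I)=\Bigl(\sum_{i=1}^r i_i\,a_i\Bigr)\boldsymbol{x}^I,
\]
so that $\partial$ sends the rank-one module $\boldsymbol{x}^I\,\Omega^0_{\mathcal A}$ into itself. This is the crucial point, and would fail without the logarithmic normalization, since a plain $\partial/\partial x_i$ would lower the exponent and potentially decrease $\nu(\boldsymbol{x}^I)$.

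Concretely I would write $f=\sum_I \boldsymbol{x}^I f_I(\boldsymbol{y})$ with $f_I\in k[[\boldsymbol{y}]]$, the sum running over multi-indices with $\nu(\boldsymbol{x}^I)\geq\nu_{\mathcal A}(f)$. Applying Leibniz to each summand gives
\[
\partial(\boldsymbol{x}^I f_I)=\Bigl(\sum_{i=1}^r i_i a_i\Bigr)\boldsymbol{x}^I f_I+\boldsymbol{x}^I\sum_{j=1}^{m-r}b_j\frac{\partial f_I}{\partial y_j},
\]
which lies in $\boldsymbol{x}^I\,\Omega^0_{\mathcal A}\subseteq \widehat{\mathcal I}^{\nu(\boldsymbol{x}^I)}_{\mathcal A}\subseteq \widehat{\mathcal I}^{\nu_{\mathcal A}(f)}_{\mathcal A}$. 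Summing over $I$ yields the desired inclusion.

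The only real technical point, and the one I would be most careful with, is the passage from the termwise estimate to the full formal series: one must know that $\partial$ is continuous for the $\widehat{\mathfrak m}_{\mathcal A}$-adic topology, so that the identity $\partial f=\sum_I\partial(\boldsymbol{x}^I f_I)$ holds in $\Omega^0_{\mathcal A}$. This follows because $\partial$ is an $\Omega^0_{\mathcal A}$-linear combination of the standard partial derivations $x_i\partial/\partial x_i$ and $\partial/\partial y_j$, each of which respects the $\widehat{\mathfrak m}_{\mathcal A}$-adic filtration and hence is continuous on $\widehat{\mathcal O}_{\mathcal A}$. With continuity in hand, the termwise inclusion above propagates to the full sum and closes the argument.
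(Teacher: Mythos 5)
Your proposal is correct and follows essentially the same route as the paper: expand $f=\sum_I\boldsymbol{x}^If_I$, apply $\partial$ termwise, and observe that the logarithmic form $a_ix_i\partial/\partial x_i$ keeps each summand inside $\boldsymbol{x}^I\Omega^0_{\mathcal A}$, hence inside $\widehat{\mathcal I}^{\nu_{\mathcal A}(f)}_{\mathcal A}$. The only difference is that you explicitly flag the $\widehat{\mathfrak m}_{\mathcal A}$-adic continuity needed to pass from the termwise estimate to the full series, a point the paper leaves tacit.
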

\begin{proof}Write $\delta=\nu_{\mathcal A}(f)$. Then
$f=\sum_{\nu(\boldsymbol{x}^I) \geq \delta}\boldsymbol{x}^If_I$ and
$\partial f=\sum_{\nu(\boldsymbol{x}^I) \geq \delta}\boldsymbol{x}^Ig_I$, where
$
g_I=\partial f_I+ f_I\sum_{j=1}^ri_ja_j$ and $I=(i_1,i_2,\ldots,i_r)
$.
Thus, we have that $\nu_{\mathcal A}(\partial f)\geq \delta$.
\end{proof}

\begin{proposition}
\label{prop:explicitvalueofthedifferential}
Consider $f\in \Omega^0_{\mathcal A}$ and $df\in \Omega^1_{\mathcal A}$.
 We have
$\nu_{\mathcal A}(df)\geq \nu_{\mathcal A}(f)$.
In the case that $f\in {\mathfrak{m}}_{{\mathcal A}}{\Omega^0_{{\mathcal A}}}$, then $\nu_{\mathcal A}(df)=\nu_{\mathcal A}(f)$. More generally, for any $\alpha\in \Omega^p_{{\mathcal A}}$ we have that $\nu_{\mathcal A}(d\alpha)\geq \nu_{\mathcal A}(\alpha)$.
\end{proposition}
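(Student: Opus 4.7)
My plan is to expand $df$ in the logarithmic basis and then invoke Lemma~\ref{lema:explicitvalueofaderivative} coefficient by coefficient. Writing
\[
df=\sum_{i=1}^r \Bigl(x_i\frac{\partial f}{\partial x_i}\Bigr)\frac{dx_i}{x_i}+\sum_{j=1}^{m-r}\frac{\partial f}{\partial y_j}\,dy_j,
\]
each coefficient is $\partial(f)$ for a derivation of the shape required by Lemma~\ref{lema:explicitvalueofaderivative} (take $a_i=1$ or $b_j=1$, all other coefficients zero). The lemma together with the description of $\nu_{\mathcal A}(df)$ as the minimum of the explicit values of its logarithmic coefficients gives $\nu_{\mathcal A}(df)\geq\nu_{\mathcal A}(f)$ immediately.

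For the equality when $f\in\mathfrak{m}_{\mathcal A}\Omega^0_{\mathcal A}$, I would exploit the fact that $\nu(x_1),\dots,\nu(x_r)$ form a $\mathbb Q$-basis of $\Gamma\otimes_{\mathbb Z}\mathbb Q$, so $I\mapsto\nu(\boldsymbol x^I)$ is injective on $\mathbb Z^r_{\geq 0}$. Hence there is a \emph{unique} multi-index $I_0$ with $\nu(\boldsymbol x^{I_0})=\delta:=\nu_{\mathcal A}(f)$ and $f_{I_0}\neq 0$, and one can write $f=\boldsymbol x^{I_0}f_{I_0}+\sum_{\nu(\boldsymbol x^I)>\delta}\boldsymbol x^I f_I$ with $f_{I_0}\in k[[\boldsymbol y]]$. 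If $I_0\ne 0$, pick any $k$ with $i_k^0>0$; then $x_k\partial f/\partial x_k=i_k^0\,\boldsymbol x^{I_0}f_{I_0}$ modulo terms of strictly higher explicit value, and since $\operatorname{char}k=0$ this coefficient has explicit value exactly $\delta$. If $I_0=0$, then $\delta=0$ and $f_0\in k[[\boldsymbol y]]\setminus\{0\}$; the hypothesis $f\in\mathfrak{m}_{\mathcal A}\Omega^0_{\mathcal A}$ forces $f_0(0)=0$, so $f_0$ is a nonzero element of the maximal ideal of $k[[\boldsymbol y]]$. In characteristic zero, some partial $\partial f_0/\partial y_j$ is then nonzero and of explicit value $0$, so the corresponding coefficient $\partial f/\partial y_j$ of $df$ has explicit value $0=\delta$. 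Either way $\nu_{\mathcal A}(df)\leq\delta$, combining with the inequality already proved to give equality.

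For a general $p$-form $\alpha\in\Omega^p_{\mathcal A}$, I would expand $\alpha=\sum_\sigma a_\sigma\,\omega_\sigma$ in the standard logarithmic basis $\{\omega_\sigma\}$ consisting of wedge products of the $dx_i/x_i$ and $dy_j$. Since $d(dx_i/x_i)=0$ and $d(dy_j)=0$, every basis element is closed, so $d\alpha=\sum_\sigma da_\sigma\wedge\omega_\sigma$. The $1$-form case just proved gives $\nu_{\mathcal A}(da_\sigma)\geq\nu_{\mathcal A}(a_\sigma)$, and the subadditivity of $\nu_{\mathcal A}$ under the wedge product (valuative property~(3)) then yields $\nu_{\mathcal A}(d\alpha)\geq\min_\sigma\nu_{\mathcal A}(a_\sigma)=\nu_{\mathcal A}(\alpha)$.

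The delicate point I anticipate is the equality assertion, and in particular the case $I_0=0$: one must rule out any hidden cancellation in the $dy$-coefficients coming from the infinitely many tail terms $\boldsymbol x^I f_I$. The $\mathbb Q$-independence of the $\nu(x_i)$ makes the leading contribution uniquely detected on the level $\delta$, and the hypothesis $f\in\mathfrak{m}_{\mathcal A}\Omega^0_{\mathcal A}$ is precisely what rules out the trivial obstruction of a nonzero constant term, which would have $\nu_{\mathcal A}(f)=0$ yet $\nu_{\mathcal A}(df)>0$.
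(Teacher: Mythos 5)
Your proof is correct and follows essentially the same route as the paper: reduce the inequality to Lemma~\ref{lema:explicitvalueofaderivative} applied coefficientwise, then obtain equality by isolating the leading term at level $\delta$ and checking directly that its differential has explicit value $\delta$, splitting into the cases $I_0\ne 0$ (a $dx_k/x_k$ coefficient survives) and $I_0=0$ (a $dy_j$ coefficient survives, using $\operatorname{char}k=0$). Your explicit appeal to the $\mathbb Q$-linear independence of $\nu(x_1),\ldots,\nu(x_r)$ to pin down the unique $I_0$, and the closed-basis expansion for the $p$-form case, make precise steps the paper leaves as immediate, but the underlying argument is the same.
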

\begin{proof} The first statement is a consequence of Lemma \ref{lema:explicitvalueofaderivative}, since
\[
df=\sum_{i=1}^r x_i\frac{\partial f}{\partial x_i}\frac{dx_i}{x_i}+\sum_{j=1}^{m-r}\frac{\partial f}{\partial y_j} dy_j \ .
\]
Write $\delta=\nu_{\mathcal A}(f)$ and $f=\sum_{\nu(\boldsymbol{x}^I) \geq \delta}\boldsymbol{x}^If_I$. Assume that $0\ne f\in {\mathfrak{m}}_{{\mathcal A}}{\Omega^0_{{\mathcal A}}}$. If $\delta=0$, we can write $f=f_0+\tilde f$, where $\nu_{\mathcal A}(\tilde f)>0$ and $0\ne f_0\in {\mathfrak{m}}_{{\mathcal A}}{\Omega^0_{{\mathcal A}}}$ has a series expansion
\[
f_0=\sum_{\vert J\vert>0}\lambda_J\boldsymbol{y}^{J},\quad  \lambda_J\in k.
\]
Then $df_0\ne 0$ and $\nu_{\mathcal A}(df_0)=0$. Since $df=df_0+d\tilde f$ and $\nu_{\mathcal A}(d\tilde f)>0$, we conclude that $\nu_{\mathcal A}(df)=0$.
If $\delta>0$, there is $I_0\ne 0$ such that $\nu(\boldsymbol{x}^{ I_0})=\delta$ and $\nu_{\mathcal A}(f_{I_0})=0$. We write
\[
f_{I_0}=g_0+\tilde g_0,\quad \nu_{\mathcal A}(\tilde g_0)>0,\quad 0\ne g_0\in k[[\boldsymbol{y}]].
\]
By the previous arguments, it is enough to see that $\nu_{\mathcal A}(d(x^{I_0}g_0))=\delta$. Let us write $I_0=(i^0_1,i^0_2,\ldots,i^0_r)\ne 0$. We have
\[
d(x^{I_0}g_0)=x^{I_0} \left(
g_0 \sum_{s=1}^r i^0_s\frac{dx_s}{x_s}+dg_0
\right)= x^{I_0}\left(
g_0\sum_{s=1}^r i^0_s\frac{dx_s}{x_s}+\sum_{j=1}^{m-r}\frac{\partial g_0}{\partial y_j}dy_j
\right).
\]
Then $\nu_{\mathcal A}(d(x^{I_0}g_0))=\delta$.
The last assumption is direct from the formulas for $d\alpha$.
\end{proof}

\begin{corollary}
\label{cor:gammafinalfdf}
For any $f\in {\mathfrak{m}}_{{\mathcal A}}{\Omega^0_{{\mathcal A}}}$, with $f\ne 0$,
 we have that ${\mathfrak c}^0_{\mathcal A}(f)\ne 0$ if and only if
 ${\mathfrak c}^1_{\mathcal A}(df)\ne 0$. Thus $({\mathcal A}, f)$ is $\gamma$-final if and only if $({\mathcal A}, df)$ is $\gamma$-final.
\end{corollary}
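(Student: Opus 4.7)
The plan is to unwind the definitions of ${\mathfrak c}^0_{\mathcal A}$ and ${\mathfrak c}^1_{\mathcal A}$ via a series expansion $f = \sum_I \boldsymbol{x}^I f_I(\boldsymbol{y})$ with $f_I \in k[[\boldsymbol{y}]]$, and verify that both nonvanishing conditions reduce to the same requirement: that some index $I$ with $\nu(\boldsymbol{x}^I) = \delta := \nu_{\mathcal A}(f)$ has $f_I(0) \neq 0$. Proposition \ref{prop:explicitvalueofthedifferential} already gives $\nu_{\mathcal A}(df) = \delta$ under the hypothesis $f \in \mathfrak m_{\mathcal A}\Omega^0_{\mathcal A}$, so both corner ideals live in the same graded piece ${\mathcal G}^\delta_{\mathcal A}$, and the last sentence of the corollary is immediate from the first.

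First I would write
\[
df = \sum_{i=1}^r a_i\frac{dx_i}{x_i} + \sum_{j=1}^{m-r}\frac{\partial f}{\partial y_j}dy_j, \qquad a_i = x_i\frac{\partial f}{\partial x_i} = \sum_I i_i\, \boldsymbol{x}^I f_I,
\]
and then compute initial forms followed by reduction modulo $\overline{\mathfrak m}_{\mathcal A}{\mathcal G}_{\mathcal A}$, which sends each $f_I$ to $f_I(0) \in k$. This yields the concrete descriptions: ${\mathfrak c}^0_{\mathcal A}(f) \ne 0$ iff some $I$ with $\nu(\boldsymbol{x}^I) = \delta$ satisfies $f_I(0) \ne 0$, while ${\mathfrak c}^1_{\mathcal A}(df) \ne 0$ iff the same holds for some such $I$ with, in addition, at least one component $i_i > 0$.

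The crucial observation is that whenever $\delta > 0$ any index $I$ with $\nu(\boldsymbol{x}^I) = \delta$ is automatically nonzero, because each $\nu(x_i) > 0$ (as $x_i \in \mathfrak m_{\mathcal A} \subset \mathfrak m_R$); thus the two conditions coincide. The case $\delta = 0$ is where the hypothesis $f \in \mathfrak m_{\mathcal A}\Omega^0_{\mathcal A}$ bites: the only contributing index is $I = 0$, for which $f_0(0) = f(0) = 0$, so ${\mathfrak c}^0_{\mathcal A}(f) = 0$; simultaneously, every term of each $a_i$ with $i_i > 0$ already has positive explicit value, forcing $\operatorname{In}^0_{\mathcal A}(a_i) = 0$ and ${\mathfrak c}^1_{\mathcal A}(df) = 0$ as well. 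This $\delta = 0$ bookkeeping is the only real obstacle; without the $\mathfrak m_{\mathcal A}$ hypothesis one could take $f \in k^*$ nonzero with $df = 0$ and the equivalence would fail.
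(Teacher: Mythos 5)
Your proposal is correct and follows essentially the same route as the paper's proof: expand $f$ in the $\boldsymbol{x}$-variables, compute $df$ in the logarithmic basis, and compare the two corner ideals by reducing initial forms modulo $\overline{\mathfrak m}_{\mathcal A}$. The one substantive point where you are more careful than the paper is the case $\delta = \nu_{\mathcal A}(f) = 0$. The paper's proof writes $f = \boldsymbol{x}^I f_I + \tilde f$ with $\nu(\boldsymbol{x}^I)=\delta$ and then asserts ``$d(\boldsymbol{x}^I) = \boldsymbol{x}^I\sum i_j\,dx_j/x_j$ with $I\ne 0$''; that step silently assumes $\delta>0$, since $I=0$ is exactly what happens for $f\in\mathfrak m_{\mathcal A}\Omega^0_{\mathcal A}$ with $\delta=0$ (e.g.\ $f=y_1$). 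In that case both corner ideals vanish, so the equivalence still holds, but the paper leaves this unsaid; you close the gap explicitly, noting $f_0(0)=f(0)=0$ forces ${\mathfrak c}^0_{\mathcal A}(f)=0$ and all $dx_i/x_i$ coefficients of $df$ have positive explicit value, forcing ${\mathfrak c}^1_{\mathcal A}(df)=0$. Your observation that $I\ne 0$ is automatic when $\delta>0$ (since $\nu(x_i)>0$) is exactly the justification the paper omits, and the $f\in k^*$ counterexample correctly illustrates why the $\mathfrak m_{\mathcal A}$ hypothesis is needed. One small notational remark: the index $I$ with $\nu(\boldsymbol{x}^I)=\delta$ is in fact unique, because the $\nu(x_i)$ form a $\mathbb Q$-basis of $\Gamma\otimes_{\mathbb Z}\mathbb Q$; so ``some $I$'' in your statement can be sharpened to ``the $I$''.
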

\begin{proof} Put $\delta=\nu_{\mathcal A}(f)=\nu_{\mathcal A}(df)$. We can write
\[
f=\boldsymbol{x}^{I}f_I+\tilde f;\quad \nu(\boldsymbol{x}^{I})=\delta,\quad \nu_{\mathcal A}(\tilde f)>\delta,\quad  f_I\in {\mathbb C}[[\boldsymbol{y}]].
\]
Then ${\mathfrak c}^0_{\mathcal A}(f)\ne 0$ is equivalent to saying that $\lambda\ne 0$, where $f_I=\lambda+\sum_{\vert J\vert>0}f_{I,J}\boldsymbol{y}^J$.

On the other hand, we have
$
df=f_Id(\boldsymbol{x}^I)+x^{I}df_I+d\tilde f
$. Since $\nu_{\mathcal A}(d\tilde f)>\delta$ and $d(\boldsymbol{x}^{I})= \boldsymbol{x}^{I}\sum_{j=1}^ri_jdx_j/x_j$ with $I\ne 0$, we have that ${\mathfrak c}^1_{\mathcal M}(df)\ne 0$ is also equivalent to $\lambda\ne 0$.
\end{proof}

\section{Truncated Cohomological Statements}\label{Truncated Cohomological Statements}

Next result is a truncated version of De Rham-Saito division \cite{Sai}.
\begin{proposition}
	\label{prop:trucateddivision}
	Let $\alpha\in\Omega^1_{\mathcal A}$ be a $0$-final dominant $ 1 $-form. For any $\beta\in\Omega^1_{\mathcal A}$ with $\nu_{\mathcal A}(\alpha\wedge\beta)\geq\rho$, there are $H\in \Omega^0_{\mathcal A}$ and $\tilde\beta\in \Omega^1_{\mathcal A}$, with $\nu_{\mathcal A}(\tilde\beta)\geq\rho$, such that $\beta=H\alpha+\tilde\beta$.
\end{proposition}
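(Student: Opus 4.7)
The plan is to unwind the definition of ``$0$-final dominant'' for $\alpha$ and then use a direct substitution to cancel the appropriate coefficient of $\beta$. Writing $\alpha=\sum_{i=1}^r f_i\frac{dx_i}{x_i}+\sum_{j=1}^{m-r}g_j\,dy_j$, the hypothesis $\nu_{\mathcal A}(\alpha)\leq 0$ forces $\nu_{\mathcal A}(\alpha)=0$ (since explicit values of elements of $\Omega^1_{\mathcal A}$ are nonnegative), and the condition ${\mathfrak c}^1_{\mathcal A}(\alpha)\ne 0$ says that some initial form $\operatorname{In}^0_{\mathcal A}(f_i)$ is nonzero modulo $\overline{\mathfrak m}_{\mathcal A}$. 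This is equivalent to the existence of an index $i$ for which $f_i$ is a unit in $\Omega^0_{\mathcal A}$. After reindexing, I may assume $f_1$ is a unit.

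Writing $\beta=\sum_{i=1}^r p_i\frac{dx_i}{x_i}+\sum_{j=1}^{m-r}q_j\,dy_j$, the candidate is
\[
H=\frac{p_1}{f_1}\in\Omega^0_{\mathcal A},\qquad \tilde\beta=\beta-H\alpha.
\]
By construction $\tilde\beta$ has no $\frac{dx_1}{x_1}$ component. Since $\alpha\wedge\alpha=0$, one has $\alpha\wedge\tilde\beta=\alpha\wedge\beta$, hence $\nu_{\mathcal A}(\alpha\wedge\tilde\beta)\geq \rho$.

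It remains to show $\nu_{\mathcal A}(\tilde\beta)\geq \rho$. Split $\alpha=f_1\frac{dx_1}{x_1}+\alpha'$ with $\alpha'$ free of $\frac{dx_1}{x_1}$, and write $\tilde\beta=\sum_{i\geq 2}\tilde p_i\frac{dx_i}{x_i}+\sum_j \tilde q_j\,dy_j$. Then in the standard logarithmic basis of $\Omega^2_{\mathcal A}$, the coefficients of $\alpha\wedge\tilde\beta$ involving $\frac{dx_1}{x_1}$ are exactly $f_1\tilde p_i$ (for $i\geq 2$) and $f_1\tilde q_j$, because $\alpha'\wedge\tilde\beta$ contains no $\frac{dx_1}{x_1}$ factor. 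The standard valuative property $\nu_{\mathcal A}(\omega)=\nu_{\mathcal A}(\{\text{coefficients}\})$ in a free module basis forces $\nu_{\mathcal A}(f_1\tilde p_i)\geq\rho$ and $\nu_{\mathcal A}(f_1\tilde q_j)\geq\rho$, and because $f_1$ is a unit this gives $\nu_{\mathcal A}(\tilde p_i)\geq\rho$ and $\nu_{\mathcal A}(\tilde q_j)\geq\rho$, hence $\nu_{\mathcal A}(\tilde\beta)\geq\rho$.

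There is no serious obstacle here: the only subtle point is recognizing that ``$0$-final dominant'' packages precisely the existence of a unit coefficient in the $\frac{dx_i}{x_i}$-part of $\alpha$, which is what makes the division by $f_1$ legitimate and, crucially, cost-free in explicit value. The argument is effectively the classical one-step De Rham--Saito division, restricted to the first column of the logarithmic basis; the truncation $\rho$ propagates automatically because $f_1$ is a unit.
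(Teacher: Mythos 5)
Your proof is correct and follows essentially the same route as the paper's: identify a unit coefficient $f_1$ in the logarithmic part of $\alpha$, set $H$ to be the quotient of the corresponding coefficients, and read off the bound $\nu_{\mathcal A}(\tilde\beta)\geq\rho$ from the coefficients of the wedge product involving $\frac{dx_1}{x_1}$. The only cosmetic difference is that you form $\tilde\beta=\beta-H\alpha$ first and then examine $\alpha\wedge\tilde\beta=\alpha\wedge\beta$, whereas the paper reads the conditions $\nu_{\mathcal A}(a_1f_i-a_if_1)\geq\rho$ directly from $\alpha\wedge\beta$; these are the same computation.
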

\begin{proof} Let us write
	\[
	\alpha = \sum_{i=1}^r a_i\frac{dx_i}{x_i}+\sum_{j=1}^{m-r}b_jdy_j \quad \text{and} \quad \beta = \sum_{i=1}^r f_i\frac{dx_i}{x_i}+\sum_{j=1}^{m-r}g_jdy_j \ .
	\]
	Since $ \alpha $ is $ 0 $-final dominant,  there is at least an unit between the coefficients $a_1, a_2,\dots,a_r $. Without lost of generality, we can assume that $ a_1 $ is a unit. Since  $\nu_{\mathcal A}(\alpha\wedge\beta)\geq\rho$ we have that
	\begin{align*}
		\nu_{\mathcal A}(a_1f_i - a_if_1)\geq\rho & \quad \text{for all} \quad i=2,3,\dots,r \ , \\
		\nu_{\mathcal A}(a_1g_j - b_jf_1)\geq\rho & \quad \text{for all} \quad j=1,2,\dots,m-r \ .
	\end{align*}
	The condition $ \nu_{\mathcal A}(a_1f_i - a_if_1)\geq\rho $ is equivalent to $ \nu_{\mathcal A}(f_i - H a_i)\geq\rho $ where \[ H=f_1/a_1.\] So, there are functions $ \tilde{f}_i $ with $ \nu_{\mathcal A}(\tilde{f}_i)\geq \rho $ such that $ f_i = H a_i + \tilde{f}_i $ for $ i=2,3,\dots,r $. In the same way, there are functions $ \tilde{g}_j $ with $ \nu_{\mathcal A}(\tilde{g}_j)\geq \rho $, such that $ g_j = H b_j + \tilde{g}_j $, for $ j=1,2,\dots,n-r $. The $ 1 $-form
	\[
	\tilde{\beta} = \sum_{i=2}^r \tilde{f}_i\frac{dx_i}{x_i}+\sum_{j=1}^{m-r}\tilde{g}_j dy_j
	\]
	satisfies that $ \nu_{\mathcal A}(\tilde{\beta}) \geq \rho $ and
	$ \beta = H \alpha + \tilde{\beta} $.
\end{proof}

Given a $p$-form $\xi\in \Omega^p_{\mathcal A}$ and a vector
$
\boldsymbol{\mu}=(\mu_1,\mu_2,\ldots,\mu_r)\in {\mathbb C}^r
$, we define the {\em $\boldsymbol{\mu}$-exterior derivative $d_{\boldsymbol{\mu}}\xi\in \Omega^{p+1}_{\mathcal A}$} by the formula
\[
d_{\boldsymbol{\mu}}\xi=
\frac{d{\boldsymbol{x}}^{\boldsymbol{\mu}}}{{\boldsymbol{x}}^{\boldsymbol{\mu}}}\wedge \xi+d(\xi),\quad  (\text{ where }
\frac{d{\boldsymbol{x}}^{\boldsymbol{\mu}}}{{\boldsymbol{x}}^{\boldsymbol{\mu}}}
=\sum_{i=1}^r\mu_i\frac{dx_i}{x_i}
).
\]
In the case $\boldsymbol{\mu}=\boldsymbol{0}$ we recover the usual exterior derivative $d_{\boldsymbol{0}}=d$.
\begin{remark} The definition of $d_{\boldsymbol{\mu}}$ comes from the formal relation
	$
	d({\boldsymbol{x}}^{\boldsymbol{\mu}}\xi)= {\boldsymbol{x}}^{\boldsymbol{\mu}}
	d_{\boldsymbol{\mu}}(\xi)
	$.
	Note that ${\boldsymbol{x}}^{-\boldsymbol{\mu}}$ is an integrant factor of $\xi$ if and only if $d_{\boldsymbol{\mu}}(\xi)=0$, see \cite{Cer-M}.
\end{remark}

Let us consider a ``value of truncation'' $\rho\in{\mathbb R}\cup \{+\infty\}$, where the case $\rho=+\infty$ means ``no truncation''. In Proposition \ref{prop:logpoincare2} below
we state and prove a truncated $\boldsymbol{\mu}$-multivaluated and logarithmic generalization of formal Poincaré's Lemma.

Consider a $1$-form $\eta\in \Omega^1_{\mathcal A}$ and write it as $ \eta=\sum_{I}\boldsymbol{x}^I\eta_I $, where
\begin{equation*}
\eta_I=\sum_{i=1}^ra_I\frac{dx_i}{x_i}+
\sum_{j=1}^{n-r}b_Idy_j;\quad a_I,b_I\in k[[\boldsymbol{y}]].
\end{equation*}
We have a splitting $\eta=\eta'+\eta''$ with
\begin{equation*}
\eta'=\sum_{\nu(\boldsymbol{x}^I)<\rho}\boldsymbol{x}^I\eta_I;\quad
\eta''=\sum_{\nu(\boldsymbol{x}^I)\geq \rho}\boldsymbol{x}^I\eta_I.
\end{equation*}
In the case $\rho=\infty$, we put $\eta'=\eta$ and $\eta''=0$.
By Proposition \ref{prop:explicitvalueofthedifferential},
we know that $\nu_{\mathcal A}(\eta'')\geq \rho$; we also have that
$\nu_{\mathcal A}(d\eta'')\geq \rho$.

\begin{proposition}
	\label{prop:logpoincare2}
	Assume that
	$\nu_{\mathcal A}(d_{\boldsymbol{\mu}}\eta)\geq \rho$.
	There are $f\in {\Omega}^0_{{\mathcal A}}$, residues $\boldsymbol{\lambda}\in k^r$, and $\tilde \eta\in {\Omega}^1_{\mathcal A}$ such that
	\begin{equation*}
	\eta=d_{\boldsymbol{\mu}}f+
	\boldsymbol{x}^{-\boldsymbol{\mu}} \frac{d \boldsymbol{x}^{\boldsymbol{\lambda}}}{\boldsymbol{x}^{\boldsymbol{\lambda}}}+\tilde \eta,\quad \nu_{\mathcal A}(\tilde \eta)\geq \rho,
	\end{equation*}
	where $\boldsymbol{\lambda}=\boldsymbol{0}$  if $-\boldsymbol{\mu}\notin {\mathbb Z}_{\geq 0}^r$.
\end{proposition}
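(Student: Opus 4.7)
The plan is to expand $\eta$ levelwise in the $\boldsymbol{x}$-monomials, turn the truncated closedness hypothesis into a family of decoupled equations on the $k[[\boldsymbol{y}]]$-coefficients of each level $\eta_I$, solve those equations by reading off a potential in the generic case or by the classical formal Poincar\'e Lemma in $\boldsymbol{y}$ in the resonant case, and then reassemble the pieces.

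First I write $\eta = \sum_I \boldsymbol{x}^I \eta_I$ with $\eta_I = \sum_i a_{i,I} \frac{dx_i}{x_i} + \sum_j b_{j,I}\, dy_j$ and $a_{i,I}, b_{j,I} \in k[[\boldsymbol{y}]]$. Using $d(\boldsymbol{x}^I) = \boldsymbol{x}^I \sum_k i_k \frac{dx_k}{x_k}$, a direct computation gives the level decomposition
$$d_{\boldsymbol{\mu}}\eta = \sum_I \boldsymbol{x}^I d_{I+\boldsymbol{\mu}}\eta_I,$$
where each $d_{I+\boldsymbol{\mu}}\eta_I$ is a $2$-form with coefficients in $k[[\boldsymbol{y}]]$ (no $\frac{dx_k}{x_k}\wedge \frac{dx_\ell}{x_\ell}$-terms can appear in $d\eta_I$, since $\eta_I$ has $\boldsymbol{y}$-only coefficients). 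Because distinct monomials $\boldsymbol{x}^I$ decouple inside $\Omega^2_{\mathcal A}$, the hypothesis $\nu_{\mathcal A}(d_{\boldsymbol{\mu}}\eta) \geq \rho$ forces $d_{I+\boldsymbol{\mu}}\eta_I = 0$ for every $I$ with $\nu(\boldsymbol{x}^I) < \rho$; the tail $\eta'' := \sum_{\nu(\boldsymbol{x}^I) \geq \rho} \boldsymbol{x}^I \eta_I$ then satisfies $\nu_{\mathcal A}(\eta'') \geq \rho$ and will become $\tilde\eta$.

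Fix such an $I$ and set $\boldsymbol{\nu} := I + \boldsymbol{\mu}$. Expanding $d_{\boldsymbol{\nu}}\eta_I = 0$ in a basis of $2$-forms yields the three decoupled systems
$$\nu_k a_{i,I} - \nu_i a_{k,I} = 0,\qquad \nu_k b_{j,I} - \frac{\partial a_{k,I}}{\partial y_j} = 0,\qquad d\Bigl(\sum_j b_{j,I}\, dy_j\Bigr) = 0.$$
If $\boldsymbol{\nu} \neq \boldsymbol{0}$, choosing any $k_0$ with $\nu_{k_0} \neq 0$ and setting $g_I := a_{k_0,I}/\nu_{k_0} \in k[[\boldsymbol{y}]]$ gives $a_{i,I} = \nu_i g_I$ and $b_{j,I} = \partial g_I/\partial y_j$, whence $\eta_I = d_{\boldsymbol{\nu}} g_I$ and $\boldsymbol{x}^I \eta_I = d_{\boldsymbol{\mu}}(\boldsymbol{x}^I g_I)$. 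If instead $\boldsymbol{\nu} = \boldsymbol{0}$ --- which can only happen for the unique $I^* = -\boldsymbol{\mu} \in \mathbb{Z}_{\geq 0}^r$ --- the $a_{k,I^*}$ are forced to be constants $\lambda_k\in k$, and the formal Poincar\'e Lemma in $\boldsymbol{y}$ yields $\phi \in k[[\boldsymbol{y}]]$ with $\sum_j b_{j,I^*}\, dy_j = d\phi$; a direct calculation using $-\boldsymbol{\mu} = I^*$ shows $\boldsymbol{x}^{I^*} d\phi = d_{\boldsymbol{\mu}}(\boldsymbol{x}^{I^*}\phi)$, so
$$\boldsymbol{x}^{I^*}\eta_{I^*} = \boldsymbol{x}^{-\boldsymbol{\mu}}\frac{d\boldsymbol{x}^{\boldsymbol{\lambda}}}{\boldsymbol{x}^{\boldsymbol{\lambda}}} + d_{\boldsymbol{\mu}}(\boldsymbol{x}^{I^*}\phi),\qquad \boldsymbol{\lambda} := (\lambda_1,\ldots,\lambda_r).$$

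Finally I set $f := \sum_{\nu(\boldsymbol{x}^I)<\rho,\, I\neq I^*} \boldsymbol{x}^I g_I + \boldsymbol{x}^{I^*}\phi \in \Omega^0_{\mathcal A}$, dropping the last summand and taking $\boldsymbol{\lambda}=\boldsymbol{0}$ whenever $-\boldsymbol{\mu}\notin \mathbb{Z}_{\geq 0}^r$ (or $\nu(\boldsymbol{x}^{-\boldsymbol{\mu}}) \geq \rho$, in which case the would-be residue term can just be absorbed into the tail), and $\tilde\eta := \eta''$. The required identity follows by the levelwise computations above together with the formal continuity of $d_{\boldsymbol{\mu}}$ on the formal series $f$. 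The main delicate point is the resonant case $-\boldsymbol{\mu} = I^* \in \mathbb{Z}_{\geq 0}^r$, where the obstruction to writing $\eta_{I^*}$ as a single $d_{\boldsymbol{\nu}}$-derivative is precisely the logarithmic closed form with constant residues; recognizing that this obstruction contributes exactly the claimed term $\boldsymbol{x}^{-\boldsymbol{\mu}}\frac{d\boldsymbol{x}^{\boldsymbol{\lambda}}}{\boldsymbol{x}^{\boldsymbol{\lambda}}}$ is what ties the argument to the statement.
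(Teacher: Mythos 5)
Your proof is correct and follows essentially the same route as the paper: reduce $\nu_{\mathcal A}(d_{\boldsymbol{\mu}}\eta)\geq\rho$ to the levelwise identities $d_{I+\boldsymbol{\mu}}\eta_I=0$ for $\nu(\boldsymbol{x}^I)<\rho$, solve each level by finding a potential, isolate the single resonant level $I^*=-\boldsymbol{\mu}$, and reassemble. The only (minor, stylistic) divergence is in the non-resonant case: you read the potential $g_I$ off directly from the $dx_k/x_k\wedge dx_i/x_i$ and $dx_k/x_k\wedge dy_j$ component equations, whereas the paper first invokes formal Poincar\'e on the $\boldsymbol{y}$-part $\eta^*_I$ and then matches the two pieces via proportionality; both work and yield the same $f_I=g_I$.
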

\begin{proof}  Let us note that $d_{\boldsymbol{\mu}}(\eta)=d_{\boldsymbol{\mu}}(\eta')+d_{\boldsymbol{\mu}}(\eta'')$ and $\nu_{\mathcal A}(d_{\boldsymbol{\mu}}(\eta''))\geq \rho$. Hence $\nu_{\mathcal A}(d_{\boldsymbol{\mu}}(\eta'))\geq \rho$. Moreover,
	\[
	d_{\boldsymbol{\mu}}(\eta')=
	\sum_{\nu(\boldsymbol{x}^I)<\rho}d_{\boldsymbol{\mu}}(\boldsymbol{x}^I\eta_I)
	= \sum_{\nu(\boldsymbol{x}^I)<\rho}\boldsymbol{x}^I d_{I+\boldsymbol{\mu}}(\eta_I)  .
	\]
	Since the coefficients of $d_{\boldsymbol{I+\mu}}(\eta_I)$ belong to $k[[\boldsymbol{y}]]$, the fact that $\nu_{\mathcal A}(d_{\boldsymbol{\mu}}(\eta'))\geq \rho$
	implies that $d_{\boldsymbol{\mu}}(\eta')=0$. Moreover, since $\nu_{\mathcal A}(d_{\boldsymbol{\mu}}(\eta''))\geq \rho$, it is enough to show that there are $\lambda_i$ and $f$ such that
	\[
	\eta'=d_{\boldsymbol{\mu}}f+\frac{d \boldsymbol{x}^{\boldsymbol{\lambda}}}{\boldsymbol{x}^{\boldsymbol{\lambda}}} .
	\]
	In other words, we have reduced the problem to the case $\rho=\infty$.
	
	We know that $d_{I+\boldsymbol{\mu}}(\eta_I)=0$ for any $I$ such that
	$\nu(\boldsymbol{x}^I)<\rho$. Let us consider one such $I$ and put
	$\boldsymbol{\rho}=I+\boldsymbol{\mu}$.  Let us split $\eta_I=\tilde\eta_I+\eta^*_I$
	where
	\[
	\tilde\eta_I=\sum_{i=1}^rf_i\frac{dx_i}{x_i},\quad \eta_I^*=\sum_{j=1}^{n-r}g_jdy_j,
	\]
	where $f_i,g_j\in k[[\boldsymbol{y}]]$ for any $i,j$. Note that
	\begin{equation}
	\label{eq:rohomologica}
	0=d_{\boldsymbol{\rho}}\eta_I=
	\frac{d \boldsymbol{x}^{\boldsymbol{\rho}}}{{\boldsymbol{x}}^{\boldsymbol{\rho}}}\wedge(\tilde\eta_I
	+\eta^*_I)+d\tilde\eta_I+d\eta_I^*
	\end{equation}
	implies that $d\eta^*_I=0$, since the coefficients of $d_{\boldsymbol{\rho}}\eta_I$ for $dy_j\wedge dy_s$ correspond exactly to the coefficients of $d\eta^*_I$. By applying the standard formal Poincaré's Lemma, we find $F^*\in k[[y]]$ such that $dF^*=\eta^*_I$. Now, we have two cases to consider:
	
	{\em Case $\boldsymbol{\rho}\neq  \boldsymbol{0}$:} By Equation \eqref{eq:rohomologica}, we deduce that
	\[
	\frac{d \boldsymbol{x}^{\boldsymbol{\rho}}}{{\boldsymbol{x}}^{\boldsymbol{\rho}}}
	\wedge\tilde\eta_I=0,
	\]
	just by looking to the coefficients of $d_{\boldsymbol{\rho}}\eta_I$ of the terms in $(dx_i/x_i)\wedge(dx_\ell/x_\ell)$. Since $\boldsymbol{\rho}\ne 0$,the above proportionality implies that there is $f_I\in k[[\boldsymbol{y}]]$ such that
	\[
	\tilde\eta_I=f_I\frac{d \boldsymbol{x}^{\boldsymbol{\rho}} }{{\boldsymbol{x}}^{\boldsymbol{\rho}}}.
	\]
	Looking in  Equation \eqref{eq:rohomologica} to the coefficients of $(dx_i/x_i)\wedge dy_j$ we find that
	\[
	\frac{d \boldsymbol{x}^{\boldsymbol{\rho}}}{{\boldsymbol{x}}^{\boldsymbol{\rho}}}\wedge
	d(F^*-f_I)=0.
	\]
	We conclude that $dF^*=df_I$, since $F^*,f_I\in k[[\boldsymbol{y}]]$.  Finally, in this case we have that $\eta_I=d_{\boldsymbol{\rho}}(f_I)$ and hence
	\begin{equation*}
	\boldsymbol{x}^I\eta_I=d_{\boldsymbol{\mu}}(\boldsymbol{x}^If_I).
	\end{equation*}
	
	{\em Case $\boldsymbol{\mu}=-I$:} We have that $d\eta_I=d\tilde\eta_I+d\eta^*_I=0$. Recalling that $d\eta^*_I=0$, we conclude that $d\tilde\eta_I=0$. This implies that $df_i=0$ for any $i=1,2,\ldots,r$ and hence
	$f_i=\lambda_i\in k$ for any $i=1,2,\ldots,r$. That is
	\[
	\tilde\eta_I=\frac{d \boldsymbol{x}^{\boldsymbol{\lambda}}}{\boldsymbol{x}^{\boldsymbol{\lambda}}} ;\quad \eta^*_I=df_I,
	\]
	where $f_I=F^*$ in this case. More precisely, we have
	\begin{equation*}
	\boldsymbol{x}^I\eta_I= \boldsymbol{x}^I \frac{d \boldsymbol{x}^{\boldsymbol{\lambda}}}{\boldsymbol{x}^{\boldsymbol{\lambda}}}+ d_{\boldsymbol{\mu}}(\boldsymbol{x}^If_I) ,
	\end{equation*}
	where we recall that $ \boldsymbol{\mu}=-I $. Finally, let us put $f=\sum_{
		\nu_{\mathcal A}(\boldsymbol{x}^I)<\rho
	}\boldsymbol{x}^If_I$. We have
	\[
	\eta'= d_{\boldsymbol{\mu}}(f)+
	\boldsymbol{x}^{-\boldsymbol{\mu}} \frac{d \boldsymbol{x}^{\boldsymbol{\lambda}}}{\boldsymbol{x}^{\boldsymbol{\lambda}}},
	\]
	where $\boldsymbol{\lambda}=\boldsymbol{0}$ if $-\boldsymbol{\mu}\notin {\mathbb Z}_{\geq 0}^r$ (or $\nu_{\mathcal A}(\boldsymbol{x}^{-\boldsymbol{\mu}})\geq \rho$). This ends the proof.
\end{proof}

\begin{remark}
	\label{rk:integrabilidadpoincare}
	Let us note that
	$\theta=d_{\boldsymbol{\mu}}f+
	\boldsymbol{x}^{-\boldsymbol{\mu}} d \boldsymbol{x}^{\boldsymbol{\lambda}} / \boldsymbol{x}^{\boldsymbol{\lambda}}$ satisfies Frobenius' integrability condition $\theta\wedge d\theta=0$.
\end{remark}

In a more general way, let us consider a $0$-final dominant $\alpha\in \Omega^1_{\mathcal A}$ such that $d\alpha=0$. For any $\xi\in \Omega^p_{\mathcal A}$ we define {\em the $\alpha$-exterior derivative $d_\alpha(\xi)$} by
\begin{equation*}
d_\alpha(\xi)=\alpha\wedge\xi+d\xi.
\end{equation*}
By Proposition \ref{prop:logpoincare2}, the fact that $d\alpha=0$ implies that there is $h\in {\Omega}^0_{{\mathcal A}}$ and $\boldsymbol{\mu}\in {\mathbb C}^r$ such that
\[
\alpha=\frac{
	d\boldsymbol{x}^{\boldsymbol{\mu}}
}
{
	\boldsymbol{x}^{\boldsymbol{\mu}}
}+dh ,
\]
where $\boldsymbol{\mu}\ne\boldsymbol{0}$ since $\alpha$ is $0$-final dominant. We call $d\boldsymbol{x^\mu}/\boldsymbol{x^\mu}$ the {\em residual part of $\alpha$} in $\mathcal A$.
Take an index $i_0\in\{1,2,\ldots,r\}$ such that $\mu_{i_0}\ne 0$ and consider the unit $H=\exp(h/\mu_{i_0})\in \widehat{\mathcal O}_{{\mathcal A}}$. Let us put
\[
\boldsymbol{\tilde \mu}=\frac{\boldsymbol{\mu}}{\mu_{i_0}} , \quad  \tilde x_{i_0}=Hx_{i_0} \quad \text{and} \quad  \tilde x_i=x_i \quad \text{for} \quad i\ne i_0.
\]
We have
\[
\frac{\alpha}{\mu_{i_0}}=\frac{d\boldsymbol{x}^{\boldsymbol{\tilde \mu}}}{\boldsymbol{x}^{\boldsymbol{\tilde \mu}}}+\frac{dH}{H}= \frac{d\boldsymbol{\tilde x}^{\boldsymbol{\tilde \mu}}}{\boldsymbol{\tilde x}^{\boldsymbol{\tilde \mu}}}
\Longrightarrow \alpha= \frac{d\boldsymbol{\tilde x}^{\boldsymbol{\mu}}}{\boldsymbol{\tilde x}^{\boldsymbol{\mu}}}.
\]

\begin{corollary}
	\label{cor:logpoincare3}
	Let $\alpha\in \Omega^1_{\mathcal A}$ be $0$-final dominant with $d\alpha=0$ and residual part $d\boldsymbol{x^\mu}/\boldsymbol{x^\mu}$. Let $\eta\in \Omega^1_{\mathcal A}$ be such that
	$\nu_{\mathcal A}(d_{\alpha}\eta)\geq \rho$. We can decompose $\eta$ as
	$
	\eta=\theta+\tilde\eta
	$,
	where $ \nu_{\mathcal A}(\tilde \eta)\geq \rho$ and there is a formal function $F\in \Omega^0_{\mathcal A}$ such that $\theta$ has the form
	\begin{equation*}
	\theta=d_{\alpha}F+\boldsymbol{{\tilde x}^{-\mu}}
	\frac{d \boldsymbol{x}^{\boldsymbol{\lambda}}}{\boldsymbol{x}^{\boldsymbol{\lambda}}} ,
	\end{equation*}
	where there is an index $i_0$ with $\mu_{i_0}\ne 0$ and $\tilde x_{i_0}=Wx_{i_0}$, for a formal unit $W$ and $\tilde x_i=x_i$ for $i\ne i_0$. Moreover, in the case that $-\boldsymbol{\mu}\notin {\mathbb Z}_{\geq 0}$ we have
	$\lambda_i=0$, for $i=1,2,\ldots,r$.
	In particular, we have $\theta\wedge d\theta=0$.
\end{corollary}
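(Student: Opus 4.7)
The plan is to transport the problem to the adapted coordinates $\boldsymbol{\tilde x}$ constructed in the paragraph preceding the statement, in which $\alpha$ takes the pure residual form $\alpha = d\boldsymbol{\tilde x}^{\boldsymbol{\mu}}/\boldsymbol{\tilde x}^{\boldsymbol{\mu}}$, and then apply Proposition \ref{prop:logpoincare2} directly. Since $W = \exp(h/\mu_{i_0})$ is a formal unit, the subring $\Omega^0_{\mathcal A} = k[[\boldsymbol{x},\boldsymbol{y}]] = k[[\boldsymbol{\tilde x},\boldsymbol{y}]]$ is unchanged and the explicit filtration, governed by the monomial ideal $(\boldsymbol{x}^{\mathbf 1}) = (\boldsymbol{\tilde x}^{\mathbf 1})$, is identical in either system of parameters. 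Moreover, in the $\boldsymbol{\tilde x}$ coordinates the operator $d_\alpha \xi = \alpha \wedge \xi + d\xi$ coincides with the operator $d_{\boldsymbol{\mu}}$ of Proposition \ref{prop:logpoincare2}.

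First I would apply Proposition \ref{prop:logpoincare2} to $\eta$ in the $\boldsymbol{\tilde x}$ system with truncation $\rho$; the hypothesis $\nu_{\mathcal A}(d_\alpha \eta) \geq \rho$ is exactly $\nu_{\mathcal A}(d_{\boldsymbol{\mu}} \eta) \geq \rho$. This yields $F \in \Omega^0_{\mathcal A}$, a vector $\boldsymbol{\lambda} \in k^r$ with $\boldsymbol{\lambda} = \boldsymbol{0}$ whenever $-\boldsymbol{\mu} \notin \mathbb{Z}_{\geq 0}^r$, and a remainder $\tilde\eta$ with $\nu_{\mathcal A}(\tilde\eta) \geq \rho$, satisfying
\[
\eta = d_\alpha F + \boldsymbol{\tilde x}^{-\boldsymbol{\mu}} \frac{d\boldsymbol{\tilde x}^{\boldsymbol{\lambda}}}{\boldsymbol{\tilde x}^{\boldsymbol{\lambda}}} + \tilde\eta.
\]

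Next I would rewrite the residual term in the original coordinates. Since only the $i_0$-th coordinate changes, a direct computation gives $d\boldsymbol{\tilde x}^{\boldsymbol{\lambda}}/\boldsymbol{\tilde x}^{\boldsymbol{\lambda}} = d\boldsymbol{x}^{\boldsymbol{\lambda}}/\boldsymbol{x}^{\boldsymbol{\lambda}} + \lambda_{i_0}\, dW/W$. The discrepancy is nontrivial only when $\boldsymbol{\lambda} \neq \boldsymbol{0}$, which by the proposition forces $-\boldsymbol{\mu} \in \mathbb{Z}_{\geq 0}^r$; in that case $\boldsymbol{\tilde x}^{-\boldsymbol{\mu}}$ is a genuine element of $\Omega^0_{\mathcal A}$. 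Using the identity $d(\boldsymbol{\tilde x}^{-\boldsymbol{\mu}}) = -\boldsymbol{\tilde x}^{-\boldsymbol{\mu}} \alpha$, one verifies $d_\alpha(\boldsymbol{\tilde x}^{-\boldsymbol{\mu}} u) = \boldsymbol{\tilde x}^{-\boldsymbol{\mu}} du$ for any $u \in \Omega^0_{\mathcal A}$; combined with $dW/W = dh/\mu_{i_0}$, this shows that the correction term is absorbed into $d_\alpha F$ by replacing $F$ with $F + (\lambda_{i_0}/\mu_{i_0}) \boldsymbol{\tilde x}^{-\boldsymbol{\mu}} h$. Setting $\theta$ to be the sum of the first two terms produces the decomposition claimed in the corollary.

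The integrability $\theta \wedge d\theta = 0$ is then an immediate application of Remark \ref{rk:integrabilidadpoincare}, since in the $\boldsymbol{\tilde x}$ coordinates $\theta$ has exactly the shape $d_{\boldsymbol{\mu}} F' + \boldsymbol{\tilde x}^{-\boldsymbol{\mu}} d\boldsymbol{\tilde x}^{\boldsymbol{\lambda}}/\boldsymbol{\tilde x}^{\boldsymbol{\lambda}}$ considered there. I expect the main obstacle to be the bookkeeping in the case $-\boldsymbol{\mu} \in \mathbb{Z}_{\geq 0}^r$: one must carefully verify both that $\boldsymbol{\tilde x}^{-\boldsymbol{\mu}} h \in \Omega^0_{\mathcal A}$ and that the cancellation $d_\alpha(\boldsymbol{\tilde x}^{-\boldsymbol{\mu}} h) = \boldsymbol{\tilde x}^{-\boldsymbol{\mu}} dh$ is exact, so that the coordinate-change correction is genuinely swallowed by the $d_\alpha F$ term and the final expression has the precise form stated.
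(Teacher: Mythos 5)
Your proposal is correct and follows exactly the paper's route: pass to the adapted parameters $\boldsymbol{\tilde x}$ in which $\alpha=d\boldsymbol{\tilde x}^{\boldsymbol{\mu}}/\boldsymbol{\tilde x}^{\boldsymbol{\mu}}$, observe that $d_\alpha$ becomes the operator of Proposition \ref{prop:logpoincare2} and that the explicit filtration is unchanged, and apply that proposition. Your additional bookkeeping — absorbing $\lambda_{i_0}\,dW/W$ into $d_\alpha F$ via $d_\alpha(\boldsymbol{\tilde x}^{-\boldsymbol{\mu}}u)=\boldsymbol{\tilde x}^{-\boldsymbol{\mu}}du$ so that the residual term appears with $d\boldsymbol{x}^{\boldsymbol{\lambda}}/\boldsymbol{x}^{\boldsymbol{\lambda}}$ rather than $d\boldsymbol{\tilde x}^{\boldsymbol{\lambda}}/\boldsymbol{\tilde x}^{\boldsymbol{\lambda}}$ — is a correct refinement of a detail the paper's two-line proof leaves implicit.
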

\begin{proof} Follows from
	Proposition \ref{prop:logpoincare2}, noting that
	$
	\alpha= {d\boldsymbol{\tilde x}^{\boldsymbol{\mu}}}/{\boldsymbol{\tilde x}^{\boldsymbol{\mu}}}
	$.
	That is, the derivative $d_\alpha$ is expressed in the new independent coordinates $\boldsymbol{\tilde x}$ as $d_{\boldsymbol{\tilde\mu}}$ and moreover,  we recall that $\nu_{\mathcal A}$ is independent of the coordinate change $\boldsymbol{x}\mapsto \boldsymbol{\tilde x}$.
\end{proof}
\section{Transformations of Locally Parameterized Models}
\label{sec:Transformations of Locally Parameterized Models}
The {\em allowed transformations}
\[
{\mathcal A}=({\mathcal O}_{\mathcal A};\boldsymbol{x},\boldsymbol{y})
\rightarrow{\mathcal A}'=({\mathcal O}_{{\mathcal A}'};\boldsymbol{x}',\boldsymbol{y}'),
\]
between  locally parameterized models are finite compositions of the following types of {\em elementary allowed transformations}:
\begin{itemize}
\item {\em $\ell$-coordinate changes}.
\item {\em Independent blow-ups of locally parameterized models}.
\item{\em $\ell$-Puiseux's packages of locally parameterized models}.
\end{itemize}
\subsection{Elementary allowed transformations}
\label{Elementary allowed transformations}
Let us describe the three types of elementary allowed transformations:
\vspace{3pt}
\paragraph{ $\bullet$ \em Coordinate changes}  Consider an integer $1\leq \ell\leq m-r$ and an element
\[
f\in k[[\boldsymbol{x},y_1,y_2,\ldots,y_{\ell-1}]]\cap {\mathcal O}_{M,P},
\]
such that $\nu_{\mathcal A}(f)\geq \nu(y_\ell)$. Let us put
\[
y'_\ell=y_\ell+f,\quad y'_j=y_j,\, j\in \{1,2,\ldots,m-r\}\setminus \{\ell\}.
\]
 We obtain a new locally parameterized model ${\mathcal A}'=({\mathcal O}_{{\mathcal A}'}; \boldsymbol{x}',\boldsymbol{y}')$, where $\boldsymbol{x}'=\boldsymbol{x}$ and ${\mathcal O}_{{\mathcal A}'}={\mathcal O}_{{\mathcal A}}$.
Any transformation ${\mathcal A}\rightarrow {\mathcal A}'$ as above is called an {\em $\ell$-coordinate change}, or a {\em coordinate change in the dependent variable $y_\ell$}.
\vspace{3pt}
\paragraph{$\bullet$ \em Independent Blow-ups} Consider a set $\{x_i,x_j\}$ of two independent variables. Let us note that $\nu(x_i)\ne\nu(x_j)$. Assume that $\nu(x_i)<\nu(x_j)$. Let us select a projective model $M$ of $K$ associated to ${\mathcal A}$ in the sense that ${\mathcal O}_{\mathcal A}={\mathcal O}_{M,P}$, where $P$ is the center of $R$ in $M$. Denote $Y\subset M$ the irreducible subvariety of $M$ defined locally a $P$ by the equations $x_i=x_j=0$. Let us consider the blow-up $\pi:M'\rightarrow M$ with center $Y$ and let $P'\in M'$ be the center of $R$ in $M'$. We obtain a locally parameterized model ${\mathcal A}'$ associated to $M'$, where ${\mathcal O}_{{\mathcal A}'}={\mathcal O}_{M',P'}$, $\boldsymbol{y}'=\boldsymbol{y}$ and
\[
x'_j=x_j/x_i, \quad x'_s=x_s,\; s\in \{1,2,\ldots,r\}\setminus\{j\}.
\]
The transformation ${\mathcal A}\rightarrow {\mathcal A}'$ is called {\em the independent blow-up of ${\mathcal A}$ with center in the coordinates $\{x_i,x_j\}$}.

\vspace{3pt}
\paragraph{$\bullet$ \em Puiseux`s Packages} Let us consider an integer number $1\leq \ell\leq m-r$. An {\em $\ell$-Puiseux's package} ${\mathcal A}\rightarrow {\mathcal A}'$ is the composition of two transformations
\[
{\mathcal A}\rightarrow\widetilde{\mathcal A}\rightarrow {\mathcal A}'
\]
where $\widetilde{\mathcal A}\rightarrow {\mathcal A}'$ is an {\em $\ell$-blow-up with translation} and ${\mathcal A}\rightarrow\widetilde{\mathcal A}$ is a finite composition of independent blow-ups and {\em $\ell$-combinatorial blow-ups}. Let us define these types of blow-ups:
\begin{itemize}
\item[1)]  An {\em $\ell$-combinatorial blow-up ${\mathcal A}\rightarrow{\mathcal A}^{*}$} is given by the choice of an $x_i$ such that $\nu(y_\ell)\ne\nu(x_i)$. As before, we take a projective model $M$ associated to $\mathcal A$ and the blow-up $\pi:M^*\rightarrow M$ of $M$ with the center $Y=(x_i=y_\ell=0)$. Let $P^*$ be the center of $R$ in $M^*$ and put ${\mathcal O}_{{\mathcal A}^*}={\mathcal O}_{M^*,P^*}$. To obtain the regular system of parameters $\boldsymbol{x}^*,\boldsymbol{y}^*$ we have two cases:
    \begin{itemize}
    \item[a)] If $\nu(x_i)<\nu(y_\ell)$, we put $\boldsymbol{x}'=\boldsymbol{x}$, $y'_{\ell}=y_\ell/x_i$ and $y'_j=y_j$, for $j\ne \ell$.
    \item[b)] If $\nu(x_i)>\nu(y_\ell)$, we put $\boldsymbol{y}'=\boldsymbol{y}$, $x'_{i}=x_i/y_\ell$ and $x'_s=x_s$, for $s\ne i$.
    \end{itemize}
    The case (a), where $\nu(x_i)<\nu(y_\ell)$, is called {\em of transversal type}.
\item[2)]  An {\em $\ell$-blow-up ${\mathcal A}\rightarrow{\mathcal A}^{*}$ with translation} is given by the choice of an $x_i$ such that $\nu(y_\ell)=\nu(x_i)$.
     We take a projective model $M$ associated to $\mathcal A$ and the blow-up $\pi:M^*\rightarrow M$ of $M$ with the center $Y=(x_i=y_\ell=0)$. Let $P^*$ be the center of $R$ in $M^*$ and put ${\mathcal O}_{{\mathcal A}^*}={\mathcal O}_{M^*,P^*}$. We obtain the regular system of parameters $\boldsymbol{x}^*,\boldsymbol{y}^*$ as follows. Since $\nu(y_\ell/x_i)=0$ and $R$ defines a $k$-rational valuation, there is a unique $0\ne\lambda\in k$ such that $\nu(y_\ell/x_i-\lambda)>0$. We put $\boldsymbol{x}^*=\boldsymbol{x}$, $y^*_\ell=y_\ell/x_i-\lambda$ and $y^*_j=y_j$ for $j\ne \ell$.
\end{itemize}
\begin{remark} Let us note that an  $\ell$-combinatorial blow-up is not an allowed transformation itself. Nevertheless, an $\ell$-blow-up with translation defines an $\ell$-Puiseux's package.
\end{remark}

The existence of at least one $\ell$-Puiseux's package ${\mathcal A}\rightarrow{\mathcal A}'$ is proved in \cite{Can-R-S}. It is related with the so called {\em $\ell$-contact rational function $\Phi$ of $\mathcal A$}. Since the values of the independent parameters define a basis for $\Gamma\otimes_{\mathbb Z}{\mathbb Q}$, there are unique integers $d>0$ and $\boldsymbol{p}=(p_1,p_2,\ldots,p_r)\in {\mathbb Z}^r$ such that $d,p_1,p_2,\ldots,p_r$ are coprime and
\[
\nu(y_\ell^d/\boldsymbol{x^p})=0.
\]
We put $\Phi=y_\ell^d/\boldsymbol{x^p}$. The number $d\geq 1$ is called the {\em $\ell$-ramification index of ${\mathcal A}$}.
There is a unique $0\ne \lambda\in k$ such that $\nu(\Phi-\lambda)>0$. Note that, in the case of a blow-up with translation, the $\ell$-contact rational function is given by $y_\ell/x_i$.

\subsection{Equations for Puiseux's Packages}
\label{sec:EquationsforPuiseuxPackages} Let $\Phi=y_\ell^d/\boldsymbol{x^p}$ be the $\ell$-contact rational function of ${\mathcal A}$ and take $\lambda\in k$ with $\nu(\Phi-\lambda)>0$. Consider an $\ell$-Puiseux's package
\[
{\mathcal A}\rightarrow \widetilde{\mathcal A}\rightarrow {\mathcal A}',
\]
where
$
{\mathcal A}=({\mathcal O}_{\mathcal A},\boldsymbol{x},\boldsymbol{y})$,
$\widetilde{\mathcal A}=({\mathcal O}_{\widetilde{\mathcal A}},\widetilde{\boldsymbol{x}},\widetilde{\boldsymbol{y}})$,
$
{\mathcal A}'=({\mathcal O}_{{\mathcal A}'},\boldsymbol{x}',\boldsymbol{y}')
$. In this subsection we recall formulas in \cite{Can-R-S} relating the parameters $\boldsymbol{x},\boldsymbol{y}$, $\widetilde{\boldsymbol{x}},\widetilde{\boldsymbol{y}}$ and $\boldsymbol{x}',\boldsymbol{y}'$.

The relationship between $\widetilde{\boldsymbol{x}},\widetilde{\boldsymbol{y}}$ and $\boldsymbol{x}',\boldsymbol{y}'$ is given by $\boldsymbol{x}'=\widetilde{\boldsymbol{x}}$, $y'_\ell=\Phi-\lambda$, where $\Phi=\tilde y /\tilde x_{i_0}$ and
\begin{equation*}
y'_j=\tilde y_j \quad \text{for} \quad  j=1,2,\ldots,m-r,\; j\ne \ell.
\end{equation*}
The relationship between $\boldsymbol{x},\boldsymbol{y}$ and $\widetilde{\boldsymbol{x}},\widetilde{\boldsymbol{y}}$ is given by a $(r+1)\times(r+1)$ matrix $B=(b_{ij})$, with nonnegative integer coefficients and $\det B=1$, such that
\begin{equation*}
 \begin{array}{cclllll}
 x_1&=&\tilde x_1^{b_{11}}&\tilde x_2^{b_{12}}&\cdots &\tilde x_r^{b_{1r}}&\tilde y_\ell^{b_{1 r+1}}\\
 x_2&=&\tilde x_1^{b_{21}}&\tilde x_2^{b_{22}}&\cdots &\tilde x_r^{b_{2r}}&\tilde y_\ell^{b_{2,r+1}}\\
 &\cdots&\\
 x_r&=&\tilde x_1^{b_{r1}}&\tilde x_2^{b_{r2}}&\cdots &\tilde x_r^{b_{rr}}&\tilde y_\ell^{b_{r,r+1}}\\
 y_\ell&=&\tilde x_1^{b_{r+1,1}}&\tilde x_2^{b_{r+1,2}}&\cdots &\tilde x_r^{b_{r+1,r}}&\tilde y_\ell^{b_{r+1,r+1}} ,
 \end{array}
 \end{equation*}
and moreover $y_j=\tilde y_j$ for any $j\in \{1,2,\ldots,m-r\}\setminus \{\ell\}$. Recalling that $\Phi=\tilde y_\ell/\tilde x_{i_0}=y'_\ell+\lambda$, we have
\begin{equation}
\label{eq:puiseux2}
 \begin{array}{cclllll}
 x_1&=&{x'}_1^{c_{11}}&{x'}_2^{c_{12}}&\cdots &{x'}_r^{c_{1r}}&\Phi_\ell^{c_{1,r+1}}\\
 x_2&=&{x'}_1^{c_{21}}&{x'}_2^{c_{22}}&\cdots &{x'}_r^{c_{2r}}&\Phi_\ell^{c_{2,r+1}}\\
 &\cdots&\\
 x_r&=&{x'}_1^{c_{r1}}&{x'}_2^{c_{r2}}&\cdots &{x'}_r^{c_{rr}}&\Phi_\ell^{c_{r,r+1}}\\
 y_\ell&=&{x'}_1^{c_{r+1,1}}&{x'}_2^{c_{r+1,2}}&\cdots &{x'}_r^{c_{r+1,r}}&\Phi_\ell^{c_{r+1,r+1}}
 \end{array}
 \end{equation}
where $c_{si_0}=b_{si_0}+b_{s,r+1}$ for any $s=1,2,\ldots,r+1$ and $c_{ji}=b_{ji}$, if $i\ne i_0$. Note that we also have that $\det C=1$ and the coefficients $c_{ji}$ of $C$ are nonnegative integer numbers.
\begin{remark}
\label{rk:propiedadesecuacionespuiseux}
 We have the following properties:
\begin{itemize}
\item[1)] Denote
$
(-p_1,-p_2,\ldots,-p_r,d)=(0,0,\ldots,0,1)C^{-1}
$.

\item[2)] Consider the $r\times r$ sub-matrix $C_0$ of $C$ given by $C_0=(c_{ij})_{1\leq  i,j\leq r}$. Then $\det C_0 \ne 0$ since it gives a base change matrix for the bases
    \[
    \{ \nu(x_1),\nu,(x_2),\ldots,\nu(x_r)\},\text{ and } \{ \nu(x'_1),\nu,(x'_2),\ldots,\nu(x'_r)\}
    \]
    of $\Gamma\otimes_{\mathbb Z}{\mathbb Q}$ in view of Equation \eqref{eq:puiseux2}.
\item[3)] We have $\nu(y_\ell)>0$ and hence $y_\ell$ is not a unit in ${\mathcal O}_{{\mathcal A}'}$. More precisely, we have that $\nu_{{\mathcal A}'}(y_\ell^d)=\nu(\boldsymbol{x}^{\boldsymbol{p}})>0$ and thus $\nu_{{\mathcal A}'}(y_\ell)>0$. As a consequence of Proposition \ref{prop:explicitvalueofthedifferential}, we also have that $\nu_{{\mathcal A}'}(dy_\ell)=\nu_{{\mathcal A}'}(y_\ell)>0$.
\item[4)] The case $d=1$ is relevant for our computations. The following properties are equivalent
    \begin{itemize}
    \item[i)] $d=1$.
    \item[ii)] The $\ell$-combinatorial blow-ups in ${\mathcal A}\rightarrow\widetilde{\mathcal A}$ are all of transversal type.
    \item[iii)] The matrix $C$ has the form
\begin{equation}
\label{eq:ecuaciondigulauno}
C=
\left(
\begin{array}{ccc}
{C_0}&\vline&{\boldsymbol{0}}\\
\hline
\boldsymbol{\tilde p}&\vline& 1\\
\end{array}
\right),\quad \boldsymbol{\tilde p}=\boldsymbol{p}C_0.
\end{equation}
 \end{itemize}
\item[5)]
 The relationship between the differential forms are given by:
\begin{equation*}
\left(\frac{dx_1}{x_1}, \frac{dx_2}{x_2},\cdots,\frac{dx_r}{x_r}, \frac{dy_\ell}{y_\ell}\right)=
\left(\frac{d\tilde x_1}{\tilde x_1}, \frac{d\tilde x_2}{\tilde x_2},\cdots,\frac{d\tilde x_r}{\tilde x_r}, \frac{d\tilde y_\ell}{\tilde y_\ell}\right)B^t
\end{equation*}
 and
 \begin{equation}
 \label{eq:puiseux4}
\left(\frac{dx_1}{x_1}, \frac{dx_2}{x_2},\cdots,\frac{dx_r}{x_r}, \frac{dy_\ell}{y_\ell}\right)=
\left(\frac{dx'_1}{ x'_1}, \frac{dx'_2}{x'_2},\cdots,\frac{dx'_r}{ x'_r}, \frac{d \Phi}{\Phi}\right)C^t,
\end{equation}
where we remark that $d\Phi/\Phi=(1/(y'_\ell+\lambda))dy'_\ell$.
\end{itemize}
\end{remark}

\subsection{Stability Results}
The use of allowed transformations in the Truncated Local Uniformization is justified by the results in this subsection. We show that the critical values are not decreasing under allowed transformations and that the stabilization of the critical value characterize final situations.
\begin{remark}
\label{rk:transforms}
Any allowed transformation ${\mathcal A}\rightarrow {\mathcal A}'$ gives and injective morphism of local rings ${\mathcal O}_{\mathcal A}\rightarrow {\mathcal O}_{{\mathcal A}'}$ and also an injective morphism of complete local rings $\Omega^0_{\mathcal A}\rightarrow\Omega^0_{{\mathcal A}'}$.  We also have induced inclusions
\[
\Omega^p_{\mathcal A}\subset\Omega^p_{{\mathcal A}'}
\]
that are compatible with the sum, the exterior product and the exterior differentiation.
In order to simplify notations along the paper, when we have $\omega\in \Omega^p_{{\mathcal A}}$, we write $\omega\in \Omega^p_{{\mathcal A}'}$ to denote the transform of $\omega$ by the considered allowed transformation.
 \end{remark}

\begin{lemma}
\label{lema:transformationofamonomial}
Let
$
{\mathcal A}
\rightarrow{\mathcal A}'
$ be an elementary allowed transformation. Consider a monomial $\boldsymbol{x}^I$. We have $ \boldsymbol{x}^I=U'{\boldsymbol{x}'}^{I'} $ where $U'\in {\mathcal O}_{{\mathcal A}'}$ is a unit. Moreover, if ${\mathcal A}\rightarrow {\mathcal A}'$ is an $\ell$-Puiseux's package, then $U'$ has the form
\[
U'=(y'_\ell+\lambda)^b\in k[[y'_\ell]] ,
\]
where $b\in {\mathbb Z}$ and $0\ne \lambda\in k$. Otherwise, we have that $U'=1$.
\end{lemma}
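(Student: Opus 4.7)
I would treat the three types of elementary allowed transformations separately, since the formulas relating $\boldsymbol{x}$ and $\boldsymbol{x}'$ are very different in each case.

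For an $\ell$-coordinate change, there is nothing to prove: by definition $\boldsymbol{x}'=\boldsymbol{x}$, so $\boldsymbol{x}^I={\boldsymbol{x}'}^I$ and $U'=1$. For an independent blow-up with center $\{x_i,x_j\}$ (say $\nu(x_i)<\nu(x_j)$), we have $x'_s=x_s$ for $s\ne j$ and $x_j=x_i x'_j=x'_i x'_j$. A direct substitution gives
\[
\boldsymbol{x}^I = \prod_{s\ne j}(x'_s)^{i_s}\cdot(x'_i x'_j)^{i_j}={\boldsymbol{x}'}^{I'}
\]
with $I'_i=i_i+i_j$, $I'_j=i_j$ and $I'_s=i_s$ otherwise; again $U'=1$.

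The only nontrivial case is that of an $\ell$-Puiseux's package $\mathcal{A}\to\mathcal{A}'$. Here I would simply invoke the explicit formulas recorded in Equation \eqref{eq:puiseux2} of Section \ref{sec:EquationsforPuiseuxPackages}: for each $s=1,2,\ldots,r$,
\[
x_s = (x'_1)^{c_{s1}}(x'_2)^{c_{s2}}\cdots(x'_r)^{c_{sr}}\,\Phi^{c_{s,r+1}},
\]
where $\Phi=y'_\ell+\lambda$ is the $\ell$-contact rational function and all $c_{st}$ are nonnegative integers. Raising to the powers $i_s$ and multiplying yields
\[
\boldsymbol{x}^I = {\boldsymbol{x}'}^{I'}\,\Phi^{b},\qquad
I'_t=\sum_{s=1}^r i_s c_{st}\in\mathbb{Z}_{\geq 0},\qquad
b=\sum_{s=1}^r i_s c_{s,r+1}\in\mathbb{Z}_{\geq 0}.
\]
Setting $U'=\Phi^b=(y'_\ell+\lambda)^b$ finishes the algebraic step.

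It remains to check that $U'$ is genuinely a unit of $\mathcal{O}_{\mathcal{A}'}$. Since $\lambda\in k^*$ and $y'_\ell$ lies in the maximal ideal $\mathfrak{m}_{\mathcal{A}'}$ (by construction of the Puiseux's package, $\nu(y'_\ell)=\nu(\Phi-\lambda)>0$), the element $y'_\ell+\lambda$ reduces to $\lambda\ne 0$ in the residue field $k$, hence is a unit in the local ring $\mathcal{O}_{\mathcal{A}'}$; so is any integer power of it. This verifies both the claim on the shape $U'=(y'_\ell+\lambda)^b\in k[[y'_\ell]]$ and the fact that $U'$ is a unit. The only potential subtlety—and the main thing worth double-checking—is that the matrix $C=(c_{st})$ from Section \ref{sec:EquationsforPuiseuxPackages} has nonnegative integer entries (so that $I'\in\mathbb{Z}_{\geq 0}^r$ and $b\in\mathbb{Z}_{\geq 0}$), but this is already part of the description of $C$ recalled there, so no further work is required.
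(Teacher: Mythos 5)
Your proof is correct and is essentially the argument the paper has in mind: the paper's own proof simply asserts that the lemma is ``a direct consequence of the description'' of the elementary allowed transformations in Subsections \ref{Elementary allowed transformations} and \ref{sec:EquationsforPuiseuxPackages}, and you have spelled out that verification case by case, including the unit check for $(y'_\ell+\lambda)^b$ via the residue field. (Your observation that $b$ is in fact nonnegative is a slight strengthening of the stated $b\in\mathbb Z$, and is consistent with the nonnegativity of the entries of $C$ recalled in the text.)
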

\begin{proof} This result is a direct consequence of the description, given in Subsections \ref{Elementary allowed transformations} and \ref{sec:EquationsforPuiseuxPackages},  of the elementary allowed transformations.
\end{proof}
\begin{proposition}
\label{prop:stabilityofcriticalvalue}
Let
$
{\mathcal A}
\rightarrow{\mathcal A}'
$ be an allowed transformation  and consider
$\alpha\in \Omega^p_{\mathcal A}$. We have
$
\nu_{{\mathcal A}'}(\alpha)\geq\nu_{\mathcal A}(\alpha).
$
Moreover, if $p\in \{0,1\}$ and ${\mathfrak c}_{\mathcal A}^{p}(\alpha)\ne 0$, then we have that  $\nu_{{\mathcal A}'}(\alpha)=\nu_{\mathcal A}(\alpha)$ and ${\mathfrak c}_{{\mathcal A}'}^{p}(\alpha)\ne 0$.
\end{proposition}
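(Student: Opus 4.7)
The strategy is to reduce to elementary allowed transformations, establish the monotonicity for arbitrary $p$, and then handle the preservation of $\mathfrak{c}^p$ for $p=0$ and $p=1$ by a case analysis on the three types of elementary transformations, bootstrapping $p=1$ from $p=0$.

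\textbf{Monotonicity and the case $p=0$.} By Lemma \ref{lema:transformationofamonomial}, every monomial satisfies $\boldsymbol{x}^I=U'\boldsymbol{x}'^{I'}$ with $U'\in\Omega^0_{\mathcal{A}'}$ a unit, hence $\nu(\boldsymbol{x}'^{I'})=\nu(\boldsymbol{x}^I)$ and the inclusion $\widehat{\mathcal I}^\delta_{\mathcal{A}}\subset \widehat{\mathcal I}^\delta_{\mathcal{A}'}$ holds; combined with $\Omega^p_{\mathcal A}\subset \Omega^p_{\mathcal{A}'}$ from Remark \ref{rk:transforms}, the inequality $\nu_{\mathcal{A}'}(\alpha)\geq \nu_{\mathcal{A}}(\alpha)$ follows for every $p\geq 0$. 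For $p=0$, write $f=\sum_I\boldsymbol{x}^If_I(\boldsymbol{y})$ with $\delta=\nu_{\mathcal{A}}(f)$ and assume $\mathfrak{c}^0_{\mathcal{A}}(f)\neq 0$, i.e., $f_{I_0}(0)\neq 0$ for some $I_0$ with $\nu(\boldsymbol{x}^{I_0})=\delta$. The assignment $I\mapsto I'$ is injective: the identity for coordinate changes, a shear for independent blow-ups, and the linear map $I\mapsto C_0^tI$ with $\det C_0\neq 0$ from Remark \ref{rk:propiedadesecuacionespuiseux} for Puiseux's packages. Substituting $\boldsymbol{x}'=\boldsymbol{y}'=0$ in each $U'_I f_I(\boldsymbol{y}(\boldsymbol{x}',\boldsymbol{y}'))$ yields $\tau_I f_I(0)$, with $\tau_I=\lambda^{b_I}\neq 0$ for Puiseux's packages (since $\lambda\neq 0$) and $\tau_I=1$ otherwise; this uses that every $y_j$ vanishes at $\boldsymbol{x}'=\boldsymbol{y}'=0$. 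Hence the image of the $\delta$-initial form in $k[X'_1,\ldots,X'_r]_\delta$ equals $\sum_{\nu(\boldsymbol{x}^I)=\delta}\tau_I f_I(0)X'^{I'}$, which is non-zero by the injectivity of $I\mapsto I'$ and the assumption $f_{I_0}(0)\neq 0$. This gives both $\mathfrak{c}^0_{\mathcal{A}'}(f)\neq 0$ and $\nu_{\mathcal{A}'}(f)=\delta$.

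\textbf{The case $p=1$.} Write $\alpha=\sum_i f_i\,dx_i/x_i+\sum_j g_j\,dy_j$ and suppose $\mathfrak{c}^0_{\mathcal{A}}(f_{i_0})\neq 0$ for some index $i_0$. For an $\ell$-coordinate change, $dx_i/x_i$ is unchanged and $dy_\ell=dy'_\ell-df$, so $f_i^{\mathrm{new}}=f_i-g_\ell\,x_i(\partial f/\partial x_i)$; Lemma \ref{lema:explicitvalueofaderivative} combined with $\nu_{\mathcal{A}}(f)\geq\nu(y_\ell)>0$ gives that the correction has explicit value strictly larger than $\delta$, so the residual $\delta$-initial form of $f_i^{\mathrm{new}}$ coincides with that of $f_i$, and the $p=0$ case yields $\mathfrak{c}^1_{\mathcal{A}'}(\alpha)\neq 0$. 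For an independent blow-up with centre $\{x_a,x_b\}$, only $f_a^{\mathrm{new}}=f_a+f_b$ is altered; if $i_0\neq a$ the conclusion follows from the $p=0$ case applied to $f_{i_0}$, and if $i_0=a$ then either $\pi\operatorname{In}^\delta_{\mathcal{A}'}(f_a+f_b)\neq 0$, or else cancellation forces $\pi\operatorname{In}^\delta_{\mathcal{A}'}(f_b)=-\pi\operatorname{In}^\delta_{\mathcal{A}'}(f_a)\neq 0$ and we use $f_b^{\mathrm{new}}=f_b$. Finally, for an $\ell$-Puiseux's package, Equation \eqref{eq:puiseux4} yields $f_t^{\mathrm{new}}=\sum_{s=1}^r c_{st}f_s+c_{r+1,t}\,g_\ell y_\ell$; since $\nu_{\mathcal{A}'}(g_\ell y_\ell)\geq\delta+\nu(y_\ell)>\delta$, the vector $(\pi\operatorname{In}^\delta_{\mathcal{A}'}f_t^{\mathrm{new}})_t$ is the image of $(\pi\operatorname{In}^\delta_{\mathcal{A}'}f_s)_s$ under the invertible matrix $C_0^t$, so non-vanishing is preserved.

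\textbf{Main obstacle.} The delicate case is the Puiseux's package for $p=1$, where one has to cleanly separate the logarithmic block $C_0$ from the $dy'_\ell$-component (which carries the extra unit factor $1/(y'_\ell+\lambda)$) and check that the cross-term $c_{r+1,t}g_\ell y_\ell$ has strictly larger explicit value than $\delta$; invertibility of $C_0$ then propagates the non-vanishing of $\mathfrak{c}^1$.
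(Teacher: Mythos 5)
Your proof is correct and follows essentially the same route as the paper: reduction to elementary transformations, Lemma \ref{lema:transformationofamonomial} for the monotonicity and the $p=0$ case, and for $p=1$ the invertibility of $C_0$ combined with $\nu_{\mathcal{A}'}(y_\ell g_\ell)>\delta$ (which the paper phrases as $\bar{y}_\ell=0$). The main cosmetic difference is that the paper first factors out $\boldsymbol{x}^I$ and works with a $0$-final dominant remainder $\beta$, whereas you track the $\delta$-initial forms of the coefficients $f_i$ directly; the underlying computation is identical.
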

\begin{proof} If ${\mathcal A}\rightarrow {\mathcal A}'$ is an $\ell$-coordinate change, we are done. Thus, we have only to solve the cases of an independent blow-up or an $\ell$-Puiseux's package.

Put $\delta=\nu_{\mathcal A}(\alpha)$. Let us prove that $\nu_{{\mathcal A}'}(\alpha)\geq \delta$.
By the properties of the explicit order, we have only to consider the case
$\alpha=f\in \Omega^0_{\mathcal A}$. Write $f$ as a finite sum
\[
f=\sum_{\nu(\boldsymbol{x}^I)\geq \delta}\boldsymbol{x}^Ih_I,\quad   h_I\in k[[\boldsymbol{x},\boldsymbol{y}]].
\]
Now, it is enough to show that
$\nu_{{\mathcal A}'}(\boldsymbol{x}^I)=\nu(\boldsymbol{x}^I)$. By Lemma \ref{lema:transformationofamonomial}, there is a unit $U'$ in ${\mathcal O}_{{\mathcal A}'}$
such that
 $
 \boldsymbol{x}^I= U'{\boldsymbol{x}'}^{I'}
 $.
This implies that $\nu_{{\mathcal A}'}(\boldsymbol{x}^I)= \nu({\boldsymbol{x}'}^{I'})= \nu(\boldsymbol{x}^I)$.

Let us prove the second statement.
Assume that $\alpha=f\in \widehat{\mathcal O}_{M,P}$. We know that $\nu_{\mathcal A}(f)=\delta$ and
${\mathfrak c}^0_{\mathcal A}(f)\ne 0$ if and only if we can write
\[
f=\boldsymbol{x}^If_I+\tilde f,
\]
where $f_I\in k[[\boldsymbol{x},\boldsymbol{y}]]$ is a unit,
$\nu(\boldsymbol{x}^I)=\delta$  and $\nu_{\mathcal A}(\tilde f)>\delta$. By Lemma \ref{lema:transformationofamonomial} we can write
\[
f={\boldsymbol{x}'}^{I'}(f_IU')+\tilde f
\]
and we are done since $f_IU'$ is a unit in $k[[\boldsymbol{x}',\boldsymbol{y}']]$ and $\nu_{{\mathcal A}'}(\tilde f)>\delta$.

Assume now that $\alpha\in \Omega^1_{\mathcal A}$. Saying that $\nu_{\mathcal A}(\alpha)=\delta$ and
${\mathfrak c}_{\mathcal A}^1(\alpha)\ne 0$ is equivalent to saying that if $\alpha=\boldsymbol{x}^I\beta+\tilde\alpha$,  $\nu_{\mathcal A}(\tilde \alpha)>\delta$ and $\nu(\boldsymbol{x}^I)=\delta$, then  $\nu_{\mathcal A}(\beta)=0$ and ${\mathfrak c}_{\mathcal A}^1(\beta)\ne 0$. Write $\alpha=\boldsymbol{x}^I\beta+\tilde\alpha$ as before. By Lemma \ref{lema:transformationofamonomial} and the previous result, the only thing we have to prove is that $\nu_{{\mathcal A}'}\beta=0$ and ${\mathfrak c}_{{\mathcal A}'}^1(\beta)\ne 0$.
Write
\[
\beta=\sum_{s=1}^rf_s\frac{dx_s}{x_s}+\sum_{j=1}^{m-r}g_jdy_j.
\]
We know that there is a unit among the coefficients $f_s$. In the case of an independent blow-up, we have
\[
\beta=\sum_{s=1}^rf'_s\frac{dx'_s}{x'_s}+\sum_{j=1}^{m-r}g_jdy'_j,
\]
where $(f'_1,f'_2,\ldots,f'_r)=(f_1,f_2,\ldots,f_r)A$ and $A$ is a matrix with $\det A=1$ and nonnegative integer coefficients. Then, there is a unit between the coefficients $f'_s$ and hence $\nu_{E'}(\beta)=\delta$ and ${\mathfrak c}^1_{{\mathcal A}'}(\beta)\ne 0$. In the case of
an $\ell$-Puiseux's package, let us write
\[
\beta=\sum_{s=1}^rf_s\frac{dx_s}{x_s}+g_\ell y_\ell\frac{ dy_\ell}{y_\ell}+\sum_{j\ne \ell }g_jdy_j= \sum_{s=1}^rf'_s\frac{dx'_s}{x'_s}+g'_\ell\frac{ dy'_\ell}{y'_\ell+\lambda}+\sum_{j\ne \ell }g'_jdy'_j.
\]
By Equation \eqref{eq:puiseux4}, we have that
$
(f'_1,f'_2,\ldots,f'_r,g'_\ell)= (f_1,f_2,\ldots,f_r,y_\ell g_\ell)C
$.
Let us denote $\bar f=f+{{\mathfrak m}'}\Omega^0_{{\mathcal A}'}\in k$. We know that $\bar{y}_\ell=0$. Thus
\[
(\bar f'_1,\bar f'_2,\ldots,\bar f'_r,\bar g'_\ell)= (\bar f_1,\bar f_2,\ldots,\bar f_r,0)C,
\]
 and hence $(\bar f'_1,\bar f'_2,\ldots,\bar f'_r)= (\bar f_1,\bar f_2,\ldots,\bar f_r)C_0$. Since $\det C_0\ne 0$, we conclude that there is a unit among the $f'_s$ and thus $\nu_{{\mathcal A}'}(\beta)=0$ and ${\mathfrak c}^1_{{\mathcal A}'}(\beta)\ne 0$.
\end{proof}

\begin{corollary}[Stability]
\label{cor:stabilityofcriticalvalue}
Consider
$\alpha\in \Omega^p_{\mathcal A}$ with $p\in \{0,1\}$ and a real number $\gamma\in {\mathbb R}$. Let
$
{\mathcal A}
\rightarrow{\mathcal A}'
$ be an allowed transformation. Then
\begin{itemize}
\item If $({\mathcal A},\alpha)$ is $\gamma$-final recessive, then $({\mathcal A}',\alpha)$ is also $\gamma$-final recessive.
\item If $({\mathcal A},\alpha)$ is $\gamma$-final dominant with $\nu_{\mathcal A}(\alpha)=\delta$, then $({\mathcal A}',\alpha)$ is also $\gamma$-final dominant with $\nu_{{\mathcal A}'}(\alpha)=\delta$.
\end{itemize}
\end{corollary}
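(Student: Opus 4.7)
The corollary is essentially a direct reading of Proposition \ref{prop:stabilityofcriticalvalue} against the definitions of $\gamma$-final recessive and $\gamma$-final dominant, so the plan is to dispatch the two cases separately with almost no extra work.

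For the recessive case, my plan is to start from the hypothesis $\nu_{\mathcal{A}}(\alpha) > \gamma$ and invoke the first assertion of Proposition \ref{prop:stabilityofcriticalvalue}, which gives $\nu_{\mathcal{A}'}(\alpha) \geq \nu_{\mathcal{A}}(\alpha)$. Combining these yields $\nu_{\mathcal{A}'}(\alpha) > \gamma$, which is precisely the definition of $(\mathcal{A}',\alpha)$ being $\gamma$-final recessive. Note that this part does not require $p \in \{0,1\}$ and works for any $p$.

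For the dominant case, the hypothesis gives $\delta = \nu_{\mathcal{A}}(\alpha) \leq \gamma$ together with $\mathfrak{c}^p_{\mathcal{A}}(\alpha) \neq 0$. Since $p \in \{0,1\}$, the second assertion of Proposition \ref{prop:stabilityofcriticalvalue} applies directly and produces both $\nu_{\mathcal{A}'}(\alpha) = \delta$ and $\mathfrak{c}^p_{\mathcal{A}'}(\alpha) \neq 0$. As $\delta \leq \gamma$ is unchanged, these are exactly the two conditions in the definition of $\gamma$-final dominant for $(\mathcal{A}',\alpha)$, with the same explicit value $\delta$.

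There is no substantive obstacle here: the work has been done in Proposition \ref{prop:stabilityofcriticalvalue}, and the corollary is just the translation of that proposition into the language of the $\gamma$-final dichotomy. The only thing to be mildly careful about is that the proposition handles both ``$\nu_{\mathcal{A}'} \geq \nu_{\mathcal{A}}$'' (used for the recessive case) and the stronger ``equality of values together with non-vanishing of $\mathfrak{c}^p$'' (used for the dominant case), and one must not mix the two. In particular, the hypothesis $p \in \{0,1\}$ is only needed for the dominant statement, whereas the recessive statement is automatic by monotonicity of the explicit value under allowed transformations.
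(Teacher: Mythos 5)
Your proposal is correct and matches the paper's intent exactly: the paper states this corollary without a proof precisely because it is the immediate translation of Proposition \ref{prop:stabilityofcriticalvalue} into the $\gamma$-final dichotomy, using monotonicity for the recessive case and the equality-plus-$\mathfrak{c}^p\neq 0$ assertion for the dominant case. Your remark that $p\in\{0,1\}$ is only needed in the dominant case is a fair observation and consistent with the proposition's hypotheses.
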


\begin{proposition}
\label{prop:notfinalformsandcriticalvalue}
Let $0\ne \alpha\in \Omega^p_{\mathcal A}$ with $p\in \{0,1\}$ be such that ${\mathfrak c}^p_{\mathcal A}(\alpha)=0$.
Consider a sequence of allowed transformations
\[{\mathcal A}={\mathcal A}_0\rightarrow {\mathcal A}_1\rightarrow \cdots \rightarrow {\mathcal A}_{m-r}={\mathcal A}'\]
such that each ${\mathcal A}_{\ell-1}\rightarrow{\mathcal A}_{\ell}$ is an $\ell$-Puiseux's package. We have $\nu_{{\mathcal A}'}(\alpha)>\nu_{\mathcal A}(\alpha)$.
\end{proposition}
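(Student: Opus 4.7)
The plan is to handle the cases $p=0$ and $p=1$ in sequence, with the function case feeding directly into the $1$-form case. The central technical input, powering both cases, is the following observation: after the full sequence of Puiseux's packages indexed by $\ell=1,2,\ldots,m-r$, every dependent parameter $y_\ell$ has strictly positive explicit value in ${\mathcal A}'$. This will follow from item (3) of Remark \ref{rk:propiedadesecuacionespuiseux}, which gives $\nu_{{\mathcal A}_\ell}(y_\ell)>0$ right after the $\ell$-th package, combined with the stability statement (Proposition \ref{prop:stabilityofcriticalvalue}), which prevents any subsequent transformation from lowering this explicit value.

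For the function case $p=0$, set $\delta=\nu_{\mathcal A}(f)$. Since $\nu(x_1),\ldots,\nu(x_r)$ are $\mathbb Q$-linearly independent, there is a unique multi-index $I_0$ with $\nu(\boldsymbol{x}^{I_0})=\delta$, and I would decompose $f=\boldsymbol{x}^{I_0}f_{I_0}+\tilde f$, with $\nu_{\mathcal A}(\tilde f)>\delta$ and $f_{I_0}\in k[[\boldsymbol{y}]]$. The hypothesis ${\mathfrak c}^0_{\mathcal A}(f)=0$ says exactly that $f_{I_0}(0)=0$, so $f_{I_0}\in(\boldsymbol{y})k[[\boldsymbol{y}]]$ and can be written as $\sum_j y_j h_j$. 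Stability alone yields $\nu_{{\mathcal A}'}(\tilde f)>\delta$ for the tail. For the leading term, Lemma \ref{lema:transformationofamonomial} keeps $\nu_{{\mathcal A}'}(\boldsymbol{x}^{I_0})=\delta$, while the above decomposition together with the key observation gives $\nu_{{\mathcal A}'}(f_{I_0})\geq\min_j \nu_{{\mathcal A}'}(y_j)>0$. Combining these two estimates produces $\nu_{{\mathcal A}'}(f)>\delta$.

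For the $1$-form case $p=1$, write $\alpha=\sum_{i=1}^r f_i\,dx_i/x_i+\sum_{j=1}^{m-r} g_j\,dy_j$. I would first check that ${\mathfrak c}^1_{\mathcal A}(\alpha)=0$ is equivalent to the statement that none of the logarithmic coefficients $f_i$ is $\delta$-final dominant: for each $i$ we either have $\nu_{\mathcal A}(f_i)>\delta$ (handled by stability) or $\nu_{\mathcal A}(f_i)=\delta$ with ${\mathfrak c}^0_{\mathcal A}(f_i)=0$ (handled by the function case just proved); in both subcases $\nu_{{\mathcal A}'}(f_i)>\delta$. The transformation rules of Subsection \ref{sec:EquationsforPuiseuxPackages}, together with Remark \ref{rk:propiedadesecuacionespuiseux}(2), express each $dx_i/x_i$ in the new parameters as a combination of the $dx'_s/x'_s$ with integer coefficients, not all zero (plus, for a Puiseux's package, a unit multiple of $dy'_\ell$), whence $\nu_{{\mathcal A}'}(dx_i/x_i)=0$ and $\nu_{{\mathcal A}'}(f_i\,dx_i/x_i)>\delta$. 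For the remaining terms, Proposition \ref{prop:explicitvalueofthedifferential} gives $\nu_{{\mathcal A}'}(dy_j)=\nu_{{\mathcal A}'}(y_j)>0$ (using that $y_j\in\mathfrak{m}_{{\mathcal A}'}\Omega^0_{{\mathcal A}'}$), and stability gives $\nu_{{\mathcal A}'}(g_j)\geq\delta$, so $\nu_{{\mathcal A}'}(g_j\,dy_j)>\delta$. Summing over all terms yields $\nu_{{\mathcal A}'}(\alpha)>\delta$.

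The main obstacle, and really the only conceptual point, is the initial claim that every dependent parameter $y_\ell$ acquires positive explicit value in ${\mathcal A}'$; one has to be careful that the subsequent $\ell'$-Puiseux's packages for $\ell'>\ell$ do not undo the gain achieved at step $\ell$, which is precisely what Proposition \ref{prop:stabilityofcriticalvalue} guarantees. A secondary bookkeeping point is verifying $\nu_{{\mathcal A}'}(dx_i/x_i)=0$ uniformly across all three kinds of elementary allowed transformations; once these two points are secured, the remainder of the argument is a straightforward application of the valuative properties of $\nu_{{\mathcal A}'}$ listed in Section 3.
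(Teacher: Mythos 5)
Your proof is correct and rests on exactly the same mechanism as the paper's: after factoring out the unique monomial $\boldsymbol{x}^{I_0}$ of value $\delta$, the hypothesis ${\mathfrak c}^p_{\mathcal A}(\alpha)=0$ lets you write the initial part as a sum of terms each carrying a factor $y_s$ or $dy_s$, and Remark \ref{rk:propiedadesecuacionespuiseux}(3) together with Proposition \ref{prop:stabilityofcriticalvalue} forces each such factor to acquire and retain positive explicit value after its own Puiseux's package. The only difference from the paper is organizational (you treat $p=0$ first and feed it into $p=1$, while the paper handles both cases with one decomposition $\alpha=\boldsymbol{x}^I\alpha_I+\tilde\alpha$), which does not affect correctness.
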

\begin{proof} Put $\delta=\nu_{\mathcal A}(\alpha)$, note that $\delta<\infty$, since $\alpha\ne 0$. Let us write $\alpha$ as
\[
\alpha=\boldsymbol{x}^I\alpha_I+\tilde \alpha, \quad \nu_{\mathcal A}(\tilde \alpha)>\delta,\quad \nu(\boldsymbol{x}^I)=\delta,
\]
where, in addition, we write $\alpha_I$ as follows:
\[
\alpha_I=\sum_{s=1}^{m-r}y_s\beta_s,\;\text{ if } p=0;\quad
\alpha_I=\sum_{s=1}^{m-r}(y_s\beta_s+h_sdy_s),\; \text{ if } p=1.
\]
This is possible because ${\mathfrak c}_{\mathcal A}^p(\alpha)=0$. The transforms in ${\mathcal A}_{s-1}$ of $y_s\beta_s$ and $h_sy_s$ have the form
\[
y^{(s-1)}_s\beta^{(s-1)}_s \text{ and } h^{(s-1)}_sdy^{(s-1)}_s.
\]
By Remark \ref{rk:propiedadesecuacionespuiseux}, after performing the $s$-Puiseux's package ${\mathcal A}_{s-1}\rightarrow {\mathcal A}_s$ we have that
$
\nu_{{\mathcal A}_s}(y^{(s-1)}_s)>0$ and $\nu_{{\mathcal A}_s}(dy^{(s-1)}_s)>0
$.
In this way, we obtain that $\nu_{{\mathcal A}'}(\alpha_I)>0$. Hence $\nu_{{\mathcal A}'}(\alpha)>\delta$.
\end{proof}
\begin{remark}
Let us fix a real number $\gamma\geq 0$, a locally parameterized model ${\mathcal A}$ and $0\ne\alpha\in \Omega^p_{{\mathcal A}}$, $p\in \{0,1\}$. In order to obtain a $\gamma$-truncated local uniformization of $\alpha$, a rough idea is to increase the explicit value $\nu_{\mathcal A}(\alpha)$ while $ \alpha $ is not final dominant. By Proposition \ref{prop:notfinalformsandcriticalvalue}, we can increase the value, but we have to deal with the possibility of an accumulation of the explicit values before arriving to the truncation limit given by $\gamma$. This is one of the classical difficulties in Local Uniformization problems.
\end{remark}
\subsection{Final forms and Independent Blow-ups}
It is a classical result \cite{Spi1, Hir1, Zar1, Cut} that any monomial ideal in the independent variables becomes a principal ideal under a suitable sequence of independent blow-ups. Let us state here these results in a useful way for the proof of the truncated local uniformization of $1$-forms.

\begin{proposition}[Monomial Local Principalization]
\label{prop:monlocprincipalization}
Let
$
{\mathcal A}
$ be a  locally parameterized model. Consider a nonempty family
\[
{\mathcal L}=\{m_I=U_I\boldsymbol{x}^I\}_{I\in A},\quad A\subset{\mathbb Z}_{\geq 0}^r,
\]
where $U_I$ is a unit in $\Omega^0_{\mathcal A}$ for any $I\in A$. Let $I_0$ be the index in $A$ such that $\nu(\boldsymbol{x}^{I_0})$ is minimum. There is a transformation ${\mathcal A}\rightarrow {\mathcal A'}$ obtained as a composition of finitely many independent blow-ups such that the transformed list
\[{\mathcal L}'=\{m'_I=U'_I{\boldsymbol{x}'}^{I'}\}_{I\in A}\]
has the property that ${\boldsymbol{x}'}^{I'_0}$ divides any other $m'_I$.
\end{proposition}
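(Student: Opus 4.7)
By Lemma \ref{lema:transformationofamonomial}, every composition of independent blow-ups sends a monomial $\boldsymbol{x}^I$ to ${\boldsymbol{x}'}^{I'}$ without introducing any unit factor, so the units $U_I$ of the family $\mathcal{L}$ remain units in $\Omega^0_{\mathcal{A}'}$ and the required divisibility ${\boldsymbol{x}'}^{I'_0} \mid m'_I$ becomes the purely combinatorial condition $I'_0 \leq I'$ (componentwise) in $\mathbb{Z}_{\geq 0}^r$. Moreover the valuation $\nu(\boldsymbol{x}^I) = \nu({\boldsymbol{x}'}^{I'})$ is preserved by these transformations, so $I_0$ continues to be the element of minimal value. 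Thus the proposition reduces to the combinatorial statement: given a finite family $A \subset \mathbb{Z}_{\geq 0}^r$ with a distinguished element $I_0$ of minimal value under the $\mathbb{Q}$-linearly independent positive form $\phi(v) = \sum_s v_s \nu(x_s)$, some sequence of independent blow-ups makes $I'_0$ componentwise smallest in $A'$.

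The plan is to induct on $\#A$. The base case $\#A=1$ is trivial. For the inductive step, the key observation is the \emph{stability of componentwise divisibility} under further independent blow-ups: if a blow-up of $(x_s, x_t)$ with $\nu(x_s) < \nu(x_t)$ transforms an exponent $J$ to $J'$ by $J'_s = J_s + J_t$ and $J'_u = J_u$ for $u \neq s$, then $I_0 \leq I$ immediately yields $I'_0 \leq I'$. Consequently, once a pair $(I_0, I)$ has been principalized, it remains principalized under any subsequent independent blow-ups. Combining this with the \emph{pairwise principalization} statement --- that for any single $I \in A$ there exist independent blow-ups making $I'_0 \leq I'$ --- an induction on the number of still-unprincipalized elements of $A$ gives the proposition.

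It remains to establish pairwise principalization for $(I_0, I)$ with $\phi(I) \geq \phi(I_0)$. Setting $K = I - I_0 \in \mathbb{Z}^r$, one has $\phi(K) \geq 0$; the goal is to reach $K \in \mathbb{Z}_{\geq 0}^r$ through transformations $K_s \mapsto K_s + K_t$ (corresponding to blow-ups with $\nu(x_s) < \nu(x_t)$). This is the classical toric / Perron continued-fraction algorithm (\cite{Zar1, Spi1, Cut}): at each step one selects a pair of indices that decreases a suitable Perron-type invariant, such as the $\phi$-mass of the negative part $\sum_{K_s < 0} |K_s| \nu(x_s)$. The main obstacle is verifying strict decrease of this invariant and termination in arbitrary rational rank $r$; the crucial input is the $\mathbb{Q}$-linear independence of the $\nu(x_s)$, equivalent to injectivity of $\phi$ on $\mathbb{Z}^r$, which precludes cycling and forces termination in finitely many steps. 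The two-dimensional case reduces to the Euclidean algorithm applied to the irrational ratio $\nu(x_2)/\nu(x_1)$, and the higher-dimensional case is handled by the standard toric resolution of monomial ideals.
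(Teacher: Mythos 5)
Your approach --- reducing to pairwise principalization of $(I_0,I)$ via a Perron-type algorithm, and using the stability of componentwise domination under further independent blow-ups --- is a valid alternative to the paper's hint of successively eliminating vertices of the Newton polyhedron of $\mathcal L$. There is, however, a gap in the inductive skeleton: the proposition does not assume that $A$ is finite, and in every place it is invoked (Corollary~\ref{cor:principalizacion}, Subsection~\ref{subsection:preparation}) the family $\mathcal L$ consists of all monomials appearing in a formal power series, which is typically infinite. An induction on $\#A$ therefore does not apply as stated. The repair is to run the pairwise argument only over the finitely many vertices $V$ of the Newton polyhedron of $A$: once $I'_0\leq v'$ for all $v\in V$, one gets $I'_0\leq I'$ for every $I\in A$, since any such $I$ lies in $\operatorname{conv}(V)+{\mathbb R}_{\geq 0}^r$ and the blow-up matrices have nonnegative integer entries, hence preserve both convex combinations and the positive orthant. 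This observation is exactly what makes the paper's one-sentence ``eliminate vertices'' proof legitimate, and you should incorporate it explicitly.

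A secondary caveat: the assertion that ${\mathbb Q}$-linear independence of the $\nu(x_s)$ ``precludes cycling and forces termination'' is not a proof of termination. The negative-mass invariant $\sum_{K_s<0}|K_s|\,\nu(x_s)$ takes values in a dense subset of ${\mathbb R}_{\geq 0}$, so strict decrease alone is compatible with an infinite strictly decreasing sequence bounded away from $0$. Termination of the Perron algorithm really does require the convergence-of-cones argument (the cones carved out by iterated independent blow-ups exhaust the open half-space $\{K:\sum_s K_s\nu(x_s)>0\}$), which is what the cited references establish. Since the paper itself defers to those references, deferring is acceptable here, but the monotonic decrease of the invariant should not be presented as self-sufficient for termination.
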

\begin{proof}(See the above references). The proof may be done by succesive elimination of vertices of the Newton Polyhedron of $\mathcal L$. We leave the details to the reader.
\end{proof}
\begin{corollary}
\label{cor:principalizacion}
 Consider a $\rho$-final dominant $\omega\in \Omega^1_{\mathcal A}$. There is an allowed transformation ${\mathcal A}\rightarrow {\mathcal A'}$, obtained as a composition of finitely many independent blow-ups, such that $\omega$ has the form
\[
\omega={{\boldsymbol x}'}^{I'_0} \omega',\quad \omega'\in \Omega^1_{{\mathcal A}'}
\]
where $({\mathcal A}',\omega')$ is $0$-final dominant  and $\nu({{\boldsymbol x}'}^{I'_0})=\rho$.
\end{corollary}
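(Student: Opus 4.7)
The plan is to apply Proposition \ref{prop:monlocprincipalization} (Monomial Local Principalization) to the family of $\boldsymbol{x}$-monomials appearing in the series expansion of $\omega$, and then factor out the resulting common monomial. Write $\omega$ in its logarithmic form
\[
\omega = \sum_{i=1}^r f_i \frac{dx_i}{x_i} + \sum_{j=1}^{m-r} g_j \, dy_j,
\]
and expand each coefficient as $f_i = \sum_I f_{i,I}\boldsymbol{x}^I$, $g_j = \sum_I g_{j,I}\boldsymbol{x}^I$ with $f_{i,I}, g_{j,I} \in k[[\boldsymbol{y}]]$. Let $\mathcal L$ denote the family of monomials $\boldsymbol{x}^I$ for which some $f_{i,I}$ or $g_{j,I}$ is non-zero. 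Because the values $\nu(x_1),\dots,\nu(x_r)$ form a $\mathbb{Q}$-basis of $\Gamma\otimes_{\mathbb Z}\mathbb Q$, the values $\nu(\boldsymbol{x}^I)$ are pairwise distinct, so there is a unique $I_0 \in \mathcal L$ with $\nu(\boldsymbol{x}^{I_0}) = \nu_{\mathcal A}(\omega) = \rho$.

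Next I would invoke Proposition \ref{prop:monlocprincipalization} with all unit multipliers taken equal to $1$ (the proof uses only the finitely many vertices of the Newton polyhedron, so the possible infinitude of $\mathcal L$ is harmless). This produces a composition of independent blow-ups ${\mathcal A}\to{\mathcal A}'$ such that the transformed monomial ${\boldsymbol{x}'}^{I'_0}$ divides every other transformed monomial coming from $\mathcal L$. By Lemma \ref{lema:transformationofamonomial}, for independent blow-ups each $\boldsymbol{x}^I$ equals ${\boldsymbol{x}'}^{I''}$ without any extra unit factor, and in particular $\nu({\boldsymbol{x}'}^{I'_0}) = \nu(\boldsymbol{x}^{I_0}) = \rho$.

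Transform $\omega$ to ${\mathcal A}'$: using the transformation rules of Section \ref{sec:Transformations of Locally Parameterized Models}, the new coefficients of $dx'_i/x'_i$ are integer linear combinations of the old $f_i$, while $g'_j = g_j$ and $dy'_j = dy_j$. Hence every monomial appearing in the new logarithmic expression of $\omega$ lies in the transform of $\mathcal L$ and is therefore divisible by ${\boldsymbol{x}'}^{I'_0}$. So one can factor
\[
\omega = {\boldsymbol{x}'}^{I'_0}\,\omega',\qquad \omega'\in\Omega^1_{{\mathcal A}'}.
\]
Since $\nu_{{\mathcal A}'}({\boldsymbol{x}'}^{I'_0})=\rho$ and, by Proposition \ref{prop:stabilityofcriticalvalue}, $({\mathcal A}',\omega)$ is still $\rho$-final dominant with $\nu_{{\mathcal A}'}(\omega)=\rho$, we obtain $\nu_{{\mathcal A}'}(\omega')=0$.

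Finally I would verify the $0$-final dominance of $\omega'$. By Proposition \ref{prop:stabilityofcriticalvalue} we have ${\mathfrak c}^1_{{\mathcal A}'}(\omega)\ne 0$, meaning that some coefficient $f'_{i_0}$ of $dx'_{i_0}/x'_{i_0}$ in $\omega$ has a $\rho$-initial form whose image in $k[X_1,\dots,X_r]$ is non-zero. Writing $f'_{i_0}={\boldsymbol{x}'}^{I'_0}f''_{i_0}$, this image equals $\boldsymbol{X}'^{I'_0}\cdot f''_{i_0}(0,0)$, so $f''_{i_0}(0,0)\ne 0$, i.e.\ the $0$-initial form of the corresponding coefficient of $\omega'$ is a non-zero element of $k\subset {\mathcal G}^0_{{\mathcal A}'}/\overline{\mathfrak m}_{{\mathcal A}'}$, giving ${\mathfrak c}^1_{{\mathcal A}'}(\omega')\ne 0$. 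The main technical point to verify carefully is this last transfer of the dominance property through the factorization, together with the observation that the linear-combination formulas for the $dx_i/x_i$ do not disturb divisibility by the principalized monomial.
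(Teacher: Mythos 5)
Your proof is correct and follows essentially the same route as the paper: decompose $\omega$ by $\boldsymbol{x}$-monomials, apply Proposition~\ref{prop:monlocprincipalization} to principalize, and factor out the minimal-value monomial. The paper states this in one line and leaves the divisibility argument and the transfer of the $0$-final-dominant property (via Proposition~\ref{prop:stabilityofcriticalvalue} and the single-monomial structure of the $\rho$-initial form, which uses the rational independence of the $\nu(x_i)$) implicit; you spell these out explicitly, but the content is the same.
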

\begin{proof} Write
$
\omega= {\boldsymbol x}^{I_0}\omega_{I_0}+\sum_{\nu({\boldsymbol x}^{I})>\rho} {\boldsymbol x}^{I}\omega_I,
$
where $\nu({\boldsymbol x}^{I_0})=\rho$ and $\omega_{I_0}$ is $0$-final dominant and apply Proposition \ref{prop:monlocprincipalization} to the list ${\mathcal L}=\{\boldsymbol{x}^I; \nu({\boldsymbol x}^{I})\geq \rho\}$.
\end{proof}

\section{Truncated Local Uniformization}
\label{Truncated Local Uniformization}
In Theorem  \ref{teo:formalforms} we state the Truncated Local Uniformization for $1$-Forms:
\begin{theorem}
 \label{teo:formalforms}
 Consider a $\gamma$-truncated formal foliated space $({\mathcal A},\omega)$. There is an allowed transformation
$
{\mathcal A}\rightarrow {\mathcal A}'
$
such that  $({\mathcal A}',\omega)$
is $\gamma$-final.
\end{theorem}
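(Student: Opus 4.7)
The plan is to proceed by induction on the number of dependent parameters $\boldsymbol{y}$ that actually occur in the expression of $\omega$, following the outline indicated in the introduction. In the base case no $y_j$ appears, so all coefficients of $\omega$ lie in $k[[\boldsymbol{x}]]$; Corollary \ref{cor:principalizacion}, coming from monomial principalization, furnishes a composition of independent blow-ups after which either $\nu_{\mathcal A}(\omega)>\gamma$ or $\omega={\boldsymbol{x}'}^{I_0}\omega'$ with $\omega'$ being $0$-final dominant. For the inductive step, single out the last dependent variable $z=y_{\ell+1}$ and decompose
\[
\omega=\sum_{s\ge 0}z^s\omega_s,\qquad \omega_s=\eta_s+h_s\frac{dz}{z},\qquad h_0=0,
\]
where the $\eta_s$ are $1$-forms in the variables $(\boldsymbol{x},y_1,\ldots,y_\ell)$ only. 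The guiding remark is that if the Newton polygon $\mathcal N_\omega$ attached to the cloud $(s,\nu_{\mathcal A}(\omega_s))$ has its unique vertex at the $z^0$-level and $\eta_0$ is $\gamma$-final, then $\omega$ itself is $\gamma$-final, because all higher levels contribute terms whose explicit value is strictly larger. The strategy of the induction step is thus to reach, through allowed transformations, either $\nu_{\mathcal A}(\omega)>\gamma$ or this favourable polygon configuration together with the $\gamma$-finality of $\eta_0$, the latter provided by the induction hypothesis applied to the forms $\eta_s$ (which involve fewer dependent variables).

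The first half of the work is the $\gamma$-preparation of $\omega$: perform independent blow-ups to principalize the relevant monomials appearing in each level, and apply the induction hypothesis to the forms $\eta_s$ so that every significant level $\omega_s$ becomes $\rho_s$-final, for the appropriate value $\rho_s$ read on the polygon. The subtlety is that the individual $\eta_s$ are not themselves integrable; only the full $\omega$ satisfies the $\gamma$-truncated Frobenius condition. One therefore first obtains an \emph{approximate} preparation, made possible because the truncated integrability of $\omega$ forces each $\eta_s$ to be integrable up to a controllable truncation. To promote this into the \emph{strict} preparation needed for the second half, one invokes the truncated De~Rham--Saito division (Proposition \ref{prop:trucateddivision}) to transfer the dominant behaviour of $\eta_0$ across the higher levels via the integrability equations.

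The second half consists in controlling the critical height $\chi_{\mathcal A}(\omega)$ — the ordinate of the highest vertex of contact of $\mathcal N_\omega$ with the lines of slope $-1/\nu(z)$ — under iterated $\ell$-Puiseux's packages followed by nested coordinate changes $z'=z+f$ and re-preparation. The stability results (Proposition \ref{prop:stabilityofcriticalvalue}) ensure the polygon only moves upward, and the Puiseux equations of Subsection \ref{sec:EquationsforPuiseuxPackages} show that $\chi$ is non-increasing along normalized transformations; if it strictly decreases we iterate on $\chi$. The main obstacle is the stabilization case, in which $\chi$ stays constant through every normalized transformation. Analysis of the contact polynomial then forces certain resonance conditions (\textbf{r1}, \textbf{r2a}, \textbf{r2b}-$\boldsymbol{\upsilon}$) on the residual parts of the dominant coefficients; these resonances ensure that the $\ell$-ramification index collapses to $1$ after further preparation, which then permits the construction of an explicit Tschirnhausen change $z'=z+f$ that strictly decreases $\chi$. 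The case $\chi=2$ is handled directly by solving a linear equation for $f$; the case $\chi=1$ is more delicate and requires the truncated logarithmic Poincar\'e lemma (Proposition \ref{prop:logpoincare2} together with Corollary \ref{cor:logpoincare3}) to extract the coboundary $f$ from the truncated integrability of $\omega$. Finally, since the resonance \textbf{r1} can occur only at $\chi=1$ and at most once, the inner induction on $\chi$ terminates; combining with the outer induction on the number of dependent variables yields the desired transformation $\mathcal A\to\mathcal A'$ rendering $(\mathcal A',\omega)$ in $\gamma$-final form.
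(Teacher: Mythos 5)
Your proposal reproduces the paper's own strategy: induction on the number of dependent variables appearing in $\omega$ (via the stronger inductive statement Theorem \ref{teo:inductivestatement}), the level decomposition and Newton Polygon governed by Proposition \ref{pro:onevertex}, two-stage preparation (approximate, then strict via truncated De~Rham--Saito division), and control of the critical height under normalized Puiseux's packages and coordinate changes with a case analysis of the resonances \textbf{r1}, \textbf{r2a}, \textbf{r2b}-$\boldsymbol{\upsilon}$, Tschirnhausen changes for $\chi\geq 2$, and the truncated logarithmic Poincar\'e lemma for $\chi=1$. The only slip is in the base case, where you invoke Corollary \ref{cor:principalizacion}: in fact when $I_{\mathcal A}(\omega)=0$ no transformation is needed at all, since the coefficients are power series in $\boldsymbol x$ alone and $\gamma$-finality is automatic (Remark \ref{rk:startinginduction}); otherwise the outline agrees with the paper's argument.
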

We get also a Truncated Local Uniformization for Formal Functions as follows:
\begin{theorem}
 \label{teo:functions}
 Consider a formal function $f\in \Omega^0_{\mathcal A}$ and a real number $\gamma\in {\mathbb R}$. There is an allowed transformation
$
{\mathcal A}\rightarrow {\mathcal A}'
$
such that $({\mathcal A}',f)$ is $\gamma$-final.
\end{theorem}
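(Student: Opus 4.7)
The plan is to reduce Theorem~\ref{teo:functions} to the already-established Theorem~\ref{teo:formalforms} by passing from $f$ to $df$ and invoking Corollary~\ref{cor:gammafinalfdf}.

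First I would dispose of the degenerate cases in which $f\notin \mathfrak{m}_{\mathcal A}\Omega^0_{\mathcal A}$, so that no transformation is needed at all. If $f=0$, then $\nu_{\mathcal A}(f)=\infty>\gamma$, so $({\mathcal A},f)$ is $\gamma$-final recessive. If $f$ is a unit, then $\nu_{\mathcal A}(f)=0$; either $\gamma<0$ and $({\mathcal A},f)$ is $\gamma$-final recessive, or $\gamma\geq 0$, in which case $\operatorname{In}^0_{\mathcal A}(f)$ equals the nonzero residue of $f$ in $k\hookrightarrow{\mathcal G}_{\mathcal A}/\overline{\mathfrak{m}}_{\mathcal A}{\mathcal G}_{\mathcal A}$, giving $\mathfrak{c}^0_{\mathcal A}(f)\ne 0$ and $({\mathcal A},f)$ $\gamma$-final dominant.

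For the main case, $0\ne f\in \mathfrak{m}_{\mathcal A}\Omega^0_{\mathcal A}$, I consider $\omega=df\in\Omega^1_{\mathcal A}$. Since $d\circ d=0$, we have $\omega\wedge d\omega=df\wedge 0=0$, so $\nu_{\mathcal A}(\omega\wedge d\omega)=\infty\geq 2\gamma$ and $({\mathcal A},df)$ is a (genuine, not merely truncated) formal foliated space for every $\gamma\in\mathbb R$. Applying Theorem~\ref{teo:formalforms} to $({\mathcal A},df)$ and the given $\gamma$, I obtain an allowed transformation ${\mathcal A}\to{\mathcal A}'$ such that $({\mathcal A}',df)$ is $\gamma$-final. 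Now I need to transfer this back to $f$. The injectivity of $\Omega^0_{\mathcal A}\hookrightarrow \Omega^0_{{\mathcal A}'}$ from Remark~\ref{rk:transforms} ensures $f\ne 0$ in $\Omega^0_{{\mathcal A}'}$, while the fact that the underlying ring map ${\mathcal O}_{\mathcal A}\hookrightarrow {\mathcal O}_{{\mathcal A}'}$ is a local morphism (both rings being dominated by the valuation $R$, so that $\mathfrak{m}_{\mathcal A}=\mathfrak{m}_R\cap{\mathcal O}_{\mathcal A}\subset \mathfrak{m}_R\cap{\mathcal O}_{{\mathcal A}'}=\mathfrak{m}_{{\mathcal A}'}$) yields $f\in\mathfrak{m}_{{\mathcal A}'}\Omega^0_{{\mathcal A}'}$. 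Corollary~\ref{cor:gammafinalfdf} applied at ${\mathcal A}'$ then gives the equivalence $({\mathcal A}',f)\text{ is }\gamma\text{-final} \iff ({\mathcal A}',df)\text{ is }\gamma\text{-final}$, and we are done.

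The entire difficulty of the statement is packed into the black-box application of Theorem~\ref{teo:formalforms}; no additional obstacle appears here. As the authors stress in the introduction, the truncated formalism was designed precisely so that the functional version becomes an immediate corollary: exact forms automatically satisfy Frobenius integrability, and Corollary~\ref{cor:gammafinalfdf} provides the exact dictionary between $\gamma$-finality of $f$ and $\gamma$-finality of $df$. One could of course give a direct proof of Theorem~\ref{teo:functions} by induction on the number of dependent variables, decomposing $f=\sum_{s\geq 0}z^sf_s$ and running the Newton polygon argument on the coefficients, but the inductive hypothesis for functions is built into the $1$-form proof (Section~7 being, by the authors' own remark, unnecessary in the functional setting), so there is nothing to be gained by reworking it.
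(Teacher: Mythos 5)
Your proof is correct and follows exactly the route the paper intends: after stating Theorem~\ref{teo:functions}, the authors simply observe that it is ``a consequence of our main result Theorem~\ref{teo:formalforms}, when we consider the $1$-form $\omega=df$,'' which, together with Corollary~\ref{cor:gammafinalfdf}, is precisely your argument. You merely spell out the details the paper leaves implicit — the degenerate cases $f=0$ and $f$ a unit (where Corollary~\ref{cor:gammafinalfdf} does not directly apply), and the verification that $f$ stays in $\mathfrak{m}_{{\mathcal A}'}\Omega^0_{{\mathcal A}'}$ after the allowed transformation because the ring map is local — all of which is sound.
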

Theorem \ref{teo:functions} may be considered as an avatar of the classical Zariski's Local Uniformization in \cite{Zar1}. Note that Theorem \ref{teo:functions} is a consequence of our main result Theorem \ref{teo:formalforms}, when we consider the $1$-form $\omega=df$.

\subsection{Induction Structure}
\label{Induction Structure}
The proof of Theorem \ref{teo:formalforms}
 goes by induction on the
{\em  number $\operatorname{I}_{\mathcal A}(\omega)$ of dependent variables involved in the formal $1$-form $\omega$}. To be precise,  given  $s\in {\mathbb Z}_{\geq 0}$ we have
$\operatorname{I}_{\mathcal A}(\omega)\leq s$ if and only
\[
\omega=\sum_{i=1}^sf_i\frac{dx_i}{x_i}+\sum_{j=1}^{m-r}g_jdy_j
\]
satisfies that
\begin{itemize}
\item $f_i,g_j\in k[[\boldsymbol{x},y_1,y_2,\ldots,y_s]]$, for $i=1,2,\ldots,r$, $j=1,2,\ldots, m-r$.
\item $g_j=0$, for any $j=s+1,s+2,\ldots,m-r$.
\end{itemize}
\begin{remark}
\label{rk:startinginduction}
If $\operatorname{I}_{\mathcal A}(\omega)=0$, then $\omega$ is $\gamma$-final for any $ \gamma \in \mathbb{R} $. Namely, write $\omega$ as
\[
\omega= \boldsymbol{x}^{I_0} \frac{d \boldsymbol{x}^{I_0}}{\boldsymbol{x}^{I_0}}+
\sum_{\nu( \boldsymbol{x}^I)>\delta } \boldsymbol{x}^I \frac{d \boldsymbol{x}^{I_0}}{\boldsymbol{x}^{I_0}} , \quad \nu_{\mathcal A}(\omega)=\delta,
\]
where $\nu(\boldsymbol{x}^{I_0})=\delta$, $\lambda_I\in k^r$ and $\lambda_{I_0}\neq 0$. Then $\omega$ is $\gamma$-final dominant when $\gamma\leq \delta$ and it is $\gamma$-final recessive when $\gamma>\delta$.
 \end{remark}
 \begin{definition}
 \label{def:nestedtransformation} An allowed transformation ${\mathcal A}\rightarrow{\mathcal A}'$ is called a {\em $\ell$-nested transformation} when it is a finite composition of independent blow-ups, $\ell'$-coordinate changes and $\ell'$-Puiseux's packages,  with $\ell'\leq\ell$.
\end{definition}
\begin{remark} If ${\mathcal A}\rightarrow {\mathcal A}'$ is a $I_{\mathcal A}(\omega)$-nested transformation, then
$I_{{\mathcal A}'}(\omega)\leq I_{\mathcal A}(\omega)$.
\end{remark}
The inductive version of Theorem \ref{teo:formalforms} that we are going to prove is the following one:
\begin{theorem}
\label{teo:inductivestatement}
Consider a $\gamma$-truncated parameterized formal foliated space $({\mathcal A},\omega)$.
There is an $\operatorname{I}_{\mathcal A}(\omega)$-nested transformation ${\mathcal A}\rightarrow{\mathcal A}'$ such that $({\mathcal A}',\omega)$ is $\gamma$-final.
\end{theorem}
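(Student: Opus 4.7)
The plan is to proceed by induction on $\operatorname{I}_{\mathcal A}(\omega)$, the number of dependent variables involved in $\omega$. The base case $\operatorname{I}_{\mathcal A}(\omega)=0$ is handled by Remark \ref{rk:startinginduction}: $\omega$ is automatically $\gamma$-final (dominant if $\nu_{\mathcal A}(\omega)\leq\gamma$, recessive otherwise), and no transformation is needed. For the inductive step, set $\ell=\operatorname{I}_{\mathcal A}(\omega)$, rename $z=y_\ell$, and decompose
\[
\omega=\sum_{s\geq 0} z^s\omega_s,\quad \omega_s=\eta_s+h_s\frac{dz}{z},\quad h_0=0,
\]
where the $\eta_s$ and $h_s$ only involve $\boldsymbol{x},y_1,\ldots,y_{\ell-1}$. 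Attach to this decomposition the Newton polygon ${\mathcal N}_\omega\subset{\mathbb R}^2_{\geq 0}$ coming from the cloud $(s,\nu_{\mathcal A}(\omega_s))$. The guiding remark highlighted in the introduction tells us that it suffices to reach a situation where either $\nu_{\mathcal A}(\omega)>\gamma$ (recessive), or ${\mathcal N}_\omega$ has a single vertex of the form $(\rho,0)$ with $\omega_0=\eta_0$ being $\gamma$-final; in the latter case the form is $\gamma$-final dominant.

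First I would carry out the \emph{$\gamma$-preparation} of $\omega$. The aim is to arrange that the ``relevant'' levels $\omega_s$ (those contributing to the compact edges of ${\mathcal N}_\omega$) are $\nu_{\mathcal A}(\omega_s)$-final dominant, so that each vertex of ${\mathcal N}_\omega$ is readable on the independent parameters. To build this, I would apply the inductive hypothesis to the forms $\eta_s$, using that the $\gamma$-truncated integrability $\nu_{\mathcal A}(\omega\wedge d\omega)\geq 2\gamma$ forces an appropriate truncated integrability on each $\eta_s$; this yields an \emph{approximate} preparation. The strict preparation is then completed by invoking the truncated De Rham--Saito division of Proposition \ref{prop:trucateddivision}, which lets us absorb the coefficients of low value into a multiple of a privileged $0$-final dominant form, killing the obstruction to strict preparation modulo value $\geq$ a chosen threshold. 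Note that the coordinate changes used here are nested ($\ell'$-coordinate changes with $\ell'\leq\ell$), independent blow-ups, and $\ell'$-Puiseux packages with $\ell'<\ell$, all of which qualify as $\ell$-nested transformations.

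Once $\omega$ is $\gamma$-prepared, I introduce the \emph{critical height} $\chi_{\mathcal A}(\omega)$, defined as the ordinate of the highest vertex lying on the critical segment (where ${\mathcal N}_\omega$ meets the lines of slope $-1/\nu(z)$). I would then iterate the following normalized step: perform an $\ell$-Puiseux package (and independent blow-ups as needed) followed by a fresh $\gamma$-preparation. By Proposition \ref{prop:stabilityofcriticalvalue} and Lemma \ref{lema:transformationofamonomial}, explicit values do not decrease, and a direct analysis of the polygon under Equation \eqref{eq:puiseux4} shows that $\chi$ is nonincreasing. If $\chi$ strictly drops infinitely often, we eventually reach either the recessive case $\nu_{\mathcal A}(\omega)>\gamma$ or $\chi=0$ with a single vertex at $(\rho,0)$ whose $\eta_0$ is $\gamma$-final (using the inductive hypothesis applied to $\eta_0$), and we are done; otherwise $\chi$ stabilizes.

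The main obstacle, and the heart of the proof, will be dealing with the stabilization of $\chi$. Stabilization forces specific resonance conditions on the initial parts along the critical vertex; these are the conditions \textbf{r1} (available only when $\chi=1$, and occurring at most once along any sequence), \textbf{r2a} and \textbf{r2b}-$\boldsymbol{\upsilon}$. The resonances \textbf{r2} force the ramification index $d$ of the subsequent Puiseux packages to equal $1$, which by Remark \ref{rk:propiedadesecuacionespuiseux}(4) gives the matrix form \eqref{eq:ecuaciondigulauno} that we need in order to execute a Tschirnhausen-type nested coordinate change $z'=z+f$. When $\chi=2$, the Tschirnhausen coordinate change can be read off directly from the critical initial form and, after a further $\gamma$-preparation, strictly drops $\chi$, contradicting stabilization and closing the case. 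When $\chi=1$, the same outline applies but the construction of the Tschirnhausen translation requires integrating a closed $1$-form modulo values $>\gamma$; this is precisely where the truncated, $\boldsymbol{\mu}$-multivalued Poincaré lemma (Proposition \ref{prop:logpoincare2} and Corollary \ref{cor:logpoincare3}) is used to produce the needed primitive $f$ from the truncated integrability of $\omega$. Since \textbf{r1} occurs at most once and the \textbf{r2} cases are broken by a Tschirnhausen step, stabilization is impossible, completing the induction.
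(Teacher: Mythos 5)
Your sketch tracks the paper's argument closely: induction on $\operatorname{I}_{\mathcal A}(\omega)$, decomposition into $z$-levels, the Newton polygon with its critical segment and critical height, $\gamma$-preparation via the truncated De Rham--Saito division and the truncated Poincar\'e lemma, control of $\chi_{\mathcal A}(\omega)$ under normalized Puiseux packages, classification of the stabilization resonances into \textbf{r1}, \textbf{r2a}, \textbf{r2b}-$\boldsymbol{\upsilon}$, and a Tschirnhausen translation to break stabilization --- all of this matches Sections \ref{Truncated Local Uniformization}--\ref{Control by the Critical Height} of the paper.

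One step in your sketch is not accurate as stated. You write that for $\chi=2$ (presumably meaning the case $\chi\geq 2$ of Proposition \ref{prop:critical height2}) a single Tschirnhausen coordinate change followed by a fresh $\gamma$-preparation ``strictly drops $\chi$.'' That is not what the paper establishes, and it is not true in general. The actual argument in Subsection \ref{Reduction to Critical Height One 2} is that the Tschirnhausen change $z'=z-F\boldsymbol{x^{p}}$, with $F$ obtained from the proportionality Lemma \ref{lema:proporcionalidad} (itself a consequence of the truncated integrability and Proposition \ref{prop:trucateddivision}), \emph{increases} $\nu(z)$ by at least $\min\{\epsilon,\nu(z)\}$ and hence increases $\varsigma_{\mathcal A}(\omega)$ by more than $\nu(z)$; under the contradiction hypothesis that $\chi$ stays fixed this can be iterated, and since $\nu_{\mathcal A}(h)$ is bounded the resonance condition \textbf{r2} eventually becomes untenable. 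So stabilization at $\chi\geq 2$ is ruled out by an iterated ad absurdum, not by one Tschirnhausen step dropping $\chi$. Relatedly, your sketch omits the machinery that makes this iteration work: the stability of the horizontal coefficient $H_{{\mathcal A},\omega}$ under never-ramified normalized transformations and the reduction to the case where it is strongly $\gamma$-final (Lemma \ref{lema:stableh}, Proposition \ref{prop:conditionatedluofafunction}, Corollary \ref{cor:hachereducido}); this is what controls $\epsilon$ and $\nu_{\mathcal A}(h)$ through the sequence of Tschirnhausen steps and, for $\chi\geq 2$, forces the resonance to be \textbf{r2a} (Lemma \ref{lema:descrition of the reduced critical part}). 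Apart from this, the outline follows the paper's route essentially step by step.
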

\subsection{Starting Situation for the Inductive Step}
 \label{subsection:initialsituation}
 Next sections are devoted to the proof of Theorem \ref{teo:inductivestatement} by induction on $\operatorname{I}_{\mathcal A}(\omega)$. This  is the main technical part in this paper. The proof is divided in two parts. The first one is the {\em preparation procedure}. In the second part, we provide a control of the critical height, in a prepared situation, under ``normalized'' Puiseux's packages and coordinate changes.

 The rest of this paper, except for the last section, is devoted to the inductive proof of Theorem \ref{teo:inductivestatement}. By Remark \ref{rk:startinginduction}, we know that Theorem \ref{teo:inductivestatement} is true when we have $I_{\mathcal A}(\omega)=0$. Thus, we supose that $I_{\mathcal A}(\omega)=\ell+1\geq 1$ and we take the {\em induction hypothesis} that says that Theorem \ref{teo:inductivestatement} is true when $I_{\mathcal A}(\omega)\leq \ell$.

 In order to avoid heavy notations, let us denote
\begin{equation*}
y_{\ell+1}=z,\quad \boldsymbol{y}_{\leq \ell}=(y_1,y_2,\ldots,y_{\ell}),\quad \boldsymbol{y}_{\geq\ell+2}=(y_{\ell+2},y_{\ell+3},\ldots,y_{n-r}),
\end{equation*}
in such a way that
$
\boldsymbol{y}=(\boldsymbol{y}_{\leq \ell},z,\boldsymbol{y}_{\geq \ell+2})
$. We decompose $\omega$ into levels as:
\begin{equation}
\label{eq:omegalevels}
\omega=\sum_{s\geq 0}z^s\omega_s,\quad
\omega_s=\eta_s+h_s\frac{dz}{z},\quad \eta_s=\sum_{i=1}^rf_{si}\frac{dx_i}{x_i}+\sum_{j=1}^{\ell}g_{sj}dy_j,
\end{equation}
$h_0=0$. Moreover, we have that $\eta_s\in \Omega^1_{\mathcal A}$ with $I_{\mathcal A}(\eta_s)\leq \ell$, so we can apply the induction hypothesis to $\eta_s$.
\begin{notation}
The induction process assigns a special consideration to the $(\ell+1)$-th dependent variable in $\mathcal A$. For the sake of simplicity, and when no confusion arises, we frequently say {\em rational contact function } or {\em ramification index} in reference to the {\em $(\ell+1)$-rational contact function } or to the {\em $(\ell+1)$-ramification index}.
\end{notation}

\subsection{Newton Polygons}
\label{sec:newtonpolygon}
The $1$-form $\omega_s$ in Equation \eqref{eq:omegalevels} is called {\em the $s$-level of $\omega$ in $\mathcal A$}. Let us note that $z\omega_s\in \Omega^1_{\mathcal A}$ and
\[
\nu_{\mathcal A}(z\omega_s)=\min\{\nu_{\mathcal A}(\eta_s),\nu_{\mathcal A}(h_s)\}.
\]
The {\em cloud of points $\operatorname{Cl}_{\mathcal A}({\omega})$} is defined by
\[
\operatorname{Cl}_{\mathcal A}({\omega})=\{(\nu_{\mathcal A}(z\omega_s),s); s\in {\mathbb Z}_{\geq 0}\}\subset {\mathbb R}_{\geq 0}\times {\mathbb Z}_{\geq 0}\subset {\mathbb R}_{\geq 0}^2.
\]
The {\em Newton Polygon ${\mathcal N}_{\mathcal A}(\omega)$} is the positively convex hull of $\operatorname{Cl}_{\mathcal A}({\omega})$ in ${\mathbb R}_{\geq 0}^2$. That is ${\mathcal N}_{\mathcal A}(\omega)$ is the convex hull of
$
\operatorname{Cl}_{\mathcal A}({\omega})+{\mathbb R}_{\geq 0}^2
$
in ${\mathbb R}_{\geq 0}^2$. It has finitely many vertices, each one belonging to the cloud of points.
The {\em main vertex } is the vertex with highest ordinate, it is also the vertex with minimal abscissa. We call {\em main height} to the ordinate of the main vertex. Let us note that the abscissa of the main vertex is equal to
$
\nu_{\mathcal A}(\omega)
$.

Proposition \ref{pro:onevertex} below highlights the inductive role of the Newton Polygon:
\begin{proposition}
\label{pro:onevertex}
Assume that ${\mathcal N}_{\mathcal A}(\omega)$ has the only vertex $(\rho,0)$. Then $\omega$ is $\gamma$-final dominant, respectively recessive, if and only if $\eta_0$ is $\gamma$-final dominant, respectively recessive. More precisely, we have $\nu_{\mathcal A}(\omega)=\nu_{\mathcal A}(\eta_0)=\rho$
and ${\mathfrak c}^1_{\mathcal A}(\omega)={\mathfrak c}^1_{\mathcal A}(\eta_0)$.
\end{proposition}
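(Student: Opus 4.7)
The plan is to directly compute $\nu_{\mathcal A}(\omega)$ and $\mathfrak{c}^1_{\mathcal A}(\omega)$ from the level decomposition, using the hypothesis that $(\rho,0)$ is the only vertex of ${\mathcal N}_{\mathcal A}(\omega)$ to control all the other levels.

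First I would observe that every dependent variable, and in particular $z$, satisfies $\nu_{\mathcal A}(z)=0$, since its expansion $z=\sum_I\boldsymbol{x}^If_I(\boldsymbol{y})$ has $f_0=z\neq 0$. By the multiplicativity of the explicit value (property~2 in Section~3), this gives $\nu_{\mathcal A}(z\omega_s)=\nu_{\mathcal A}(\omega_s)$ for every $s$. The hypothesis that $(\rho,0)$ is the only vertex of the Newton polygon then reads $\nu_{\mathcal A}(\omega_0)=\rho$ and $\nu_{\mathcal A}(\omega_s)\geq\rho$ for all $s\geq 1$, while $h_0=0$ forces $\omega_0=\eta_0$, so $\nu_{\mathcal A}(\eta_0)=\rho$.

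Next I would compute $\nu_{\mathcal A}(\omega)$ by expanding every $\omega_s$ as $\omega_s=\sum_I\boldsymbol{x}^I\omega_{s,I}$ with $\omega_{s,I}$ a $1$-form over $k[[\boldsymbol{y}_{\leq\ell}]]$, so that the coefficient of $\boldsymbol{x}^I$ in $\omega=\sum_{s\geq 0}z^s\omega_s$ equals $\sum_{s\geq 0}z^s\omega_{s,I}$. Because the monomials $\{z^s\}_{s\geq 0}$ are linearly independent over $k[[\boldsymbol{y}_{\leq\ell}]]$, the coefficient of $\boldsymbol{x}^I$ vanishes iff all $\omega_{s,I}$ vanish. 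Consequently $\nu_{\mathcal A}(\omega)=\min_s\nu_{\mathcal A}(\omega_s)=\rho$, with the minimum attained only at $s=0$ by a unique $I_0$ satisfying $\nu(\boldsymbol{x}^{I_0})=\rho$ (uniqueness comes from the $\mathbb{Q}$-linear independence of $\nu(x_1),\ldots,\nu(x_r)$).

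For the corner comparison, I would examine the coefficient $F_i=\sum_{s\geq 0}z^sf_{si}$ of $dx_i/x_i$ in $\omega$. Its $\boldsymbol{x}^{I_0}$-coefficient is $\sum_{s\geq 0}z^s(f_{si})_{I_0}(\boldsymbol{y}_{\leq\ell})$. The natural morphism
\[
\mathcal{G}^\rho_{\mathcal A}\longrightarrow \mathcal{G}^\rho_{\mathcal A}/\overline{\mathfrak m}_{\mathcal A}\mathcal{G}^\rho_{\mathcal A}\subset k[X_1,\ldots,X_r]
\]
evaluates the $k[[\boldsymbol{y}]]$-coefficient of $\boldsymbol{x}^{I_0}$ at $\boldsymbol{y}=0$ (which in particular sets $z=0$). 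All terms with $s\geq 1$ vanish, leaving precisely $(f_{0i})_{I_0}(0)\cdot X^{I_0}$, which is exactly the image of $\operatorname{In}^\rho_{\mathcal A}(f_{0i})$ entering the definition of $\mathfrak{c}^1_{\mathcal A}(\eta_0)$. Hence $\mathfrak{c}^1_{\mathcal A}(\omega)=\mathfrak{c}^1_{\mathcal A}(\eta_0)$ as submodules of $k[X_1,\ldots,X_r]$.

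Combining these identifications with the definitions: $({\mathcal A},\omega)$ is $\gamma$-final recessive iff $\rho>\gamma$ iff $({\mathcal A},\eta_0)$ is $\gamma$-final recessive; and $({\mathcal A},\omega)$ is $\gamma$-final dominant iff $\rho\leq\gamma$ and $\mathfrak{c}^1_{\mathcal A}(\omega)\neq 0$ iff $({\mathcal A},\eta_0)$ is $\gamma$-final dominant. The only step that requires some care is the passage to the quotient $k[X_1,\ldots,X_r]$, where one must be sure that setting $z=0$ kills the contribution of all higher levels—this is ultimately the reason why, in the inductive scheme of the paper, the hypothesis $h_0=0$ (which forces $\omega_0$ to be a pure $\eta_0$) is so crucial.
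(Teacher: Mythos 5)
Your proof is correct and takes essentially the same route as the paper's. The paper's version is terser: it writes $\omega=\eta_0+\tilde\omega$ with $\tilde\omega=z\sum_{s\geq 1}z^{s-1}\eta_s+\sum_{s\geq 0}z^sh_{s+1}\,dz$, notes $\nu_{\mathcal A}(\tilde\omega)\geq\rho$, and concludes ${\mathfrak c}^1_{\mathcal A}(\eta_0+\tilde\omega)={\mathfrak c}^1_{\mathcal A}(\eta_0)$ because the logarithmic coefficients of $\tilde\omega$ are $z$-divisible (hence die in ${\mathcal G}^\rho_{\mathcal A}/\overline{\mathfrak m}_{\mathcal A}{\mathcal G}^\rho_{\mathcal A}$) while the $dz$-coefficient never enters ${\mathfrak c}^1$; you spell this out coefficient-by-coefficient, which is a worthwhile expansion of the two lines the paper devotes to it. Two small inaccuracies, neither affecting the argument: the phrase ``the minimum attained only at $s=0$'' need not hold and is not needed, since the reduction modulo $z$ kills the higher levels regardless of their explicit value; and $h_0=0$ is used to guarantee $\nu_{\mathcal A}(\eta_0)=\nu_{\mathcal A}(z\omega_0)=\rho$ rather than to make the $z$-truncation work (that mechanism is independent of $h_0$, since ${\mathfrak c}^1$ ignores the $dz$-coefficient altogether).
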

\begin{proof} We already know that $\rho=\nu_{\mathcal A}(\omega)$. Since $h_0=0$, we  have that $\rho=\nu_{\mathcal A}(\eta_0)$. Write $\omega=\eta_0+\tilde\omega$. We have that $\nu_{\mathcal A}(\tilde\omega)\geq \lambda$ and since
\[
\tilde\omega=z\sum_{s\geq 1}z^{s-1}\eta_s+\sum_{s\geq 0}z^sh_{s+1}dz
\]
we conclude that ${\mathfrak c}^1_{\mathcal A}(\eta_0+\tilde\omega)={\mathfrak c}^1_{\mathcal A}(\eta_0)$.
\end{proof}

The {\em critical value $\varsigma_{\mathcal A}(\omega)$} is defined by
\[
\varsigma_{\mathcal A}(\omega)=\min\{\alpha+s\nu(z);\; (\alpha,s)\in \operatorname{Cl}_{\mathcal A}(\omega)\}.
\]
The {\em critical line $\operatorname{L}_{\mathcal A}(\omega)$} is
$
\operatorname{L}_{\mathcal A}(\omega)=\{(a,b)\in{\mathbb R}^2;a+b\nu(z)=\varsigma_{\mathcal A}(\omega)\}
$,
the {\em critical segment $\operatorname{C}_{\mathcal A}(\omega)$} is the intersection
 $\operatorname{C}_{\mathcal A}(\omega)=\operatorname{L}_{\mathcal A}(\omega)\cap {\mathcal N}_{\mathcal A}(\omega)$ and finally, the {\em critical height $\chi_{\mathcal A}(\omega)$} is the ordinate of the highest vertex in the critical segment. This vertex has the form
 $
 (\varsigma_{\mathcal A}(\omega)-\nu(z)\chi_{\mathcal A}(\omega), \chi_{\mathcal A}(\omega))
 $
and is called {\em the critical vertex}.
\section{Statements of Strict Preparation}
\label{Statements of Strict Preparation}
We introduce here the definitions and first results concerning the preparation process of  the $\gamma$-truncated formal foliated space $({\mathcal A},\omega)$.

\begin{definition}
\label{def:finallevels}
Given a real number $\delta\in {\mathbb R}$ we say that the $s$-level $\omega_s$ of $({\mathcal A},\omega)$ is
 \begin{itemize}
 \item {\em $\delta$-dominant}, if $\nu_{\mathcal A}(z\omega_s)\leq \delta$ and both $({\mathcal A},\eta_s)$ and $(\mathcal A,h_s)$ are
 $\nu_{\mathcal A}(z\omega)$-final.
 \item {\em $\delta$-recessive}, if $\nu_{\mathcal A}(z\omega_s)>\delta$.
 \end{itemize}
It is called {\em $\delta$-final} if it is $\delta$-dominant or $\delta$-recessive.
\end{definition}

\begin{remark}
  \label{rk:stabilityoflevels}
  Let ${\mathcal A}\rightarrow {\mathcal A}'$ be an $\ell$-nested transformation. Then the decomposition into levels given in Equation \eqref{eq:omegalevels} is valid both for
  $({\mathcal A},\omega)$  and $({\mathcal A}',\omega)$. Applying Proposition \ref{prop:stabilityofcriticalvalue} to each level, we have that
 \[
 {\mathcal N}_{{\mathcal A}'}(\omega)\subset{\mathcal N}_{\mathcal A}(\omega).
 \]
If
 $\omega_s$ is $\lambda_s$-dominant
 for each vertex $(\lambda_s,s)$, we have the equality $
 {\mathcal N}_{{\mathcal A}'}(\omega)={\mathcal N}_{\mathcal A}(\omega)
 $.
 \end{remark}
\begin{definition}
\label{def:preparado}
We say that
  $({\mathcal A},\omega)$ is {\em $\gamma$-prepared} if and only if each level $\omega_s$ is $(\tilde\gamma-s\nu(z))$-final, where $\tilde\gamma=\min\{\gamma, \varsigma_{\mathcal A}(\omega)\}$.
\end{definition}
\begin{remark}
  \label{rk:stability of preparation} If $({\mathcal A},\omega)$ is $\gamma$-prepared and ${\mathcal A}\rightarrow{\mathcal A}'$ is an $\ell$-nested transformation, then $({\mathcal A}',\omega)$ is also $\gamma$-prepared. Moreover, we have same critical value and same critical segment, that is
  $
  \varsigma_{{\mathcal A}'}(\omega)=\varsigma_{\mathcal A}(\omega)
  $
  and $C_{{\mathcal A}'}(\omega)= C_{{\mathcal A}}(\omega)$. This is a consequence of Proposition \ref{prop:stabilityofcriticalvalue}.
\end{remark}

 \subsection{Maximally Dominant Truncated Foliated Spaces}
  \label{Dominant Preparation}
  The first step of the preparation process is to obtain a ``maximally dominant polygon''. To be precise, we say that $({\mathcal A},\omega)$ is {\em $\rho$-maximally dominant} if and only if, for any $s\geq 0$,  one of the following properties holds:
 \begin{itemize}
 \item The $s$-level $\omega_s$ is $(\rho-s\nu(z))$-dominant.
 \item There is no $\ell$-nested transformation ${\mathcal A}\rightarrow {\mathcal A}'$ such that the $s$-level becomes $(\rho-s\nu(z))$-dominant.
 \end{itemize}
 \begin{remark}
 Let us note that if $\rho'\leq\rho$ and $({\mathcal A},\omega)$
is $\rho$-maximally dominant, then it is also $\rho'$-maximally dominant.
\end{remark}

 In view of Propositions \ref{prop:stabilityofcriticalvalue} and  \ref{prop:notfinalformsandcriticalvalue} and Remark \ref{rk:stabilityoflevels} the property for an $s$-level of being $(\rho-s\nu(z))$-dominant is stable under any $\ell$-nested transformation ${\mathcal A}\rightarrow {\mathcal A}'$. In particular, the property of being $\rho$-maximally dominant is stable under further $\ell$-nested transformations.
 \begin{remark} Any $\gamma$-prepared $({\mathcal A},\omega)$ is $\tilde\gamma$-maximally dominant, where we denote  $\tilde\gamma=\min\{\gamma,\varsigma_{\mathcal A}(\omega)\}$.
 \end{remark}

 The ``dominant preparation'' is given by Proposition \ref{prop:dominantpreparation} below:

 \begin{proposition}
 \label{prop:dominantpreparation}
 There is an $\ell$-nested transformation ${\mathcal A}\rightarrow{\mathcal A}'$ such that $({\mathcal A}',\omega)$ is $\gamma$-maximally dominant.
 \end{proposition}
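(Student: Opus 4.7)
The strategy is to apply the induction hypothesis on $I_{\mathcal A}\leq\ell$ level by level. Since the coefficients of $\eta_s$ and $h_s$ lie in $k[[\boldsymbol{x},\boldsymbol{y}_{\leq\ell}]]$, both involve at most $\ell$ dependent variables, so Theorem \ref{teo:inductivestatement} in the case $I_{\mathcal A}\leq\ell$ and Theorem \ref{teo:functions} are applicable in principle. Because $\nu(z)>0$, the set $S=\{s\geq 0\mid s\nu(z)\leq\gamma\}$ is finite; for $s\notin S$ we have $\gamma-s\nu(z)<0\leq\nu_{\mathcal A}(z\omega_s)$, so $\omega_s$ is already $(\gamma-s\nu(z))$-recessive, and by Proposition \ref{prop:stabilityofcriticalvalue} this property is preserved by every $\ell$-nested transformation. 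Hence no such transformation can ever turn $\omega_s$ into a dominant level, which gives case~2 of the definition of maximally dominant for free; only the finitely many $s\in S$ need actual work.

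For each $s\in S$ I would first derive a truncated Frobenius condition on $\eta_s$ at threshold $\gamma-s\nu(z)$. Writing $\omega=\sum_{s\geq 0}z^s\eta_s+\bigl(\sum_{s\geq 1}z^{s-1}h_s\bigr)dz$, the component of $\omega\wedge d\omega$ that does not contain the factor $dz$ equals $\sum_k z^k\bigl(\sum_{a+b=k}\eta_a\wedge d\eta_b\bigr)$, and the hypothesis $\nu_{\mathcal A}(\omega\wedge d\omega)\geq 2\gamma$ bounds each coefficient $\sum_{a+b=k}\eta_a\wedge d\eta_b$ from below by $2\gamma$. Coupled with control on the cross terms $\eta_a\wedge d\eta_b$, $a+b=2s$, $a\neq s$, that becomes available inductively once the lower levels have been processed, this isolates a bound on $\eta_s\wedge d\eta_s$ sufficient to make $(\mathcal A,\eta_s)$ a $(\gamma-s\nu(z))$-truncated formal foliated space involving at most $\ell$ dependent variables. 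The induction hypothesis then furnishes an $\ell$-nested transformation making $\eta_s$ $(\gamma-s\nu(z))$-final, and Theorem \ref{teo:functions} applied to $h_s$ (which needs no integrability hypothesis) makes $h_s$ $(\gamma-s\nu(z))$-final. By Corollary \ref{cor:stabilityofcriticalvalue} these properties persist under any subsequent $\ell$-nested transformation, so the finite list $S$ can be processed sequentially, in order of increasing $s$, without undoing earlier work.

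After this finite sequence of transformations ${\mathcal A}\rightarrow{\mathcal A}'$, every $s\in S$ has both $\eta_s$ and $h_s$ $(\gamma-s\nu(z))$-final at $\mathcal A'$. A direct check against Definition \ref{def:finallevels} then shows that $\omega_s$ itself is either $(\gamma-s\nu(z))$-dominant (case~1) or $(\gamma-s\nu(z))$-recessive, which by recessive stability falls into case~2. Hence $({\mathcal A}',\omega)$ is $\gamma$-maximally dominant. The hard part will be the inductive extraction of the truncated Frobenius bound for $\eta_s$: the $z^{2s}$ coefficient of $\omega\wedge d\omega$ yields only $\nu_{\mathcal A}(\sum_{a+b=2s}\eta_a\wedge d\eta_b)\geq 2\gamma$, so cleanly isolating the diagonal term $\eta_s\wedge d\eta_s$ requires carefully bounding the mixed terms using the preparation already achieved at levels $a<s$, and this interlocks with and dictates the order in which the levels of $S$ must be treated.
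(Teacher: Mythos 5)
Your approach diverges fundamentally from the paper's and, as you yourself suspect at the end, contains a gap that cannot be patched along the lines you sketch. The paper's own proof is a two-line maximality argument that uses \emph{no} integrability at all: only the finitely many $s$ with $s\nu(z)\leq\gamma$ can possibly carry dominant levels, and since the property of being $(\gamma-s\nu(z))$-dominant is stable under $\ell$-nested transformations (Propositions \ref{prop:stabilityofcriticalvalue}, \ref{prop:notfinalformsandcriticalvalue} and Remark \ref{rk:stabilityoflevels}), the set of dominant levels is non-decreasing under composition; so simply pick a transformation that achieves the maximum number of dominant levels. If the result were not $\gamma$-maximally dominant, some further transformation would create an additional dominant level while keeping the old ones, contradicting maximality. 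Nothing else is needed, because ``maximally dominant'' is precisely the condition that asserts nothing about which levels can be made dominant --- only that whatever is achievable has been achieved.

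You are instead trying to prove the stronger claim that every level with $s\nu(z)\leq\gamma$ can actually be made $(\gamma-s\nu(z))$-final, and this is not what the proposition asserts, nor is it true. The induction hypothesis applies to $\eta_s$ only at truncation thresholds $\rho$ for which $\eta_s$ satisfies the $\rho$-truncated integrability condition, and the bound one extracts from $\nu_{\mathcal A}(\Theta_{2s})\geq 2\gamma$ is not $\nu_{\mathcal A}(\eta_s\wedge d\eta_s)\geq 2(\gamma-s\nu(z))$ but only the weaker $2\delta$ with
$2\delta=\min\{2\gamma,\min_{j\geq 1}(\nu_{\mathcal A}(\eta_{s-j})+\nu_{\mathcal A}(\eta_{s+j}))\}$, as recorded later in Lemma \ref{lema:aproximacion}. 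The cross terms in $\Theta_{2s}=\sum_{a+b=2s}\eta_a\wedge d\eta_b$ involve levels $\eta_{s+j}$ \emph{above} $s$ as well as below, so processing $S$ in increasing order of $s$ cannot control them; this is the reason the paper develops the planning-polygon machinery (Subsection \ref{planning polygons}) and Approximate Preparation (Proposition \ref{prop:epsilonpreparation}) before it ever dares to push a single level toward $(\gamma-s\nu(z))$-finality. Your proof effectively reinvents the need for all of that later machinery in order to establish the one preliminary result that the paper is careful to state in a form avoiding it.
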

 \begin{proof} We have only to consider $s$-levels with $0\leq s\leq \gamma/\nu(z)$. This is a finite set. Then, it is enough to consider an $\ell$-nested transformation that produces the maximum number of $(\gamma-s\nu(z))$-dominant levels.
  \end{proof}

 \subsection{Pseudo-Prepared and Strictly Prepared Situations} We say that $({\mathcal A},\omega)$ is {\em $\gamma$-pseudo prepared} if and only if the following conditions hold:
 \begin{itemize}
 \item $({\mathcal A},\omega)$ is $\gamma$-maximally dominant.
 \item For any $s\geq 0$ we have that $h_s$ is $(\gamma-s\nu(z))$-final.
 \item The $0$-level $\eta_0$ is $\gamma$-final.
 \end{itemize}
  In view of Proposition \ref{prop:dominantpreparation} and using the induction hypothesis applied to formal functions and to $\eta_0$, there is an $\ell$-nested transformation ${\mathcal A}\rightarrow {\mathcal A}'$ such that $({\mathcal A}',\omega)$ is $\gamma$-pseudo prepared. Moreover, if $({\mathcal A},\omega)$ is $\gamma$-pseudo prepared, then $({\mathcal A}',\omega)$ is also $\gamma$-pseudo prepared for any $\ell$-nested transformation ${\mathcal A}\rightarrow {\mathcal A}'$.

  \begin{definition} We say that $({\mathcal A},\omega)$ is {\em strictly $\gamma$-prepared} if and only if it is both $\gamma$-prepared and $\gamma$-pseudo prepared.
  \end{definition}

  \begin{remark} Although we only need a result of preparation for our purposes, we shall give a proof of the existence of strict $\gamma$-preparation.
  \end{remark}

  \subsection{Preparation Theorem}
  Next Preparation Theorem \ref{teo:preparation} is the first step in inductive proof of Theorem \ref{teo:inductivestatement}.

 \begin{theorem}[Strict Preparation]
 \label{teo:preparation} Let $({\mathcal A},\omega)$ be a $\gamma$-truncated formal foliated space $({\mathcal A},\omega)$ with $I_{\mathcal A}(\omega)=\ell+1$. There is an $\ell$-nested transformation ${\mathcal A}\rightarrow {\mathcal A}'$ such that $({\mathcal A}',\omega)$ is strictly $\gamma$-prepared.
\end{theorem}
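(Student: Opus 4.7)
The strategy is to first secure $\gamma$-pseudo preparation via the induction hypothesis on the number of dependent variables, and then to upgrade it to strict $\gamma$-preparation by exploiting the $\gamma$-truncated integrability through the truncated De Rham--Saito division of Proposition \ref{prop:trucateddivision}.

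For the pseudo-preparation step, I would begin by applying Proposition \ref{prop:dominantpreparation} to reach a $\gamma$-maximally dominant situation. Only the finitely many indices $s$ with $s\nu(z)\leq\gamma$ require attention. Each $h_s$ lies in $k[[\boldsymbol{x},\boldsymbol{y}_{\leq\ell}]]$ and thus involves at most $\ell$ dependent variables; applying the induction hypothesis to the trivially integrable form $dh_s$, combined with Corollary \ref{cor:gammafinalfdf}, produces an $\ell$-nested transformation making $h_s$ $(\gamma-s\nu(z))$-final. I would iterate over $s$, invoking Corollary \ref{cor:stabilityofcriticalvalue} to preserve the finality of previously handled coefficients and the maximal dominance of the whole form. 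Finally, expanding $\omega\wedge d\omega$ in powers of $z$ shows that the $z^0$-coefficient contains $\eta_0\wedge d\eta_0$ in basis directions not involving $dz$, so the $\gamma$-truncated integrability of $\omega$ passes to $\eta_0$; since $I_\mathcal{A}(\eta_0)\leq\ell$, a last application of the induction hypothesis makes $\eta_0$ $\gamma$-final. The resulting model is $\gamma$-pseudo prepared.

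To upgrade to $\gamma$-preparation, set $\tilde\gamma=\min\{\gamma,\varsigma_\mathcal{A}(\omega)\}$. Levels $\omega_s$ with $\nu_\mathcal{A}(z\omega_s)>\tilde\gamma-s\nu(z)$ are automatically recessive, so only the finitely many levels sitting on the critical segment need to be promoted to dominant. If $\eta_0$ is $\gamma$-final recessive then $\nu_\mathcal{A}(\omega)>\gamma$ and there is nothing left to do, so assume $\eta_0$ is $\gamma$-final dominant. By Corollary \ref{cor:principalizacion}, after further independent blow-ups I may write $\eta_0=\boldsymbol{x}^{I_0}\alpha_0$ with $\alpha_0$ a $0$-final dominant $1$-form. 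Extracting from the $\gamma$-truncated integrability $\nu_\mathcal{A}(\omega\wedge d\omega)\geq 2\gamma$ the coefficients associated with the critical vertex, I obtain a congruence of the form $\alpha_0\wedge\omega_s\equiv 0$ modulo terms of explicit value strictly exceeding the critical abscissa. Proposition \ref{prop:trucateddivision} applied to this congruence furnishes a formal function $H_s\in\Omega^0_\mathcal{A}$ and a form $\tilde\omega_s$ with strictly greater explicit value such that $\omega_s=H_s\alpha_0+\tilde\omega_s$. The term $H_s\alpha_0$ inherits the dominant behavior from $\eta_0$, so the level either becomes $(\tilde\gamma-s\nu(z))$-dominant or is lifted off the critical segment.

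The main obstacle is the termination of the resulting correction cycle, since substituting $\omega_s\mapsto H_s\alpha_0+\tilde\omega_s$ may perturb other levels and force a new pass of pseudo preparation, whose own transformations can in turn reshape the critical segment. Termination rests on two facts: only finitely many levels meet the critical segment; and each pass either creates a new $(\tilde\gamma-s\nu(z))$-dominant level, which remains dominant under further $\ell$-nested transformations by Remark \ref{rk:stability of preparation} and Corollary \ref{cor:stabilityofcriticalvalue}, or else pushes $\nu_\mathcal{A}(z\omega_s)$ strictly above $\tilde\gamma-s\nu(z)$ via Proposition \ref{prop:notfinalformsandcriticalvalue}, turning the level into a recessive one. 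A bounded number of alternations between the dominant preparation of Proposition \ref{prop:dominantpreparation} and the De Rham--Saito division step thus yields the desired strict $\gamma$-preparation.
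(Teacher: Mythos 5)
Your plan captures the first phase (pseudo-preparation) correctly, but the jump from there to strict preparation has several genuine gaps that the paper works hard to fill.

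First, the division step is aimed at the wrong $1$-form. Writing $\eta_0 = \boldsymbol{x}^{I_0}\alpha_0$ and dividing by $\alpha_0$ is only relevant when the lowest vertex of the critical segment sits at ordinate $s_0=0$. In general $\varsigma_{\mathcal A}(\omega) < \nu_{\mathcal A}(\eta_0)$, the $0$-level lies strictly to the right of the critical line, and $s_0 > 0$. What the integrability condition actually relates a recalcitrant level $\omega_{\tilde s}$ to is the critical-vertex level $\omega_{s_0}$, via $\Delta_{\tilde s + s_0}$ in Equation~\eqref{eq:condicionesinttruncada}, not to $\omega_0$. So the congruence one can hope to extract involves $\eta^*_{s_0}$, not $\alpha_0$.

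Second, even granting the correct target, the congruence you claim, namely that the wedge with $\omega_s$ vanishes modulo high value, is not what integrability delivers. After all the reductions, what comes out is
\[
\nu_{\mathcal A}\Bigl((\tilde s-s_0)\eta_{\tilde s}\wedge\eta_{s_0}+h_{s_0}\,d\eta_{\tilde s}+\eta_{\tilde s}\wedge dh_{s_0}\Bigr)\geq 2\varsigma-(\tilde s+s_0)\delta .
\]
When $\nu_{\mathcal A}(h_{s_0})>\varsigma-s_0\delta$, the two terms involving $h_{s_0}$ are negligible and Proposition~\ref{prop:trucateddivision} applies, as you intend. But when $\nu_{\mathcal A}(h_{s_0})=\varsigma-s_0\delta$, which is precisely the situation when the critical level carries a dominant horizontal coefficient, those terms are \emph{not} negligible. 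The relation one obtains is a $d_\alpha$-closedness estimate, $\nu_{\mathcal A}(\alpha\wedge\eta_{\tilde s}+d\eta_{\tilde s})\geq \varsigma-\tilde s\delta$ with $\alpha=\bigl((s_0-\tilde s)\eta_{s_0}-dh_{s_0}\bigr)/h_{s_0}$, and De~Rham--Saito says nothing about it. This is exactly why the paper proves and invokes the truncated logarithmic Poincar\'e Lemma (Proposition~\ref{prop:logpoincare2} and Corollary~\ref{cor:logpoincare3}). Without this second tool, your argument does not cover the dominant horizontal case at all.

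Third, and most structurally, you skip the approximate preparation (Proposition~\ref{prop:epsilonpreparation} and the planning-polygon Lemmas~\ref{lema:planning} and \ref{lema:finitudplanning}). Before any of the computations above can be made, one needs $\nu_{\mathcal A}(z\omega_s) > \varsigma - s\delta - \epsilon$ for all $s$ and a suitable small $\epsilon$; otherwise the terms in $\Delta_{\tilde s+s_0}$ coming from pairs other than $(\tilde s,s_0)$ are not controlled, and there is no clean congruence to apply De~Rham--Saito (or Poincar\'e) to. Your termination argument also inherits this gap: Proposition~\ref{prop:notfinalformsandcriticalvalue} only gives a \emph{strict} increase of the explicit value of a non-dominant level, not a bounded one, so without the planning-polygon bound $b_{\delta,\rho}(\epsilon)$ of Lemma~\ref{lema:finitudplanning}, there is nothing preventing an infinite accumulation of explicit values below the critical line. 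The ``bounded number of alternations'' claim is precisely what must be proved, and it is not obvious; it is the content of Subsection~\ref{planning polygons}.
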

The if ${\mathcal A}\rightarrow {\mathcal A}'$ is an $\ell$-nested transformation as in Theorem \ref{teo:preparation}, we say that
  $
  ({\mathcal A},\omega)\rightarrow ({\mathcal A}',\omega)
  $
  is a {\em strict $\gamma$-preparation of $({\mathcal A},\omega)$.}

Let us note that, in order to prove Theorem \ref{teo:preparation}, we can assume that $({\mathcal A},\omega)$ is $\gamma$-pseudo prepared.
The next section is devoted to the proof of  Theorem  \ref{teo:preparation}.

\section{Preparation Process}
\label{Preparation Process}

In this section we give a proof of Theorem  \ref{teo:preparation}. In other words, we show the existence of a {\em strict $\gamma$-preparation} for a given $\gamma$-truncated formal foliated space $({\mathcal A},\omega)$, that we suppose to be $\gamma$-pseudo prepared without loss of generality.

The $\gamma$-preparation will be done in two steps. First, we show that we can approximate the Newton Polygon to the dominant Newton Polygon. Second, we use this approximation to obtain the preparation.

\begin{remark}
The preparation process is done under the induction hypothesis. More precisely, we are going to apply induction hypothesis to the levels
\[
\omega_s=\eta_s+h_sdz/z,
\]
in particular, to the $1$-forms $\eta_s$ and to the ``horizontal coefficients'' $h_s$. The induction hypothesis may be directly applied to the formal functions $h_s$, since we do not need to assure any additional property of truncated integrability (recall that $dh_s$ is Frobenius integrable, since $d(dh_s)=0$). Nevertheless, when we consider the $1$-forms $\eta_s$, we can only apply the induction hypothesis with respect to truncation values $\rho$ for which $\eta_s$ satisfy the $\rho$-truncated integrability condition. The ``previous'' approximated preparation is necessary due to this observation.
\end{remark}

 \subsection{Dominant Newton Polygon} Assume that $({\mathcal A},\omega)$ is $\gamma$-maximally dominant. The {\em $\gamma$-dominant cloud of points
 $\operatorname{Cl}^\gamma_{\mathcal A}(\omega)$
 } is defined by
 \[
 \operatorname{Cl}^\gamma_{\mathcal A}(\omega)=\{(\nu_{\mathcal A}(z\omega_s),s);\quad
 \mbox{\rm the $s$-level $\omega_s$ is $(\gamma-s\nu(z))$-dominant }\}.
 \]
 See Definition \ref{def:finallevels}.
 Note that it is possible that $\operatorname{Cl}^\gamma_{\mathcal A}(\omega)=\emptyset$. We obtain the following objects from the dominant cloud of points:
 \begin{itemize}
 \item The {\em $\gamma$-dominant Newton Polygon ${\mathcal N}^\gamma_{\mathcal A}(\omega)$}  is the positively convex hull of $\operatorname{Cl}^\gamma_{\mathcal A}(\omega)$. Note that ${\mathcal N}^\gamma_{\mathcal A}(\omega)=\emptyset $ if and only if $\operatorname{Cl}^\gamma_{\mathcal A}(\omega)=\emptyset$.
 \item The {\em $\gamma$-dominant critical value $\varsigma^\gamma_{\mathcal A}(\omega)$} is defined by
\[
\varsigma^\gamma_{\mathcal A}(\omega)=\min\{\alpha+s\nu(z);\; (\alpha,s)\in \operatorname{Cl}^\gamma_{\mathcal A}(\omega)\}.
\]
If $\operatorname{Cl}^\gamma_{\mathcal A}(\omega)\ne\emptyset$, then $\varsigma^\gamma_{\mathcal A}(\omega)\leq \gamma$. If $\operatorname{Cl}^\gamma_{\mathcal A}(\omega)=\emptyset$, we put $\varsigma^\gamma_{\mathcal A}(\omega)=\infty$.
\item The {\em $\gamma$-dominant critical line $\operatorname{L}^\gamma_{\mathcal A}(\omega)$} is
\[
\operatorname{L}^\gamma_{\mathcal A}(\omega)=\{(a,b)\in{\mathbb R}^2;a+b\nu(z)=\varsigma^\gamma_{\mathcal A}(\omega)\}.
\]
\item The {\em $\gamma$-dominant critical segment $\operatorname{C}^\gamma_{\mathcal A}(\omega)$} is the intersection
 \[
 \operatorname{C}^\gamma_{\mathcal A}(\omega)=\operatorname{L}^\gamma_{\mathcal A}(\omega)\cap {\mathcal N}^\gamma_{\mathcal A}(\omega).
 \]
 \item The {\em $\gamma$-dominant critical height $\chi^\gamma_{\mathcal A}(\omega)$} is the ordinate of the highest vertex in the $\gamma$-dominant critical segment. This vertex has the form
 \[
 (\varsigma^\gamma_{\mathcal A}(\omega)-\nu(z)\chi^\gamma_{\mathcal A}(\omega), \chi^\gamma_{\mathcal A}(\omega))
 \]
 and is called {\em the $\gamma$-dominant critical vertex}.
 \end{itemize}

\begin{remark}
Assume that $({\mathcal A},\omega)$ is $\gamma$-maximally dominant.
 If ${\mathcal A}\rightarrow{\mathcal A}'$ is an $\ell$-nested transformation,  we have $
 \operatorname{Cl}^\gamma_{{\mathcal A}'}(\omega)=
 \operatorname{Cl}^\gamma_{\mathcal A}(\omega)
 $. In particular
 \[
 {\mathcal N}^\gamma_{{\mathcal A}'}(\omega)={\mathcal N}^\gamma_{\mathcal A}(\omega), \quad
 \operatorname{L}^\gamma_{{\mathcal A}'}(\omega)=\operatorname{L}^\gamma_{\mathcal A}(\omega),\quad
 \operatorname{C}^\gamma_{{\mathcal A}'}(\omega)=\operatorname{C}^\gamma_{\mathcal A}(\omega),
 \quad
 \varsigma^\gamma_{{\mathcal A}}(\omega)=\varsigma^\gamma_{{\mathcal A}'}(\omega).
 \]
 Moreover, we have $
 \varsigma_{\mathcal A}(\omega)\leq \varsigma_{{\mathcal A}'}(\omega)\leq \varsigma^\gamma_{{\mathcal A}}(\omega)=\varsigma^\gamma_{{\mathcal A}'}(\omega)
 $.
 \end{remark}

 The {\em totally recessive case $\operatorname{Cl}^\gamma_{\mathcal A}(\omega)=\emptyset$} corresponds to the property $\varsigma^\gamma_{\mathcal A}(\omega)=\infty$. The {\em dominant case} $\operatorname{Cl}^\gamma_{\mathcal A}(\omega)\ne \emptyset$ corresponds to the property $\varsigma^\gamma_{\mathcal A}(\omega)\leq\gamma$. Let us also recall that we always have the property $\varsigma_{\mathcal A}(\omega)\leq \varsigma^\gamma_{\mathcal A}(\omega)$.

 \begin{proposition}
 \label{prop:caracterizacionpreparacion}
 Assume that $({\mathcal A},\omega)$ is $\gamma$-maximally dominant. Then $({\mathcal A},\omega)$ is $\gamma$-prepared if and only if either $\varsigma_{\mathcal A}(\omega)>\gamma$ or the following statements hold:
 \begin{itemize}
 \item[a)] $\varsigma_{\mathcal A}(\omega)=\varsigma^\gamma_{\mathcal A}(\omega)$.
 \item[b)] $\operatorname{L}\cap \operatorname{Cl}_{\mathcal A}(\omega)=
 \operatorname{L}\cap \operatorname{Cl}^\gamma_{\mathcal A}(\omega)$, where
 $\operatorname{L}=\operatorname{L}^\gamma_{\mathcal A}(\omega)= \operatorname{L}_{\mathcal A}(\omega)$.
 \end{itemize}
 In this case, we have that $
 \operatorname{C}^\gamma_{\mathcal A}(\omega)= \operatorname{C}_{\mathcal A}(\omega)
 $
 and the levels in the cloud of points contributing  to the critical segment are only dominant levels. In particular the critical vertex corresponds to a dominant level.
 \end{proposition}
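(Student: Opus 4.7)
My plan is to unfold Definition \ref{def:preparado} level by level and translate the condition on each $\omega_s$ into a geometric statement about which points of the cloud lie on the critical line. Recall that $({\mathcal A},\omega)$ is $\gamma$-prepared iff every $\omega_s$ is $(\tilde\gamma-s\nu(z))$-final, where $\tilde\gamma=\min\{\gamma,\varsigma_{\mathcal A}(\omega)\}$. The first step is to dispose of the trivial disjunct $\varsigma_{\mathcal A}(\omega)>\gamma$: here $\tilde\gamma=\gamma$, and by the very definition of $\varsigma_{\mathcal A}(\omega)$ we have $\nu_{\mathcal A}(z\omega_s)+s\nu(z)\geq \varsigma_{\mathcal A}(\omega)>\gamma$ for every $s$, so every level is $(\gamma-s\nu(z))$-recessive, hence final. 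Thus $({\mathcal A},\omega)$ is automatically $\gamma$-prepared in this case, and both sides of the stated equivalence hold.

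Next, I would work under the remaining hypothesis $\varsigma_{\mathcal A}(\omega)\leq\gamma$, so $\tilde\gamma=\varsigma_{\mathcal A}(\omega)$. Any level $\omega_s$ whose cloud point $(\nu_{\mathcal A}(z\omega_s),s)$ lies strictly above $\operatorname{L}_{\mathcal A}(\omega)$ is automatically $(\varsigma_{\mathcal A}(\omega)-s\nu(z))$-recessive. The whole issue therefore concentrates on the levels whose cloud point lies exactly on $\operatorname{L}_{\mathcal A}(\omega)$: for such a level, $\nu_{\mathcal A}(z\omega_s)=\varsigma_{\mathcal A}(\omega)-s\nu(z)$, so being $(\varsigma_{\mathcal A}(\omega)-s\nu(z))$-final forces being $(\varsigma_{\mathcal A}(\omega)-s\nu(z))$-dominant, which by Definition \ref{def:finallevels} means exactly that $\eta_s$ and $h_s$ are both $\nu_{\mathcal A}(z\omega_s)$-final. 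Under our assumption $\varsigma_{\mathcal A}(\omega)\leq\gamma$ we moreover have $\nu_{\mathcal A}(z\omega_s)\leq\gamma-s\nu(z)$, so this condition is in turn equivalent to $(\nu_{\mathcal A}(z\omega_s),s)\in \operatorname{Cl}^\gamma_{\mathcal A}(\omega)$. This level-by-level equivalence is the heart of the argument; the main (technical rather than conceptual) obstacle is keeping straight the slightly different roles played by $\varsigma_{\mathcal A}(\omega)$ and $\gamma$ in the two notions of ``dominant''.

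Putting the equivalence together, under $\varsigma_{\mathcal A}(\omega)\leq\gamma$ the pair $({\mathcal A},\omega)$ is $\gamma$-prepared iff $\operatorname{L}_{\mathcal A}(\omega)\cap \operatorname{Cl}_{\mathcal A}(\omega)\subset \operatorname{Cl}^\gamma_{\mathcal A}(\omega)$; since $\operatorname{Cl}^\gamma_{\mathcal A}(\omega)\subset\operatorname{Cl}_{\mathcal A}(\omega)$ always holds, this inclusion is really an equality of intersections with $\operatorname{L}_{\mathcal A}(\omega)$. Because some vertex of ${\mathcal N}_{\mathcal A}(\omega)$ always realizes $\varsigma_{\mathcal A}(\omega)$, this equality forces $\operatorname{Cl}^\gamma_{\mathcal A}(\omega)$ to meet $\operatorname{L}_{\mathcal A}(\omega)$, giving $\varsigma^\gamma_{\mathcal A}(\omega)\leq \varsigma_{\mathcal A}(\omega)$; combined with the always-true reverse inequality, this yields (a), hence $\operatorname{L}_{\mathcal A}(\omega)=\operatorname{L}^\gamma_{\mathcal A}(\omega)$, and (b) is then just the equality noted above. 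Conversely, if (a) and (b) hold, every point of $\operatorname{Cl}_{\mathcal A}(\omega)$ on $\operatorname{L}_{\mathcal A}(\omega)$ lies in $\operatorname{Cl}^\gamma_{\mathcal A}(\omega)$, i.e.\ corresponds to a dominant level, and the level-by-level equivalence above delivers the $\gamma$-preparation.

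Finally, the two closing assertions come out as immediate corollaries of (a) and (b). The segments $\operatorname{C}^\gamma_{\mathcal A}(\omega)$ and $\operatorname{C}_{\mathcal A}(\omega)$ share the same supporting line $\operatorname{L}$ by (a), and their endpoints are the extreme points of $\operatorname{L}\cap \operatorname{Cl}_{\mathcal A}(\omega)=\operatorname{L}\cap \operatorname{Cl}^\gamma_{\mathcal A}(\omega)$ by (b); hence they coincide. Likewise, any level contributing to the critical segment has its cloud point in $\operatorname{L}\cap \operatorname{Cl}^\gamma_{\mathcal A}(\omega)$, so it is a dominant level by the very definition of $\operatorname{Cl}^\gamma_{\mathcal A}(\omega)$.
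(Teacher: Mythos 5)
Your proposal is correct and follows essentially the same route as the paper: both arguments unfold Definition \ref{def:preparado} level by level, observe that levels strictly above the critical line are automatically recessive, and identify $(\varsigma_{\mathcal A}(\omega)-s\nu(z))$-dominance of a level on the critical line with membership in $\operatorname{Cl}^\gamma_{\mathcal A}(\omega)$, using $\varsigma_{\mathcal A}(\omega)\leq\varsigma^\gamma_{\mathcal A}(\omega)$ to extract (a). Your write-up merely makes explicit a few steps the paper leaves terse (notably why the inclusion of cloud points forces $\varsigma^\gamma_{\mathcal A}(\omega)\leq\varsigma_{\mathcal A}(\omega)$).
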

 \begin{proof} If $\varsigma_{\mathcal A}(\omega)>\gamma$ we see that $({\mathcal A},\omega)$ is $\gamma$-prepared by definition. Thus, we assume $\varsigma_{\mathcal A}(\omega)\leq\gamma$.
 If $({\mathcal A},\omega)$ is $\gamma$-prepared, then each $\omega_s$ is $(\varsigma_{\mathcal A}(\omega)-s\nu(z))$-final. Take a level $\omega_s$ such that
 $\nu_{\mathcal A}(z\omega_s)=\varsigma_{\mathcal A}(\omega)-s\nu(z)$, it is  $(\varsigma_{\mathcal A}(\omega)-s\nu(z))$-dominant.
 This shows a) and b). Conversely, assume a) and b). For any $s$-level $\omega_s$ we have that
 \[
 \nu_{\mathcal A}(z\omega_s)\geq\varsigma_{\mathcal A}(\omega)-s\nu(z).
 \]
 If $\nu_{\mathcal A}(z\omega_s)>\varsigma_{\mathcal A}(\omega)-s\nu(z)$, the level is $(\varsigma_{\mathcal A}(\omega)-s\nu(z))$-recessive. If
 \[
 \nu_{\mathcal A}(z\omega_s)=\varsigma_{\mathcal A}(\omega)-s\nu(z),
 \]
the level is $(\varsigma_{\mathcal A}(\omega)-s\nu(z))$-dominant in view of b).
 \end{proof}
 \begin{corollary}
 \label{cor:dominantcriticalsegment}
  Assume that $({\mathcal A},\omega)$ is $\gamma$-maximally dominant and that one of the following two conditions holds
  \begin{equation*}
(i):\gamma\leq\varsigma_{\mathcal A}(\omega) \mbox{  and } \varsigma_{\mathcal A}^\gamma(\omega)=\infty.\quad (ii):
\varsigma_{\mathcal A}(\omega)= \varsigma_{\mathcal A}^\gamma(\omega)\leq \gamma.
  \end{equation*}
  {\em (This is equivalent to saying that $\varsigma_{\mathcal A}(\omega)\geq \min\{\gamma,\varsigma^\gamma_{\mathcal A}(\omega)\}$)}.
Then, there is an $\ell$-nested transformation ${\mathcal A}\rightarrow{\mathcal A}'$ such that $({\mathcal A}',\omega)$ is $\gamma$-prepared.
 \end{corollary}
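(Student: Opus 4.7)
The plan is to invoke the characterization Proposition \ref{prop:caracterizacionpreparacion} and to realize its conditions by making finitely many $\ell$-nested transformations that strictly raise the explicit value of the ``offending'' levels of $\omega$. First I would dispose of the trivial sub-case $\varsigma_{\mathcal A}(\omega)>\gamma$, in which $({\mathcal A},\omega)$ is already $\gamma$-prepared by Proposition \ref{prop:caracterizacionpreparacion}. Under the hypothesis $\varsigma_{\mathcal A}(\omega)\geq \min\{\gamma,\varsigma^\gamma_{\mathcal A}(\omega)\}$, this leaves either sub-case (i) with $\gamma=\varsigma_{\mathcal A}(\omega)$ and $\operatorname{Cl}^\gamma_{\mathcal A}(\omega)=\emptyset$, or sub-case (ii) with $\varsigma_{\mathcal A}(\omega)=\varsigma^\gamma_{\mathcal A}(\omega)\leq\gamma$. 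In either sub-case one has $\operatorname{L}_{\mathcal A}(\omega)=\operatorname{L}^\gamma_{\mathcal A}(\omega)$, and by Proposition \ref{prop:caracterizacionpreparacion} the task reduces to arranging $\operatorname{L}\cap\operatorname{Cl}_{{\mathcal A}'}(\omega)=\operatorname{L}\cap\operatorname{Cl}^\gamma_{{\mathcal A}'}(\omega)$, that is, to removing from the cloud the ``bad'' integers $s$: those on the critical line whose level $\omega_s$ does not lie in the dominant cloud.

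Next, I would handle the bad levels one at a time. Fix a bad $s$; by $\gamma$-maximal dominance, no $\ell$-nested transformation turns $\omega_s$ into a $(\gamma-s\nu(z))$-dominant level. Both constituents $\eta_s$ and $h_s$ of $\omega_s=\eta_s+h_sdz/z$ involve only $\boldsymbol{x}$ and the first $\ell$ dependent variables, so the induction hypothesis (Theorem \ref{teo:inductivestatement} for the $1$-form $\eta_s$, together with Theorem \ref{teo:functions} applied to the formal function $h_s$, where the latter carries no integrability obstruction since $d(dh_s)=0$) yields an $\ell$-nested transformation making both $\eta_s$ and $h_s$ into $(\gamma-s\nu(z))$-final objects. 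A short case analysis using the definition of ``$\omega_s$ is $(\gamma-s\nu(z))$-dominant'' together with the impossibility of this forces both $\eta_s$ and $h_s$ to end up recessive: if either became dominant then, comparing explicit values, $\omega_s$ itself would become $(\gamma-s\nu(z))$-dominant, contradicting maximal dominance. Hence $\nu_{{\mathcal A}'}(z\omega_s)>\varsigma_{\mathcal A}(\omega)-s\nu(z)$ and the bad level is removed from the critical segment. Composing the transformations for the finitely many bad levels — and using the stability results of Proposition \ref{prop:stabilityofcriticalvalue} and the remark on $\gamma$-maximal dominance to guarantee that the dominant cloud and the already-recessive levels are preserved — produces a single $\ell$-nested ${\mathcal A}\to{\mathcal A}'$ for which conditions (a) and (b) of Proposition \ref{prop:caracterizacionpreparacion} hold, yielding $\gamma$-preparation of $({\mathcal A}',\omega)$.

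The main obstacle is the application of the induction hypothesis to the $1$-form $\eta_s$, which requires a $(\gamma-s\nu(z))$-truncated Frobenius integrability condition for $\eta_s$ alone; this does not follow immediately from the $\gamma$-truncated integrability of $\omega$. Extracting it demands a careful analysis of the $z$-expansion of $\omega\wedge d\omega$, which is the central technical device of the paper's Section 7 (where the ``approximate preparation'' is introduced precisely to overcome this difficulty). An alternative path that sidesteps full integrability for $\eta_s$ is to apply Proposition \ref{prop:notfinalformsandcriticalvalue} directly (it requires no integrability, only the vanishing of the corner initial form, which is assured by the non-dominant character of $\omega_s$ and by maximal dominance), using the well-orderedness of ${\mathcal V}_{\mathcal A}$ below $\gamma$ to guarantee termination; I would take this shorter route whenever it keeps the logical dependencies clean.
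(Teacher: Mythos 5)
You correctly land on the paper's argument: your ``alternative path'' \emph{is} the paper's proof, which applies Proposition \ref{prop:notfinalformsandcriticalvalue} directly to the non-dominant levels sitting on the critical line, and your instinct to discard the first route (invoking Theorem \ref{teo:inductivestatement} on $\eta_s$) is right for exactly the reason you give --- the $(\gamma-s\nu(z))$-truncated integrability of the individual $\eta_s$ is not a consequence of the $\gamma$-truncated integrability of $\omega$, which is why Lemma \ref{lema:aproximacion} and the approximate preparation of Section \ref{Preparation Process} are introduced later, whereas Corollary \ref{cor:dominantcriticalsegment} sits before that machinery and is deliberately elementary. One small cleanup: no well-orderedness or termination argument is needed here, since Proposition \ref{prop:notfinalformsandcriticalvalue} strictly raises the explicit value of each offending constituent ($\eta_s$ or $h_s$ lying at the critical abscissa with vanishing corner initial form) in a single sequence of $\ell'$-Puiseux's packages, $\ell'=1,\ldots,\ell$; since $\gamma$-maximal dominance excludes the mixed configurations where one constituent is corner-dominant while the other is not, one pass pushes every bad level strictly above the critical line, there are only finitely many of them, and in case (i) this already forces $\varsigma_{{\mathcal A}'}(\omega)>\gamma$ while in case (ii) the dominant levels anchor $\varsigma_{{\mathcal A}'}(\omega)=\varsigma^\gamma_{\mathcal A}(\omega)$.
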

 \begin{proof}
 If $\varsigma_{\mathcal A}(\omega)> \gamma$, we are done. Assume that $\varsigma^\gamma_{\mathcal A}(\omega)=\infty$ and $\varsigma_{\mathcal A}(\omega)=\gamma$. Applying Proposition \ref{prop:notfinalformsandcriticalvalue} we obtain the situation where $\varsigma_{\mathcal A}(\omega)>\gamma$ and we are done.

 If $\varsigma^\gamma_{\mathcal A}(\omega)<\infty$, we have $\varsigma_{\mathcal A}(\omega)=\varsigma^\gamma_{\mathcal A}(\omega)\leq \gamma$. We can also apply  Proposition \ref{prop:notfinalformsandcriticalvalue} to the non dominant levels $\omega_s$ such that $(\nu_{\mathcal A}(z\omega_s),s)\in L$, where $L=L^\gamma_{\mathcal A}(\omega)=L_{\mathcal A}(\omega)$. In this way we obtain Property b) in Proposition \ref{prop:caracterizacionpreparacion}.
 \end{proof}
 Proposition \ref{prop:caracterizacionpreparacion} above and Corollary
   \ref{cor:dominantcriticalsegment}
   show that we need to  ``approximate'' the critical segment to the $\gamma$-dominant critical segment in order to
 obtain a prepared situation.

\subsection{Truncated Integrability Condition}
\label{Truncated Integrability Condition}
 Here we develop
the $\gamma$-truncable integrability condition in terms of levels.
Consider the level decomposition given in Equation \eqref{eq:omegalevels}. Let us write
\begin{equation*}
\omega\wedge d\omega=\sum_{s\geq 0}z^s\left( \Theta_s+\frac{dz}{z}\wedge \Delta_s\right),
\end{equation*}
where:
\begin{equation}
\label{eq:condicionesinttruncada}
\Theta_s=\sum_{i+j=s}\eta_i\wedge d\eta_j , \quad \Delta_s=\sum_{i+j=s}j\eta_j\wedge\eta_i+h_id\eta_j+\eta_i\wedge dh_j.
\end{equation}
The condition $\nu_{\mathcal A}(\omega\wedge d\omega)\geq 2\gamma$ is equivalent to saying that
\begin{equation}\label{eq:condicionesinttruncada2}
\nu_{\mathcal A}(\Theta_s)\geq 2\gamma \quad \text{and} \quad \nu_{\mathcal A}(\Delta_s)\geq  2\gamma \quad \text{for any } s\geq 0.
\end{equation}
Lemma \ref{lema:aproximacion} below is our main tool in order to perform the ``aproximated preparation'':
\begin{lemma}
\label{lema:aproximacion}
 Consider an integer number $s\geq 0$ and let $ \delta $ denote the value given by
\[
2\delta=\min\{2\gamma,\min_{s\geq j\geq 1}\{\nu_{\mathcal A}(\eta_{s-j})+\nu_{\mathcal A}(\eta_{s+j})\}\}.
\]
There is an $\ell$-nested transformation ${\mathcal A}\rightarrow {\mathcal A}'$ such that $({\mathcal A}',\eta_s)$ is $\delta$-final.
\end{lemma}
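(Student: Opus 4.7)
My plan is to reduce the statement to an application of the induction hypothesis of Theorem~\ref{teo:inductivestatement} to the $1$-form $\eta_s$. Since $\eta_s\in\Omega^1_{\mathcal A}$ involves only the dependent variables $y_1,\ldots,y_\ell$, we have $I_{\mathcal A}(\eta_s)\leq\ell$, so the induction hypothesis applies \emph{provided} we can verify that $({\mathcal A},\eta_s)$ is a $\delta$-truncated formal foliated space, that is, $\nu_{\mathcal A}(\eta_s\wedge d\eta_s)\geq 2\delta$.

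The key calculation is to extract $\eta_s\wedge d\eta_s$ from the even-indexed integrability datum $\Theta_{2s}$ of Equation~\eqref{eq:condicionesinttruncada}. Writing out the sum,
\begin{equation*}
\Theta_{2s}=\sum_{i+j=2s}\eta_i\wedge d\eta_j=\eta_s\wedge d\eta_s+\sum_{j=1}^{s}\bigl(\eta_{s-j}\wedge d\eta_{s+j}+\eta_{s+j}\wedge d\eta_{s-j}\bigr),
\end{equation*}
so that
\begin{equation*}
\eta_s\wedge d\eta_s=\Theta_{2s}-\sum_{j=1}^{s}\bigl(\eta_{s-j}\wedge d\eta_{s+j}+\eta_{s+j}\wedge d\eta_{s-j}\bigr).
\end{equation*}
The hypothesis on $\omega$ gives $\nu_{\mathcal A}(\Theta_{2s})\geq 2\gamma$ by Equation~\eqref{eq:condicionesinttruncada2}. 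For the cross terms, Proposition~\ref{prop:explicitvalueofthedifferential} yields $\nu_{\mathcal A}(d\eta_{s\pm j})\geq \nu_{\mathcal A}(\eta_{s\pm j})$, and the standard valuative property of the exterior product then gives
\begin{equation*}
\nu_{\mathcal A}\bigl(\eta_{s-j}\wedge d\eta_{s+j}\bigr),\;\nu_{\mathcal A}\bigl(\eta_{s+j}\wedge d\eta_{s-j}\bigr)\;\geq\; \nu_{\mathcal A}(\eta_{s-j})+\nu_{\mathcal A}(\eta_{s+j}).
\end{equation*}
Taking minima produces $\nu_{\mathcal A}(\eta_s\wedge d\eta_s)\geq 2\delta$, which is exactly the $\delta$-truncated Frobenius condition for $\eta_s$.

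With this in hand, the induction hypothesis applied to the $\delta$-truncated formal foliated space $({\mathcal A},\eta_s)$ (legitimate since $I_{\mathcal A}(\eta_s)\leq\ell$) produces an $I_{\mathcal A}(\eta_s)$-nested transformation ${\mathcal A}\to{\mathcal A}'$ such that $({\mathcal A}',\eta_s)$ is $\delta$-final. Any $\ell'$-nested transformation with $\ell'\leq\ell$ is in particular an $\ell$-nested transformation in the sense of Definition~\ref{def:nestedtransformation}, so this gives the desired conclusion. The only point that required real input was the algebraic identity isolating $\eta_s\wedge d\eta_s$ inside $\Theta_{2s}$; everything else is a direct use of the valuative inequalities and the inductive step. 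I do not foresee a serious obstacle here: the lemma is precisely designed so that the $\delta$ appearing in its statement matches exactly the lower bound produced by this identity together with the given $2\gamma$-integrability of $\omega$.
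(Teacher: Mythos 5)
Your argument is exactly the paper's proof, spelled out: isolate $\eta_s\wedge d\eta_s$ inside $\Theta_{2s}$, bound the cross terms via $\nu_{\mathcal A}(d\eta_t)\geq\nu_{\mathcal A}(\eta_t)$ and the wedge inequality, then invoke the induction hypothesis on $\eta_s$. No discrepancy.
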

\begin{proof} The fact that $\nu_{\mathcal A}(\Theta_{2s})\geq 2\gamma$ implies that
$
\nu_{\mathcal A}(\eta_s\wedge d\eta_s)\geq 2\delta
$, recall that $\nu_{\mathcal A}(d\eta_{t})\geq \nu_{\mathcal A}(\eta_t)$. We conclude by the
 induction hypothesis.
\end{proof}
\subsection{Planning Polygons}
\label{planning polygons}
 We give here useful facts concerning positively convex polygons. We need them to get the approximate preparation from  Lemma \ref{lema:aproximacion}.

  Let $N\subset {\mathbb R}_{\geq 0}^2$ be a positively convex polygon with vertices in ${\mathbb R}_{\geq 0}\times{\mathbb Z}_{\geq 0}$. For any nonnegative integer number $s$, define the abscissa $\lambda_N(s)\in {\mathbb Z}_{\geq 0}\cup \{\infty\}$ by
 \[
 \lambda_N(s)=\min\{\lambda;\; (\lambda,s)\in N\}.
 \]
 Any $N$ provides a non-increasing sequence $\{\lambda_N(s)\}_{s=0}^\infty$ of abscissas and conversely. Moreover $N=\emptyset$ if and only if $\lambda_N(s)=\infty$ for any $s\geq 0$.
The {\em sharpness $\alpha_N(s)$} is
\[
\alpha_N(s)=\lambda_N(s-1)+\lambda_N(s+1)-2\lambda_N(s),
\]
when $\lambda_N(s)\ne \infty$. We put $\alpha_N(s)=0$ when $\lambda_N(s)=\infty$. Note that $\alpha_N(s)\geq 0$ and that $\alpha_N(s)>0$ if and only $(\lambda_N(s),s)$  is a vertex of $N$. Moreover $\alpha_N(s)=\infty$ if and only if $(\lambda_N(s),s)$ is the lowest vertex of $N$.

 Given two real numbers $\rho\geq 0$ and $\delta >0$, let us denote $H^{+}_\delta(\rho)$, respectively $H_\delta^-(\rho)$, the set of points $(\alpha,\beta)$ in ${\mathbb R}^2_{\geq 0}$ such that $\alpha+\delta\beta\geq \rho$, respectively $\alpha+\delta\beta\leq \rho$. We also put $L_{\delta}(\rho)=H^{+}_\delta(\rho)\cap H^{-}_\delta(\rho)$.

\begin{lemma}
\label{lema:planning}
 Take $\rho,\delta$ as above and consider  $\epsilon>0$. Let $h$ be the smallest integer number such that $h>\rho/\delta$ and consider any $\theta>0$ with
 $
 \theta\leq 2\epsilon/(h+1)(h+2)
 $.
  Then, any positively convex polygon $N$ with vertices in ${\mathbb R}_{\geq 0}\times{\mathbb Z}_{\geq 0}$ such that $N\not\subset H^+_\delta(\rho-\epsilon)$ has a vertex $(\lambda_N(s),s)$ with $\lambda_N(s)<\rho-s\delta$
and $\alpha_N (s)\geq \theta$.
\end{lemma}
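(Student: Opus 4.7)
The plan is to exploit the convexity of the function $g(s) := \lambda_N(s) + s\delta - \rho$, whose discrete second differences coincide with the sharpness $\alpha_N(s)$. The hypothesis $N \not\subset H^+_\delta(\rho-\epsilon)$ furnishes an integer $s^*$ with $g(s^*) < -\epsilon$, and since $\lambda_N(s^*) \geq 0$ this forces $s^* \leq h-1$. By convexity of $g$, the set $T := \{s \in \mathbb{Z}_{\geq 0} : g(s) < 0\}$ is an integer interval $[t_1, t_2] \subseteq \{0, 1, \ldots, h-1\}$, so $|T| \leq h$.

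First I would dispose of the degenerate boundary case in which $t_1$ equals the smallest vertex index $s_1$ of $N$: then $(\lambda_N(s_1), s_1)$ is the lowest vertex, satisfies $\lambda_N(s_1) < \rho - s_1\delta$, and has $\alpha_N(s_1) = \infty \geq \theta$, closing that case. Otherwise $g(t_1-1) \geq 0$ is well defined and, symmetrically, $g(t_2+1) \geq 0$ (when $t_2+1$ lies past the largest vertex of $N$, this is automatic since $\lambda_N$ stabilizes while $s\delta$ keeps growing). The key step will then be the telescoping identity
\[
\sum_{s=t_1}^{t_2} \alpha_N(s) \;=\; m_{t_2} - m_{t_1-1}, \qquad m_i := g(i+1)-g(i).
\]
Convexity yields the endpoint slope bounds by comparison with the chords through $(s^*, g(s^*))$: namely $m_{t_1-1} \leq -\epsilon/(s^*-t_1+1)$ and $m_{t_2} \geq \epsilon/(t_2-s^*+1)$. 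Applying the AM--HM inequality with $a = s^*-t_1+1$, $b = t_2-s^*+1$ and $a+b = t_2 - t_1 + 2 \leq h+1$ then delivers
\[
\sum_{s \in T} \alpha_N(s) \;\geq\; \frac{\epsilon}{s^*-t_1+1}+\frac{\epsilon}{t_2-s^*+1} \;\geq\; \frac{4\epsilon}{t_2-t_1+2} \;\geq\; \frac{4\epsilon}{h+1}.
\]

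Pigeonhole over the at most $h$ elements of $T$ then produces some $s \in T$ with $\alpha_N(s) \geq 4\epsilon/(h(h+1))$; such an $s$ is automatically a vertex of $N$ (nonzero sharpness), lies strictly below $L_\delta(\rho)$ (since $g(s) < 0$), and satisfies $\alpha_N(s) \geq 2\epsilon/((h+1)(h+2)) \geq \theta$ because $4(h+2) \geq 2h$. The main obstacle I anticipate is the careful bookkeeping at the two boundaries of $T$: handling the case $t_1 = s_1$ via the convention $\alpha_N(s_1) = \infty$, and handling the possibility that $T$ runs past the highest vertex of $N$ on the right via the eventual linear behavior of $g$. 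Once those are in place, everything else is a transparent convexity computation.
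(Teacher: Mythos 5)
Your argument is correct and takes a genuinely different route from the paper's own proof. The paper argues by contradiction: writing $\mu_s = \lambda_N(s) - (\rho - s\delta)$, it takes $b$ to be the smallest index with $\mu_b < -\epsilon$ and $t$ the largest with $\mu_t < 0$, double-telescopes to obtain
\[
\mu_{t+1} = \mu_b + (t-b+1)(\mu_b - \mu_{b-1}) + \sum_{j=b}^{t}(t-j+1)\,\alpha_N(j),
\]
drops the nonpositive middle term, bounds each $\alpha_N(j) < \theta$ (using that $\mu_j<0$ on $[b,t]$ by convexity together with the negation of the conclusion), and arrives at $\mu_{t+1} < -\epsilon + \theta(h+1)(h+2)/2 \leq 0$, contradicting $\mu_{t+1}\geq 0$. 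You instead argue directly: you telescope the \emph{unweighted} sum $\sum_{s\in T}\alpha_N(s)$ into the slope difference $m_{t_2} - m_{t_1-1}$, bound the two endpoint slopes by convexity comparison with the chord through the deep point $s^*$ where $g(s^*)<-\epsilon$, invoke AM--HM to lower bound the sum by $4\epsilon/(h+1)$, then pigeonhole. Both proofs are convexity computations on second differences, but the paper's weighted telescope controls each $\alpha_N(j)$ individually within a contradiction scheme, whereas your unweighted telescope controls their total and extracts one good index afterwards; this incidentally yields the marginally sharper constant $4\epsilon/(h(h+1))$, which, as you verify, dominates the stated $2\epsilon/((h+1)(h+2))$. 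Your handling of the boundary bookkeeping you flag --- the case $t_1 = s_1$ disposed of via the convention $\alpha_N(s_1)=\infty$, and the automaticity of $g(t_2+1)\geq 0$ from the maximality of $t_2$ --- is exactly what is needed to make the endpoint slope comparisons legitimate, and you carry it out correctly.
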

\begin{proof} Suppose that the statement is false, so there is a  positively
convex polygon $N\not\subset H^+_\delta(\rho-\epsilon)$ satisfying $\alpha_s<\theta$ for any $s$ such that $\lambda_s<\rho-s\delta$, where $\lambda_s=\lambda_{N}(s)$, $\alpha_s=\alpha_N(s)$.
Denote $\mu_s=\lambda_s-(\rho-s\delta)$ for any $s$ and consider the following two nonnegative integers
\[
b=\min\{s;\, \mu_s<\epsilon\}\quad \text{ and }\quad  t=\max\{s;\, \mu_s<0\}.
\]
We have $0<b\leq t<h$. Recall that $\mu_{t+1}\geq 0$, $\mu_{b}< -\epsilon$ and $\mu_{b}-\mu_{b-1}<0$.
 Let us note that $\alpha_s= (\mu_{s+1}-\mu_s)-(\mu_s-\mu_{s-1})$, for any $s$. We have
\begin{align*}
\mu_{t+1}&=\mu_b+(\mu_{b+1}-\mu_b)+\cdots+(\mu_t-\mu_{t-1})+(\mu_{t+1}-\mu_t)\\
&=\mu_b+(\mu_{b+1}-\mu_b)+\cdots+2(\mu_t-\mu_{t-1})+\alpha_t\\
&=\mu_b+(\mu_{b+1}-\mu_b)+\cdots+3(\mu_{t-1}-\mu_{t-2})+2\alpha_{t-1}+\alpha_t\\
&=\cdots\\
&=\mu_b+(t-b+1)(\mu_{b}-\mu_{b-1})+(t-b+1)\alpha_b+(t-b)\alpha_{b+1}+\cdots+\alpha_t.
\end{align*}
We conclude that
\[
\mu_{t+1}<-\epsilon+\frac{(t-b+1)(t-b+2)}{2}\theta\leq -\epsilon+\frac{(h+1)(h+2)}{2}\theta\leq 0.
\]
This contradicts the fact that $\mu_{t+1}\geq 0$.
\end{proof}

Let us denote $\theta_{\delta,\rho}(\epsilon)=\min\{1,2\epsilon/(h+1)(h+2)\}$.
We introduce now the kind of operations that we perform in the approximate preparation process. Consider two positively convex polygons $N, N'\subset {\mathbb R}_{\geq 0}^2$ whose vertices have integer ordinates. Given  and integer $s\geq 0$, we say that {\em $N'$ is an $s$-planning of $N$} if and only if $N'$ is contained in the positive convex hull of
\[
(\operatorname{Vert}(N)\setminus\{(\lambda_N(s),s)\})\cup \{(\lambda_N(s+1),s+1),(\lambda_N(s-1),s-1)\},
\]
where $\operatorname{Vert}(N)$ denotes the set of vertices of $N$. We say that {\em $N'$ has been obtained from $N$ by a $(\delta,\rho,\epsilon)$-planning operation} if and only if $N'$ is an $s$-planning of $N$, where $\alpha_N(s)\geq\theta_{\delta,\rho}(\epsilon)$ and $\lambda_N(s)<\rho-s\delta$.
\begin{corollary}
Take $\delta,\rho,\epsilon$ as above and a positively convex  polygon $N\subset {\mathbb R}_{\geq 0}$ whose vertices have integer ordinates. Given any sequence
\[
N\rightsquigarrow N'\rightsquigarrow \cdots \rightsquigarrow N^{(t)}
\]
of $(\delta,\rho,\epsilon)$-planning operations we have that either $N^{(t)}\subset H^+_\delta(\rho-\epsilon)$ or it is possible to perform a new $(\delta,\rho,\epsilon)$-planning operation $N^{(t)}\rightsquigarrow N^{(t+1)}$ .
\end{corollary}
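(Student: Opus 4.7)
The plan is to derive this corollary essentially as a direct application of Lemma \ref{lema:planning}, once one checks that the class of ``positively convex polygons with vertices of integer ordinates'' is closed under planning operations. Indeed, by the definition of $s$-planning, $N'$ is contained in the positive convex hull of $\operatorname{Vert}(N)\setminus\{(\lambda_N(s),s)\}$, which is a positively convex polygon whose vertices form a subset of $\operatorname{Vert}(N)$ and therefore have integer ordinates. Since we are told that each $N^{(i)}$ is required to be a positively convex polygon with vertices of integer ordinates, this property propagates along the sequence and in particular holds for $N^{(t)}$.

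Now I would argue as follows. Suppose $N^{(t)}\not\subset H^+_\delta(\rho-\epsilon)$. Since $N^{(t)}$ is positively convex with vertices of integer ordinates, the hypotheses of Lemma \ref{lema:planning} apply with $\theta=\theta_{\delta,\rho}(\epsilon)$ (note that $\theta_{\delta,\rho}(\epsilon)\leq 2\epsilon/(h+1)(h+2)$ by its very definition, so the lemma is applicable). The lemma then provides an integer $s\geq 0$ such that the vertex $(\lambda_{N^{(t)}}(s),s)$ of $N^{(t)}$ satisfies
\[
\lambda_{N^{(t)}}(s)<\rho-s\delta \quad \text{and}\quad \alpha_{N^{(t)}}(s)\geq \theta_{\delta,\rho}(\epsilon).
\]
These are precisely the two conditions that license a $(\delta,\rho,\epsilon)$-planning operation at the vertex $(\lambda_{N^{(t)}}(s),s)$; performing any $s$-planning of $N^{(t)}$ produces the desired $N^{(t+1)}$.

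There is no real obstacle: the content is already concentrated in Lemma \ref{lema:planning}, and the corollary only packages the existence of a suitable vertex into a statement about the possibility of continuing the sequence of operations. The only point that deserves verification is the bookkeeping that the ``positively convex, integer ordinate'' structure survives each planning step, which we addressed above.
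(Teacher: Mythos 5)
Your proof is correct and follows exactly the paper's route: the authors also dispatch this corollary as a direct consequence of Lemma \ref{lema:planning}, since that lemma hands you a vertex satisfying precisely the two conditions that license a further $(\delta,\rho,\epsilon)$-planning operation whenever $N^{(t)}\not\subset H^+_\delta(\rho-\epsilon)$. The extra bookkeeping about integer ordinates is harmless but already built into the definition of a planning operation, which requires both polygons to lie in that class.
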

\begin{proof}Direct consequence of Lemma \ref{lema:planning}.
\end{proof}

\begin{lemma}
\label{lema:finitudplanning}
Take $\delta,\rho,\epsilon$ as above. There is $b_{\delta,\rho}(\epsilon)\in {\mathbb Z}_{\geq 0}$ such that for any positively convex  polygon $N\subset {\mathbb R}_{\geq 0}$ whose vertices have integer ordinates and any sequence
\[
N\rightsquigarrow N'\rightsquigarrow \cdots \rightsquigarrow N^{(t)}
\]
of $(\delta,\rho,\epsilon)$-planning operations, we have that $t\leq b_{\delta,\rho}(\epsilon)$.
\end{lemma}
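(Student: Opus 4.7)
The plan is to fix $\theta = \theta_{\delta,\rho}(\epsilon)$ and argue by a decoupled potential function: at each fixed ordinate $s$, the abscissa $\lambda_{N^{(j)}}(s)$ turns out to be non-decreasing in $j$ and strictly grows by at least $\theta/2$ whenever the $j$-th operation is centered at $s$. Combined with the admissibility condition $\lambda_{N^{(j)}}(s) < \rho - s\delta$, which simultaneously forces $s \leq h-1$, this yields an explicit uniform bound that does not depend on the initial polygon $N$.

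To establish the monotonicity and step-size claim, denote by $H^{(j)}$ the positive convex hull appearing in the definition of the $j$-th $s$-planning, so that $N^{(j+1)} \subset H^{(j)}$ and consequently $\lambda_{N^{(j+1)}}(s') \geq \lambda_{H^{(j)}}(s')$ for every ordinate $s'$. For $s' \neq s$, the vertex $(\lambda_{N^{(j)}}(s'), s')$ still lies in $H^{(j)}$, hence $\lambda_{H^{(j)}}(s') = \lambda_{N^{(j)}}(s')$ and the abscissa does not decrease. For $s' = s$, the boundary of $H^{(j)}$ at ordinate $s$ lies on the segment joining $(\lambda_{N^{(j)}}(s-1), s-1)$ and $(\lambda_{N^{(j)}}(s+1), s+1)$, whose abscissa at ordinate $s$ equals exactly $\lambda_{N^{(j)}}(s) + \alpha_{N^{(j)}}(s)/2$. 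The admissibility condition $\alpha_{N^{(j)}}(s) \geq \theta$ then gives $\lambda_{N^{(j+1)}}(s) \geq \lambda_{N^{(j)}}(s) + \theta/2$. The degenerate cases where $s$ is the top or bottom ordinate of $N^{(j)}$ are handled identically, with the natural conventions $\lambda_N(s_{\mathrm{top}}+1) = \lambda_N(s_{\mathrm{top}})$ and $\lambda_N(-1) = +\infty$; in the latter the step becomes infinite, consistently with the convention $\alpha_N(s)=\infty$ at the lowest vertex.

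Since admissibility at ordinate $s$ also requires $\lambda_{N^{(j)}}(s) \geq 0$ and $\lambda_{N^{(j)}}(s) < \rho - s\delta$, it forces $s \leq h-1$. At a fixed ordinate $s$ in this finite range, if the $k$-th admissible operation is centered at $s$, the pre-operation abscissa is at least $(k-1)\theta/2$ by the step-size claim together with monotonicity across operations at other ordinates, and must be strictly less than $\rho - s\delta$, giving $k \leq \lceil 2(\rho-s\delta)/\theta \rceil$. Summing over the admissible ordinates yields the uniform bound
\[
b_{\delta,\rho}(\epsilon) = \sum_{s=0}^{h-1} \left\lceil \frac{2(\rho - s\delta)}{\theta_{\delta,\rho}(\epsilon)} \right\rceil,
\]
which depends only on $\delta, \rho, \epsilon$. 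The crucial ingredient, and the only point that requires care, is the monotonicity across operations \emph{not} centered at $s$; once this is established the count decouples cleanly by ordinate and the rest is a routine arithmetic estimate.
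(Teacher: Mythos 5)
Your proof is correct and follows essentially the same strategy as the paper's: decouple by ordinate, track a monotone per-ordinate potential, and bound the number of operations at each ordinate $s\le h-1$ by the available range divided by the step size. You work directly with the abscissa $\lambda_{N^{(j)}}(s)$, carefully establish monotonicity across operations not centered at $s$ via $\lambda_{H^{(j)}}(s')=\lambda_{N^{(j)}}(s')$, and compute the jump precisely as $\alpha_N(s)/2\ge\theta/2$; the paper uses the complementary deficit $\beta_N(s)=\max\{0,\rho-s\delta-\lambda_N(s)\}$ (whose published formula has a $\delta\leftrightarrow\rho$ typo and writes $\min$ where $\max$ is meant) with a loosely stated step, yielding a bound of the same order as your $\sum_{s=0}^{h-1}\lceil 2(\rho-s\delta)/\theta_{\delta,\rho}(\epsilon)\rceil$.
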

\begin{proof} Given a polygon $N$, let us consider the numbers $\beta_N(s)$ defined by
\[
\beta_N(s)=\max\{0, \delta - s \rho - \lambda_{N(s)} \}.
\]
We know that $\beta_N(s)=0$ for $s<0$ and $s\geq h$, where $h$ is the first integer number greater or equal than $\delta / \rho$. If $N\rightsquigarrow N'$ is a $(\delta,\rho,\epsilon)$-planning operation, we have that $\beta_{N'}(s)\leq \beta_N(s)$ for any $s$ and moreover, there is an index $s_0$ such that $\beta_N(s_0)>0$ and
\[\beta_{N'}(s_0)\leq \min\{0, \beta_N(s_0)-\theta_{\delta,\rho}(\epsilon) \}.\]
The level $s_0$ corresponds to the considered vertex in the operation $N\rightsquigarrow N'$. With these properties, it is enough to take
$
b_{\delta,\rho}(\epsilon)\geq (h+1)(\rho+1)/\theta_{\delta,\rho}(\epsilon)
$.
\end{proof}

\subsection{Approximate Preparation}
Proposition \ref{prop:epsilonpreparation} and   Corollary \ref{cor:approximatepreparation}  provide the starting situation to get a prepared situation. Both results are obtained by a ``planning'' of the Newton Polygon, that is possible in view of Lemma \ref{lema:aproximacion}.

 \begin{proposition}
  \label{prop:epsilonpreparation} Consider a $\gamma$-pseudo prepared
 $({\mathcal A},\omega)$ and fix $\epsilon>0$. There is an $\ell$-nested transformation ${\mathcal A}\rightarrow {\mathcal A}'$ such that  $\varsigma_{{\mathcal A}'}(\omega)>\min\{\gamma,\varsigma^\gamma_{\mathcal A}(\omega)-\epsilon\}$.
 \end{proposition}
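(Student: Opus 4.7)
\emph{Plan.} Set $\rho_1 := \min\{\gamma,\varsigma^\gamma_{\mathcal{A}}(\omega)-\epsilon\}$; the target is $\varsigma_{\mathcal{A}'}(\omega) > \rho_1$. The strategy is to iteratively realize ``planning operations'' on the Newton polygon $\mathcal{N}_{\mathcal{A}}(\omega)$ via $\ell$-nested transformations, raising each ``bad'' vertex $(\lambda_s,s)$ with $\lambda_s + s\nu(z) \leq \gamma$ and $\omega_s$ non-dominant, until no such vertex remains. Every bad vertex satisfies $\lambda_s + s\nu(z) < \varsigma^\gamma_{\mathcal{A}}(\omega)$ and $\lambda_s \leq \gamma - s\nu(z)$, so the $\gamma$-maximal dominance hypothesis is available to constrain its behaviour.

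For a bad vertex $(\lambda_s,s)$, I first claim $\nu_{\mathcal{A}}(h_s) > \lambda_s$. If instead $\nu_{\mathcal{A}}(h_s) = \lambda_s \leq \gamma - s\nu(z)$, then the $\gamma$-pseudo-preparation forces $h_s$ to be $\lambda_s$-dominant, and non-dominance of $\omega_s$ forces $\nu_{\mathcal{A}}(\eta_s) = \lambda_s$ with $\mathfrak{c}^1_{\mathcal{A}}(\eta_s)=0$; then Proposition \ref{prop:notfinalformsandcriticalvalue} provides an $\ell$-nested transformation strictly raising $\nu(\eta_s)$ above $\lambda_s$, after which $\omega_s$ (with $h_s$ still dominant, by stability) becomes $\lambda_s$-dominant, hence $(\gamma-s\nu(z))$-dominant, contradicting maximal dominance. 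Next, I apply Lemma \ref{lema:aproximacion} at level $s$: since $\nu_{\mathcal{A}}(\eta_{s\pm j}) \geq \lambda_{s\pm j}$ and the polygon is convex, this yields an $\ell$-nested transformation $\mathcal{A}\to\mathcal{A}'$ making $\eta_s$ become $\delta_s$-final with $\delta_s \geq \lambda_s$ (in fact $\delta_s \geq \lambda_s + \theta/2$, where $\theta$ is the sharpness supplied by Lemma \ref{lema:planning}).

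The crux is that maximal dominance now upgrades $\delta_s$-finality to a strict numerical raise $\nu_{\mathcal{A}'}(z\omega_s) > \lambda_s$. Indeed, if $\nu_{\mathcal{A}'}(\eta_s) = \lambda_s$, then $\eta_s$ must be $\delta_s$-dominant (its value equals $\lambda_s \leq \delta_s$, so it cannot be $\delta_s$-recessive), hence $\lambda_s$-dominant; combined with $h_s$ being $\lambda_s$-recessive (from $\nu_{\mathcal{A}}(h_s) > \lambda_s$ and stability), $\omega_s$ becomes $\lambda_s$-dominant and therefore $(\gamma-s\nu(z))$-dominant, contradicting maximal dominance. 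Hence $\nu_{\mathcal{A}'}(\eta_s) > \lambda_s$, and since $\nu_{\mathcal{A}'}(h_s) \geq \nu_{\mathcal{A}}(h_s) > \lambda_s$, we obtain $\nu_{\mathcal{A}'}(z\omega_s) > \lambda_s$; this realizes the desired $s$-planning of the Newton polygon.

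Iterating, each step strictly raises some bad vertex's value within the finite set $\mathcal{V}_{\mathcal{A}}\cap[0,\gamma]$; Lemma \ref{lema:finitudplanning}, applied with parameters $\delta=\nu(z)$ and suitable $\rho,\epsilon'$, bounds the total number of operations. At termination every originally bad vertex satisfies $\lambda_s + s\nu(z) > \gamma$, while dominant vertices (which are left untouched) satisfy $\lambda_s + s\nu(z) \geq \varsigma^\gamma_{\mathcal{A}}(\omega)$. A short case check on the relative position of $\gamma$ and $\varsigma^\gamma_{\mathcal{A}}(\omega)$ then yields $\varsigma_{\mathcal{A}'}(\omega) > \rho_1$ in every scenario. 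The principal obstacle is precisely this upgrade from abstract $\delta_s$-finality to a strict value raise: Lemma \ref{lema:aproximacion} alone does not guarantee a numerical jump, and only the combination with the maximal dominance hypothesis rules out the pathological ``dominant with value exactly $\lambda_s$'' outcome, thereby turning a polygonal planning into a genuine $\ell$-nested transformation.
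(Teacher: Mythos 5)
Your overall strategy is the paper's: realize $s$-plannings of the Newton polygon via Lemma \ref{lema:aproximacion} and the maximal-dominance hypothesis, and terminate via Lemma \ref{lema:finitudplanning}. Your preliminary observations are sound (the exclusion of $\nu_{\mathcal A}(h_s)=\lambda_s$ via Proposition \ref{prop:notfinalformsandcriticalvalue}, and the exclusion of the outcome $\nu_{{\mathcal A}'}(\eta_s)=\lambda_s$). But the decisive step fails: you conclude only that $\nu_{{\mathcal A}'}(z\omega_s)>\lambda_s$ and assert that ``this realizes the desired $s$-planning.'' It does not. By definition, an $s$-planning forces the new abscissa at ordinate $s$ to reach at least the midpoint $(\lambda_{s-1}+\lambda_{s+1})/2=\lambda_s+\alpha_N(s)/2$; a strict but possibly tiny increase above $\lambda_s$ is not enough. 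Your argument rules out only the single value $\nu_{{\mathcal A}'}(\eta_s)=\lambda_s$, not the whole range $(\lambda_s,(\lambda_{s-1}+\lambda_{s+1})/2)$, where $\eta_s$ could become $\delta_s$-final \emph{dominant}. Consequently Lemma \ref{lema:finitudplanning} is not applicable (it counts genuine $(\delta,\rho,\epsilon)$-planning operations, each with a jump bounded below by $\theta_{\delta,\rho}(\epsilon)$), and your fallback ``each step strictly raises a value in the finite set ${\mathcal V}_{\mathcal A}\cap[0,\gamma]$'' is also invalid: the set ${\mathcal V}_{{\mathcal A}}$ is enlarged by every independent blow-up and Puiseux's package, so across an infinite sequence of $\ell$-nested transformations the relevant values range over a growing union that can accumulate below $\gamma$. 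This accumulation of explicit values is precisely the obstruction the planning machinery is designed to defeat, so it cannot be waved away.

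The missing idea is the one carried by Equation \eqref{eq:abscisanivels} in the paper's proof: one must first show $\nu_{\mathcal A}(\eta_s\wedge d\eta_s)<2(\rho-s\delta)$ (otherwise the induction hypothesis plus the $(\rho-s\delta)$-finality of $h_s$ would make the level dominant at a value below $\varsigma^\gamma_{\mathcal A}(\omega)$, which is impossible). This pins down $\varrho=(\lambda_{s-1}+\lambda_{s+1})/2<\rho-s\delta$, so that after applying the induction hypothesis at truncation $\varrho$ the level $\omega_s$ is $\varrho$-final and \emph{cannot} be $\varrho$-dominant (again because $\rho<\varsigma^\gamma_{\mathcal A}(\omega)$ and the dominant cloud is stable). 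Hence the level is $\varrho$-recessive, i.e., its value jumps past the midpoint, which is what a genuine $s$-planning requires. Your proof skips this comparison of $\delta_s$ with $\rho-s\delta$ entirely, and with it the only mechanism that converts ``$\delta_s$-final'' into the quantitative jump needed for termination.
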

 \begin{proof}  In order to simplify notation, let us denote $N={\mathcal N}_{\mathcal A}(\omega)$, $\delta=\nu(z)$, $\rho=\min\{\gamma,\varsigma^\gamma_{\mathcal A}(\omega)-\epsilon\}$ and
$
\lambda_s=\lambda_N(s)\leq \nu_{\mathcal A}(z\omega_s).
$
Let us note that if $\varsigma^\gamma_{\mathcal A}(\omega)=\infty$, then $\rho=\gamma$ and if
$\varsigma^\gamma_{\mathcal A}(\omega)\ne\infty$ then $\rho=\varsigma^\gamma_{\mathcal A}(\omega)-\epsilon\leq \gamma-\epsilon<\gamma$.

Let us do an argument by contradiction, assuming that there is no $\ell$-nested transformation
 ${\mathcal A}\rightarrow {\mathcal A}'$ such that $\varsigma_{{\mathcal A}'}(\omega)> \rho$.

Up to performing a suitable $\ell$-nested transformation, the following properties hold:
\begin{itemize}
\item[a)] For any level $s$ such that $\lambda_s\leq  \rho-s\delta$ there is no new nested transformation with  $\lambda'_s >\rho-s\delta$. That is, we  assume that we have the minimum possible number of levels $s$ with $\lambda_s\leq \rho-s\delta$.
\item[b)] For any level $s$, we have that $\lambda_s\ne\rho-s\delta$. If we have a level $s$ with $\lambda_s=\rho-s\delta$, it is not a dominant level and we can apply
    Proposition \ref{prop:notfinalformsandcriticalvalue} to  obtain $\lambda'_s>\gamma-s\delta$ in contradiction with the minimality given in a).
\item[c)] $\lambda_0>\rho$. Recall that $\eta_0=\omega_0$ is $\gamma$-final.
\end{itemize}
The above properties a),b) and c) are still true under any further $\ell$-nested transformation.
 Let us choose $\tilde\rho$ such that $\rho<\tilde\rho<\varsigma^\gamma_{\mathcal A}(\omega)$ such that for any $s$ with $\lambda_s>\rho-s\delta$ we have $\lambda_s>\tilde\rho-s\delta$. Note that this property is stable under any further $\ell$-nested transformation.
 In view of the above reductions of the problem, we have the following properties under any $\ell$-nested transformation ${\mathcal A}\rightarrow {\mathcal A}'$:
  \begin{itemize}
  \item[i)] $\lambda_0>\tilde\rho$ and $\lambda'_0>\tilde\rho$.
 \item[ii)] For any $s$ such that $\lambda_s\geq \rho-s\delta$, then $\lambda_s>\tilde\rho-s\delta$ and $\lambda'_s>\tilde\rho-s\delta$.
 \item[iii)] For any $s$ such that $\lambda_s \leq \rho-s\delta$, then
 $\lambda_s<\rho-s\delta$ and $\lambda'_s<\rho-s\delta$.
 \end{itemize}
 Let us consider now the following ``Planning Property'':
 \begin{quote}
  (P): {\em For any $s$ with $\lambda_s\leq \rho-s\delta$, there is an $\ell$-nested transformation ${\mathcal A}\rightarrow {\mathcal A}'$ such that the transformed Newton Polygon $N'$ is an $s$-planning of $N$.}
 \end{quote}
If (P) is true, we end as follows. Take $\varepsilon = \tilde\rho-\rho$. By Lemma \ref{lema:planning} we can perform a $(\tilde\gamma,\delta,\varepsilon)$-transformation $N'$ of $N$ induced by an $\ell$-nested transformation ${\mathcal A}\rightarrow {\mathcal A}'$. The transformed polygon $N'$ is still in the same situation in view of properties i), ii) and iii). We repeat indefinitely the operation. This contradicts Lemma \ref{lema:finitudplanning}.

Now, let us show that Property (P) holds. Take $s$ such that $\lambda_s\leq \rho-s\delta$, note that $s\geq 1$. For any $j\geq 1$ we have that
\[
\lambda_{s+1}+\lambda_{s-1}\leq \lambda_{s+j}+\lambda_{s-j}\leq \nu_{\mathcal A}(\eta_{s+j})+\nu_{\mathcal A}(\eta_{s-j}).
\]
Let us recall that $\nu_{\mathcal A}(\Theta_{2s})\geq 2\gamma$, see Equation \eqref{eq:condicionesinttruncada2}. We deduce that
\begin{equation}
\label{eq:truncatedinductioncondition}
\nu_{\mathcal A}(\eta_s\wedge d\eta_s)\geq 2\varrho,
\end{equation}
where $2\varrho=
\min\{2\gamma, \lambda_{s+1}+\lambda_{s-1}\}$. On the other hand, let us remark that
\begin{equation}
\label{eq:abscisanivels}
\nu_{\mathcal A}(\eta_s\wedge d\eta_s)<2(\rho-s\delta).
\end{equation}
If Equation \eqref{eq:abscisanivels} does not hold, by induction hypothesis, there is a suitable $\ell$-nested transformation such  that $({\mathcal A}',\eta_s)$ is $(\rho-s\delta)$-final. Recalling that $({\mathcal A}',h_s)$ is also  $(\rho-s\delta)$-final,  the level $\omega_s$ is $(\rho-s\delta)$-final with respect to ${\mathcal A}'$. But we know that $\lambda'_s<\rho-s\delta$, in particular it should be a dominant level, contradiction with the fact that $\rho<\varsigma^\gamma_{\mathcal A}(\omega)$.

Then Equation \eqref{eq:abscisanivels} holds and we deduce that
\[
2\varrho=\lambda_{s+1}+\lambda_{s-1}\quad  \text{ and }\quad   \rho\leq \gamma-s\delta.
\]
By a new application of the induction hypothesis in view of Equation \eqref{eq:truncatedinductioncondition}, we can perform an $\ell$-nested transformation ${\mathcal A}\rightarrow {\mathcal A}'$  such  that $({\mathcal A}',\eta_s)$ is $\varrho$-final. Moreover, since $\varrho\leq \rho-s\delta$ we have that $({\mathcal A}',h_s)$ is also $\varrho$-final and hence $({\mathcal A}',\omega_s)$ is also $\varrho$-final. We know that $({\mathcal A},\omega_s)$ is not $\varrho$-final dominant, since otherwise it would be $(\rho-s\delta)$-final dominant and $\rho<\varsigma^\gamma_{\mathcal A}(\omega)$. We conclude that $\lambda'_s\geq (\lambda_{s+1}+\lambda_{s-1})/2$ and thus $N'$ has been obtained by an $s$-planning of $N$.
 \end{proof}

 \begin{corollary}[Approximate Preparation]
  \label{cor:approximatepreparation} Let us assume that $({\mathcal A},\omega)$ is  $\gamma$-pseudo prepared. We have the following properties:
 \begin{itemize}
 \item {\em Recessive Preparation:} If ${\varsigma}^\gamma_{\mathcal A}(\omega)=\infty$, there is an $\ell$-nested transformation ${\mathcal A}\rightarrow {\mathcal A}'$ such that $\varsigma_{{\mathcal A}'}(\omega)>\gamma$. In particular $({\mathcal A}',\omega)$ is $\gamma$-prepared.
 \item {\em Approximate Preparation:} Assume that ${\varsigma}^\gamma_{\mathcal A}(\omega)\ne\infty$ and consider $\epsilon>0$. There is an $\ell$-nested transformation ${\mathcal A}\rightarrow {\mathcal A}'$ such that
     $
     \varsigma_{{\mathcal A}'}(\omega)>\varsigma^\gamma_{\mathcal A}(\omega)-\epsilon
     $.
 \end{itemize}
 \end{corollary}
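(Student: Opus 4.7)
The plan is to derive this corollary as a direct case analysis of Proposition \ref{prop:epsilonpreparation}, which gives an $\ell$-nested transformation ${\mathcal A}\rightarrow{\mathcal A}'$ with
\[
\varsigma_{{\mathcal A}'}(\omega)>\min\{\gamma,\,\varsigma^\gamma_{\mathcal A}(\omega)-\epsilon\}
\]
for any prescribed $\epsilon>0$. The two bullets of the corollary correspond exactly to the two possibilities for which term of this minimum is smaller, so the whole argument is an unpacking together with a small verification in the recessive case.

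For the recessive preparation, assume $\varsigma^\gamma_{\mathcal A}(\omega)=\infty$. Then $\varsigma^\gamma_{\mathcal A}(\omega)-\epsilon=\infty$, so the minimum reduces to $\gamma$; applying Proposition \ref{prop:epsilonpreparation} with any $\epsilon>0$ (say $\epsilon=1$) yields an $\ell$-nested transformation ${\mathcal A}\rightarrow{\mathcal A}'$ such that $\varsigma_{{\mathcal A}'}(\omega)>\gamma$. I would then check that this implies $({\mathcal A}',\omega)$ is $\gamma$-prepared: by the definition of the critical value we have, for every level $\omega_s$,
\[
\nu_{{\mathcal A}'}(z\omega_s)\geq \varsigma_{{\mathcal A}'}(\omega)-s\nu(z)>\gamma-s\nu(z),
\]
so each $\omega_s$ is $(\gamma-s\nu(z))$-recessive, hence trivially final. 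Since $\tilde\gamma=\min\{\gamma,\varsigma_{{\mathcal A}'}(\omega)\}=\gamma$ in Definition \ref{def:preparado}, this is exactly the condition of being $\gamma$-prepared.

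For the approximate preparation, assume $\varsigma^\gamma_{\mathcal A}(\omega)\ne\infty$. By the definition of $\varsigma^\gamma_{\mathcal A}(\omega)$ recalled just before Proposition \ref{prop:caracterizacionpreparacion}, the nonemptiness of the $\gamma$-dominant cloud forces $\varsigma^\gamma_{\mathcal A}(\omega)\leq\gamma$. Hence for any $\epsilon>0$ we have $\varsigma^\gamma_{\mathcal A}(\omega)-\epsilon<\gamma$, so
\[
\min\{\gamma,\,\varsigma^\gamma_{\mathcal A}(\omega)-\epsilon\}=\varsigma^\gamma_{\mathcal A}(\omega)-\epsilon,
\]
and Proposition \ref{prop:epsilonpreparation} applied with this $\epsilon$ gives the desired ${\mathcal A}\rightarrow{\mathcal A}'$ with $\varsigma_{{\mathcal A}'}(\omega)>\varsigma^\gamma_{\mathcal A}(\omega)-\epsilon$.

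There is no real obstacle: the entire content of the corollary is already contained in Proposition \ref{prop:epsilonpreparation}, and the only nontrivial point is the small verification in the recessive case that strict inequality $\varsigma_{{\mathcal A}'}(\omega)>\gamma$ is enough to deduce $\gamma$-preparation in the sense of Definition \ref{def:preparado}.
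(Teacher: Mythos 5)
Your proof is correct and matches the intended (implicit) derivation: the paper states the corollary immediately after Proposition \ref{prop:epsilonpreparation} without a separate proof, treating it as the direct unpacking of the $\min$ into its two cases, which is exactly what you do. The small verification you add in the recessive case (that $\varsigma_{{\mathcal A}'}(\omega)>\gamma$ forces every level to be $(\gamma-s\nu(z))$-recessive, hence $\gamma$-prepared with $\tilde\gamma=\gamma$) is correct and makes explicit a step the authors leave to the reader.
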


\subsection{Preparation}
\label{subsection:preparation}
We complete here the proof of Theorem \ref{teo:preparation}.

Denote $\varsigma=\varsigma^\gamma_{\mathcal A}(\omega)$.
By Corollary \ref{cor:approximatepreparation}, we have only to consider the case when $\varsigma\ne\infty$. Thus, we assume that $\varsigma<\infty$ and hence $\varsigma\leq \gamma$. By Corollary  \ref{cor:dominantcriticalsegment}, we have only to show that
$\varsigma_{{\mathcal A}'}(\omega)=\varsigma$ after a suitable
$\ell$-nested transformation ${\mathcal A}\rightarrow {\mathcal A}'$.

We follow an argument by contradiction, assuming that Theorem \ref{teo:preparation} does not hold for a given $({\mathcal A},\omega)$. Denote by $ s_0\leq s_1$ the levels corresponding to the vertices of the dominant critical segment and by $[s_0,s_1]=\{s; s_0\leq s\leq  s_1\}$.
There is a positive $\mu>0$ such that after a suitable $\ell$-nested transformation, the following  holds:
\begin{itemize}
\item[1)] The $s_0$-level $\omega_{s_0}$ is divisible by $\boldsymbol{x}^I$, with $\nu_{\mathcal A}(\boldsymbol{x}^I)=\varsigma-s_0\delta$. That is, we have
    \[
    \eta_{s_0}= \boldsymbol{x}^I\eta^*_{s_0},\quad h_{s_0}=\boldsymbol{x}^I h^*_{s_0} .
    \]
    (See Proposition \ref{prop:monlocprincipalization}).
\item[2)]  For any $s$-level such that $\nu_{\mathcal A}(z\omega_s)< \varsigma-s\delta$ and any $\ell$-nested transformation ${\mathcal A}\rightarrow{\mathcal A}'$, we have that
$\nu_{{\mathcal A}'}(z\omega_s)< \varsigma-s\delta$.
\item[3)] For any $s$-level we have that $\nu_{\mathcal A}(h_s)\geq \varsigma-s\delta$. Moreover, if $\nu_{\mathcal A}(h_s)>\varsigma-s\delta$ then $\nu_{\mathcal A}(h_s)>\varsigma-s\delta+\mu$.
\item[4)] For any $s\notin [s_0,s_1]$ such that $\nu_{\mathcal A}(z\omega_s)\geq \varsigma-s\delta$ we have
$\nu_{\mathcal A}(z\omega_s)> \varsigma-s\delta+\mu$ and
$\nu_{{\mathcal A}'}(z\omega_s)> \varsigma-s\delta+\mu$ after any further $\ell$-nested transformation.
\item[5)] For any $s\in [s_0,s_1]$ we have one of the following situations:
\begin{itemize}
\item[a)] $\nu_{\mathcal A}(z\omega_s)=\varsigma-s\delta$ and this property is stable under any further $\ell$-nested transformation. This is the case for $s=s_0$, $s=s_1$ and the $(\varsigma-s\delta)$-dominant levels with $s\in [s_0,s_1]$.
\item[b)] $\nu_{\mathcal A}(z\omega_s)<\varsigma-s\delta$ and this property is stable under any further $\ell$-nested transformation.
\item[c)] $\nu_{\mathcal A}(z\omega_s)>\varsigma-s\delta+\mu$ and this property is stable under any further $\ell$-nested transformation.
\end{itemize}
\end{itemize}
Moreover, in view of Corollary \ref{cor:approximatepreparation}, given $\epsilon>0$ we can perform an $\ell$-nested transformation such that $\nu_{\mathcal A}(z\omega_s)>\varsigma-s\delta-\epsilon$ for any $s$ and this property is stable under any further $\ell$-nested transformation.

Let us note that our contradiction hypothesis states that there is at least one level $\omega_s$ such that $\nu_{\mathcal A}(z\omega_s)<\varsigma-s\delta$. Let $\tilde s$ denote  the minimum of the indices $ s $ such that $\nu_{\mathcal A}(z\omega_s)<\varsigma-s\delta$. We remark that $\tilde s\geq 1$ since the level $\omega_0=\eta_0$ is $\gamma$-final.
We consider the two main situations $\tilde s<s_0$ and $\tilde s>s_0$ that we call respectively the {\em recessive case} and the {\em dominant case}.

Let us consider the recessive case $\tilde s<s_0$. Taking $0<\epsilon<\mu$ and in view of the fact that $\nu_{\mathcal A}(\Theta_{2\tilde s})\geq 2\gamma$, we deduce that
\[
\nu_{\mathcal A}(\eta_{\tilde s}\wedge d \eta_{\tilde s})\geq 2(\varsigma-\tilde s\delta).
\]
By induction hypothesis we can perform an $\ell$-nested transformation in such a way that the level $\tilde s$ should be $(\varsigma-\tilde s\delta)$-final. Since $\tilde s< s_0$, we obtain $\nu_{\mathcal A}(\omega_{\tilde s})>\varsigma-s\delta$, this is the desired contradiction.

Now, consider the dominant case $\tilde s>s_0$. For any $j<s_0$ we have
\begin{equation*}
\begin{array}{llc}
\nu_{\mathcal A}(\eta_{\tilde s+s_0-j}\wedge\eta_j)>\tilde\varsigma,&
\nu_{\mathcal A}(h_{\tilde s+s_0-j}d\eta_j)>\tilde\varsigma,&
\nu_{\mathcal A}(h_jd\eta_{\tilde s+s_0-j})>\tilde\varsigma,\\
\nu_{\mathcal A}(\eta_{\tilde s+s_0-j}\wedge dh_j)>\tilde\varsigma,&
\nu_{\mathcal A}(\eta_j\wedge dh_{\tilde s+s_0-j})>\tilde\varsigma,&
\end{array}
\end{equation*}
where $\tilde\varsigma=2\varsigma-(\tilde s+s_0)\delta+(\mu-\epsilon)$.
Recalling that $\nu_{\mathcal A}(\Delta_{\tilde s+s_0})\geq 2\gamma$, see Equation \eqref{eq:condicionesinttruncada}, we deduce that
\begin{equation}
\label{eq:cotasdelta}
\nu_{\mathcal A}\left(
\sum_{j=s_0}^{\tilde s}(j\eta_j\wedge\eta_{\tilde s+s_0-j}+h_{\tilde s+s_0-j}d\eta_j+\eta_{\tilde s+s_0-j}\wedge dh_j)
\right)> \tilde\varsigma.
\end{equation}
Moreover, for any $s_0\leq j\leq \tilde s$ we have that $\nu_{\mathcal A}(h_j)\geq \varsigma-j\delta$. Otherwise, by performing an $\ell$-nested transformation we can obtain that $\nu_{\mathcal A}(z\omega_j)>\varsigma-j\delta-\epsilon>\nu_{\mathcal A}(h_j)$, contradicting the definition of the dominant abscissa. Noting that $\nu(z\omega_j)\geq \varsigma-j\delta$ for any $j<\tilde s$, we conclude from Equation \eqref{eq:cotasdelta} that
\begin{equation}
\label{eq:cotasdelta2}
\nu_{\mathcal A}
((\tilde s-s_0)\eta_{\tilde s}\wedge\eta_{s_0}+h_{s_0}d\eta_{\tilde s}+\eta_{\tilde s}\wedge dh_{s_0})
\geq 2\varsigma-(\tilde s+s_0)\delta> 2(\varsigma-\tilde s\delta).
\end{equation}
Now, we are going to consider separately the cases where $\nu_{\mathcal A}(h_{s_0})>\varsigma-s_0\delta$ and $\nu_{\mathcal A}(h_{s_0})=\varsigma-s_0\delta$.

$\bullet$ Assume first that $\nu_{\mathcal A}(h_{s_0})>\varsigma-s_0\delta$ and hence $\nu_{\mathcal A}(h_{s_0})>\varsigma-s_0\delta+\mu$ . In this case, we have
\begin{equation}
\label{eq:cotasdelta3}
\nu_{\mathcal A}
(\eta_{\tilde s}\wedge\eta_{s_0})
\geq 2\varsigma-(\tilde s+s_0)\delta.
\end{equation}
Moreover, $ \eta_{s_0}= {\boldsymbol{x}^I}\eta^*_{s_0} $ where $\eta^*_{s_0}$ is $0$-final dominant and $\nu_{\mathcal A}({\boldsymbol{x}^I})= \varsigma-s_0\delta$. Dividing by ${\boldsymbol{x}^I}$ in Equation \eqref{eq:cotasdelta3}  we obtain
\[
\nu_{\mathcal A}(\eta_{\tilde s}\wedge\eta^*_{s_0})\geq \varsigma-\tilde s\delta.
\]
By Proposition  \ref{prop:trucateddivision}, there is
$F\in k[[\boldsymbol{x},y_1,y_2,\ldots,y_{\ell}]]$ and $\tilde\eta_{\tilde s}$ such that
\[
\eta_{\tilde s}=F\eta^*_{s_0}+\tilde\eta_{\tilde s};
\quad \nu_{\mathcal A}(\tilde\eta_{\tilde s})> \varsigma-\tilde s \delta.
\]
Up to a further $\ell$-nested transformation (that does not affect to this general situation) we have that $F$ is $(\varsigma-\tilde s\delta)$-final and hence $\eta_{\tilde s}$ is also $(\varsigma-\tilde s\delta)$-final. In particular $\nu_{\mathcal A}(\eta_{\tilde s})\geq \varsigma$, this is the desired contradiction.

$\bullet$ Assume now that $\nu_{\mathcal A}(h_{s_0})=\varsigma-s_0\delta$. Let us show that after a suitable $\ell$-nested transformation we can assume that
\begin{equation}
\label{eq:descomposicionfinal}
\eta_{\tilde s}=\theta+\tilde \eta,\quad \nu_{\mathcal A}(\tilde \eta)\geq \varsigma-\tilde s\delta.
\end{equation}
where $\theta\wedge d\theta=0$. This ends the proof as follows. By a suitable new $\ell$-nested transformation the integrable form $\theta$ may assumed to be $(\varsigma-\tilde s\delta)$-final. If
$\nu_{\mathcal A}(\theta)<\varsigma-\tilde s\delta$ we have that
$\eta_{\tilde s}$
is $(\varsigma-\tilde s\delta)$-final dominant with $\nu_{\mathcal A}(\eta_{\tilde s})<\varsigma-\tilde s\delta$. This is not possible. Then, we necessarily have that $\nu_{\mathcal A}(\theta)\geq \varsigma-\tilde s\delta$ and hence
$
\nu_{\mathcal A}(\eta_{\tilde s})\geq \varsigma-\tilde s\delta.
$
This is the desired contradiction.

Now, let us show that we can obtain the decomposition given in Equation \eqref{eq:descomposicionfinal}.
We know that
$
h_{s_0}=h^*_{s_0}\boldsymbol{x}^I
$, where
and $h^*_{s_0}\in \widehat{\mathcal O}_{M,P}$ is a unit and $\nu(\boldsymbol{x}^I)=\varsigma-s_0\delta$.
Let us take $\alpha$ given by
\begin{equation*}
\alpha= \frac{(s_0-\tilde s)\eta_{s_0}-dh_{s_0}}{h_{s_0}}= \frac{(s_0-\tilde s)\eta^*_{s_0}-dh^*_{s_0}}{h^*_{s_0}}- \frac{d\boldsymbol{x}^I}{\boldsymbol{x}^I}.
\end{equation*}
 From Equation \eqref{eq:cotasdelta2} we have that
\begin{equation}
\label{eq:cotas3}
\nu_{\mathcal A}(\alpha\wedge \eta_{\tilde s}+d\eta_{\tilde s})\geq \varsigma-\tilde s\delta.
\end{equation}
After a suitable $\ell$-nested transformation, we have two possibilities:
\begin{itemize}
\item[a)] There is $\mu'>0$ such that $\nu_{{\mathcal A}'}(\alpha)>\mu'$ for any further $\ell$-nested trasformation.
\item[b)] $\alpha$ is $0$-final dominant.
\end{itemize}
$-$ Assume that we are in the situation a), that is  $\nu_{\mathcal A}(\alpha)>\mu'>0$. By performing a suitable $\ell$-nested transformation associated to $\epsilon'<\mu'$, we obtain
$\nu_{\mathcal A}(\eta_{\tilde s})\geq \varsigma-\tilde s\delta -\epsilon'$. This implies that
$
\nu_{\mathcal A}(\alpha\wedge\eta_{\tilde s})\geq \varsigma-\tilde s\delta
$
and by Equation \eqref{eq:cotas3} we conclude that $\nu_{\mathcal A}(d\eta_{\tilde s})\geq \varsigma-\tilde s\delta$. Applying Proposition \ref{prop:logpoincare2}, we find that
\[
\eta_{\tilde s}=\theta +\tilde \eta,\quad \theta= df+\frac{d\boldsymbol{x}^{\boldsymbol{\lambda}}}{\boldsymbol{x}^{\boldsymbol{\lambda}}},
\quad \nu_{\mathcal A}(\tilde \eta)\geq \varsigma-\tilde s\delta.
\]
We see that $d\theta=0$, in particular $\theta$ is integrable.

$-$ Assume now that we are in the situation b), that is $\alpha$ is $0$-final dominant. We can write
$
\alpha=\beta+\tilde\beta
$,
where $\beta$ is $0$-final dominant with $d\beta=0$ and $\tilde\beta$ is not $0$-final dominant. To see this, it is enough to take as $\beta$ the ``residual part'' of $\alpha$.
By applying Proposition \ref{prop:notfinalformsandcriticalvalue}, after a suitable $\ell$-nested transformation, we can assume that $\nu_{\mathcal A}(\tilde \beta)>\mu'>0$. Moreover, we can perform a new $\ell$-nested transformation associated to $0<\epsilon'<\mu'$ in such a way that $\nu_{\mathcal A}(\eta_{\tilde s})\geq \varsigma-\tilde s \delta -\epsilon'$. By Equation \eqref{eq:cotas3} we conclude that
\begin{equation*}
\nu_{\mathcal A}(\beta\wedge\eta_{\tilde s}+d\eta_{\tilde s})\geq \varsigma-\tilde s\delta.
\end{equation*}
Note that $d_\beta\eta_{\tilde s}=\beta\wedge\eta_{\tilde s}+d\eta_{\tilde s}$
By Corollary \ref{cor:logpoincare3} we have that
$
\eta_{\tilde s}=\theta+\tilde\eta
$,
where $\theta\wedge d\theta=0$ and $\nu_{\mathcal A}(\tilde\eta)\geq \varsigma-\tilde s\delta$.

This ends the proof of Preparation Theorem \ref{teo:preparation}.

\section{Control by the Critical Height}
\label{Control by the Critical Height}
In this section we end the proof of Theorem \ref{teo:inductivestatement}.  We take the assumptions and notations as in Section \ref{Truncated Local Uniformization}. Thus, we fix $\ell\geq 0$, we assume  the induction  hypothesis, that is, Theorem \ref{teo:inductivestatement} is true for $1$-forms $\eta$ with  $I_{\mathcal A}(\eta)\leq \ell$.  We consider
$\gamma$-truncated formal foliated space $({\mathcal A},\omega)$ with $I_{\mathcal A}(\omega)=\ell+1$ and we intend to show the existence of a $(\ell+1)$-nested transformation ${\mathcal A}\rightarrow {\mathcal B}$ such that $({\mathcal B},\omega)$ is $\gamma$-final.

 By Preparation Theorem  \ref{teo:preparation}, we start with a strictly $\gamma$-prepared $({\mathcal A},\omega)$.

The proof of Theorem \ref{teo:inductivestatement} follows from the control  of the critical value $\varsigma_{\mathcal A}(\omega)$ and the critical height $\chi_{\mathcal A}(\omega)$ under a type of $(\ell+1)$-nested transformations that we call {\em normalized transformations}.

\begin{remark}
If $({\mathcal A},\omega)$ is strictly $\gamma$-prepared, the critical value and critical height determine the ``dominant'' ones as follows:
\begin{itemize}
\item If $\varsigma_{\mathcal A}(\omega)\leq\gamma$, then
$\varsigma^\gamma_{\mathcal A}(\omega)=\varsigma_{\mathcal A}(\omega)$
 and
 $\chi^\gamma_{\mathcal A}(\omega)=\chi_{\mathcal A}(\omega)$.
 \item If $\varsigma_{\mathcal A}(\omega)>\gamma$, then
 $\varsigma^\gamma_{\mathcal A}(\omega)=\infty$.
\end{itemize}
\end{remark}

\subsection{Normalized Transfomations}  The normalized transformations are transformations between strictly $\gamma$-prepared $\gamma$-truncated formal foliated spaces.

\begin{definition}
 \label{def:normalized transformation}
 Consider a strictly $\gamma$-prepared $({\mathcal A},\omega)$, with $I_{\mathcal A}(\omega)=\ell+1$. A transformation
$
({\mathcal A},\omega)\rightarrow({\mathcal A}^\star,\omega)
$
is said to be
\begin{itemize}
  \item A {\em normalized Puiseux's package}, if it is composition of a $(\ell+1)$-Puiseux's package ${\mathcal A}\rightarrow {\mathcal A'}$, followed by a strict  $\gamma$-preparation $({\mathcal A}',\omega)\rightarrow ({\mathcal A}^\star,\omega)$.
  \item A {\em normalized coordinate change}, if it is
composition of a $(\ell+1)$-coordinate change  ${\mathcal A}\rightarrow {\mathcal A'}$  followed by  a strict  $\gamma$-preparation $({\mathcal A}',\omega)\rightarrow ({\mathcal A}^\star,\omega)$, where the ramification index of $\mathcal A$ is equal to one, or ${\mathcal A}'={\mathcal A}$.
\end{itemize}
{\em A normalized transformation} is a finite composition of normalized Puiseux's packages and normalized coordinate changes.
\end{definition}

\begin{remark}
\label{rk: degeneratenormalizedtransforamtions}
Any $\ell$-nested transformation ${\mathcal A}\rightarrow {\mathcal A}^\star$ supports a normalized coordinate change of a degenerate type
\[({\mathcal A},\omega)\rightarrow ({\mathcal A}^\star,\omega)\]
where the $(\ell+1)$-coordinate change is just the identity ${\mathcal A}'={\mathcal A}$. In particular, any $0$-nested transformation (finite composition of independent blow-ups) supports a normalized transformation.
\end{remark}

\begin{remark}
\label{rk:nontruncatednormalizedtransformations}
In  Definition \ref{def:normalized transformation} the truncation value $\gamma$ is implicit. When we need to exhibit it, we say that $({\mathcal A},\omega)\rightarrow ({\mathcal B},\omega)$ is a {\em $\gamma$-normalized transformation}. This additional information is necessary in the last section, where we consider non-truncated situations.
\end{remark}
\subsection{Control of the Critical Height and Critical Value}
 \label{Statements for the Control of the Critical Height and Value}
 Let us state here the results we prove in next subsections, in order to prove Theorem \ref{teo:inductivestatement}. We start with a strictly $\gamma$-prepared $({\mathcal A},\omega)$.
\begin{proposition}
\label{prop:recessive case} Assume that $\varsigma_{\mathcal A}(\omega)>\gamma$ and let $({\mathcal A},\omega)\rightarrow ({\mathcal A}^\star,\omega)$ be a normalized Puiseux's package. Then $({\mathcal A}^\star,\omega)$ is $\gamma$-final.
\end{proposition}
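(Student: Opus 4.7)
The plan is to show directly that the underlying $(\ell+1)$-Puiseux's package ${\mathcal A} \rightarrow {\mathcal A}'$ already forces $\nu_{{\mathcal A}'}(\omega) > \gamma$; stability of the explicit value under the subsequent $\ell$-nested strict $\gamma$-preparation ${\mathcal A}' \rightarrow {\mathcal A}^\star$ then yields the conclusion.

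The decisive point is that after the $(\ell+1)$-Puiseux's package the variable $z = y_{\ell+1}$, which was invisible to $\nu_{\mathcal A}$, acquires positive explicit value. From the monomial formulas of Section \ref{sec:EquationsforPuiseuxPackages} one reads $z = {\boldsymbol{x}'}^{\boldsymbol{c}}(z'+\lambda)^{e}$ with $0 \neq \lambda \in k$, so $z^s$ is a monomial in $\boldsymbol{x}'$ times a unit and $\nu_{{\mathcal A}'}(z^s) = s\nu(z)$; moreover Remark \ref{rk:propiedadesecuacionespuiseux}(3) gives $\nu_{{\mathcal A}'}(dz) = \nu(z)$. Set $\varsigma = \varsigma_{\mathcal A}(\omega) > \gamma$. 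Since $\nu_{\mathcal A}(z) = 0$, the identity $\nu_{\mathcal A}(z\omega_s) = \min\{\nu_{\mathcal A}(\eta_s), \nu_{\mathcal A}(h_s)\}$ combined with the definition of $\varsigma$ gives $\nu_{\mathcal A}(\eta_s) \geq \varsigma - s\nu(z)$ and $\nu_{\mathcal A}(h_s) \geq \varsigma - s\nu(z)$ for every $s \geq 0$ (recall $h_0 = 0$).

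Now expand $\omega = \sum_{s \geq 0} z^s \eta_s + \sum_{s \geq 1} z^{s-1} h_s \, dz$. Proposition \ref{prop:stabilityofcriticalvalue} preserves the explicit values $\nu_{{\mathcal A}'}(\eta_s) \geq \nu_{\mathcal A}(\eta_s)$ and $\nu_{{\mathcal A}'}(h_s) \geq \nu_{\mathcal A}(h_s)$, and the multiplicativity $\nu_{{\mathcal A}'}(f\alpha) = \nu_{{\mathcal A}'}(f) + \nu_{{\mathcal A}'}(\alpha)$ yields
\[
\nu_{{\mathcal A}'}(z^s \eta_s) = s\nu(z) + \nu_{{\mathcal A}'}(\eta_s) \geq \varsigma, \qquad \nu_{{\mathcal A}'}(z^{s-1} h_s \, dz) = s\nu(z) + \nu_{{\mathcal A}'}(h_s) \geq \varsigma.
\]
Because this bound is uniform in $s$, we get $\nu_{{\mathcal A}'}(\omega) \geq \varsigma > \gamma$. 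Since the strict $\gamma$-preparation ${\mathcal A}' \rightarrow {\mathcal A}^\star$ is an $\ell$-nested transformation (Theorem \ref{teo:preparation}), a final application of Proposition \ref{prop:stabilityofcriticalvalue} gives $\nu_{{\mathcal A}^\star}(\omega) \geq \nu_{{\mathcal A}'}(\omega) > \gamma$, and thus $({\mathcal A}^\star, \omega)$ is $\gamma$-final recessive. The only delicate step is the identification $\nu_{{\mathcal A}'}(z) = \nu(z)$ after the Puiseux's package, but this is immediate from the monomial expression of $z$ in the new coordinates.
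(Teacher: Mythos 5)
Your argument is correct and follows essentially the same route as the paper: the paper factors the key step through Proposition~\ref{pro:valorcriticopuiseux} ($\nu_{{\mathcal A}^\star}(\omega)\geq\nu_{{\mathcal A}'}(\omega)\geq\varsigma_{\mathcal A}(\omega)$) and then invokes $\varsigma_{\mathcal A}(\omega)>\gamma$, while you re-derive $\nu_{{\mathcal A}'}(\omega)\geq\varsigma_{\mathcal A}(\omega)$ inline by splitting $z^s\omega_s$ into $z^s\eta_s$ and $z^{s-1}h_s\,dz$ rather than keeping $\omega_s$ as a unit. The same facts carry the weight in both versions: $\nu_{{\mathcal A}'}(z)=\nu(z)$, $\nu_{{\mathcal A}'}(dz)=\nu(z)$, Proposition~\ref{prop:stabilityofcriticalvalue}, and the multiplicativity of the explicit value.
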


\begin{proposition}
\label{prop:critical heightcero}
Assume that $\chi_{\mathcal A}(\omega)=0$ and let $({\mathcal A},\omega)\rightarrow ({\mathcal A}^\star,\omega)$ be a normalized Puiseux's package. Then $({\mathcal A}^\star,\omega)$ is $\gamma$-final.
\end{proposition}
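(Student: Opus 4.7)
My plan is to split on whether $\varsigma := \varsigma_{\mathcal A}(\omega)$ exceeds $\gamma$: the case $\varsigma > \gamma$ is already covered by Proposition~\ref{prop:recessive case}, so I focus on $\varsigma \leq \gamma$. Because $\chi_{\mathcal A}(\omega) = 0$ and ordinates are integral, the critical segment collapses to the single vertex $(\varsigma, 0)$, which is the cloud point of the $0$-level. As $h_0 = 0$ this forces $\nu_{\mathcal A}(\eta_0) = \varsigma$ and, for every $s \geq 1$, the strict inequality $\nu_{\mathcal A}(z\omega_s) > \varsigma - s\nu(z)$. Strict $\gamma$-preparation then gives that $\eta_0$ is $\varsigma$-final dominant.

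Writing the normalized Puiseux's package as $\mathcal A \to \mathcal A' \to \mathcal A^\star$ (first the $(\ell+1)$-Puiseux's package, then an $\ell$-nested strict $\gamma$-preparation), I will exploit the mechanism that Puiseux's packages convert true valuations into $\mathcal A'$-explicit values. By Equation~\eqref{eq:puiseux2}, $z = \boldsymbol{x}'^{\boldsymbol{q}}(z^\star + \lambda)^{e}$ with $\nu(\boldsymbol{x}'^{\boldsymbol{q}}) = \nu(z)$ and $z^\star = \Phi - \lambda$, so every monomial $\boldsymbol{x}^{I}z^{s}$ acquires $\mathcal A'$-explicit value equal to its true weight $\nu(\boldsymbol{x}^{I}) + s\nu(z)$. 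In particular, every term of $\omega$ originating at a cloud point $(\alpha, s)$ of $\mathcal N_{\mathcal A}(\omega)$ has $\mathcal A'$-explicit value $\alpha + s\nu(z) \geq \varsigma$, and by stability (Corollary~\ref{cor:stabilityofcriticalvalue}) the transform of $\eta_0$ remains $\varsigma$-final dominant in $\mathcal A'$.

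Next I re-decompose $\omega = \sum_{t \geq 0}(z^\star)^{t}\omega_{t}^{\star}$ in the new dependent variable. Using $\nu_{\mathcal A'}(z^\star) = 0$ together with the previous bound, the cloud points of $\mathcal N_{\mathcal A'}(\omega)$ satisfy $\nu_{\mathcal A'}(\omega_{t}^{\star}) \geq \varsigma$ for every $t$. The $0$-level $\omega_{0}^{\star}$ is essentially the horizontal part of the transform of $\eta_0$ evaluated at $z^\star = 0$, and since the corner morphism $\mathfrak c^{1}$ factors through setting $\boldsymbol{y}' = 0$ (in particular $z^\star = 0$), the nonvanishing of $\mathfrak c^{1}_{\mathcal A'}(\eta_0)$ descends to $\mathfrak c^{1}_{\mathcal A'}(\omega_0^\star) \ne 0$; hence $\omega_0^\star$ is $\varsigma$-final dominant. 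Thus $\mathcal N_{\mathcal A'}(\omega)$ has $(\varsigma, 0)$ as its unique vertex, and the subsequent $\ell$-nested strict $\gamma$-preparation only shrinks the polygon while preserving this vertex by stability. Proposition~\ref{pro:onevertex} then concludes that $(\mathcal A^\star, \omega)$ is $\gamma$-final dominant, since $(\mathcal A^\star, \omega_0^\star)$ is and $\varsigma \leq \gamma$.

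The main obstacle will be the careful bookkeeping involved in extracting $\omega_0^\star$ from the re-decomposition: the transforms of $dz/z$ and of $dx_i/x_i$ each acquire $dz^\star$-components via Equation~\eqref{eq:puiseux4}, so the horizontal and $dz^\star$ parts of the old levels get mixed across the new levels. The clean resolution is the true-valuation viewpoint above: each monomial of $\omega$ inherits its total weight as its $\mathcal A'$-explicit value, so the new polygon's vertex structure is determined entirely by the old cloud points, and the $\eta_0$-contribution alone supplies the $\varsigma$-initial form of $\omega_0^\star$.
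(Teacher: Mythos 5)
Your proof is correct, and it essentially reconstructs, in concrete terms, the content of Lemma \ref{lema:stabilityofcriticalheight} in the case $\chi=0$ --- which the paper explicitly leaves to the reader --- and then feeds it into Proposition \ref{pro:onevertex} as the paper's Remark \ref{rk:prostabilityparapuiseux} does. So the overall route is the same (you both reduce to showing that after the Puiseux's package the $0$-level sits at abscissa $\varsigma$, is dominant, and is the unique vertex of $\mathcal N_{\mathcal A'}(\omega)$, then invoke Proposition \ref{pro:onevertex} and Corollary \ref{cor:stabilityofcriticalvalue}), but you replace the abstract $\hbar$-machinery of Lemmas \ref{lema: control by the main height}--\ref{lema:stabilityofcriticalheight} by a direct explicit-value computation through Equations \eqref{eq:puiseux2} and \eqref{eq:puiseux4}. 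That is a legitimate and somewhat more self-contained route for this special case, though it would not scale to the general inequality $\hbar^\varsigma_{\mathcal A'}(\omega)\leq\chi_{\mathcal A}(\omega)$ that the paper needs elsewhere.

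Two points in your write-up deserve to be made fully explicit. First, when you assert that the $\eta_0$-contribution alone supplies the $\varsigma$-initial form of the new $0$-level, you are implicitly claiming $\nu_{\mathcal A'}(\omega-\eta_0)>\varsigma$; the term-by-term estimate $\alpha+s\nu(z)>\varsigma$ for $s\geq 1$ is not by itself a uniform bound, and one should either invoke Proposition \ref{pro:valorcriticopuiseux} applied to $\omega-\eta_0$ together with the finiteness of the vertex set of $\mathcal N_{\mathcal A}(\omega-\eta_0)$, or observe that $\chi_{\mathcal A}(\omega)=0$ forces every cloud point with $s\geq 1$ to lie strictly above the critical line and hence $\varsigma_{\mathcal A}(\omega-\eta_0)>\varsigma$. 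Second, the descent of $\mathfrak c^1_{\mathcal A'}(\eta_0)\neq 0$ to $\mathfrak c^1_{\mathcal A'}(\omega'_0)\neq 0$ should be justified by noting that $\overline{\mathfrak m}_{\mathcal A'}$ contains $z'$, so the reduction mod $\overline{\mathfrak m}_{\mathcal A'}\mathcal G_{\mathcal A'}$ kills the $z'$-multiple part of the initial form; combined with the previous bound this pins the $\varsigma$-initial form of $\omega'_0$ to that of the $z'$-free part of $\eta_0$. With these two clarifications your argument is complete.
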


\begin{proposition}
\label{prop:stability critical height}
Assume that $\varsigma_{\mathcal A}(\omega)\leq \gamma$ and let $({\mathcal A},\omega)\rightarrow ({\mathcal B},\omega)$ be a normalized transformation. Then either $\varsigma_{{\mathcal B}}(\omega)>\gamma$ or
$\chi_{{\mathcal B}}(\omega)\leq \chi_{{\mathcal A}}(\omega)$.
\end{proposition}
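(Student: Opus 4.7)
The plan is to reduce to a single elementary normalized transformation and then analyze each case using the explicit coordinate formulas. First I would argue by induction on the number $n$ of elementary normalized steps in the decomposition
\[
({\mathcal A},\omega) = ({\mathcal A}_0,\omega)\to ({\mathcal A}_1,\omega)\to\cdots \to ({\mathcal A}_n,\omega) = ({\mathcal B},\omega).
\]
If at some intermediate stage $\varsigma_{{\mathcal A}_i}(\omega) > \gamma$, then Proposition \ref{prop:stabilityofcriticalvalue} propagates this inequality forward and the conclusion holds. Otherwise the inductive hypothesis at step $n-1$ gives $\chi_{{\mathcal A}_{n-1}}(\omega)\leq \chi_{\mathcal A}(\omega)$, and it remains to treat a single elementary normalized step, of which there are two kinds.

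For a normalized Puiseux's package ${\mathcal A}\to {\mathcal A}'\to {\mathcal A}^\star$ I would invoke the formulas of Section \ref{sec:EquationsforPuiseuxPackages}. Each monomial $\boldsymbol{x}^Iz^s$ rewrites as $\boldsymbol{x'}^J\Phi^K$ where $(J,K)=(I,s)C$ for the unimodular matrix $C$, and since $\nu(\Phi)=0$ we have $\nu(\boldsymbol{x'}^J)=\nu(\boldsymbol{x}^I)+s\nu(z)$. Expanding $\Phi^K=(z'+\lambda)^K$ and collecting in powers of $z'$ produces new Newton points $(\nu(\boldsymbol{x'}^J),k)$ for $k=0,1,\ldots,K$, each with critical-line value
\[
\nu(\boldsymbol{x}^I)+s\nu(z)+k\nu(z')\;\geq\; \varsigma_{\mathcal A}(\omega),
\]
attaining equality only when $k=0$ and the old monomial lies on the critical segment. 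This establishes $\varsigma_{{\mathcal A}'}(\omega)\geq \varsigma_{\mathcal A}(\omega)$; combined with Remark \ref{rk:stability of preparation} we also obtain $\varsigma_{{\mathcal A}^\star}(\omega)\geq \varsigma_{\mathcal A}(\omega)$. If $\varsigma_{{\mathcal A}^\star}(\omega)>\gamma$ we are done, so I assume $\varsigma_{{\mathcal A}^\star}(\omega)\leq \gamma$. Every new Newton point on the new critical segment then comes either from an old Newton point on the original critical segment (contributing at $k=0$) or from cancellation that raises the critical value. By Proposition \ref{prop:caracterizacionpreparacion} the critical vertex of the strictly $\gamma$-prepared $({\mathcal A},\omega)$ corresponds to a dominant level whose $\boldsymbol{x}$-leader survives the transformation by Corollary \ref{cor:stabilityofcriticalvalue}, so the nonzero contribution at $(\varsigma_{\mathcal A}(\omega),0)$ in the new polygon bounds the new critical ordinate. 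Combined with the observation that expansion of $z^s$ produces only ordinates $k\leq K$ linearly dependent on $s$, bookkeeping on the contributing indices (in which only levels $s\leq \chi_{\mathcal A}(\omega)$ can reach the new critical line) yields $\chi_{{\mathcal A}^\star}(\omega)\leq \chi_{\mathcal A}(\omega)$.

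For a normalized coordinate change $({\mathcal A},\omega)\to ({\mathcal A}',\omega)\to ({\mathcal A}^\star,\omega)$ with $z'=z+f$ and $\nu_{\mathcal A}(f)\geq \nu(z)$, I would substitute $z=z'-f$ and expand $z^s=\sum_{k=0}^s\binom{s}{k}(z')^{s-k}(-f)^k$. Each resulting term has critical-line value $\nu(\boldsymbol{x}^If^k)+(s-k)\nu(z)\geq \nu(\boldsymbol{x}^I)+s\nu(z)$, with equality exactly at $k=0$. Hence $\varsigma_{{\mathcal A}'}(\omega)\geq \varsigma_{\mathcal A}(\omega)$, and the level-$s$ part at $k=0$ is preserved intact, so the old critical vertex survives at ordinate $\chi_{\mathcal A}(\omega)$ in the new polygon. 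A further application of Remark \ref{rk:stability of preparation} to the strict $\gamma$-preparation then yields $\chi_{{\mathcal A}^\star}(\omega)\leq \chi_{\mathcal A}(\omega)$.

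The main obstacle is the Puiseux's-package step when cancellations occur among $k=0$ contributions, which could lift the new critical vertex to a positive ordinate; the resolution is the dominant-leader argument coming from strict $\gamma$-preparation (Proposition \ref{prop:caracterizacionpreparacion}) together with the preservation of dominant forms under allowed transformations (Corollary \ref{cor:stabilityofcriticalvalue}), which ensures that the image under $C$ of the leading $\boldsymbol{x}$-monomial of the critical level is never annihilated by images of other monomials.
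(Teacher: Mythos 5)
Your reduction to a single elementary step, your treatment of the normalized coordinate change (which reproduces the content of Proposition \ref{prop:effect of a coordinate change} and Corollary \ref{cor:stabilizadcambiodecoordenadas}), and the inequality $\varsigma_{{\mathcal A}'}(\omega)\geq\varsigma_{\mathcal A}(\omega)$ for a Puiseux's package (Proposition \ref{pro:valorcriticopuiseux}) are all fine. The gap is in the Puiseux's-package half, precisely where the real difficulty of the proposition lies. You claim that, because the critical vertex is a dominant level whose $\boldsymbol{x}$-leader survives by Corollary \ref{cor:stabilityofcriticalvalue}, there is ``a nonzero contribution at $(\varsigma_{\mathcal A}(\omega),0)$ in the new polygon''. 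This is false. Corollary \ref{cor:stabilityofcriticalvalue} controls the transform of a \emph{single} form; after the Puiseux's package, \emph{every} level $s$ on the critical segment sends a contribution to the new level $k=0$ at abscissa $\varsigma_{\mathcal A}(\omega)$, and these contributions can (and in the resonant cases do) cancel in the sum. If your claim were true, every normalized Puiseux's package would drop the critical height to $0$, and the entire resonance analysis (conditions \textbf{r1}, \textbf{r2}) on which Sections 8.5--8.8 rest would be vacuous. The closing assertion that the leading monomial ``is never annihilated by images of other monomials'' is exactly the statement that fails.

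What must actually be proved is that the cancellation among the transformed critical-segment levels cannot extend through all of the levels $0,1,\ldots,\chi$. The paper does this (Lemmas \ref{lema:alphabar}--\ref{lema:calculodehbar} and Proposition \ref{prop:estabilidadho}) by factoring the reduced critical part as $\bar\omega^*=z^{s_0}\boldsymbol{x}^{I_{s_0}}\bar\alpha$ with $\bar\alpha=\sum_{m=0}^{m_1}\Phi^m\bar\omega^*_{s(m)}$ a polynomial in $\Phi$ of degree $m_1=(\chi-s_0)/d\leq\chi$ with coefficients in $V_{{\mathcal A}'}$, and bounding the \emph{order of vanishing} $a$ of $\bar\alpha$ at $\Phi=\lambda$ (equivalently at $z'=0$) by $m_1$, plus one in the degenerate case where the first nonvanishing coefficient is a pure $d\Phi/\Phi$ term; the borderline case $a=m_1=\chi$ there is excluded using $h_0=0$. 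Your ``bookkeeping on the contributing indices'' with ordinates $k\leq K$ bounds the \emph{highest} ordinate reached by the transforms, which is the wrong invariant: the new critical height is governed by the \emph{lowest} ordinate at which a noncancelled contribution survives at abscissa $\varsigma_{\mathcal A}(\omega)$, i.e.\ by the order of vanishing just described. Without that argument (and the subsequent comparison of $\hbar^\varsigma_{{\mathcal A}'}(\omega)$ with the critical height after the strict $\gamma$-preparation, as in Lemma \ref{lema:stabilityofcriticalheight}), the Puiseux's-package case is not established.
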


Once Propositions
\ref{prop:recessive case},
\ref{prop:critical heightcero} and
\ref{prop:stability critical height} are proved,
we focus in the strictly $\gamma$-prepared $({\mathcal A},\omega)$ for which we have
$\varsigma_{{\mathcal B}}(\omega)\leq\gamma$ and $\chi_{{\mathcal B}}(\omega)=\chi_{{\mathcal A}}(\omega)\geq 1$,
under any normalized transformation $({\mathcal A},\omega)\rightarrow ({\mathcal B},\omega)$. This justifies next definition
 \begin{definition}
\label{def:fixedcriticalheight} Let $\chi\geq 1$ be an integer number. We say that $({\mathcal A},\omega)$ has the property of {\em $\chi$-fixed critical height} if
$\varsigma_{{\mathcal B}}(\omega)\leq \gamma$ and $\chi_{{\mathcal B}}(\omega)=\chi$,
for any normalized transformation $({\mathcal A},\omega)\rightarrow({\mathcal B},\omega)$.
\end{definition}

Because of the results in Propositions \ref{prop:recessive case},
\ref{prop:critical heightcero} and
\ref{prop:stability critical height}, we see that Theorem \ref{teo:inductivestatement} is a consequence of Propositions
\ref{prop:critical height2} and
\ref{prop:critical height1} below:

\begin{proposition}
\label{prop:critical height2} Let us consider an integer number $\chi\geq 2$. There is no strictly $\gamma$-prepared $({\mathcal A},\omega)$ with the property of $\chi$-fixed critical height.
\end{proposition}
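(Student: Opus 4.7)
The plan is to argue by contradiction. Suppose $({\mathcal A},\omega)$ is strictly $\gamma$-prepared and has $\chi$-fixed critical height for some integer $\chi\geq 2$. The goal is to exhibit a normalized transformation $({\mathcal A},\omega)\rightarrow ({\mathcal B},\omega)$ after which either $\varsigma_{{\mathcal B}}(\omega)>\gamma$ or $\chi_{{\mathcal B}}(\omega)<\chi$, both of which contradict $\chi$-fixedness.

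First, I would perform a sequence of normalized $(\ell+1)$-Puiseux's packages and analyse the transformation of the critical vertex $(\varsigma-\chi\nu(z),\chi)$. Using the equations of a Puiseux's package described in Subsection \ref{sec:EquationsforPuiseuxPackages}, and in particular the specialization of the matrix $C$ given by Equation \eqref{eq:ecuaciondigulauno} when the ramification index is $d=1$, one sees that if $d\geq 2$ then the transformed dominant vertex at level $\chi$ either lies at a strictly smaller ordinate or yields a strictly smaller critical value after re-preparation. Since both the critical height and critical value are assumed to be fixed, this forces $d=1$ after finitely many normalized Puiseux's packages, and from then on every further Puiseux's package has ramification index equal to one. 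This is the content of the ``$\mathbf{r2}$'' resonance mentioned in the introduction.

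Once $d=1$, I would extract a Tschirnhausen-like coordinate by exploiting the $\gamma$-truncated integrability of $\omega$. At the critical vertex the $\chi$-level is dominant, so by Corollary \ref{cor:principalizacion} (applied after further independent blow-ups) we may write $\eta_\chi=\boldsymbol{x}^{I_\chi}\eta_\chi^*$ with $\eta_\chi^*$ a $0$-final dominant $1$-form and $\nu(\boldsymbol{x}^{I_\chi})=\varsigma-\chi\nu(z)$. The integrability bound $\nu_{\mathcal A}(\Delta_{2\chi-1})\geq 2\gamma$ from Equation \eqref{eq:condicionesinttruncada}, combined with the fact that every level on the critical segment is dominant, gives a control of the combination $\chi\,\eta_{\chi-1}\wedge\eta_\chi$ (the other summands in $\Delta_{2\chi-1}$ being of sufficiently large value thanks to preparation). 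Applying the Truncated De Rham--Saito Division Proposition \ref{prop:trucateddivision} to the $0$-final dominant form $\eta_\chi^*$ then yields a formal function $H$ and a remainder $\tilde\eta$ of sufficiently high explicit value such that $\eta_{\chi-1}=H\eta_\chi^*+\tilde\eta$.

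With $H$ in hand, I would perform the $(\ell+1)$-coordinate change $z\mapsto z-(1/\chi)\boldsymbol{x}^{I_\chi}H$, where characteristic zero is essential so that dividing by $\chi$ is meaningful. A short bookkeeping shows that the value of $\boldsymbol{x}^{I_\chi}H/\chi$ is at least $\nu(z)$, so the change is admissible in the sense of Subsection \ref{Elementary allowed transformations}. In the new coordinate, the $(\chi-1)$-level of $\omega$ loses its dominant contribution on the critical line precisely by the classical Tschirnhausen cancellation, and after a strict $\gamma$-preparation the resulting critical polygon either has critical height strictly below $\chi$ or has critical value above $\gamma$. Either possibility contradicts $\chi$-fixedness. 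The principal obstacle, in my view, is verifying that the function $H$ produced by truncated division actually lies in the subring $k[[\boldsymbol{x},\boldsymbol{y}_{\leq\ell}]]\cap {\mathcal O}_{\mathcal A}$ and that the residual $\tilde\eta$ is confined sufficiently far beyond the critical line so that the post-preparation Newton polygon collapses as claimed. Because the levels $\eta_s$ involve only the variables $(\boldsymbol{x},\boldsymbol{y}_{\leq\ell})$ according to the decomposition \eqref{eq:omegalevels}, one expects $H$ to inherit this property from Proposition \ref{prop:trucateddivision}, but the explicit value estimate requires a careful audit of every summand of $\Delta_{2\chi-1}$ and a delicate use of the $\gamma$-pseudo-preparation hypotheses on $h_s$ and $\eta_0$ to discard the unwanted cross terms.
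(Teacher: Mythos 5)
Your overall strategy --- argue by contradiction, force the ramification index to one, extract a Tschirnhausen shift from truncated integrability via Proposition \ref{prop:trucateddivision}, and use characteristic zero to divide by $\chi$ --- is the strategy of the paper. But there are genuine gaps. The most serious is your final claim that after \emph{one} Tschirnhausen change and a strict $\gamma$-preparation ``the resulting critical polygon either has critical height strictly below $\chi$ or has critical value above $\gamma$''. A single change does not accomplish this. What one actually proves (Lemma \ref{lema:avance epsilon}) is that $\nu(z^\star)\geq \nu(z)+\min\{\epsilon,\nu(z)\}$ for a fixed $\epsilon>0$; combined with the persistence of the main vertex and the inequality $\varsigma_{{\mathcal A}^\star}(\omega)-\nu(z^\star)=\varsigma_{\mathcal A}(\omega)-\nu(z)+(\chi-1)\bigl(\nu(z^\star)-\nu(z)\bigr)$ --- note that $\chi\geq 2$ enters here essentially --- this yields $\varsigma_{{\mathcal A}^\star}(\omega)>\varsigma_{\mathcal A}(\omega)+\nu(z)$. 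The contradiction only appears after \emph{iterating}: the critical value grows by at least $\nu(z)$ at each step, while the persistent resonance \textbf{r2} forces $\varsigma-\nu(z)<\min\{\gamma,\nu_{\mathcal A}(h)\}$, where $h$ is the horizontal coefficient. Your proposal contains neither the iteration nor the upper bound it must violate; in particular the horizontal coefficient never enters your argument, whereas the paper must first arrange (Corollary \ref{cor:hachereducido}) that $H_{{\mathcal A},\omega}$ is strongly $\gamma$-final precisely so that this bound exists and is stable.

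A second gap is your assertion that $\eta_\chi=\boldsymbol{x}^{I_\chi}\eta_\chi^*$ with $\eta_\chi^*$ $0$-final dominant. The level $\omega_\chi$ is dominant at the critical vertex, but its dominance could a priori sit entirely in the $h_\chi\,dz/z$ component (this is what happens in resonance \textbf{r2b}), in which case $\eta_\chi^*$ is not $0$-final dominant and Proposition \ref{prop:trucateddivision} cannot be applied to it. Excluding this is Lemma \ref{lema:descrition of the reduced critical part}, which rests on $\nu_{\mathcal A}(h_\chi)>\varsigma_{\mathcal A}(\omega)-\chi\nu(z)$ (Lemma \ref{lema:nuhachechi}) --- a second essential use of $\chi\geq 2$. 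The paper also sidesteps part of this difficulty by dividing by $\alpha_0$ rather than by $\alpha_\chi$: since $h_0=0$, the $0$-level is automatically a pure, dominant $\eta$-form, and the relations $\Delta_1,\ldots,\Delta_\chi$ give, by finite induction, $\alpha_s=U_s\alpha_0+\tilde\alpha_s$ for all $1\leq s\leq\chi$ (Lemma \ref{lema:proporcionalidad}). This choice matters quantitatively: your single relation $\Delta_{2\chi-1}$ contains cross terms $\eta_i\wedge\eta_j$ with $j>\chi$, whose explicit values exceed the critical line only by a polygon-dependent gap that you cannot bound uniformly through the iteration, whereas every level entering $\Delta_s$ for $s\leq\chi$ lies on the critical segment. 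Finally, a computational slip: the admissible shift is $\boldsymbol{x}^{\boldsymbol{p}}$ times a polynomially prepared unit (the quotient $W_{\chi-1}(\chi W_\chi)^{-1}$ of the proportionality factors), of explicit value exactly $\nu(z)$; your $\boldsymbol{x}^{I_\chi}H/\chi$ has the monomial on the wrong side and does not produce the cancellation at level $\chi-1$.
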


\begin{proposition}
\label{prop:critical height1} There is no strictly $\gamma$-prepared $({\mathcal A},\omega)$ with the property of $1$-fixed critical height.
\end{proposition}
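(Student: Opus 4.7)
The plan is to argue by contradiction: assume a strictly $\gamma$-prepared $({\mathcal A},\omega)$ has the property of $1$-fixed critical height, and construct a normalized transformation that either drops the critical height below $1$ or pushes the critical value beyond $\gamma$, which contradicts the stability hypothesis.

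First, I would study how the Newton Polygon evolves under a sequence of normalized Puiseux's packages. Because each such package preserves both $\chi_{{\mathcal B}}(\omega)=1$ and $\varsigma_{{\mathcal B}}(\omega)\leq \gamma$, the numerical data of successive packages are forced to satisfy the resonance constraints of type \textbf{r1}, \textbf{r2a}, \textbf{r2b}-$\boldsymbol{\upsilon}$ alluded to in the introduction. Since \textbf{r1} can be triggered at most once, after at most one normalized Puiseux's package I may assume that every further package falls under a \textbf{r2} resonance. This forces the ramification index to become $d=1$, so that the Puiseux equations acquire the block-triangular shape of Equation \eqref{eq:ecuaciondigulauno}; in particular the nested coordinate changes in $z$ that I will employ below genuinely define normalized coordinate changes in the sense of Definition \ref{def:normalized transformation}.

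Next, in this stabilized $d=1$ regime, the critical vertex of ${\mathcal N}_{\mathcal A}(\omega)$ lies at $(\varsigma-\nu(z),1)$, and the $1$-level $\omega_1=\eta_1+h_1\,dz/z$ is dominant with $\nu_{\mathcal A}(z\omega_1)=\varsigma-\nu(z)$. From the $\gamma$-truncated integrability relations in Equation \eqref{eq:condicionesinttruncada} applied to $\Delta_1$, and using that $h_0=0$ and that $\eta_0$ is $\gamma$-final dominant with $\nu_{\mathcal A}(\eta_0)=\varsigma$, I would extract
\begin{equation*}
\nu_{\mathcal A}\!\left(\eta_1\wedge\eta_0 + h_1\, d\eta_0\right)\geq 2\varsigma-\nu(z).
\end{equation*}
After principalizing $\eta_0$ via Corollary \ref{cor:principalizacion} as $\eta_0=\boldsymbol{x}^{I_0}\eta_0^\ast$ with $\eta_0^\ast$ being $0$-final dominant, dividing by $\boldsymbol{x}^{I_0}$ yields a closed $1$-form $\alpha$ (closed up to the truncation error) built from the logarithmic derivative of $\boldsymbol{x}^{I_0}$, from $\eta_0^\ast$, and from $h_1$, such that $\nu_{\mathcal A}(d_\alpha\eta_1)\geq \varsigma-\nu(z)$. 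This mirrors the mechanism used at the end of the proof of Preparation Theorem \ref{teo:preparation}, but now applied at level one instead of level zero.

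The main obstacle, and precisely the reason this case is more delicate than $\chi=2$, lies in converting this truncated $\alpha$-closedness into an actual Tschirnhausen coordinate change $z\mapsto z+f$ that eliminates the critical vertex at ordinate $1$. For this I would invoke the truncated logarithmic Poincaré Lemma in the form of Corollary \ref{cor:logpoincare3}, obtaining a decomposition
\begin{equation*}
\eta_1 \;=\; d_\alpha F \;+\; \boldsymbol{{\tilde x}^{-\mu}}\,\frac{d\boldsymbol{x}^{\boldsymbol{\lambda}}}{\boldsymbol{x}^{\boldsymbol{\lambda}}} \;+\; \tilde\eta_1, \qquad \nu_{\mathcal A}(\tilde\eta_1)\geq \varsigma-\nu(z).
\end{equation*}
The residual combinatorial term is either absent because $-\boldsymbol{\mu}\notin \mathbb{Z}_{\geq 0}^{r}$, in which case $\boldsymbol{\lambda}=\boldsymbol{0}$ by the last clause of Corollary \ref{cor:logpoincare3}, or else it is cancelled by the arithmetic compatibility imposed by the \textbf{r2} resonance together with the principalization of $\eta_0$. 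Taking the nested coordinate change associated to $F$ (suitably adjusted to absorb the translation parameter of the Puiseux package), followed by a fresh strict $\gamma$-preparation, yields a new strictly $\gamma$-prepared $({\mathcal A}^\star,\omega)$ whose Newton Polygon has lost the vertex at ordinate $1$; hence either $\chi_{{\mathcal A}^\star}(\omega)<1$ or $\varsigma_{{\mathcal A}^\star}(\omega)>\gamma$. This contradicts the $1$-fixed critical height hypothesis and completes the proof.
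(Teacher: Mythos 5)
Your high-level roadmap is sound — reduce to the \textbf{r2} resonances with $d=1$, use the truncated integrability of $\Delta_1$ and the truncated Poincar\'e result (Corollary \ref{cor:logpoincare3}) to manufacture a Tschirnhausen coordinate change — but the actual construction you sketch has two gaps that would stop the argument.

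First, the direction of the Poincar\'e decomposition is inverted. You build a closed form $\alpha$ from $\eta_0^\ast$ (the $0$-level) and write $\eta_1 = d_\alpha F + \cdots$. But the Tschirnhausen change $z\mapsto z + U\boldsymbol{x^{\boldsymbol{p}}}$ must absorb the dominant part of the \emph{$0$-level}, which requires a relation of the opposite shape, $\alpha_0 = U\sigma + dU + \cdots$, where the closed form $\sigma$ is built from the \emph{$1$-level} (in the paper's notation, $\sigma=\xi^{-1}\alpha_1^\ast + d\boldsymbol{x^{\boldsymbol{p}}}/\boldsymbol{x^{\boldsymbol{p}}}$, and Corollary \ref{cor:logpoincare3} is applied to $\alpha_0$, not to $\eta_1$). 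This is exactly what makes the identity $z(\alpha_1+\xi\,dz/z)+\boldsymbol{x^{\boldsymbol{p}}}\alpha_0 = z'(\alpha_1^\ast+\xi\,dz'/z')+\cdots$ collapse after the substitution; a decomposition of $\eta_1$ in terms of $\eta_0^\ast$ gives you $F$, but no useful way to feed $F$ into a coordinate change on $z$. (Also, you dropped the $\eta_0\wedge dh_1$ term from $\Delta_1$; it is not negligible and appears in the paper's derivation as the $-dH_1$ piece.)

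Second, a single coordinate change cannot ``make the Newton Polygon lose the vertex at ordinate $1$.'' What the paper actually proves (Proposition \ref{prop:epsilonalturauno}) is the existence of a positive $\epsilon_{\mathcal A}(\omega)$, invariant under the construction, such that each normalized coordinate change raises $\nu(z)$ by at least that fixed $\epsilon$, while preserving properties P0--P4. The contradiction comes from iterating: after finitely many steps $\nu(z^{\mathcal B})>\gamma$, hence $\varsigma_{\mathcal B}(\omega)\geq\chi_{\mathcal B}(\omega)\nu(z^{\mathcal B})>\gamma$, contradicting $1$-fixed critical height. Your proposal has no such quantitative control, and without it the argument does not terminate.

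There is a third, smaller omission: your dismissal of the residual term in Corollary \ref{cor:logpoincare3} (``cancelled by the arithmetic compatibility'') is doing a lot of unspoken work. The paper first reduces, via Propositions \ref{prop:r1ycambiodecoordenadas}--\ref{prop:r1ypaquetepuiseux} and Lemma \ref{lema:reduccionalcasor2}, to one of the \emph{persistent} types \textbf{r2a}, \textbf{r2b}$^{+}$, \textbf{r2b}$^{\times}$, and then disposes of the problematic residual case ($\boldsymbol{\mu}\ne\boldsymbol{0}$ with $\nu(\boldsymbol{x}^{-\boldsymbol{\upsilon}})<\epsilon_2$) precisely because it would produce an \textbf{r2b}-$\boldsymbol{0}$ resonance, excluded by persistence. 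You mention \textbf{r1} occurring at most once, but you do not set up the persistence framework that the residual-term analysis actually relies on.
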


When we have  $\varsigma_{{\mathcal A}^\star}(\omega)\leq\gamma$ and $\chi_{{\mathcal A}^\star}(\omega)=\chi_{{\mathcal A}}(\omega)\geq 1$ under a normalized Puiseux's package, we see that $({\mathcal A},\omega)$ satisfies certain {\em resonance conditions } \textbf{r1} or \textbf{r2}. Condition  \textbf{r1} occurs ``at most once'' and only when $\chi_{\mathcal A}(\omega)=1$. On the other hand,  condition \textbf{r2} implies that the ramification index of ${\mathcal A}$ is $1$.  In this way, we
 we obtain situations where it is possible to perform Tschirnhausen transformations to ``escape'' from a situation of  $\chi$-fixed critical height.

\subsection{Reduced Part of a Level}
We consider here a $\gamma$-truncated formal foliated space $({\mathcal A},\omega)$, not necessarily strictly $\gamma$-prepared.
 The {\em reduced part $\bar\omega^{\mathcal A}_s$ of the $s$-level $\omega_s$ of $({\mathcal A},\omega)$  } appears in many of our computations.

 Let us recall the definition of the abscissa
 $\lambda_{{\mathcal A},\omega}(s)$ given in Subsection \ref{planning polygons}:
 \begin{equation*}
 \lambda_{{\mathcal A},\omega}(s)=
 \lambda_{{\mathcal N}_{\mathcal A}(\omega)}(s)=
 \min\{
 \lambda;\;(\lambda,s)\in {\mathcal N}_{\mathcal A}(\omega)
 \}.
 \end{equation*}
 Note that $\lambda_{{\mathcal A},\omega}(s)=\varsigma_{\mathcal A}(\omega)-s\nu(z)$,  when $s$ corresponds to a level in the critical segment. If $\nu_{\mathcal A}(z\omega_s)>\lambda_{{\mathcal A},\omega}(s)$, we write
$\bar\omega^{\mathcal A}_s=0$. Assume $\nu_{\mathcal A}(z\omega_s)=\lambda_{{\mathcal A},\omega}(s)$ and write
\[
 \omega_s=\boldsymbol{x}^I\omega^*_s+\tilde\omega_s,\quad
 \nu_{\mathcal A}(\tilde\omega_s)> \lambda_{{\mathcal A},\omega}(s),
 \quad
 \nu(\boldsymbol{x}^I)=\lambda_{{\mathcal A},\omega}(s),
 \]
 where
\[
0\ne \omega^*_s= \sum_{i=1}^rf^*_{is}\frac{dx_i}{x_i}+\sum_{j=1}^\ell g^*_{js}dy_j+h^*_{s}\frac{dz}{z}.
\]
Denote $\bar{f}^*_{is}, \bar{h}^*_s\in k$ the classes modulo the maximal ideal of $k[[\boldsymbol{x},\boldsymbol{y}_{\leq\ell}]]$. Write
\begin{equation*}
\bar\omega^*_s=
\sum_{i=1}^r\bar{f}^*_{is}\frac{dx_i}{x_i}+\bar{h}^*_{s}\frac{dz}{z}.
\end{equation*}
The {\em reduced part $\bar{\omega}^{\mathcal A}_s$} of the level $\omega_s$  is defined by $\bar{\omega}^{\mathcal A}_s=\bar\omega^*_s$. Let us precise the nature of $\bar{\omega}^{\mathcal A}_s$. If we consider the $k$-vector space $\overline{\Omega}^1_{\mathcal A}$ defined by
\[
\overline{\Omega}^1_{\mathcal A}= \frac{\Omega^1_{\mathcal A}[\log z]}{
\hat{\mathfrak m}_{\mathcal A}\Omega^1_{\mathcal A}[\log z]+\sum_{j=1}^\ell \Omega^0_{\mathcal A}dy_j
},
\]
then $\bar{\omega}^{\mathcal A}_s\in \overline{\Omega}^1_{\mathcal A}$; we will not insist on this formalism. Anyway, we have an isomorphism of $k$-vector spaces $\overline{\Omega}^1_{\mathcal A}\rightarrow k^{r+1}$ such that the image
{$\overline{\operatorname{vec}}^{\mathcal A}_s(\omega)$} of $\overline\omega^{\mathcal A}_s$ is the vector
\begin{equation*}
\overline{\operatorname{vec}}^{\mathcal A}_s(\omega)=(\bar{f}^*_{1s}, \bar{f}^*_{2s},\ldots, \bar{f}^*_{rs},\bar{h}^*_{s})\in k^r\times k.
\end{equation*}
\begin{remark}
 \label{rk:dominantreducedpart}
 The level $\omega_s$ is $\lambda_{{\mathcal A},\omega}(s)$-dominant if and only if $\bar\omega^{\mathcal A}_s\ne 0$ or, equivalently, if and only if we have $\overline{\operatorname{vec}}^{\mathcal A}_s(\omega)\ne 0$.
\end{remark}
\subsection{Effect of Coordinate Changes}
In Proposition
\ref{prop:effect of a coordinate change},
we describe the effect of a coordinate change on the reduced part of a level higher of equal than the critical height.

First, let us give some elementary remarks on positively convex polygons ${N}\subset{\mathbb R}_{\geq 0}^2$ given by a cloud of points in ${\mathbb R}_{\geq 0}\times{\mathbb Z}_{\geq 0}$, se also Subsection \ref{planning polygons}. For any $\delta>0$, let us denote
\[
\varsigma_{\delta}(N)=\min\{\alpha+\delta \beta;\; (\alpha,\beta)\in {N}\}= \max\{\rho;\; {N}\subset H^+_{\delta}(\rho)\}.
\]
The {\em $\delta$-critical vertex} is the highest vertex of $N$ such that $\alpha+\delta\beta=\varsigma_\delta(N)$ and the {\em $\delta$-critical height $\chi_\delta(N)$} is the ordinate of the $\delta$-critical vertex.

\begin{remark} If $N={\mathcal N}_{\mathcal A}(\omega)$ and $\delta=\nu(z)$, then we have that $\varsigma_{\mathcal A}(\omega)=\varsigma_{\delta}(N)$ and $\chi_{\mathcal A}(\omega)=\chi_\delta(N)$.
\end{remark}
\begin{lemma}
\label{lema:ordenadas supracriticas} Let $N, N'\subset{\mathbb R}^2_{\geq 0}$ be two positively convex polygons with vertices in $(\mathbb R)_{\geq 0}\times{\mathbb Z}_{\geq 0}$. Let us consider $\delta_0>0$. The following statements are equivalent:
\begin{itemize}
\item[1)] $\varsigma_{\delta}(N)=\varsigma_{\delta}(N')$ for any $\delta\leq \delta_0$.
\item[2)] $\chi_{\delta_0}(N)=\chi_{\delta_0}(N')$ and $\lambda_s(N)=\lambda_s(N')$ for any $s\geq \chi_{\delta_0}(N)$.
\end{itemize}
\end{lemma}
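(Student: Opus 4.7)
The plan rests on viewing $f_N(\delta):=\varsigma_\delta(N)=\min_s(\lambda_s(N)+s\delta)$ as a piecewise linear concave function of $\delta>0$, each of whose linear pieces has slope equal to the ordinate of the corresponding active vertex of $N$. Concavity then says that the slopes decrease as $\delta$ grows, so the $\delta$-critical ordinate $\chi_\delta(N)$ is non-increasing in $\delta$. In particular, for every $\delta\le\delta_0$ the minimum defining $\varsigma_\delta(N)$ is attained at some vertex of ordinate at least $\chi_{\delta_0}(N)$, and the analogous statement holds for $N'$.

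Granting this, the implication $(2)\Rightarrow(1)$ is immediate. Writing $\chi:=\chi_{\delta_0}(N)=\chi_{\delta_0}(N')$ and fixing $\delta\le\delta_0$, I would write
\[
\varsigma_\delta(N)=\min_{s\ge\chi}(\lambda_s(N)+s\delta)=\min_{s\ge\chi}(\lambda_s(N')+s\delta)=\varsigma_\delta(N'),
\]
the middle equality being the hypothesis $\lambda_s(N)=\lambda_s(N')$ for $s\ge\chi$, and the outer equalities being the monotonicity observation above.

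For the converse $(1)\Rightarrow(2)$, I would first observe that the slope of $f_N$ on the linear piece immediately to the left of $\delta_0$ is exactly $\chi_{\delta_0}(N)$; equivalently, the left derivative of $f_N$ at $\delta_0$ equals $\chi_{\delta_0}(N)$. Hypothesis (1) therefore forces $\chi_{\delta_0}(N)=\chi_{\delta_0}(N')$. To recover the individual abscissas I would appeal to the Legendre-type identity
\[
\lambda_s(N)=\sup_{\delta>0}\bigl(\varsigma_\delta(N)-s\delta\bigr),
\]
which holds because $\varsigma_\delta(N)\le\lambda_s(N)+s\delta$ for every $\delta$, with equality whenever $(\lambda_s(N),s)$ is active at $\delta$. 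For $s\ge\chi_{\delta_0}(N)$ the monotonicity of $\chi_\delta$ makes this supremum realized (or approached as $\delta\to 0^+$) within $(0,\delta_0]$, so the identity becomes $\lambda_s(N)=\sup_{\delta\in(0,\delta_0]}(\varsigma_\delta(N)-s\delta)$; the same formula for $N'$, together with (1), then gives $\lambda_s(N)=\lambda_s(N')$ for all $s\ge\chi_{\delta_0}(N)$.

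The only mildly delicate points are the monotonicity of $\chi_\delta$ in $\delta$ (a direct consequence of the concavity of $f_N$) and the fact that, for $s$ strictly above the topmost vertex of $N$, the supremum in the Legendre identity is attained only as $\delta\to 0^+$. Neither is a real obstacle, so I expect the full argument to amount to a short elementary verification.
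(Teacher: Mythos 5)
Your argument is correct and supplies exactly the verification that the paper waves away as ``a standard verification on positively convex polygons.'' The Legendre-duality framing is the natural one: viewing $\delta\mapsto\varsigma_\delta(N)$ as a concave piecewise-linear function whose slopes are the ordinates of active vertices gives both the monotonicity of $\chi_\delta(N)$ in $\delta$ and the identification of the left derivative at $\delta_0$ with $\chi_{\delta_0}(N)$; and the conjugate identity $\lambda_s(N)=\sup_{\delta>0}(\varsigma_\delta(N)-s\delta)$, together with the observation that $\delta\mapsto\varsigma_\delta(N)-s\delta$ is non-increasing on $[\delta_0,\infty)$ whenever $s\geq\chi_{\delta_0}(N)$, lets you restrict the supremum to $(0,\delta_0]$ and transfer it across (1). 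Both directions of the equivalence follow cleanly. The only small remark is that strictly above the topmost vertex the supremum is attained only in the limit $\delta\to 0^+$, which you already flag; since $(0,\delta_0]$ has $0$ in its closure this costs nothing. As the paper provides no proof for this lemma there is no competing route to compare against, but your argument is short, complete, and well matched to the ``positively convex polygon'' hint.
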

\begin{proof} It is a standard verification on positively convex polygons.
\end{proof}

\begin{proposition}
\label{prop:effect of a coordinate change}
Consider a $\gamma$-truncated foliated space $({\mathcal A},\omega)$ and
let ${\mathcal A}\rightarrow{\mathcal A}'$ be a $(\ell+1)$-coordinate change. Denote $\delta_0=\nu(z)$, $\delta_0'=\nu(z')$.
We have $\delta_0'\geq\delta_0$ and
\begin{equation}
\label{eq:effect of a coordinate change}
\varsigma_{\delta}({\mathcal N}_{\mathcal A}(\omega))=\varsigma_{\delta}({\mathcal N}_{{\mathcal A}'}(\omega)),\quad \text{ for any }\delta\leq\delta_0.
\end{equation}
As a consequence, we have
\begin{itemize}
\item[1)]
    $\chi_{{\mathcal A}'}(\omega)=\chi_{\delta'_0}({\mathcal N}_{{\mathcal A}'}(\omega))
    \leq
    \chi_{\delta_0}({\mathcal N}_{{\mathcal A}'}(\omega))=
    \chi_{\mathcal A}(\omega)
    $.
\item[2)] $\varsigma_{{\mathcal A}'}(\omega)=
\varsigma_{\delta_0'}({{\mathcal N}_{\mathcal{A}'}}(\omega))\geq \varsigma_{\delta_0}({\mathcal N}_{\mathcal{A}'}(\omega))= \varsigma_{\delta_0}({{\mathcal N}_{\mathcal A}}(\omega))= \varsigma_{{\mathcal A}}(\omega)
    $.
\item[3)] $\lambda_{{\mathcal A}',\omega}(s)=\lambda_{{\mathcal A},\omega}(s)$, for any $s\geq \chi_{{\mathcal A}}(\omega)$.
\end{itemize}
 Moreover, we have
 $\overline{\operatorname{vec}}^{\mathcal A}_s(\omega)= \overline{\operatorname{vec}}^{{\mathcal A}'}_s(\omega)$, for any $s\geq \chi_{{\mathcal A}}(\omega)$.
\end{proposition}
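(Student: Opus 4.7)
The plan is to derive explicit formulas for the levels $\omega'_s = \eta'_s + h'_s\, dz'/z'$ of $\omega$ in $\mathcal{A}'$ in terms of the levels $\omega_s$ in $\mathcal{A}$, and then to exploit the positive convexity of the Newton polygon to see that at every level $s \geq \chi_{\mathcal{A}}(\omega)$ the difference between the two is strictly controlled by $\lambda_{{\mathcal{A}},\omega}(s)$. First I record that the hypothesis $\nu_{\mathcal{A}}(f) \geq \delta_0$ on the coordinate change $z' = z + f$ immediately gives $\delta_0' = \nu(z+f) \geq \delta_0$, and that the explicit valuation is literally the same function on both parameterized local models since $\mathcal{O}_{\mathcal{A}} = \mathcal{O}_{\mathcal{A}'}$ and $\boldsymbol{x}' = \boldsymbol{x}$.

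Substituting $z = z' - f$ and $dz = dz' - df$ into $\omega = \sum_{s \geq 0} z^s \eta_s + \sum_{s \geq 1} z^{s-1} h_s\, dz$ and collecting powers of $z'$ yields (with $h'_0 = 0$)
\[
h'_s = \sum_{t \geq s} \binom{t-1}{s-1} (-f)^{t-s} h_t \quad (s \geq 1), \qquad \eta'_s = \sum_{t \geq s} \binom{t}{s} (-f)^{t-s} \eta_t - h'_{s+1}\, df \quad (s \geq 0).
\]
Since $\nu_{\mathcal{A}}(df) \geq \nu_{\mathcal{A}}(f) \geq \delta_0$ by Proposition \ref{prop:explicitvalueofthedifferential}, both $h'_s - h_s$ and $\eta'_s - \eta_s$ satisfy the estimate $\min_{t > s}\{(t-s)\delta_0 + \lambda_{{\mathcal{A}},\omega}(t)\}$; the seemingly anomalous term $-h'_{s+1}\, df$ is handled by $\nu_{\mathcal{A}}(h'_{s+1}\, df) \geq \nu_{\mathcal{A}}(h'_{s+1}) + \delta_0 \geq \min_{t \geq s+1}\{(t-s)\delta_0 + \lambda_{{\mathcal{A}},\omega}(t)\}$. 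The crucial fact, coming from positive convexity of $\mathcal{N}_{\mathcal{A}}(\omega)$ together with the definition of $\chi_{\mathcal{A}}(\omega)$ as the ordinate of the highest vertex on the critical line $\alpha + s\delta_0 = \varsigma_{\mathcal{A}}(\omega)$, is that the function $s \mapsto \lambda_{{\mathcal{A}},\omega}(s) + s\delta_0$ is strictly increasing on $\{s \geq \chi_{\mathcal{A}}(\omega)\}$: polygon edges above the critical vertex have slope $d\lambda/ds > -1/\delta_0$. Hence for every $t > s \geq \chi_{\mathcal{A}}(\omega)$ we obtain $(t-s)\delta_0 + \lambda_{{\mathcal{A}},\omega}(t) > \lambda_{{\mathcal{A}},\omega}(s)$, so the corrections strictly exceed $\lambda_{{\mathcal{A}},\omega}(s)$. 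This proves assertion (3), and since dividing $\omega'_s$ and $\omega_s$ by the same $\boldsymbol{x}^I$ with $\nu(\boldsymbol{x}^I) = \lambda_{{\mathcal{A}},\omega}(s)$ and reducing modulo the maximal ideal annihilates the corrections, the reduced vectors $\overline{\operatorname{vec}}^{\mathcal{A}}_s(\omega)$ and $\overline{\operatorname{vec}}^{\mathcal{A}'}_s(\omega)$ agree as well.

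Extending the same bound to the range $t \geq s$ gives $\lambda_{{\mathcal{A}}',\omega}(s) \geq \varsigma_{\mathcal{A}}(\omega) - s\delta_0$ for every $s$, so $\mathcal{N}_{{\mathcal{A}}'}(\omega) \subset H^+_{\delta_0}(\varsigma_{\mathcal{A}}(\omega))$; combined with (3), the two polygons meet this boundary line precisely at the critical vertex $(\lambda_{{\mathcal{A}},\omega}(\chi_{\mathcal{A}}(\omega)), \chi_{\mathcal{A}}(\omega))$, which is therefore the common $\delta_0$-critical vertex. Thus $\chi_{\delta_0}(\mathcal{N}_{{\mathcal{A}}'}(\omega)) = \chi_{\delta_0}(\mathcal{N}_{\mathcal{A}}(\omega))$ and $\lambda_s(\mathcal{N}_{{\mathcal{A}}'}(\omega)) = \lambda_s(\mathcal{N}_{\mathcal{A}}(\omega))$ for every $s \geq \chi_{\mathcal{A}}(\omega)$, and Lemma \ref{lema:ordenadas supracriticas} delivers the central identity $\varsigma_\delta(\mathcal{N}_{\mathcal{A}}(\omega)) = \varsigma_\delta(\mathcal{N}_{{\mathcal{A}}'}(\omega))$ for every $\delta \leq \delta_0$. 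Assertions (1) and (2) are then elementary consequences of the monotonicity properties of $\chi_\delta$ (non-increasing in $\delta$) and $\varsigma_\delta$ (non-decreasing in $\delta$) on any positively convex polygon with vertices of integer ordinate, combined with the established $\delta_0' \geq \delta_0$.

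The main obstacle I anticipate is already isolated above: the mixed term $-h'_{s+1}\, df$ in $\eta'_s$, which is not of the pure form $(-f)^{t-s}\eta_t$ and a priori could spoil the uniform estimate on $\eta'_s - \eta_s$. Once this is bypassed through $\nu_{\mathcal{A}}(df) \geq \delta_0$, the rest is a routine verification of Newton-polygon geometry around the critical vertex.
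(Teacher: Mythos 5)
Your formulas for $h'_s$ and $\eta'_s$ are correct and, via Pascal's identity, equivalent to the paper's Equations \eqref{eq:lelvesprima1}; and your overall strategy genuinely differs from the paper's. The paper proves \eqref{eq:effect of a coordinate change} directly by a two-sided estimate, using the forward formulas to get $\varsigma_\delta(\mathcal{N}_{\mathcal{A}'}(\omega)) \geq \varsigma_\delta(\mathcal{N}_{\mathcal{A}}(\omega))$ and the \emph{inverse} formulas \eqref{eq:lelvesprima2} (with the same bound $\nu_{\mathcal{A}}(f)\geq\delta_0\geq\delta$) for the reverse inequality, and only then reads off (1)--(3) through Lemma \ref{lema:ordenadas supracriticas}. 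You instead try to establish item (3) and the equality of the $\delta_0$-critical vertex geometrically, and then run Lemma \ref{lema:ordenadas supracriticas} in the opposite direction. This is a legitimate route in principle, but it is precisely at the step ``This proves assertion (3)'' that the argument as written breaks.

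The quantity $\lambda_{\mathcal{A}',\omega}(s)$ is a \emph{Newton polygon} abscissa, not a level abscissa; it is determined by the whole transformed cloud, including points at heights $<\chi_{\mathcal{A}}(\omega)$. What your correction estimate actually shows is that the cloud point at height $s\geq\chi_{\mathcal{A}}(\omega)$ satisfies $\nu_{\mathcal{A}}(z'\omega'_s)\geq\lambda_{\mathcal{A},\omega}(s)$, with equality whenever $\nu_{\mathcal{A}}(z\omega_s)=\lambda_{\mathcal{A},\omega}(s)$; that does not yet pin down $\lambda_{\mathcal{A}',\omega}(s)$. To complete (3) you still need two further observations. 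First, for the inequality $\lambda_{\mathcal{A}',\omega}(s)\geq\lambda_{\mathcal{A},\omega}(s)$ you must rule out cloud points at heights $<\chi_{\mathcal{A}}(\omega)$ dragging the polygon leftward above $\chi_{\mathcal{A}}(\omega)$; this is where $\mathcal{N}_{\mathcal{A}'}(\omega)\subset H^+_{\delta_0}(\varsigma_{\mathcal{A}}(\omega))$ comes in, together with a convexity argument (the barrier $\tilde\lambda(s)=\lambda_{\mathcal{A},\omega}(s)$ for $s\geq\chi$ and $\tilde\lambda(s)=\varsigma-s\delta_0$ for $s<\chi$ is convex and non-increasing, so it bounds the polygon). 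But you prove the $H^+$-inclusion \emph{after} asserting (3), and you never supply the convexity step. Second, for $\lambda_{\mathcal{A}',\omega}(s)\leq\lambda_{\mathcal{A},\omega}(s)$ you need the remark that each vertex of $\mathcal{N}_{\mathcal{A}}(\omega)$ at height $\geq\chi_{\mathcal{A}}(\omega)$ is a cloud point with $\nu_{\mathcal{A}}(z\omega_s)=\lambda_{\mathcal{A},\omega}(s)$, hence (by your estimate) remains a cloud point of $\mathcal{N}_{\mathcal{A}'}(\omega)$ with the same abscissa; this direction is never mentioned. All the ingredients are in your writeup or immediately available, so the approach is salvageable, but as stated the deduction of (3) is a non sequitur; the paper's symmetric forward/backward computation of $\varsigma_\delta$ sidesteps these polygon-geometry subtleties entirely.
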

\begin{proof} Let us recall that $z'=z+f$, where $\nu_{\mathcal A}(f)\geq\nu(z)=\delta_0$ and $f\in k[[\boldsymbol{x},\boldsymbol{y}]]\cap {\mathcal O}_{\mathcal A}$. In particular, we have $\nu(f)\geq \nu_{\mathcal A}(f)\geq \nu(z)$ and thus $\nu(z')\geq\nu(z)$. Let us also recall that $\nu_{\mathcal A}=\nu_{{\mathcal A}'}$.

 Note that  $\omega_0=\eta_0$ and
\[
z^t\omega_t=(z'-f)^{t-1}\left(
 z'(\eta_t+h_t\frac{dz'}{z'})- (f\eta_t+h_tdf)\right), \quad t\geq 1.
\]
If we write the level decomposition of $({\mathcal A}',\omega)$ as
$
\omega=\sum_{s\geq 0}{z'}^s\omega'_s$,   $\omega'_s=\eta'_s+h'_s{dz'}/{z'}
$, we obtain formulas for $\eta_s$, $h_s$, $\eta'_s$ and $h'_s$ as follows:
\begin{align}
\label{eq:lelvesprima1}
&
\begin{array}{ccl}
\eta'_s&=&\sum_{t\geq s}
 c^{t-1}_{s-1}(-f)^{t-s}\eta_t-\sum_{t-1\geq s}c^{t-1}_s(-f)^{t-s-1}(f\eta_t+h_tdf).
\\
h'_s&=&\sum_{t\geq s}c^{t-1}_{s-1}(-f)^{t-s}h_t.
\end{array}
\\
\label{eq:lelvesprima2}
&
\begin{array}{ccl}
\eta_s&=&\sum_{t\geq s}
 c^{t-1}_{s-1}f^{t-s}\eta'_t-\sum_{t-1\geq s}c^{t-1}_sf^{t-s-1}(f\eta'_t+h'_tdf).
\\
h_s&=&\sum_{t\geq s}c^{t-1}_{s-1}f^{t-s}h'_t.
\end{array}
\end{align}
where $c^a_b$ are the binomial coefficients.

Let us take $\delta\leq\delta_0=\nu(z)$.  In order to prove Equation \eqref{eq:effect of a coordinate change}, let us first show that $\varsigma_{\delta}({\mathcal N}_{{\mathcal A}'}(\omega))\geq \varsigma_{\delta}({\mathcal N}_{\mathcal A}(\omega))
$. Taking into account Equations \eqref{eq:lelvesprima1} and the fact that $\nu_{\mathcal A}(f)=\nu_{{\mathcal A}}(df)\geq\delta_0$, we have
\begin{align*}
s\delta+\nu_{{\mathcal A}'}(\eta'_s)
&\geq \min_{t\geq s}
\{s\delta+ (t-s)\delta_0+\min\{\nu_{\mathcal A}(\eta_t),\nu_{\mathcal A}(h_t)\}
\}
\\
&\geq \min_{t\geq s}
\{t\delta+\min\{\nu_{\mathcal A}(\eta_t),\nu_{\mathcal A}(h_t)\}\} ,
\end{align*}
and in the same way we have
$
s\delta+\nu_{{\mathcal A}'}(h'_s)
\geq  \min_{t\geq s}
\{t\delta+\nu_{\mathcal A}(h_t)\}$. Thus we get
\[
\min\{s\delta+\nu_{{\mathcal A}'}(\eta'_s), s\delta+\nu_{{\mathcal A}'}(h'_s)\}\geq
\min_{t\geq s}\{t\delta+\min\{\nu_{\mathcal A}(\eta_t),\nu_{\mathcal A}(h_t)\}\}.
\]
Now, we have
\begin{align*}
\varsigma_{\delta}({\mathcal N}_{{\mathcal A}'}(\omega))&=
\min_s\{s\delta+\min\{\nu_{{\mathcal A}'}(\eta'_s),\nu_{{\mathcal A}'}(h'_s)\}\}=\\
&=\min_s\{\min\{s\delta+\nu_{{\mathcal A}'}(\eta'_s),s\delta+\nu_{{\mathcal A}'}(h'_s)\}\}\geq\\
&\geq\min_s\{
\min_{t\geq s}\{t\delta+\min\{\nu_{\mathcal A}(\eta_t),\nu_{\mathcal A}(h_t)\}\}
\}=\\
&=
\min_{s}\{s\delta+\min\{\nu_{\mathcal A}(\eta_s),\nu_{\mathcal A}(h_s)\}\}
=\varsigma_{\delta}({\mathcal N}_{{\mathcal A}}(\omega)).
\end{align*}
By considering Equation \eqref{eq:lelvesprima2} we obtain in the same way that $\varsigma_{\delta}({\mathcal N}_{{\mathcal A}}(\omega))\geq \varsigma_{\delta}({\mathcal N}_{{\mathcal A}'}(\omega))$ and then we have $\varsigma_{\delta}({\mathcal N}_{{\mathcal A}}(\omega))= \varsigma_{\delta}({\mathcal N}_{{\mathcal A}'}(\omega))$. This proves Equation \eqref{eq:effect of a coordinate change}. Properties (1), (2) and (3) follow from Lemma \ref{lema:ordenadas supracriticas}.

Now, take $s\geq\chi_{\mathcal A}(\omega)$ and let us show that $\overline{\operatorname{vec}}^{\mathcal A}_s(\omega)= \overline{\operatorname{vec}}^{{\mathcal A}'}_s(\omega)$. Let us denote $\lambda_t=\lambda_{{\mathcal A},t}(\omega)= \lambda_{{\mathcal A}',t}(\omega)$ for any $t\geq s$. Let us take the following expressions for $t\geq s$:
\begin{itemize}
\item[1)] If $\nu_{\mathcal A}(\eta_t, h_t)=\lambda_t$, we denote
\[
(\eta_t, h_t)=\boldsymbol{x}^{I_t}(\eta_t^*, h^*_t)+(\tilde\eta_{t},\tilde h_t),
\]
where  $\nu_{\mathcal A}(\tilde \eta_t, \tilde h_t)>\lambda_t$ and $\nu_{\mathcal A}(\boldsymbol{x}^{I_t})=\lambda_t$.
\item[2)]
If $\nu_{\mathcal A}(\eta_t, h_t)>\lambda_t$, we write
$(\eta_t, h_t)= (\tilde\eta_{t},\tilde h_t)$.
\item[3)] If $\nu_{\mathcal A}f=\nu(z)$, (recall that $\nu(z)=\delta_0$) we write $f=\boldsymbol{x}^Jf^*+\tilde f$, where $\nu_{\mathcal A}(\tilde f)>\nu(z)$ and $\nu(\boldsymbol{x}^J)=\delta_0$. (Let us note that, in this situation, the only possibility is that the ramification index of $\mathcal A$ is one and the rational contact function is $\Phi=z/\boldsymbol{x^p}$, with $\boldsymbol{p}=J$).
\item[4)] If $\nu_{\mathcal A}f>\delta_0$, we write $f=\tilde f$.
\end{itemize}
Let us note that for $t>s$ we have that $\lambda_s-\lambda_t<(t-s)\delta_0$. By Equations \eqref{eq:lelvesprima1}, we conclude that
\[
\eta'_s= \boldsymbol{x}^{I_s}\eta_s^*+\tilde\eta'_{s},\quad  h'_s=\boldsymbol{x}^{I_s}h_s^*+\tilde h'_{s}, \quad \nu_{{\mathcal A}'}(\tilde\eta'_{s}),
\nu_{{\mathcal A}'}(\tilde h'_{s})>\lambda_s.
\]
This shows that $\bar\omega^{\mathcal A}_s=\bar\omega^{{\mathcal A}'}_s$, for any $s\geq \chi$. We also get that $\overline{\operatorname{vec}}^{\mathcal A}_s(\omega)=\overline{\operatorname{vec}}^{{\mathcal A}'}_s(\omega)$, since
$\overline{\Omega}^1_{\mathcal A}=\overline{\Omega}^1_{{\mathcal A}'}$ and the $k$-isomorphisms
$\overline{\Omega}^1_{\mathcal A}\rightarrow k^{r+1}$ and
$\overline{\Omega}^1_{{\mathcal A}'}\rightarrow k^{r+1}$ coincide.
\end{proof}

Next result proves Proposition \ref{prop:stability critical height} in the case of a normalized coordinate change:
\begin{corollary}
\label{cor:stabilizadcambiodecoordenadas}
Assume that $({\mathcal A},\omega)$ is strictly $\gamma$-prepared with $\varsigma_{\mathcal A}(\omega)\leq \gamma$ and that $({\mathcal A},\omega)\rightarrow ({\mathcal A}^\star,\omega)$ is a normalized coordinate change. If  $\varsigma_{{\mathcal A}^\star}(\omega)\leq\gamma$ we have
\[
\varsigma_{\mathcal A}(\omega)\leq \varsigma_{{\mathcal A}^\star}(\omega), \quad
\chi_{\mathcal A}(\omega)\geq \chi_{{\mathcal A}^\star}(\omega).
\]
\end{corollary}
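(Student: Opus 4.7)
The normalized coordinate change $(\mathcal{A},\omega)\to(\mathcal{A}^\star,\omega)$ factors as a (possibly trivial) $(\ell+1)$-coordinate change $\mathcal{A}\to\mathcal{A}'$ followed by a strict $\gamma$-preparation $\mathcal{A}'\to\mathcal{A}^\star$, the latter being an $\ell$-nested transformation. Let me denote $\chi=\chi_{\mathcal{A}}(\omega)$, $\delta=\nu(z)$, $\delta'=\nu(z')$; Proposition \ref{prop:effect of a coordinate change} ensures $\delta'\geq\delta$. For the first inequality, I combine Proposition \ref{prop:effect of a coordinate change}(2), giving $\varsigma_{\mathcal{A}}(\omega)\leq\varsigma_{\mathcal{A}'}(\omega)$, with the inclusion $\mathcal{N}_{\mathcal{A}^\star}(\omega)\subset\mathcal{N}_{\mathcal{A}'}(\omega)$ (Remark \ref{rk:stabilityoflevels}), which implies $\varsigma_{\mathcal{A}'}(\omega)\leq\varsigma_{\mathcal{A}^\star}(\omega)$.

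For the critical height, the first key step is to identify the critical vertex $V=(\varsigma_{\mathcal{A}}(\omega)-\chi\delta,\chi)$ of $\mathcal{A}$ as a point of $\mathcal{N}_{\mathcal{A}^\star}(\omega)$ corresponding to a dominant level. Since $(\mathcal{A},\omega)$ is strictly $\gamma$-prepared with $\varsigma_{\mathcal{A}}(\omega)\leq\gamma$, Proposition \ref{prop:caracterizacionpreparacion} yields $\overline{\operatorname{vec}}^{\mathcal{A}}_\chi(\omega)\ne 0$. Proposition \ref{prop:effect of a coordinate change} then gives $\lambda_{\mathcal{A}',\omega}(\chi)=\varsigma_{\mathcal{A}}(\omega)-\chi\delta$ and $\overline{\operatorname{vec}}^{\mathcal{A}'}_\chi(\omega)=\overline{\operatorname{vec}}^{\mathcal{A}}_\chi(\omega)\ne 0$, and Corollary \ref{cor:stabilityofcriticalvalue} propagates dominance through the $\ell$-nested transformation $\mathcal{A}'\to\mathcal{A}^\star$. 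Consequently $V$ belongs to the cloud of $\mathcal{A}^\star$, whence
\[
\varsigma_{\mathcal{A}^\star}(\omega)\;\leq\;(\varsigma_{\mathcal{A}}(\omega)-\chi\delta)+\chi\delta'\;=\;\varsigma_{\mathcal{A}}(\omega)+\chi(\delta'-\delta).
\]

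To finish, I compare heights $s>\chi$. Using Proposition \ref{prop:effect of a coordinate change}(3) and the inclusion $\mathcal{N}_{\mathcal{A}^\star}(\omega)\subset\mathcal{N}_{\mathcal{A}'}(\omega)$,
\[
\lambda_{\mathcal{A}^\star,\omega}(s)\;\geq\;\lambda_{\mathcal{A},\omega}(s)\;>\;\varsigma_{\mathcal{A}}(\omega)-s\delta,
\]
the last inequality being strict because $V$ is the highest vertex of $\mathcal{N}_{\mathcal{A}}(\omega)$ on its critical line. Hence
\[
\lambda_{\mathcal{A}^\star,\omega}(s)+s\delta'\;>\;\varsigma_{\mathcal{A}}(\omega)+s(\delta'-\delta)\;\geq\;\varsigma_{\mathcal{A}}(\omega)+\chi(\delta'-\delta)\;\geq\;\varsigma_{\mathcal{A}^\star}(\omega),
\]
so no point of the cloud of $\mathcal{A}^\star$ of height greater than $\chi$ can lie on the critical line of $\mathcal{A}^\star$, giving $\chi_{\mathcal{A}^\star}(\omega)\leq\chi$. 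The delicate case is $\delta'=\delta$, where the upper and lower bounds on $\varsigma_{\mathcal{A}^\star}(\omega)$ collapse to the equality $\varsigma_{\mathcal{A}^\star}(\omega)=\varsigma_{\mathcal{A}}(\omega)$ and the chain of estimates above loses its slack; the whole conclusion then rests on the strict inequality $\lambda_{\mathcal{A},\omega}(s)>\varsigma_{\mathcal{A}}(\omega)-s\delta$ for $s>\chi$, which comes from the strict $\gamma$-preparation together with the maximality of $V$ on the critical line. This is the step I expect to demand the most care.
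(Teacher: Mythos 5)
Your proof is correct and follows the paper's own route: decompose the normalized coordinate change into the $(\ell+1)$-coordinate change $\mathcal A\to\mathcal A'$ and the strict $\gamma$-preparation $\mathcal A'\to\mathcal A^\star$, transport the dominant critical vertex of $({\mathcal A},\omega)$ via Proposition \ref{prop:effect of a coordinate change} and the stability results, and compare slopes using $\nu(z')\geq\nu(z)$. You are essentially making explicit the chain of inequalities hidden behind the paper's terse statement that the vertex \emph{is preserved under the strict $\gamma$-preparation, and it is higher or equal than the new critical height}. One small correction on your closing remark: the strict inequality $\lambda_{\mathcal A,\omega}(s)>\varsigma_{\mathcal A}(\omega)-s\delta$ for $s>\chi$ is forced already by the definition of $\chi$ as the highest vertex of the critical segment; what the strict $\gamma$-preparation supplies (via Proposition \ref{prop:caracterizacionpreparacion}) is that the critical vertex corresponds to a dominant level, which is what lets it persist as a cloud point of $({\mathcal A}^\star,\omega)$.
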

\begin{proof}
Let ${\mathcal A}\rightarrow {\mathcal A}'$ be the $(\ell+1)$-coordinate change that we follow by a strict $\gamma$-preparation $({\mathcal A}',\omega)\rightarrow ({\mathcal A}^\star,\omega)$.  In view of Proposition \ref{prop:effect of a coordinate change} and by Remark  \ref{rk:dominantreducedpart}, the critical vertex
\[
(\varsigma_{\mathcal A}(\omega)-\nu(z)\chi_{\mathcal A}(\omega),\chi_{\mathcal A}(\omega))
\]
of $({\mathcal A},\omega)$ is also a dominant vertex of ${\mathcal N}_{{\mathcal A}'}(\omega)$. Since $\nu(z')\geq\nu(z)$, this vertex is preserved under the strict $\gamma$-preparation, and it is higher or equal than the new critical height.
\end{proof}

\subsection{The Critical Height under a Puiseux's package}
\label{The Critical Height under a Puiseuxs package} Let us take a strictly $\gamma$-prepared $({\mathcal A},\omega)$ and consider a normalized Puiseux's package
\[
({\mathcal A},\omega)\rightarrow ({\mathcal A}^\star,\omega).
\]
We recall that it is composed of an $(\ell+1)$-Puiseux's package ${\mathcal A}\rightarrow {\mathcal A'}$ followed by a strict $\gamma$-preparation $({\mathcal A}',\omega)\rightarrow
({\mathcal A}^\star,\omega)$.

 Let us denote by $\Phi=z^d/\boldsymbol{x}^{\boldsymbol p}$ the contact rational function for the Puiseux's package, where the number ``$d$'' is the ramification index of the Puiseux's package. We recall that $\nu(\Phi)=0$ and that $\Phi=z'+\lambda$, where $0\ne \lambda\in k$ is uniquely determined.

  Let us recall the level decomposition $\omega=\sum_sz^s\omega_s$ given in Equation \eqref{eq:omegalevels}.

 \begin{proposition}
 \label{pro:valorcriticopuiseux} We have
  $\nu_{{\mathcal A}^\star}(\omega)\geq \nu_{{\mathcal A}'}(\omega)\geq \varsigma_{\mathcal A}(\omega)$.
 \end{proposition}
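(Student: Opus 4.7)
The plan is to handle the two inequalities separately, each reducing to results already in hand. For $\nu_{{\mathcal A}^\star}(\omega) \geq \nu_{{\mathcal A}'}(\omega)$, I observe that by Theorem \ref{teo:preparation} the strict $\gamma$-preparation $({\mathcal A}',\omega) \to ({\mathcal A}^\star,\omega)$ is an $\ell$-nested transformation, hence a composition of allowed transformations; Proposition \ref{prop:stabilityofcriticalvalue} then yields this inequality immediately.

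For the second inequality $\nu_{{\mathcal A}'}(\omega) \geq \varsigma_{\mathcal A}(\omega)$, the plan is to use the level decomposition $\omega = \sum_{s \geq 0} z^s \omega_s$ with $z^s \omega_s = z^s \eta_s + z^{s-1} h_s\, dz$ (recalling $h_0 = 0$). The key observation is that, after the $(\ell+1)$-Puiseux's package, both $z$ and $dz$ have explicit value exactly $\nu(z)$ in ${\mathcal A}'$. Indeed, from the Puiseux's equations of Subsection \ref{sec:EquationsforPuiseuxPackages} we have $z = {\boldsymbol{x}'}^{(c_{r+1,1},\ldots,c_{r+1,r})}\, \Phi^{c_{r+1,r+1}}$ with $\Phi = z' + \lambda$ and $0 \ne \lambda \in k$, so $\Phi$ is a unit in $\widehat{\mathcal O}_{{\mathcal A}'}$, giving $\nu_{{\mathcal A}'}(z) = \nu(z)$; and by Remark \ref{rk:propiedadesecuacionespuiseux}(3), $\nu_{{\mathcal A}'}(dz) = \nu(z)$ as well.

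Combining these with Proposition \ref{prop:stabilityofcriticalvalue} applied to $\eta_s$ and $h_s$, namely $\nu_{{\mathcal A}'}(\eta_s) \geq \nu_{\mathcal A}(\eta_s)$ and $\nu_{{\mathcal A}'}(h_s) \geq \nu_{\mathcal A}(h_s)$, the multiplicative property of the explicit value yields
$$\nu_{{\mathcal A}'}(z^s \omega_s) \geq s\nu(z) + \min\{\nu_{\mathcal A}(\eta_s), \nu_{\mathcal A}(h_s)\} = s\nu(z) + \nu_{\mathcal A}(z\omega_s).$$
The non-Archimedean inequality applied to the sum $\omega = \sum_{s} z^s \omega_s$ then gives $\nu_{{\mathcal A}'}(\omega) \geq \min_s\{s\nu(z) + \nu_{\mathcal A}(z\omega_s)\} = \varsigma_{\mathcal A}(\omega)$, as required. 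No step is really an obstacle; the only care needed is the bookkeeping that establishes $\nu_{{\mathcal A}'}(z) = \nu_{{\mathcal A}'}(dz) = \nu(z)$ from the Puiseux's equations, after which the proof is a direct application of the valuative properties and the stability result.
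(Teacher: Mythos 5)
Your proof is correct and follows essentially the same route as the paper's: both parts rest on Proposition \ref{prop:stabilityofcriticalvalue} applied levelwise, with the only cosmetic difference that the paper keeps $h_s\,dz/z$ together and observes that $dz/z=d\boldsymbol{x'}^{I}/\boldsymbol{x'}^{I}+dU'/U'$ has nonnegative explicit value, while you split off $z^{s-1}h_s\,dz$ and compute $\nu_{{\mathcal A}'}(z)=\nu_{{\mathcal A}'}(dz)=\nu(z)$ directly from the Puiseux's equations and Remark \ref{rk:propiedadesecuacionespuiseux}. These are equivalent bookkeeping choices, and the rest of the argument matches the paper's.
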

 \begin{proof} The stability results under a $\gamma$-strict preparation show that  $\nu_{{\mathcal A}^\star}(\omega)\geq \nu_{{\mathcal A}'}(\omega)$. To see that $\nu_{{\mathcal A}'}(\omega)\geq \varsigma_{\mathcal A}(\omega)$ is enough to show that
  \[
 \nu_{{\mathcal A}'}(z^s\omega_s)\geq s\nu(z)+\min\{\nu_{\mathcal A}(\eta_s),\nu_{\mathcal A}(h_s)\}.
 \]
Moreover, $\nu(z)=\nu_{{\mathcal A}'}(z)$, since
$z=U'\boldsymbol{x'}^I$ where $U'$ is a unit in ${\mathcal O}_{{\mathcal A}'}$. Then, we have only to see that
 $
 \nu_{{\mathcal A}'}(\omega_s)\geq \min\{\nu_{\mathcal A}(\eta_s),\nu_{\mathcal A}(h_s)\}
 $.
 Let us note that
 \[
 \omega_s=\eta_s+h_s\frac{dz}{z}=
 \eta_s+h_s\left(\frac{d\boldsymbol{x'}^I}{\boldsymbol{x'}^I}+\frac{dU'}{U'}\right)\in \Omega^1_{{\mathcal A}'}.
 \]
 Then $\nu_{{\mathcal A}'}(\omega_s)\geq \min \{\nu_{{\mathcal A}'}(\eta_s),\nu_{{\mathcal A}'}(h_s)\}$. By Proposition \ref{prop:stabilityofcriticalvalue} we have
 $\nu_{{\mathcal A}'}(\eta_s)\geq \nu_{\mathcal A}(\eta_s)$ and
 $\nu_{{\mathcal A}'}(h_s)\geq \nu_{\mathcal A}(h_s)$.  This ends the proof.
 \end{proof}
\begin{remark} We obtain
Proposition \ref{prop:recessive case} as a consequence of Proposition \ref{pro:valorcriticopuiseux}, just by  noting that $
 \nu_{{\mathcal A}'}(\omega')\geq \varsigma_{{\mathcal A}}(\omega)>\gamma
 $.
 \end{remark}
Let us consider  a number $\rho$ lower or equal than the main abscissa $\nu_{\mathcal A}(\omega)$. We define the {\em $\rho$-dominant main height} $\hbar^\rho_{\mathcal A}(\omega)$ by
\begin{equation*}
\hbar^\rho_{{\mathcal A}}(\omega)=\min\{s;\;
\nu_{\mathcal A}(z\omega_s)=\rho \text{ and } \bar\omega^{\mathcal A}_s\ne 0 \}.
\end{equation*}
In the rest of this subsection, we put $\varsigma=\varsigma_{\mathcal A}(\omega)$ and we assume that $\varsigma\leq \gamma$.

 \begin{lemma}
     \label{lema: control by the main height} We have either $\varsigma_{{\mathcal A}^\star}(\omega)>\gamma$ or  $\chi_{{\mathcal A}^\star}(\omega)\leq \hbar^\varsigma_{{\mathcal A}'}(\omega)$.
     \end{lemma}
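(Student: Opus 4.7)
My plan is to trace the point of the cloud of points witnessing $h := \hbar^\varsigma_{\mathcal{A}'}(\omega)$ through the strict $\gamma$-preparation $(\mathcal{A}',\omega)\to(\mathcal{A}^\star,\omega)$ and then to combine its survival with a uniform lower bound $\alpha\geq\varsigma$ on the abscissas of $\mathcal{N}_{\mathcal{A}^\star}(\omega)$ in order to squeeze the new critical height. If $h=\infty$ the desired inequality is vacuous, so I assume $h<\infty$.

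First, every point of $\mathcal{N}_{\mathcal{A}^\star}(\omega)$ lies in the half-plane $\{\alpha\geq\varsigma\}$. Indeed, Proposition \ref{pro:valorcriticopuiseux} gives $\nu_{\mathcal{A}'}(\omega)\geq\varsigma$; expanding in powers of $z'$ does not affect the explicit value (the dependent variable $z'$ contributes nothing to $\nu_{\mathcal{A}'}$), so $\nu_{\mathcal{A}'}(z'\omega'_s)\geq\varsigma$ for every $s$. The strict $\gamma$-preparation $\mathcal{A}'\to\mathcal{A}^\star$ is an $\ell$-nested transformation, hence Proposition \ref{prop:stabilityofcriticalvalue} then ensures $\nu_{\mathcal{A}^\star}(z'\omega^\star_s)\geq\varsigma$ for every $s$.

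Second, I will show that $(\varsigma,h)$ survives as a dominant point of $\operatorname{Cl}_{\mathcal{A}^\star}(\omega)$. By definition of $h$ one has $\nu_{\mathcal{A}'}(z'\omega'_h)=\varsigma$ and $\bar\omega^{\mathcal{A}'}_h\neq 0$. In view of Remark \ref{rk:dominantreducedpart}, the nonvanishing of the reduced part forces at least one of the following: the differential part $\eta'_h$ is $\varsigma$-final dominant, i.e.\ $\mathfrak{c}^1_{\mathcal{A}'}(\eta'_h)\neq 0$, or the horizontal coefficient $h'_h$ is $\varsigma$-final dominant, i.e.\ $\mathfrak{c}^0_{\mathcal{A}'}(h'_h)\neq 0$. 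In either case Corollary \ref{cor:stabilityofcriticalvalue}, applied to the allowed transformation $\mathcal{A}'\to\mathcal{A}^\star$ and to the corresponding piece (a $1$-form or a function), preserves both the explicit value $\varsigma$ and the nonvanishing of the relevant corner ideal. Consequently $\nu_{\mathcal{A}^\star}(z'\omega^\star_h)=\varsigma$ and $\bar\omega^{\mathcal{A}^\star}_h\neq 0$, so $(\varsigma,h)\in\operatorname{Cl}_{\mathcal{A}^\star}(\omega)$ and the level $\omega^\star_h$ is $\varsigma$-dominant.

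To conclude, the previous step yields $\varsigma_{\mathcal{A}^\star}(\omega)\leq\varsigma+h\,\nu(z')$. Assume we are not in the first alternative of the lemma, i.e.\ $\varsigma_{\mathcal{A}^\star}(\omega)\leq\gamma$. The critical vertex of $\mathcal{N}_{\mathcal{A}^\star}(\omega)$ has coordinates
\[
(\varsigma_{\mathcal{A}^\star}(\omega)-\chi_{\mathcal{A}^\star}(\omega)\,\nu(z'),\;\chi_{\mathcal{A}^\star}(\omega)),
\]
and by the first step its abscissa is at least $\varsigma$, so
\[
\chi_{\mathcal{A}^\star}(\omega)\,\nu(z')\;\leq\;\varsigma_{\mathcal{A}^\star}(\omega)-\varsigma\;\leq\;h\,\nu(z'),
\]
whence $\chi_{\mathcal{A}^\star}(\omega)\leq h$, as wanted. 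The only subtle point is the second step: one has to read $\bar\omega^{\mathcal{A}'}_h\neq 0$ as an honest $\varsigma$-final dominance statement about $\eta'_h$ or $h'_h$ so that the stability corollary applies; the re-preparation part of the normalized transformation plays no further role beyond being $\ell$-nested.
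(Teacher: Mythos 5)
Your proof is correct, and it takes a genuinely different route from the paper's. The paper splits into two cases according to whether $\varsigma>\gamma-\hbar'\nu(z')$ or $\varsigma\leq\gamma-\hbar'\nu(z')$: in the first case it argues that the $\gamma$-preparation only produces dominant levels strictly below $\hbar'$, so the critical vertex (which by Proposition \ref{prop:caracterizacionpreparacion} sits on a dominant level) has ordinate $<\hbar'$; in the second case it argues that the point $(\varsigma,\hbar')$ persists and then bounds the critical height. You avoid the case split entirely by observing that the nonvanishing $\bar\omega^{{\mathcal A}'}_{\hbar'}\neq 0$ is honest $\varsigma$-final dominance of $\eta'_{\hbar'}$ or $h'_{\hbar'}$, which Corollary \ref{cor:stabilityofcriticalvalue} preserves under the strict $\gamma$-preparation \emph{regardless} of how $\varsigma$ compares to $\gamma-\hbar'\nu(z')$. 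Combined with the lower bound $\alpha\geq\varsigma$ on all abscissas of ${\mathcal N}_{{\mathcal A}^\star}(\omega)$ (which also underlies the paper's case a), once unpacked), the short coordinate computation yields $\chi_{{\mathcal A}^\star}(\omega)\leq\hbar'$ uniformly. What you gain is a single, cleaner argument; what the paper's version makes a bit more visible is the interaction between the cloud of dominant levels and the truncation value $\gamma$, which matters elsewhere in the section. Both are valid.
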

     \begin{proof} Let us note that $\varsigma\leq \nu_{{\mathcal A}'}(\omega)$ in view of Proposition \ref{pro:valorcriticopuiseux}. Then $\hbar^\varsigma_{{\mathcal A}'}(\omega)$ makes sense. Assume that $\varsigma_{{\mathcal A}^\star}(\omega)\leq\gamma$ and put $\hbar'=\hbar^\varsigma_{{\mathcal A}'}(\omega)$, that we suppose $\hbar'<\infty$. In particular, we have that $\varsigma=\nu_{{\mathcal A}'}(\omega)$ is the main abscissa of ${\mathcal N}_{\mathcal A}(\omega)$.
     We have two possible situations:
     \begin{itemize}
     \item[a)] $\varsigma>\gamma-\hbar'\nu(z')$. The main abscissa of ${\mathcal N}_{{\mathcal A}'}(\omega)$ is $\varsigma$ and $(\varsigma, \hbar')\in {\mathcal N}_{{\mathcal A}'}(\omega)$. In the process of $\gamma$-dominant preparation, see Section  \ref{Dominant Preparation} and Proposition \ref{prop:dominantpreparation}, we have only to consider levels strictly under $\hbar$. Since we assume that $\varsigma_{{\mathcal A}^\star}(\omega)\leq\gamma$, the critical vertex of $({\mathcal A}^\star,\omega)$ corresponds to one of that levels. Then
         $\chi_{{\mathcal A}^\star}(\omega)< \hbar'$.
     \item[b)] $\varsigma\leq\gamma-\hbar'\nu(z')$. In this case, the point $(\varsigma,\hbar')$ of the Newton Puiseux's polygon is  persistent under the process of strict preparation ${\mathcal A}'\rightarrow{\mathcal A}^\star$ as well as the main abscissa $\varsigma$.   Then $\chi_{{\mathcal A}^\star}(\omega)\leq \hbar'$.
     \end{itemize}
     This ends the proof.
     \end{proof}

\begin{remark} In the above proof, the only possibility to have  $\chi_{{\mathcal A}^\star}(\omega)= \hbar'$ is that $(\varsigma,\hbar')$ is both the main and the critical vertex of ${\mathcal N}_{{\mathcal A}^\star}(\omega)$.
\end{remark}

\begin{lemma}
\label{lema:stabilityofcriticalheight}
 We have $\hbar_{{\mathcal A}'}^\varsigma(\omega)\leq\chi_{{\mathcal A}}(\omega)$.
 \end{lemma}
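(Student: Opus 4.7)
Plan: Since $({\mathcal A},\omega)$ is strictly $\gamma$-prepared and $\varsigma:=\varsigma_{\mathcal A}(\omega)\leq\gamma$, Proposition~\ref{prop:caracterizacionpreparacion} ensures the critical vertex $(\lambda_\chi,\chi)$ of ${\mathcal N}_{\mathcal A}(\omega)$, with $\chi:=\chi_{\mathcal A}(\omega)$ and $\lambda_\chi=\varsigma-\chi\nu(z)$, is a dominant level: $\bar\omega^{\mathcal A}_\chi\neq 0$. More generally every level $s\leq\chi$ lying on the critical segment is dominant, so I would write $\omega_s=\boldsymbol{x}^{I_s}\omega^*_s+\tilde\omega_s$ with $\nu(\boldsymbol{x}^{I_s})=\lambda_s$, $\nu_{\mathcal A}(\tilde\omega_s)>\lambda_s$ and nonzero reduced row-vector $\overline{\operatorname{vec}}^{\mathcal A}_s(\omega)$. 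The plan is to expand each $z^s\omega_s$ in the new coordinates $(\boldsymbol{x}',z')$ and show, via a Vandermonde-type non-degeneracy argument, that the collective expansion cannot vanish at every new level index $\leq\chi$ without forcing the top-level reduced vector to vanish.

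Using Lemma~\ref{lema:transformationofamonomial} we have $\boldsymbol{x}^{I_s}=(z'+\lambda)^{b_s}\boldsymbol{x'}^{I'_s}$, and from Subsection~\ref{sec:EquationsforPuiseuxPackages} one has $z=\boldsymbol{x'}^{\boldsymbol{q}}(z'+\lambda)^{e}$ with $e=c_{r+1,r+1}$ and $\boldsymbol{q}=(c_{r+1,j})_{j=1}^{r}$. Combining,
\[
z^s\omega_s=\boldsymbol{x'}^{s\boldsymbol{q}+I'_s}(z'+\lambda)^{\mu_s}\omega^*_s+R_s,\quad \mu_s:=se+b_s,\quad \nu_{{\mathcal A}'}(R_s)>\varsigma.
\]
Transforming $\omega^*_s$ to the basis $\{dx'_j/x'_j,\,d\Phi/\Phi\}$ through Equation~\eqref{eq:puiseux4} amounts to right-multiplying $\overline{\operatorname{vec}}^{\mathcal A}_s(\omega)$ by $C$; since $\det C=1$, the resulting vector $(\bar F'_{1,s},\ldots,\bar F'_{r,s},\bar H'_s)$ is nonzero, and in particular this holds for $s=\chi$.

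Grouping the critical-segment levels by their shared new monomial $\boldsymbol{x'}^{s\boldsymbol{q}+I'_s}=\boldsymbol{x'}^J$ (value $\varsigma$), the total contribution at that monomial becomes the $\overline{\Omega}^1_{{\mathcal A}'}$-valued polynomial $P_J(z')=\sum_{s\in S_J}(z'+\lambda)^{\mu_s}\bigl[\sum_{j}\bar F'_{j,s}\,dx'_j/x'_j+\bar H'_s\,d\Phi/\Phi\bigr]$. A direct computation using $\det C=1$ and the nonvanishing of $\det C_0$ (Remark~\ref{rk:propiedadesecuacionespuiseux}) shows that $\mu_s$ is affine and strictly increasing in $s$, so the integer exponents $\mu_s$ for $s\in S_J\subseteq\{0,\ldots,\chi\}$ are pairwise distinct. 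The main obstacle will be to rule out simultaneous cancellation of the $(z')^{s'}$-coefficients of $P_J$ for every $s'\leq\chi$: the $(\chi+1)\times|S_J|$ matrix with entries $\binom{\mu_s}{s'}\lambda^{\mu_s-s'}$ has full column rank by a generalized Vandermonde argument on the distinct $\mu_s$, while $|S_J|\leq\chi+1$. Thus such vanishing would force each reduced vector $(\bar F'_{j,s},\bar H'_s)$ to be zero, including for $s=\chi$, contradicting $\overline{\operatorname{vec}}^{\mathcal A}_\chi(\omega)\cdot C\neq 0$. Therefore some new level $s'\leq\chi$ carries a nonzero dominant reduction at value $\varsigma$ in ${\mathcal A}'$, yielding $\hbar^\varsigma_{{\mathcal A}'}(\omega)\leq\chi_{\mathcal A}(\omega)$.
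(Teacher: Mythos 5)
Your overall strategy is essentially the one the paper follows: after discarding the terms of explicit value $>\varsigma$, the critical-segment levels all transform into the same monomial $\boldsymbol{x'}^{J}$ of value $\varsigma$ times pairwise distinct powers of $\Phi=z'+\lambda$, and one must show that the resulting polynomial in $z'$ cannot vanish to order $>\chi$ at $z'=0$. Your generalized Vandermonde full-rank claim is correct in characteristic zero and is an equivalent reformulation of the paper's observation that a nonzero polynomial in $\Phi-\lambda$ of degree $m_1\le\chi$ vanishes at $z'=0$ to order at most $m_1$ (Lemma \ref{lema:alphabar} together with the first part of Proposition \ref{prop:estabilidadho}).

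There is, however, a genuine gap in the final step, where you pass from ``some Taylor coefficient of $P_J$ of index $s'\le\chi$ is nonzero'' to ``some new level $s'\le\chi$ is dominant''. You treat $d\Phi/\Phi$ as if it were a logarithmic generator with respect to $z'$, but $d\Phi/\Phi=dz'/(z'+\lambda)=\frac{z'}{z'+\lambda}\,\frac{dz'}{z'}$, so its class in $\overline{\Omega}^1_{{\mathcal A}'}$ is zero: the $d\Phi/\Phi$-component of the coefficient of $(z')^{s'}$ contributes to the reduced part of the level $s'+1$, not of the level $s'$. Consequently, if the smallest index with nonzero coefficient is exactly $s'=\chi$ and that coefficient is a pure multiple of $d\Phi/\Phi$, your argument only yields $\hbar^{\varsigma}_{{\mathcal A}'}(\omega)=\chi+1$, and the lemma would fail. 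This boundary case is precisely what Lemma \ref{lema:calculodehbar} (the dichotomy $\hbar=a$ versus $\hbar=a+1$ according to whether $\boldsymbol{\tau'}\ne\boldsymbol{0}$ or $\boldsymbol{\tau'}=\boldsymbol{0}$) and Case B of the proof of Proposition \ref{prop:estabilidadho} are designed to handle: when the order of vanishing equals $\chi$ one necessarily has $d=1$ and $s_0=0$, hence $\bar\omega^{*}_0=(-\lambda)^{\chi}\beta_0$; since $h_0=0$ (the form $\omega$ has no pole along $z=0$) and, for $d=1$, the matrix $C$ has the block-triangular form of Equation \eqref{eq:ecuaciondigulauno}, the $d\Phi/\Phi$-coefficient of $\bar\omega^{*}_0$ equals $\mu_0=0$, so $\beta_0$ cannot be a pure multiple of $d\Phi/\Phi$. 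Your proof needs this additional argument; without it the bound $\hbar^{\varsigma}_{{\mathcal A}'}(\omega)\le\chi_{\mathcal A}(\omega)$ does not follow.
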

\begin{remark}
\label{rk:prostabilityparapuiseux}
We obtain Proposition \ref{prop:stability critical height}
 for the case of a normalized Puiseux's package  as a corollary of  Lemmas \ref{lema:stabilityofcriticalheight} and
    \ref{lema: control by the main height}.
 We also obtain Proposition \ref{prop:critical heightcero},
 as follows: the fact that
 $\hbar^\varsigma_{{\mathcal A}'}(\omega)=0$ implies that $\varsigma$ is the main abscissa and $(\varsigma,0)$ is the main vertex that corresponds to a  $\varsigma$-final level. In view of Proposition \ref{pro:onevertex}
  we have that $({\mathcal A}',\omega)$ is $\gamma$-final and thus $({\mathcal A}^\star,\omega)$ also is $\gamma$-final.
\end{remark}
Let us start the proof of Lemma \ref{lema:stabilityofcriticalheight}.

Denote
 $
 \delta=\nu(z)$ and $\hbar'=\hbar^\varsigma_{{\mathcal A}'}(\omega)$.
Let $ s_0$ and $\chi$,  with $s_0\leq\chi$ be the ordinates of the vertices of the dominant critical segment ${\mathcal C}_{\mathcal A}(\omega)$. We recall that $\chi=\chi_{\mathcal A}(\omega)$
is the critical height of $({\mathcal A},\omega)$. The case $\chi=0$ is straighforward and we leave it to the reader. Thus we assume $\chi\geq 1$.

The proof follows from  Lemmas \ref{lema:reduccionaomegabar},
\ref{lema:reduccionaomegabar2},
\ref{lema:alphabar},
\ref{lema:calculodehbar}
and Proposition
 \ref{prop:estabilidadho}
 below.

\begin{lemma}
\label{lema:reduccionaomegabar}
For any decomposition $\omega=\omega^*+\tilde\omega$ where $\varsigma_{\mathcal A}(\tilde\omega)>\varsigma$, we have
\[
\hbar'=\hbar^\varsigma_{{\mathcal A}'}(\omega)= \hbar^\varsigma_{{\mathcal A}'}({\omega^*}).
\]
\end{lemma}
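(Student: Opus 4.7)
The plan is to show that the perturbation $\tilde\omega$ contributes nothing at the critical level $\varsigma$ of the Newton polygon in ${\mathcal A}'$, so that $\hbar^\varsigma_{{\mathcal A}'}$ cannot distinguish $\omega$ from $\omega^*$. The technical content reduces to the single estimate
\[
\nu_{{\mathcal A}'}(z'(\tilde\omega)'_s) > \varsigma \quad \text{for every } s\geq 0,
\]
where $\tilde\omega=\sum_{s\geq 0} (z')^s (\tilde\omega)'_s$ denotes the level decomposition of $\tilde\omega$ in ${\mathcal A}'$.

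To obtain it, I would first apply the argument of Proposition \ref{pro:valorcriticopuiseux} to $\tilde\omega$. That proof does not use integrability or any preparation; it only uses the level decomposition of $\tilde\omega$ in ${\mathcal A}$ together with the fact that $z=U'\boldsymbol{x'}^{\boldsymbol{q}}$ is a unit times a monomial in ${\mathcal A}'$. Applied verbatim to $\tilde\omega$ it yields $\nu_{{\mathcal A}'}(\tilde\omega)\geq \varsigma_{{\mathcal A}}(\tilde\omega)>\varsigma$. Second, I would verify the identity
\[
\nu_{{\mathcal A}'}(\tilde\omega)=\min_{s\geq 0}\nu_{{\mathcal A}'}(z'(\tilde\omega)'_s),
\]
by writing $\tilde\omega$ in the basis $\{dx'_i/x'_i,\, dy'_j,\, dz'\}$ of $\Omega^1_{{\mathcal A}'}$ and observing that, since $z'$ is a dependent parameter in ${\mathcal A}'$, distinct powers $(z')^s$ appear as separate contributions to the coefficient of each basis element, with no possible cancellation in the $\boldsymbol{x}'$-expansion. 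Combining the two steps gives the desired estimate for every $s$.

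To conclude, one decomposes $z'\omega'_s=z'(\omega^*)'_s+z'(\tilde\omega)'_s$. Since the second summand has explicit value strictly greater than $\varsigma$, the valuative property yields $\nu_{{\mathcal A}'}(z'\omega'_s)=\varsigma$ if and only if $\nu_{{\mathcal A}'}(z'(\omega^*)'_s)=\varsigma$; and at any such $s$, Proposition \ref{pro:valorcriticopuiseux} applied to both $\omega$ and $\omega^*$ (the bound $\varsigma_{{\mathcal A}}(\omega^*)\geq \varsigma$ being immediate from the decomposition) forces the Newton abscissas $\lambda_{{\mathcal A}',\omega}(s)$ and $\lambda_{{\mathcal A}',\omega^*}(s)$ to equal $\varsigma$, while the initial parts at value $\varsigma$ of $z'\omega'_s$ and $z'(\omega^*)'_s$ coincide by what has just been shown. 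Hence the reduced parts $\bar\omega^{{\mathcal A}'}_s$ and $\bar{(\omega^*)}^{{\mathcal A}'}_s$ agree at every relevant level, and $\hbar^\varsigma_{{\mathcal A}'}(\omega)=\hbar^\varsigma_{{\mathcal A}'}(\omega^*)$. The main obstacle is the bookkeeping in the second step: one must check carefully that no cancellation between different powers of the new dependent variable $z'$ can lower the explicit value of $\tilde\omega$ in ${\mathcal A}'$.
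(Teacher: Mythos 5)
Your proof is correct and follows the paper's approach exactly: the paper's one-line argument is to note that $\nu_{{\mathcal A}'}(\tilde\omega)>\varsigma$ by Proposition~\ref{pro:valorcriticopuiseux}, and you fill in the bookkeeping steps that the paper leaves implicit, namely that this bound persists levelwise in ${\mathcal A}'$ (because $z'$ is a dependent parameter and distinct $z'$-powers cannot cancel in the explicit expansion) and that it then forces $z'\omega'_s$ and $z'(\omega^*)'_s$ to have the same initial part at value $\varsigma$, hence the same reduced part $\bar\omega^{{\mathcal A}'}_s$.
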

\begin{proof}It is enough to remark that
 $\nu_{{\mathcal A}'}(\tilde\omega)>\varsigma$ in view of
Proposition \ref{pro:valorcriticopuiseux}.
\end{proof}

Since $({\mathcal A},\omega)$ is strictly $\gamma$-prepared, we can decompose $\omega=\omega^*+\tilde\omega$ where $\varsigma_{\mathcal A}(\tilde \omega)>\varsigma$ and $\omega^*$ may be expressed as follows
 \begin{equation*}
 \omega^*=\sum_{s=s_0}^{\chi}z^s \boldsymbol{x}^{I_s}\omega^*_s, \quad \omega^*_s=\eta^*_s+h^*_s{dz}/{z},
 \quad
 \eta^*_s=\sum_{i=1}^r f^*_{s,i}{dx_i}/{x_i}+ \sum_{j=1}^{\ell}g^*_{s,j}dy_j ,
 \end{equation*}
 with
     $
     f^*_{s,i},g^*_{s,j}, h^*_s\in k[[\boldsymbol{y_\ell}]]
     $
 and the following properties hold:
 \begin{itemize}
  \item[1)] $\omega^*_{s_0}\ne 0\ne\omega^*_{\chi}$.
  \item[2)] If $\omega^*_s\ne 0$, then $\nu(\boldsymbol{x}^{I_s})=\varsigma-s\delta$.
 \item[3)] Each $\omega^*_s$ with $\omega^*_s\ne 0$ is $0$-final with respect to ${\mathcal A}$ and more precisely we have:
 \begin{itemize}
 \item[a)] If $\eta^*_s\ne 0$ there is a unit among the coefficients $f^*_{i,s}$, for $i=1,2,\ldots,r$.
     \item[b)] If $h^*_s\ne 0$ then $h^*_s$ is a unit in $k[[\boldsymbol{y}_\ell]]$.
 \end{itemize}
 \end{itemize}
Let us denote $\mu_s=h^*_s(0)$, the class of $h^*_s$ modulo the maximal ideal. In the same way, we denote $\lambda_{s,i}=f^*_{s,i}(0)$.
Now, we decompose $\omega^*_s=\bar\omega^*_s+\tilde\omega^*_s$, where
\begin{equation}
\label{eq:omegaese}
\bar\omega^*_s=
\frac{
d\boldsymbol{x}^{\boldsymbol{\lambda}_s}
}{
\boldsymbol{x}^{\boldsymbol{\lambda}_s}
}
+\mu_s\frac{d z}{z}=\bar\omega^{\mathcal A}_s,\quad
\boldsymbol{\lambda}_s=(\lambda_{s,1},\lambda_{s,2},\ldots,\lambda_{s,r}).
\end{equation}
We have that $\omega^*_s\ne0$ if and only if $\bar\omega^*_s\ne0$.
Note that $\tilde\omega^*_s$ is not $0$-final dominant and we can write it as
\begin{equation}
\label{eq:tildeomega}
\tilde\omega^*_s= \sum_{j=1}^{\ell}
 y_j\left(\sum_{i=1}^rf^*_{s,i,j}\frac{dx_i}{x_i}+h^*_{s,j}\frac{dz}{z}\right)+ g^*_{s,j}dy_j=\sum_{j=1}^{\ell}\left( y_j\omega^*_{s,j}+g^*_{s,j}dy_j\right).
\end{equation}
Let us write $\bar\omega^*=\sum_{s=s_0}^{\chi}z^s \boldsymbol{x}^{I_s}\bar\omega^*_s$ and
$\tilde\omega^*=\sum_{s=s_0}^{\chi}z^s \boldsymbol{x}^{I_s}\tilde\omega^*_s$. Hence $\omega^*=\bar\omega^*+\tilde\omega^*$.
\begin{definition}
\label{def:reducedcriticalpart}
We call $\bar\omega^*$ the {\em reduced critical part of $\omega$ with respect to  $\mathcal A$}.
\end{definition}

\begin{lemma}
\label{lema:reduccionaomegabar2} We have
$
 \hbar^\varsigma_{{\mathcal A}'}(\omega^*)= \hbar^\varsigma_{{\mathcal A}'}(\bar\omega^*)
$.
\end{lemma}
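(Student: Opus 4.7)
The plan is to exhibit $\tilde\omega^*$ as an element of a distinguished submodule of $\Omega^1_{\mathcal{A}'}[\log z']$ whose levels have vanishing reduced part, so $\tilde\omega^*$ never contributes to the definition of $\hbar^\varsigma_{\mathcal{A}'}$. First I would introduce
\[
\mathcal{S}'=\sum_{j=1}^{\ell}\bigl(y_j\,\Omega^1_{\mathcal{A}'}[\log z']+\Omega^0_{\mathcal{A}'}\,dy_j\bigr)
\]
and verify $\tilde\omega^*\in\mathcal{S}'$. From Equation \eqref{eq:tildeomega} each $\tilde\omega^*_s$ lies in the analogous submodule of $\Omega^1_{\mathcal{A}}[\log z]$; since the $(\ell+1)$-Puiseux's package affects only $\boldsymbol{x}$ and $z$ (see Subsection \ref{sec:EquationsforPuiseuxPackages}), the dependent variables $y_1,\ldots,y_\ell$ and their differentials survive unchanged, so each summand $z^s\boldsymbol{x}^{I_s}\tilde\omega^*_s$ remains in $\mathcal{S}'$ after transport to $\mathcal{A}'$.

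Then I would check the two properties of $\mathcal{S}'$ that are needed. First, $\mathcal{S}'$ is stable under the $z'$-level decomposition: since $y_j$ and $dy_j$ for $j\leq\ell$ are independent of $z'$, writing $\alpha=\sum_{s'}{z'}^{s'}\alpha_{s'}$ for $\alpha\in\mathcal{S}'$ forces $\alpha_{s'}\in\mathcal{S}'$ for every $s'$. Second, $\mathcal{S}'$ is a $k[[\boldsymbol{x}',\boldsymbol{y}']]$-submodule, so it is closed under the extraction of the $\boldsymbol{x}'$-leading monomial used to define the reduced part. Since
\[
\mathcal{S}'\subset\hat{\mathfrak{m}}_{\mathcal{A}'}\,\Omega^1_{\mathcal{A}'}[\log z']+\sum_{j=1}^\ell\Omega^0_{\mathcal{A}'}\,dy_j,
\]
that is, the kernel of the projection onto $\overline{\Omega}^1_{\mathcal{A}'}$, it follows that the reduced part of $(\tilde\omega^*)'_{s'}$ in $\mathcal{A}'$ vanishes for every $s'$.

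To conclude I would compare the two $\hbar^\varsigma$-values level by level. By Proposition \ref{pro:valorcriticopuiseux} applied separately to $\bar\omega^*$ and to $\tilde\omega^*$ one has $\nu_{\mathcal{A}'}(\bar\omega^*),\,\nu_{\mathcal{A}'}(\tilde\omega^*)\geq\varsigma$, hence at each level $\nu_{\mathcal{A}'}(z'(\bar\omega^*)'_{s'})\geq\varsigma$ and $\nu_{\mathcal{A}'}(z'(\tilde\omega^*)'_{s'})\geq\varsigma$. Writing $\omega^{*\prime}_{s'}=(\bar\omega^*)'_{s'}+(\tilde\omega^*)'_{s'}$: if the reduced part of $\omega^{*\prime}_{s'}$ at value $\varsigma$ is nonzero then, since $(\tilde\omega^*)'_{s'}$ reduces to zero, the piece $(\bar\omega^*)'_{s'}$ must itself reach value $\varsigma$ with nonzero reduced part; conversely, when $(\bar\omega^*)'_{s'}$ has value $\varsigma$ and nonzero reduced part, adding $(\tilde\omega^*)'_{s'}$ (of value $\geq\varsigma$ and vanishing reduced part) preserves both. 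This yields the desired equality $\hbar^\varsigma_{\mathcal{A}'}(\omega^*)=\hbar^\varsigma_{\mathcal{A}'}(\bar\omega^*)$.

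The main subtlety will be a careful treatment of the logarithmic basis change induced by the Puiseux's package, since by Equation \eqref{eq:puiseux4} the new forms $dx'_i/x'_i$ and $dz'/z'$ are nontrivial integer combinations of the old $dx_i/x_i$ and $dz/z$; however this mixing occurs entirely inside the $\Omega^1_{\mathcal{A}'}[\log z']$-factor that multiplies each $y_j$ in the generators of $\mathcal{S}'$, so it does not take us outside $\mathcal{S}'$.
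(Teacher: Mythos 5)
Your proposal is correct and follows essentially the same route as the paper: the paper's (two-sentence) proof likewise observes that, by Equation \eqref{eq:tildeomega} and the inequality $\nu_{{\mathcal A}'}(\tilde\omega^*)\geq\varsigma$, one has $\hbar^\varsigma_{{\mathcal A}'}(\tilde\omega^*)=\infty$, and then concludes from the decomposition $\omega^*=\bar\omega^*+\tilde\omega^*$. Your submodule $\mathcal{S}'$ is just a more explicit packaging of why the shape of $\tilde\omega^*$ in Equation \eqref{eq:tildeomega} survives the Puiseux's package and forces the reduced parts of its levels to vanish.
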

\begin{proof}
By Equation \eqref{eq:tildeomega}   and since $\nu_{{\mathcal A}'}(\tilde\omega^*)\geq \varsigma$, we deduce that $\hbar^\varsigma_{{\mathcal A}'}(\tilde\omega^*)=\infty$. Noting that $\omega^*=\bar\omega^*+\tilde\omega^*$, we conclude that
$
\hbar^\varsigma_{{\mathcal A}'}(\omega^*)=
\hbar^\varsigma_{{\mathcal A}'}(\bar\omega^*)
$.
\end{proof}
By Lemmas  \ref{lema:reduccionaomegabar} and \ref{lema:reduccionaomegabar2} we have
$
\hbar^\varsigma_{{\mathcal A}'}(\omega)= \hbar^\varsigma_{{\mathcal A}'}({\bar\omega^*})
$.
Now, we are going to compute $\hbar^\varsigma_{{\mathcal A}'}({\bar\omega^*})$.
Recall that the rational contact function is given by $\Phi=z^d/\boldsymbol{x}^{\boldsymbol{p}}$.

\begin{lemma}
\label{lema:alphabar}
 We have
$
\bar\omega^*= z^{s_0}\boldsymbol{x}^{I_{s_0}}\bar\alpha$,
$\nu(z^{s_0}\boldsymbol{x}^{I_{s_0}})=\varsigma
$,
where
\begin{equation*}
\bar\alpha=\sum_{m=0}^{m_1}\Phi^{m}\bar\omega^*_{s(m)}\in \Omega^1_{{\mathcal A}'},
\end{equation*}
with $s(m)=md+s_0$.
\end{lemma}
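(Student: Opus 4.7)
The plan is to extract arithmetic information from the fact that every nonzero summand of $\bar\omega^*$ lies on the critical line. For each $s\in[s_0,\chi]$ with $\bar\omega^*_s\neq 0$, the point $(\nu(\boldsymbol{x}^{I_s}),s)=(\varsigma-s\delta,s)$ lies on that line, so $(s-s_0)\nu(z)=\nu(\boldsymbol{x}^{I_{s_0}-I_s})$. The integer vector $(s-s_0,I_{s_0}-I_s)\in\mathbb{Z}\times\mathbb{Z}^r$ therefore lies in the kernel of the group morphism $\mathbb{Z}^{r+1}\rightarrow\Gamma$ given by $(a,b_1,\ldots,b_r)\mapsto a\nu(z)-\sum b_i\nu(x_i)$. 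Since $\nu(x_1),\ldots,\nu(x_r)$ form a $\mathbb{Q}$-basis of $\Gamma\otimes_{\mathbb{Z}}\mathbb{Q}$, this kernel has rank one; and since $\gcd(d,p_1,\ldots,p_r)=1$ by the very definition of the ramification index, the kernel is generated by $(d,p_1,\ldots,p_r)$.

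It follows that there is a nonnegative integer $m$ (nonnegative because $s\geq s_0$ and $d>0$) with $s-s_0=md$ and $I_s=I_{s_0}-m\boldsymbol{p}$. This immediately yields
\[
z^s\boldsymbol{x}^{I_s}=z^{s_0}\boldsymbol{x}^{I_{s_0}}\cdot\frac{z^{md}}{\boldsymbol{x}^{m\boldsymbol{p}}}=z^{s_0}\boldsymbol{x}^{I_{s_0}}\Phi^m,
\]
so factoring $z^{s_0}\boldsymbol{x}^{I_{s_0}}$ out of the defining sum $\bar\omega^*=\sum_{s=s_0}^{\chi}z^s\boldsymbol{x}^{I_s}\bar\omega^*_s$ produces the claimed formula, with $s(m)=s_0+md$ and $m_1$ the largest $m$ for which $s(m)\leq\chi$ (so $m_1\leq\lfloor(\chi-s_0)/d\rfloor$). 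The value of the factored monomial is $s_0\nu(z)+\nu(\boldsymbol{x}^{I_{s_0}})=s_0\delta+(\varsigma-s_0\delta)=\varsigma$, as required.

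It remains to verify that $\bar\alpha$ actually belongs to $\Omega^1_{{\mathcal A}'}$ and not merely to the fraction field of $\Omega^0_{{\mathcal A}'}$. This is clear because $\Phi=z'+\lambda\in\widehat{\mathcal{O}}_{{\mathcal A}'}$, and each $\bar\omega^*_s$ is a $k$-linear combination of $dx_i/x_i$ and $dz/z$, all of which lie in $\Omega^1_{{\mathcal A}'}$ by the transformation formulas for logarithmic differentials recalled in Remark \ref{rk:propiedadesecuacionespuiseux}(5). The whole argument is essentially combinatorial; no real obstacle is anticipated, since the only nontrivial input is the rank-one kernel computation, which is a standard consequence of the $\mathbb{Q}$-linear independence of $\nu(x_1),\ldots,\nu(x_r)$ together with the coprimality of $d,p_1,\ldots,p_r$.
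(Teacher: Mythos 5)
Your proposal is correct and follows essentially the same route as the paper: both arguments observe that every nonzero summand sits on the critical line, deduce that $z^{s-s_0}\boldsymbol{x}^{I_s-I_{s_0}}$ is a power of $\Phi$, and factor out $z^{s_0}\boldsymbol{x}^{I_{s_0}}$. The only difference is that you spell out the rank-one kernel computation (using the $\mathbb{Q}$-independence of the $\nu(x_i)$ and the coprimality of $d,p_1,\ldots,p_r$) that the paper asserts implicitly when it writes ``there is $m(s)\in\mathbb{Z}_{\geq 0}$ such that $z^{s-s_0}\boldsymbol{x}^{I_s-I_{s_0}}=\Phi^{m(s)}$''; note only that since $\bar\omega^*_\chi\neq 0$ one in fact has $m_1=(\chi-s_0)/d$ exactly, which is how $m_1$ is used later.
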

\begin{proof}
For any
index $s$ such that $\bar\omega^*_s\ne 0$, we have $\nu(z^s\boldsymbol{x}^{I_s})=\nu(z^{s_0}\boldsymbol{x}^{I_{s_0}})=\varsigma$.
 This implies that $\nu(z^{s-s_0}\boldsymbol{x}^{I_{s}-I_{s_0}})=0$.
Then, there is  $m(s)\in {\mathbb Z}_{\geq 0}$ such that
\[
z^{s-s_0}\boldsymbol{x}^{I_{s}-I_{s_0}}=\Phi^{m(s)}=(z^d\boldsymbol{x}^{\boldsymbol{-p}})^{m(s)}.
\]
That is
$
s-s_0=m(s)d$ and  $I_s-I_{s_0}=-m(s)\boldsymbol{p}
$.
In particular $m_1d=\chi-s_0$ and
 $I_{\chi}-I_{s_0}=-m_1\boldsymbol{p}$, where $m_1=m(\chi)$.
 Then, we can write $\bar\omega^*$ as
\begin{equation*}
\bar\omega^*=z^{s_0}\boldsymbol{x}^{I_{s_0}}\sum_{m=0}^{m_1}\Phi^m
\bar\omega^*_{s(m)}=
z^{s_0}\boldsymbol{x}^{I_{s_0}}\sum_{m=0}^{m_1}\Phi^{m}
\bar\omega^*_{s(m)}
,\quad s(m)=md+s_0.
\end{equation*}
Denote
$
\bar\alpha=\sum_{m=0}^{m_1}\Phi^{m}\bar\omega^*_{s(m)}\in \Omega^1_{{\mathcal A}'}.
$
We have
$
\bar\omega^*= z^{s_0}\boldsymbol{x}^{I_{s_0}}\bar\alpha
$.
\end{proof}
\begin{remark} We have that $\hbar_{{\mathcal A}'}^\varsigma(\bar\omega^*)
=\hbar_{{\mathcal A}'}^0(\bar\alpha)$, since in view of
Equations \eqref{eq:puiseux2}, there is $q\in {\mathbb Z}$ and $I'\in {\mathbb Z}_{\geq 0}^r$ with $\nu({\boldsymbol{x'}}^{I'})=\varsigma$ such that
$\bar\omega^*=(z'+\lambda)^{q}{\boldsymbol{x'}}^{I'}\bar\alpha$.
Thus, in order to compute $\hbar^\varsigma(\bar\omega^*)$ it is enough to compute $\hbar^0_{{\mathcal A}'}(\bar\alpha)$.
\end{remark}
Let us introduce the $k$-vector subspace $V_{{\mathcal A}'}$ of $\Omega^1_{{\mathcal A}'}$ whose elements are the $ 1 $-forms $\beta\in \Omega^1_{{\mathcal A}'}$ written as
\begin{equation}
\beta=\frac{d\boldsymbol{x'}^{\boldsymbol{\tau'}}}{\boldsymbol{x'}^{\boldsymbol{\tau'}}}+
\xi'\frac{d\Phi}{\Phi},\quad \boldsymbol{\tau'}\in k^{r},\;\xi'\in k.
\end{equation}
Let us recall that $\Phi=z'-\lambda$ and then $k[\Phi]\subset {\mathcal O}_{{\mathcal A}'}$ is isomorphic to a polynomial ring in one variable over $k$. Consider the $k[\Phi]$-submodule $V_{{\mathcal A}'}[\Phi]\subset\Omega^1_{{\mathcal A}'}$ given by the finite sums
$
\theta=\sum_{i\geq 0}\Phi^i\beta_i
$, where $\beta_i\in V_{{\mathcal A}'}$. Note that if $\theta'=\sum_{i\geq 0}\Phi^i\beta'_i$, we have that $\theta=\theta'$ if and only if $\beta_i=\beta'_i$ for all $i\geq 0$. Let us remark that any element $\theta\in V_{{\mathcal A}'}[\Phi]$ may be written in a unique way as a finite sum
\[
\theta=\sum_{i\geq 0}(\Phi-\lambda)^i\tilde\beta_i,\quad \tilde\beta_i\in V_{{\mathcal A}'}.
\]
Next lemma provides a way to compute $\hbar^0_{{\mathcal A}'}(\theta)$ for any $\theta\in V_{{\mathcal A}'}[\Phi]$.

\begin{lemma}
\label{lema:calculodehbar}
 Let us consider $\theta\in V_{{\mathcal A}'}[\Phi]$ written as
\[
\theta=(\Phi-\lambda)^a\sum_{i=0}^b(\Phi-\lambda)^i\beta_i, \quad
0\ne\beta_0=\frac{d\boldsymbol{x'}^{\boldsymbol{\tau'}}}{\boldsymbol{x'}^{\boldsymbol{\tau'}}}+
\xi'\frac{d\Phi}{\Phi}.
\]
We have
\begin{itemize}
\item If $\boldsymbol{\tau'}\ne\boldsymbol{0}$, then $\hbar^0_{{\mathcal A}'}(\theta)=a$.
\item If $\boldsymbol{\tau'}=\boldsymbol{0}$, then $\hbar^0_{{\mathcal A}'}(\theta)=a+1$.
\end{itemize}
\end{lemma}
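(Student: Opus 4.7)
The plan is to expand $\theta$ explicitly in powers of the dependent variable $z' = \Phi - \lambda$, identify the level decomposition of $\theta$ in $\mathcal{A}'$, and simply read off the first index at which the reduced part is nonzero with zero explicit value.

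First I would expand $d\Phi/\Phi$ as a power series in $z'$. Using $\Phi = z' + \lambda$ with $\lambda \in k^*$, a geometric series gives
\[
\frac{d\Phi}{\Phi} = \frac{dz'}{z'+\lambda} = \sum_{m \geq 1}\frac{(-1)^{m-1}}{\lambda^{m}}\,(z')^{m}\,\frac{dz'}{z'},
\]
so $d\Phi/\Phi$ has no level-$0$ contribution in $z'$ and its level-$1$ contribution is exactly $(1/\lambda)\,dz'/z'$. Substituting this into each $\beta_i = d\boldsymbol{x'}^{\boldsymbol{\tau'_i}}/\boldsymbol{x'}^{\boldsymbol{\tau'_i}} + \xi'_i\,d\Phi/\Phi$, and rewriting $\theta = \sum_{i=0}^b (z')^{a+i}\beta_i$, I regroup by powers of $z'$ to obtain a level decomposition $\theta = \sum_{s}(z')^{s}\theta_s$ in $\mathcal{A}'$, with $\theta_s = 0$ for $s < a$.

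Then I would read off only the two levels that can matter. The level-$a$ part receives a contribution only from the logarithmic $dx'$-part of $\beta_0$ (the $\xi'\,d\Phi/\Phi$ piece of $\beta_0$ starts at level $a+1$), which yields
\[
\theta_a \;=\; \frac{d\boldsymbol{x'}^{\boldsymbol{\tau'}}}{\boldsymbol{x'}^{\boldsymbol{\tau'}}}.
\]
At level $a+1$ one picks up both the logarithmic $dx'$-part of $\beta_1$ and the level-$1$ piece of $(z')^{a}\xi'\,d\Phi/\Phi$, giving
\[
\theta_{a+1} \;=\; \frac{d\boldsymbol{x'}^{\boldsymbol{\tau'_1}}}{\boldsymbol{x'}^{\boldsymbol{\tau'_1}}} \;+\; \frac{\xi'}{\lambda}\,\frac{dz'}{z'}.
\]

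Finally I compute the reduced parts. Since $z'$ is a dependent parameter of $\mathcal{A}'$, we have $\nu_{\mathcal{A}'}(z'\theta_s) = \nu_{\mathcal{A}'}(\theta_s)$. In the case $\boldsymbol{\tau'} \ne \boldsymbol{0}$, at least one coefficient of $\theta_a$ is a nonzero element of $k$, so $\nu_{\mathcal{A}'}(\theta_a) = 0$ and $\bar\theta^{\mathcal{A}'}_a = \theta_a \ne 0$, which combined with $\theta_s = 0$ for $s < a$ forces $\hbar^0_{\mathcal{A}'}(\theta) = a$. In the case $\boldsymbol{\tau'} = \boldsymbol{0}$ we have $\theta_a = 0$; but $\beta_0 \ne 0$ then forces $\xi' \ne 0$, so the horizontal coefficient $\xi'/\lambda$ of $\theta_{a+1}$ is nonzero, yielding $\nu_{\mathcal{A}'}(\theta_{a+1}) = 0$, $\bar\theta^{\mathcal{A}'}_{a+1} \ne 0$, and $\hbar^0_{\mathcal{A}'}(\theta) = a+1$. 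The argument is pure bookkeeping; the only delicate point is verifying that the $d\Phi/\Phi$ expansion contributes nothing at the lowest $z'$-level, which is exactly what creates the "$+1$" in the second case.
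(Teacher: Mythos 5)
Your proof is correct and takes essentially the same approach as the paper's own proof, which is the one-line remark that $\Phi-\lambda=z'$; you have simply written out the expansion of $d\Phi/\Phi$ in powers of $z'$, identified the level decomposition, and read off the first level with nonzero reduced part, which is exactly the bookkeeping the paper's proof is inviting the reader to do.
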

\begin{proof} It is enough to recall that $\Phi-\lambda=z'$.
\end{proof}
\begin{lemma} We have
$\bar\alpha\in V_{{\mathcal A}'}[\Phi]$.
\end{lemma}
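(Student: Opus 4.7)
The plan is to express each $\bar\omega^*_{s(m)}$, originally written in the logarithmic basis $\{dx_i/x_i, dz/z\}$ of $\mathcal{A}$, in the logarithmic basis $\{dx'_j/x'_j, d\Phi/\Phi\}$ of $\mathcal{A}'$ associated to the Puiseux's package, and then invoke the definition of $V_{\mathcal{A}'}[\Phi]$ directly.

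First I would recall from Equation \eqref{eq:omegaese} that each reduced level has the form
\[
\bar\omega^*_s = \frac{d\boldsymbol{x}^{\boldsymbol{\lambda}_s}}{\boldsymbol{x}^{\boldsymbol{\lambda}_s}} + \mu_s \frac{dz}{z} = \sum_{i=1}^r \lambda_{s,i}\frac{dx_i}{x_i} + \mu_s \frac{dz}{z},
\]
with $\lambda_{s,i}, \mu_s \in k$. The key tool is Equation \eqref{eq:puiseux4}, which provides the matrix identity
\[
\left(\frac{dx_1}{x_1},\ldots,\frac{dx_r}{x_r},\frac{dz}{z}\right) = \left(\frac{dx'_1}{x'_1},\ldots,\frac{dx'_r}{x'_r},\frac{d\Phi}{\Phi}\right)C^t,
\]
where $C$ has nonnegative integer entries (so in particular the entries of $C^t$ lie in $k$). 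Substituting this into the expression for $\bar\omega^*_s$ produces a $k$-linear combination
\[
\bar\omega^*_s = \frac{d\boldsymbol{x'}^{\boldsymbol{\tau'}_s}}{\boldsymbol{x'}^{\boldsymbol{\tau'}_s}} + \xi'_s \frac{d\Phi}{\Phi},
\]
for some $\boldsymbol{\tau'}_s \in k^r$ and $\xi'_s \in k$ obtained explicitly from $(\boldsymbol{\lambda}_s,\mu_s)\cdot C^t$. This is exactly the defining form of an element of $V_{\mathcal{A}'}$, so $\bar\omega^*_{s(m)} \in V_{\mathcal{A}'}$ for every $m$.

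Finally, since $\bar\alpha = \sum_{m=0}^{m_1} \Phi^m \bar\omega^*_{s(m)}$ is a finite $k[\Phi]$-combination of elements of $V_{\mathcal{A}'}$, it belongs to $V_{\mathcal{A}'}[\Phi]$ by definition. There is essentially no obstacle here: the content of the lemma is entirely the observation that the reduced critical levels, being $k$-linear combinations of the logarithmic differentials $dx_i/x_i$ and $dz/z$, transform into $k$-linear combinations of $dx'_j/x'_j$ and $d\Phi/\Phi$ under the logarithmic change of coordinates given by the Puiseux's package.
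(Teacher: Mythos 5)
Your proof follows exactly the same route as the paper: express each $\bar\omega^*_{s(m)}$ in the logarithmic basis $\{dx'_j/x'_j, d\Phi/\Phi\}$ via Equation \eqref{eq:puiseux4}, observe that the resulting coefficients lie in $k$, and conclude that each $\bar\omega^*_{s(m)}\in V_{\mathcal{A}'}$, hence $\bar\alpha\in V_{\mathcal{A}'}[\Phi]$. The only slip is a transposition: from the relation $(dx_1/x_1,\ldots,dz/z)=(dx'_1/x'_1,\ldots,d\Phi/\Phi)\,C^t$ of Equation \eqref{eq:puiseux4}, the transformed coefficient vector is $(\boldsymbol{\lambda}'_s,\mu'_s)=(\boldsymbol{\lambda}_s,\mu_s)\,C$, not $(\boldsymbol{\lambda}_s,\mu_s)\,C^t$; this does not affect the conclusion.
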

\begin{proof} It is enough to show that $\bar\omega^*_s\in V_{{\mathcal A}'}$, for any $s_0\leq s\leq\chi$. Write $\bar\omega^*_s$ as in Equation \eqref{eq:omegaese}. Recalling the matrix $C$ in Equation  \eqref{eq:puiseux4}, we have
\begin{equation*}
\bar\omega^*_s= \frac{
d\boldsymbol{x}^{\boldsymbol{\lambda}_s}
}{
\boldsymbol{x}^{\boldsymbol{\lambda}_s}
}
+\mu_s\frac{d z}{z}=
\frac{
d\boldsymbol{x'}^{\boldsymbol{\lambda}'_s}
}{
\boldsymbol{x'}^{\boldsymbol{\lambda}'_s}
}
+\mu'_s\frac{d \Phi}{\Phi}\in V_{{\mathcal A}'},
\end{equation*}
where $(\boldsymbol{\lambda'}_s,\mu'_s)=(\boldsymbol{\lambda}_s,\mu_s)C$.
\end{proof}

\begin{proposition}
 \label{prop:estabilidadho}
 We have $\hbar^0_{{\mathcal A}'}(\bar\alpha)\leq \chi$. Moreover, if $\hbar^0_{{\mathcal A}'}(\bar\alpha)=\chi$ one of the following two conditions ``r1'' or ``r2'' holds:
 \begin{itemize}
   \item[r1:] $\chi=1$, $d\geq 2$, $s_0=1$ and
   $
\bar\alpha= \xi'd\Phi/\Phi
$, where $\xi'\in k$, $\xi'\ne 0$.
   \item[r2:] $d=1$ and $\bar\alpha$ has one of the forms:
   \begin{itemize}
   \item[r2-i:] $s_0=0$,
   $\bar\alpha=(\Phi-\lambda)^\chi
   (d\boldsymbol{x}^{\boldsymbol{\tau}}/\boldsymbol{x}^{\boldsymbol{\tau}})$.
   \item[r2-ii:] $s_0=1$, $\bar\alpha= (\Phi-\lambda)^{\chi-1}
\xi'{d\Phi}/\Phi$, $\xi'\ne 0$.
 \item[r2-iii:] $s_0=0$,
 $\bar\alpha= (\Phi-\lambda)^{\chi}
 \left(
 (d\boldsymbol{x}^{\boldsymbol{\upsilon}}/\boldsymbol{x}^{\boldsymbol{\upsilon}})+
 (\xi'/\lambda)(d(\Phi-\lambda)/(\Phi-\lambda))
\right)$,
 $\xi'\ne 0$.
   \end{itemize}
 \end{itemize}
 Here ``$d$'' stands for the ramification index of the Puiseux's package.
 \end{proposition}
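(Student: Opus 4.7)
The plan is to treat $\bar\alpha$ as a polynomial in $\Phi$ with coefficients in $V_{\mathcal{A}'}$ and to analyze its vanishing at $\Phi = \lambda$ (i.e.\ at $z' = 0$) via Lemma~\ref{lema:calculodehbar}. Concretely, set $P(\Phi) := \bar\alpha = \sum_{m=0}^{m_1}\Phi^m \bar\omega^*_{s(m)} \in V_{\mathcal{A}'}[\Phi]$; its $\Phi$-degree equals $m_1$ since $\bar\omega^*_{s(m_1)} = \bar\omega^*_\chi \neq 0$. Factoring $P(\Phi) = (\Phi-\lambda)^a Q(\Phi)$ with $Q(\lambda) = \beta_0 \neq 0$, one has $0 \le a \le m_1$, and Lemma~\ref{lema:calculodehbar} says $\hbar^0_{\mathcal{A}'}(\bar\alpha) = a$ if the $d\boldsymbol{x'}$-logarithmic residue $\boldsymbol{\tau'}_0$ of $\beta_0$ is nonzero, and $a+1$ otherwise.

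For the bound $\hbar^0 \le \chi = s_0 + m_1 d$, I note $m_1 \le \chi$ (since $d \ge 1$), so $\hbar^0 \le m_1 + 1 \le \chi + 1$; equality $m_1+1 = \chi+1$ only occurs when $m_1 = \chi$, forcing $s_0 = 0$ and $d = 1$. In that configuration, $\hbar^0 = m_1+1$ would need $a = m_1$ and $\beta_0 = \bar\omega^*_\chi$ with $\boldsymbol{\tau'}_0 = 0$, i.e.\ $\bar\omega^*_\chi$ a pure multiple of $d\Phi/\Phi$. But $P(0) = (-\lambda)^{m_1}\bar\omega^*_\chi = \bar\omega^*_0$, and taking the $dz/z$-coefficient in the original coordinates together with $h_0 = 0$ (hence $\mu_0 = 0$) and $\lambda \neq 0$ yields $\mu_\chi = 0$; combined with $\boldsymbol{\tau'}_0 = 0$ and $\det C_0 \neq 0$, this forces $\bar\omega^*_\chi = 0$, a contradiction. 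Thus $\hbar^0 \le \chi$.

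For the equality case $\hbar^0 = \chi$, I split into $a = \chi$ and $a = \chi - 1$. If $a = \chi$ then $\boldsymbol{\tau'}_0 \neq 0$ and $m_1 = \chi$ forces $s_0 = 0$, $d = 1$; the same $h_0 = 0$ argument gives $\mu_\chi = 0$, so $\bar\omega^*_\chi = d\boldsymbol{x}^{\boldsymbol{\lambda}_\chi}/\boldsymbol{x}^{\boldsymbol{\lambda}_\chi}$ with $\boldsymbol{\lambda}_\chi \neq 0$, and $\bar\alpha = (\Phi-\lambda)^\chi \bar\omega^*_\chi$, which is \textbf{r2-i}. If $a = \chi - 1$ then $\boldsymbol{\tau'}_0 = 0$, so $\beta_0 = \xi' d\Phi/\Phi$ with $\xi' \neq 0$, and the inequality $s_0 + m_1(d-1) \le 1$ leaves four sub-cases. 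The case $(s_0,d,m_1) = (1,\ge 2,0)$ gives $\chi = 1$ and $\bar\alpha = \bar\omega^*_1 = \xi' d\Phi/\Phi$, i.e.\ \textbf{r1}; the case $(s_0,d) = (1,1)$ gives $P(\Phi) = \xi'(\Phi-\lambda)^{\chi-1}d\Phi/\Phi$, i.e.\ \textbf{r2-ii}; the case $(s_0,d) = (0,1)$ gives $P(\Phi) = \xi'(\Phi-\lambda)^{\chi-1}d\Phi/\Phi + (\Phi-\lambda)^\chi \bar\omega^*_\chi$, where I use that for $d=1$ the last column of $C^{-1}$ is $(0,\ldots,0,1)^T$, so $\mu_0 = \mu'_0$; the condition $h_0 = 0$ then pins down $\mu_\chi = \xi'/\lambda$, and the elementary identity $(\Phi-\lambda)^{\chi-1}\Phi = (\Phi-\lambda)^\chi + \lambda(\Phi-\lambda)^{\chi-1}$ combined with $d\Phi/\Phi = dz/z - d\boldsymbol{x}^{\boldsymbol{p}}/\boldsymbol{x}^{\boldsymbol{p}}$ allows me to regroup as $\bar\alpha = (\Phi-\lambda)^\chi\left[d\boldsymbol{x}^{\boldsymbol{\upsilon}}/\boldsymbol{x}^{\boldsymbol{\upsilon}} + (\xi'/\lambda)d(\Phi-\lambda)/(\Phi-\lambda)\right]$ with $\boldsymbol{\upsilon} = \boldsymbol{\lambda}_\chi + (\xi'/\lambda)\boldsymbol{p}$, matching \textbf{r2-iii}. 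The remaining case $(s_0,d,m_1) = (0,2,1)$ is excluded: $h_0 = 0$ again forces $\mu_\chi = 0$, and $\boldsymbol{\tau'}_0 = 0$ with $\det C_0 \neq 0$ forces $\boldsymbol{\lambda}_\chi = 0$, whence $\bar\omega^*_\chi = 0$, a contradiction.

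The main obstacle will be the algebraic identification in the sub-case $(s_0,d) = (0,1)$, where switching between $d\Phi/\Phi$ and $d(\Phi-\lambda)/(\Phi-\lambda)$ must be done carefully; the key is that the constraint $h_0 = 0$ exactly produces the coefficient $\xi'/\lambda$ needed to absorb the ``residual'' $(\Phi-\lambda)^{\chi-1}d\Phi/\Phi$ term into the shape of \textbf{r2-iii}. Everything else is bookkeeping: the bound $\hbar^0 \le \chi$ is almost immediate from the degree count $a \le m_1 \le \chi$, and the dichotomy supplied by Lemma~\ref{lema:calculodehbar} makes the enumeration of equality configurations completely mechanical.
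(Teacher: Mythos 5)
Your proof is correct and follows essentially the same route as the paper: the factorization $\bar\alpha=(\Phi-\lambda)^aQ(\Phi)$ in $V_{{\mathcal A}'}[\Phi]$, the dichotomy of Lemma \ref{lema:calculodehbar}, the numerical constraint $\chi=s_0+m_1d$ with $a\leq m_1$, and the use of $h_0=0$ (hence $\mu_0=0$) together with the block form of $C$ when $d=1$ to pin down the residues. The only differences are organizational — you enumerate the equality configurations via $s_0+m_1(d-1)\leq 1$ and track $\mu_\chi,\boldsymbol{\lambda}_\chi$ in the original coordinates where the paper splits first on $\hbar'=a$ versus $\hbar'=a+1$ and computes with $\mu'_0,\boldsymbol{\lambda}'_0$ — and these are equivalent.
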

 \begin{proof} Denote $\hbar'=\hbar^0_{{\mathcal A}'}(\bar\alpha)$ and let us write
 \begin{equation}\label{eq:baralpha}
 \bar\alpha=\sum_{m=0}^{m_1}\Phi^{m}\bar\omega^*_{s(m)}=
 (\Phi-\lambda)^a\sum_{i=0}^{m_1-a}(\Phi-\lambda)^i\beta_i,
 \end{equation}
 where $\beta_i\in V_{{\mathcal A}'}$ and
 $
 0\ne\beta_0=
(d\boldsymbol{x'}^{\boldsymbol{\tau'}}/\boldsymbol{x'}^{\boldsymbol{\tau'}})+
\xi'd\Phi/\Phi
 $.
 Note that $a\leq m_1$. In view of Lemma \ref{lema:calculodehbar}, there are two possibilities: $\hbar'=a$ and hence $\boldsymbol{\tau'}\ne\boldsymbol{0}$ or $\hbar'=a+1$ and hence $\boldsymbol{\tau'}=\boldsymbol{0}$.

 A) {\em  Case:} $\hbar'=a$. We have $\boldsymbol{\tau'}\ne\boldsymbol{0}$.
Recall that $dm_1=\chi-s_0$ and hence $m_1\leq \chi$. Since $a\leq m_1$, we obtain that $a=\hbar'\leq m_1\leq\chi$. In this case we have that
\[
\hbar'=\chi \Leftrightarrow a=m_1\,\text { and }\, m_1=\chi \Leftrightarrow a=m_1,\, d=1\,\text{ and }\, s_0=0.
\]
This is equivalent to $ \bar\alpha=(\Phi-\lambda)^{\chi}\beta_0 $ and $\boldsymbol{\tau'}\ne\boldsymbol{0}$. Moreover, since $s_0=0$ we have that $s(0)=0$ and thus
\[
\bar\omega^*_0=\bar\omega^*_{s(0)}=(-\lambda)^\chi\beta_0.
\]
Recall that $\mu_0=0$ since $\bar\omega\in \Omega_{\mathcal A}$. Then, we have
\[
\frac{d\boldsymbol{x}^{{\boldsymbol{\lambda}_0}}}{\boldsymbol{x}^{{\boldsymbol{\lambda}_0}}}=
\frac{d\boldsymbol{x}^{{\boldsymbol{\lambda}'_0}}}{\boldsymbol{x}^{{\boldsymbol{\lambda}'_0}}}
+\mu'_0\frac{d\Phi}{\Phi}
=\bar\omega^*_0=(-\lambda)^\chi\beta_0.
\]
Since $d=1$, by Equation \eqref{eq:ecuaciondigulauno},  we have
\begin{equation}
\label{eq:ecuaciondigulauno3}
C=
\left(
\begin{array}{ccc}
{C_0}&\vline&{\boldsymbol{0}}\\
\hline
\boldsymbol{\tilde p}&\vline& 1\\
\end{array}
\right),\quad \boldsymbol{\tilde p}=\boldsymbol{p}C_0, \quad
,
 \quad (\boldsymbol{\lambda}'_0,\mu'_0)=(\boldsymbol{\lambda}_0,\mu_0)C.
\end{equation}
and then $\mu'_0=\mu_0=0$. Thus, in this case A) we have that $\hbar'=\chi$ if and only if $d=1$, $s_0=0$ and
\[
\bar\alpha=(\Phi-\lambda)^\chi
   (d\boldsymbol{x'}^{\boldsymbol{\tau'}}/\boldsymbol{x'}^{\boldsymbol{\tau'}}).
\]
Noting that $d\boldsymbol{x'}^{\boldsymbol{\tau'}}/\boldsymbol{x'}^{\boldsymbol{\tau'}}=
d\boldsymbol{x}^{\boldsymbol{\tau}}/\boldsymbol{x}^{\boldsymbol{\tau}}
$ for $\boldsymbol{\tau}=\boldsymbol{\tau}'C_0^{-1}$ we have
$
\bar\alpha=(\Phi-\lambda)^\chi
   (d\boldsymbol{x}^{\boldsymbol{\tau}}/\boldsymbol{x}^{\boldsymbol{\tau}})
$,
 hence condition {\rm r2-i} is satisfied.
\vspace{5pt}

B) {\em Case:} $\hbar'=a+1$. We have $\boldsymbol{\tau'}=\boldsymbol{0}$.
Let us show first that $\hbar'\leq\chi$. If $m_1<\chi$ or $a<m_1=\chi$
we are done, since then $a+1\leq\chi$. The only remaining case is $a=m_1=\chi$, let us show that this situation leads to a contradiction and hence it does not occur. Since $m_1=\chi$, we have $s_0=0$ and $d=1$. We have that $s(m)=dm+s_0=m$, thus
\[
\bar\alpha=\sum_{m=0}^{\chi}\Phi^{m}\bar\omega^*_{s(m)}= (\Phi-\lambda)^{\chi}\beta_0,
\quad (-\lambda)^\chi\beta_0=\bar\omega^*_0 \ .
\]
Write
\[
(-\lambda)^\chi\beta_0=\bar\omega^*_0=
 \frac{d\boldsymbol{x}^{\boldsymbol{\lambda}_0}}{\boldsymbol{x}^{\boldsymbol{\lambda}_0}}+
\mu_0\frac{dz}{z}= \frac{d\boldsymbol{x'}^{\boldsymbol{\lambda}'_0}}{\boldsymbol{x'}^{\boldsymbol{\lambda}'_0}}+
\mu'_0\frac{d\Phi}{\Phi},\quad (\boldsymbol{\lambda}'_0,\mu'_0)= (\boldsymbol{\lambda}_0,\mu_0)C.
\]
We know that $\boldsymbol{\lambda}'_0=(-\lambda)^\chi\boldsymbol{\tau'}=0$ and $(-\lambda)^\chi\xi'=\mu'_0\ne 0$. On the other hand,
note that $\omega\in \Omega^1_{\mathcal A}$ and hence $h_0=0$. This implies that $\mu_0=0$.  Now, since $d=1$, we have $C$ as in  by Equation \eqref{eq:ecuaciondigulauno3},
and then $\mu'_0=\mu_0=0$. This is a contradiction.

Let us now characterize the situations when $a+1=\chi$.  We have two possibilities, either $a=m_1=\chi-1$ or $a+1=m_1=\chi$.

$\bullet$
Assume first that $a=m_1=\chi-1$. Since $m_1d=\chi-s_0$ we have $ \chi (d-1) = d-s_0 $. If $d=1$ we have $s_0=1$ hence
\begin{equation*}
\bar\alpha=(\Phi-\lambda)^{\chi-1}\xi'd\Phi/\Phi.
\end{equation*}
We obtain r2-ii. Let us consider now the case $d\geq 2$. We have $ \chi = (d-s_0)/(d-1)  $, hence $ s_0\leq 1 $. If $ s_0 = 1 $ we obtain $ \chi = 1 $ and $ m_1=0 $. We have
\[
\bar\alpha=\bar\omega^*_{1}=\beta_0=\xi'd\Phi/\Phi \ ,
\]
and this corresponds with property r1. In the case $ s_0 = 0 $ we must have $ \chi = d = 2 $ and $ m_1=1 $. From Equation \eqref{eq:baralpha} we obtain
\[
\bar{\alpha}=\phi \omega^*_{2} + \omega^*_{0} = (\phi-\lambda) \beta_0 \ , \quad \beta_0 = \frac{d {\boldsymbol{x}'}^{\boldsymbol{\tau}'}}{{\boldsymbol{x}'}^{\boldsymbol{\tau}'}} + \xi \frac{d \phi}{\phi} \ .
\]
Write
\[
\omega^*_{0} =  \frac{d\boldsymbol{x}^{\boldsymbol{\lambda}_0}}{\boldsymbol{x}^{\boldsymbol{\lambda}_0}}+
\mu_0\frac{dz}{z}= \frac{d\boldsymbol{x'}^{\boldsymbol{\lambda}'_0}}{\boldsymbol{x'}^{\boldsymbol{\lambda}'_0}}+
\mu'_0\frac{d\Phi}{\Phi},\quad (\boldsymbol{\lambda}'_0,\mu'_0)= (\boldsymbol{\lambda}_0,\mu_0)C \ .
\]
We know that $ \mu_{0}=0 $ so $ \boldsymbol{\lambda}'_0 =  \boldsymbol{\lambda}_0 C_0 $. On the other hand, we have that $ \omega^*_{0} = -\lambda \beta_0 $ and hence
\[
 \boldsymbol{\lambda}_0 C_0 = -\lambda \boldsymbol{\tau}' \ .
\]
Since $ \beta_0 \neq 0 $ we have  $ \omega^*_{0}\neq 0 $. Then $ \mu_0 = 0 $ implies $ \boldsymbol{\lambda}_0 \neq \boldsymbol{0} $. Since $ C_0 $ is invertible, we have that $ \boldsymbol{\tau}' \neq \boldsymbol{0} $. In this situation Lemma \ref{lema:calculodehbar} assures that $ \hbar' = a = \chi-1 < \chi$.

$\bullet$ Assume now that $a+1=m_1=\chi$. Since $m_1=\chi$, we have that $s_0=0$ and $d=1$. We can write
\[
\bar\alpha=\sum_{m=0}^\chi\Phi^m\bar\omega^*_{s(m)}=
(\Phi-\lambda)^{\chi-1}\left((\phi-\lambda)\beta_1+\beta_0\right) \ .
\]
By Lemma \ref{lema:calculodehbar} we know that $\beta_0=\xi'd\Phi/\Phi$. On the other hand, we have $\bar\omega^*_0=\bar\omega^*_{s(0)}=(-\lambda)^{\chi-1}(\beta_0-\lambda\beta_1)$. Let us write
\[
\beta_1=\frac{
{d\boldsymbol{x}'}^{{\boldsymbol{\upsilon}}'}
}{
{\boldsymbol{x}'}^{{\boldsymbol{\upsilon}}'}
}
+
\xi_1\frac{d\Phi}{\Phi},\quad \beta_0-\lambda\beta_1=
\frac{
{d\boldsymbol{x}'}^{-\lambda{\boldsymbol{\upsilon}'}}
}{
{\boldsymbol{x}'}^{-\lambda{\boldsymbol{\upsilon}'}}
}
+
(\xi'-\lambda\xi_1)\frac{d\Phi}{\Phi}.
\]
Recalling that $d=1$ and $\mu_0=0$, we get that $\xi'-\lambda\xi_1=\mu'_0=0$, see Equation \eqref{eq:ecuaciondigulauno3}. Then, we have that
\[
(\Phi-\lambda)\beta_1+\beta_0=
(\Phi-\lambda)
\frac{
d\boldsymbol{x}^{\boldsymbol{\upsilon}'}
}{
\boldsymbol{x}^{\boldsymbol{\upsilon}'}
}
+
\frac{\xi'}{\lambda}\Phi\frac{d\Phi}{\Phi}=
(\Phi-\lambda)\left(\frac{
d\boldsymbol{x}^{\boldsymbol{\upsilon}'}
}{
\boldsymbol{x}^{\boldsymbol{\upsilon}'}
}
+\frac{\xi'}{\lambda}\frac{d(\Phi-\lambda)}{\Phi-\lambda}
\right).
\]
Let us denote $\boldsymbol{\upsilon}=\boldsymbol{\upsilon}'C_0^{-1}$, see Equation \eqref{eq:ecuaciondigulauno3}. Then, we have that
\[\bar\alpha= (\Phi-\lambda)^{\chi}
 \left(
 (d\boldsymbol{x}^{\boldsymbol{\upsilon}}/\boldsymbol{x}^{\boldsymbol{\upsilon}})+
 (\xi'/\lambda)(d(\Phi-\lambda)/(\Phi-\lambda))
\right)
\]
and we obtain condition {\rm r2-iii}.
\end{proof}

The proof of Lemma \ref{lema:stabilityofcriticalheight} is ended.

Let us complete the proof of Proposition \ref{prop:stability critical height}. By Corollary \ref{cor:stabilizadcambiodecoordenadas} we know that Proposition \ref{prop:stability critical height} is true for  a normalized coordinate change. By Remark \ref{rk:prostabilityparapuiseux} we know that it is true
for a normalized Puiseux's package. We deduce the result for a general normalized transformation $({\mathcal A},\omega)\rightarrow ({\mathcal B},\omega)$, by noting that we always have that $\varsigma_{\mathcal B}(\omega)\geq \varsigma_{\mathcal A}(\omega)$.

\subsection{Resonances} Conditions ``r1'' and ``r2'' in Proposition \ref{prop:estabilidadho} are the properties that may produce a stabilization of the critical height. Here we present them in terms of the reduced critical part $\bar\omega^*$ and we describe their behaviour under normalized transformations.

\begin{definition}
\label{def:resonances}
Let $({\mathcal A},\omega)$ be
strictly $\gamma$-prepared with $\varsigma^\gamma_{\mathcal A}(\omega)\leq \gamma$. Consider the  $(\ell+1)$-rational contact function $\Phi=z^d/{\boldsymbol{x}^{\boldsymbol{q}}}$ of $\mathcal A$ and let $\lambda$ be the only scalar $0\ne\lambda\in k$ such that $\nu(\Phi-\lambda)>0$. Denote by $\chi$ the critical height $\chi=\chi_{\mathcal A}(\omega)$ and assume $\chi\geq 1$. Let
$\bar\omega^*$ be the  reduced critical part of $({\mathcal A},\omega)$.
\begin{itemize}
\item We say that $({\mathcal A},\omega)$ satisfies the {\em resonant property \textbf{r1}} if and only if $d\geq 2$, $\chi=1$ and
    $\bar\omega^*= \xi \boldsymbol{x}^{I}zd\Phi/\Phi,$ with $\xi\in k$, $\xi\ne 0$ and $I\in {\mathbb Z}^r_{\geq 0}$.
\item We say that $({\mathcal A},\omega)$ satisfies the {\em resonant property \textbf{r2}} if and only if $d=1$ and there are $\xi\ne0$, $I\in {\mathbb Z}^r$,  $\boldsymbol{0}\ne\boldsymbol{\tau}\in {\mathbb C}^r$ and $\boldsymbol{\upsilon}\in {\mathbb C}^r$ such that one of the following expressions holds:
   \begin{description}
    \item[r2a]
    $\bar\omega^*=\boldsymbol{x}^{I}(\Phi-\lambda)^\chi
    d\boldsymbol{x}^{\boldsymbol{\tau}}/\boldsymbol{x}^{\boldsymbol{\tau}}
    $.
    \item[r2b-$\boldsymbol{\upsilon}$]
    $\bar\omega^*=\xi \boldsymbol{x}^{I} (\Phi-\lambda)^{\chi}
 \left(
 d\boldsymbol{x}^{\boldsymbol{\upsilon}}/\boldsymbol{x}^{\boldsymbol{\upsilon}}+
 d(\Phi-\lambda)/(\Phi-\lambda)\right).
 $
    \end{description}
\end{itemize}
\end{definition}
\begin{remark}
\label{rk:resonanciasunificadas}
Condition \textbf{r2b}-$\boldsymbol{0}$
is equivalent to
\begin{equation*}
\bar\omega^*=\xi \boldsymbol{x}^{I-\boldsymbol{p}}z (\Phi-\lambda)^{\chi-1} \frac{d\Phi}{\Phi}, \quad \Phi=z/\boldsymbol{x}^{\boldsymbol{p}}.
\end{equation*}
In a general way, condition \textbf{r2b}-$\boldsymbol{\upsilon}$ is equivalent to
\begin{equation}
\label{eq:condicionr2unificada}
\bar\omega^*
 =\xi
 \boldsymbol{x}^{I-\chi\boldsymbol{p}} (z-\lambda\boldsymbol{x}^{\boldsymbol{p}})^{\chi}
 \left(
 \frac{
 d\boldsymbol{x}^{\boldsymbol{\upsilon}-\boldsymbol{p}}
 }{
 \boldsymbol{x}^{\boldsymbol{\upsilon}-\boldsymbol{p}}
 }
 +
 \frac{d(z-\lambda\boldsymbol{x}^{\boldsymbol{p}})}{(z-\lambda\boldsymbol{x}^{\boldsymbol{p}})}\right)
 ,\quad \Phi=z/\boldsymbol{x}^{\boldsymbol{p}}.
\end{equation}
Note that any $\bar\omega^*\in \Omega^1_{{\mathcal A}}$ written as in Equation \eqref{eq:condicionr2unificada} with $I\in {\mathbb Z}^r$ satisfies automatically that $I-\chi\boldsymbol{p}\in {\mathbb Z}^r_{\geq 0}$, since the coefficient of $dz$ is $\xi\boldsymbol{x}^{I-\chi\boldsymbol{p}} (z-\lambda\boldsymbol{x}^{\boldsymbol{p}})^{\chi-1}$.

We also have that \textbf{r2b}-$\boldsymbol{\upsilon}$ is equivalent to
\begin{equation}
\label{eq:condicionr2unificada2}
\bar\omega^*= \xi\boldsymbol{x}^{I-\chi\boldsymbol{p}} (z-\lambda\boldsymbol{x}^{\boldsymbol{p}})^{\chi-1}
 \left(z\left(
 \frac{
 d\boldsymbol{x}^{\boldsymbol{\upsilon-p}}
 }{
 \boldsymbol{x}^{\boldsymbol{\upsilon-p}}
 }
 +\frac{dz}{z}
 \right)-\lambda\boldsymbol{x^p}
 \frac{
 d\boldsymbol{x}^{\boldsymbol{\upsilon}}
 }{
 \boldsymbol{x}^{\boldsymbol{\upsilon}}
 }
 \right), \quad \Phi=z/\boldsymbol{x}^{\boldsymbol{p}}.
\end{equation}
The reader can verify that conditions $\textbf{r1}$, respectively $\textbf{r2}$, coincide with the conditions {\rm r1}, respectively {\rm r2}, stated in Proposition \ref{prop:estabilidadho} in terms of $\bar\alpha$.
\end{remark}
\begin{remark}
\label{rk:resonantreducedcriticallevel}
Let us write $\bar\omega^*=\sum_{s=0}^\chi z^s\bar\omega^*_s$. Under the resonance conditions, the reduced critical level $\bar\omega^*_\chi$ is obtained as follows:
\begin{align}
\label{eq:chilevelsr1}
\textbf{r1}:\qquad 
\bar\omega^*_\chi& = \bar\omega^*_1=\xi\boldsymbol{x}^{I}\frac{d\Phi}{\Phi}
    =\xi\boldsymbol{x}^{I}
    \left(
    \frac{dz^d}{z^d}-
    \frac{
    d\boldsymbol{x}^{\boldsymbol{p}}
    }{
    \boldsymbol{x}^{\boldsymbol{p}}
    }
    \right),\;  d\geq 2.
    \\
    \label{eq:chilevelsr2a}
    \textbf{r2a}: \qquad
\bar\omega^*_\chi& = \boldsymbol{x}^{I-\chi\boldsymbol{p}}\frac{
    d\boldsymbol{x}^{\boldsymbol{\tau}}
    }{
    \boldsymbol{x}^{\boldsymbol{\tau}}
    }
    \\
    \label{eq:chilevelsr2b}
    \textbf{r2b}\text{-}\boldsymbol{\upsilon}: \qquad
\bar\omega^*_\chi& = \xi\boldsymbol{x}^{I-\chi\boldsymbol{p}}\left(
 \frac{
 d\boldsymbol{x}^{\boldsymbol{\upsilon-p}}
 }{
 \boldsymbol{x}^{\boldsymbol{\upsilon-p}}
 }
 +\frac{dz}{z}
 \right)
\end{align}
\end{remark}

There is an essential difference between the resonance conditions \textbf{r1} and \textbf{r2}. In the case of \textbf{r1}, the ramification index is $d\geq 2$ and in the case of \textbf{r2}, we have $d=1$.  On the other hand, the ``bad resonance'' \textbf{r1} only occurs when $\chi=1$. In  Propositions \ref{prop:r1ycambiodecoordenadas} and \ref{prop:r1ypaquetepuiseux}, we describe the possible transitions between the two types of resonance when $\chi=1$. Roughtly speaking, the resonance \textbf{r1} occurs ``at most once'' during our local uniformization procedure.
\begin{definition}
\label{def:anegativo}
We say that $\boldsymbol{\upsilon}\in {\mathbb Q}^r$ is {\em $\mathcal A$-negative} if and only if $\nu(\boldsymbol{x}^{\boldsymbol{\upsilon}})< 0$.
\end{definition}

\begin{proposition}
\label{prop:r1ycambiodecoordenadas} Let $({\mathcal A},\omega)$ be strictly $\gamma$-prepared with $\chi=\chi_{\mathcal A}(\omega)=1$ and let $({\mathcal A},\omega)\rightarrow ({\mathcal A}^\star,\omega)$ be a normalized coordinate change. Assume that $\chi_{{\mathcal A}^\star}(\omega)=1$ and $({\mathcal A}^\star,\omega)$ is resonant.
We have
\begin{itemize}
  \item[1)] $({\mathcal A},\omega)$ satisfies \textbf{r2a} if and only if  $({\mathcal A}^\star,\omega)$ satisfies \textbf{r2a}.
   \item[2)] If $({\mathcal A},\omega)$ satisfies \textbf{r2b}-$\boldsymbol{\upsilon}$,  where  $\boldsymbol{\upsilon}$ is not ${\mathcal A}$-negative, then $({\mathcal A}^\star,\omega)$ satisfies \textbf{r2b}-$\boldsymbol{\upsilon}^\star$, with $\boldsymbol{\upsilon}^\star$ being also not ${\mathcal A}^\star$-negative.
   \item[3)] If $({\mathcal A},\omega)$ satisfies \textbf{r2b}-$\boldsymbol{\upsilon}$,  where  $\boldsymbol{\upsilon}$ is not ${\mathcal A}$-negative and $\boldsymbol{\upsilon}\ne 0$, then $({\mathcal A}^\star,\omega)$ satisfies \textbf{r2b}-$\boldsymbol{\upsilon}^\star$, with $\boldsymbol{\upsilon}^\star\ne 0$.
\end{itemize}
\end{proposition}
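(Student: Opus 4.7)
Since the normalized coordinate change requires the ramification index of $\mathcal{A}$ to be one, and since the statement assumes $({\mathcal A}^\star,\omega)$ is also resonant with $\chi_{{\mathcal A}^\star}(\omega)=1$ (hence of type \textbf{r2}, forcing the ramification index of $\mathcal{A}^\star$ to be one as well), in all three parametrized models $\mathcal{A},\mathcal{A}',\mathcal{A}^\star$ the rational contact function has the simple form $\Phi=z/\boldsymbol{x}^{\boldsymbol{p}}$, $\Phi'=z'/\boldsymbol{x}^{\boldsymbol{p}'}$, $\Phi^\star=z^\star/\boldsymbol{x^\star}^{\boldsymbol{p}^\star}$. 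The overall strategy is to read the conditions \textbf{r2a} and \textbf{r2b}-$\boldsymbol{\upsilon}$ off the reduced critical level $\bar\omega^{\mathcal{A}}_1$, transport it under the $(\ell+1)$-coordinate change via Proposition \ref{prop:effect of a coordinate change}, and then track how the subsequent strict $\gamma$-preparation (an $\ell$-nested transformation, so not affecting $z$) acts on it.

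For Part 1, I will use Equations \eqref{eq:chilevelsr2a} and \eqref{eq:chilevelsr2b} of Remark \ref{rk:resonantreducedcriticallevel}: condition \textbf{r2a} is equivalent to the $(r+1)$-th component of $\overline{\operatorname{vec}}^{\mathcal{A}}_1(\omega)$ (the coefficient of $dz/z$) being zero, while \textbf{r2b}-$\boldsymbol{\upsilon}$ is equivalent to it being nonzero. Proposition \ref{prop:effect of a coordinate change} gives $\overline{\operatorname{vec}}^{\mathcal{A}}_1(\omega)=\overline{\operatorname{vec}}^{\mathcal{A}'}_1(\omega)$, and the subsequent $\ell$-nested strict preparation leaves $z$ untouched while acting linearly on the logarithmic basis $\{dx_i/x_i\}$ (this follows by combining Proposition \ref{prop:stabilityofcriticalvalue} with the explicit transformation formulas in Subsection \ref{sec:EquationsforPuiseuxPackages}, where the $(r+1,r+1)$ entry of the transformation matrix $C$ for Puiseux packages equals $1$). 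Hence the vanishing of the $(r+1)$-th entry is preserved, proving Part 1.

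For Parts 2 and 3, using Remark \ref{rk:resonanciasunificadas} I will write
\[
\bar\omega^{\mathcal{A}}_1=\xi\boldsymbol{x}^{I-\boldsymbol{p}}\left(\sum_{i=1}^r(\upsilon_i-p_i)\frac{dx_i}{x_i}+\frac{dz}{z}\right),
\]
so that the reduced vector (after dividing out the scalar normalization fixed by the $dz/z$-coefficient) is $(\boldsymbol{\upsilon}-\boldsymbol{p},1)\in k^r\times k$. Under the coordinate change this vector is preserved; under the strict preparation, the pair $(\boldsymbol{\upsilon}-\boldsymbol{p},1)$ and $(\boldsymbol{p},1)$ are \emph{simultaneously} transformed by the same change of logarithmic basis $M$, yielding $(\boldsymbol{\upsilon}^\star-\boldsymbol{p}^\star,1)$ and $(\boldsymbol{p}^\star,1)$ in $\mathcal{A}^\star$. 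The key observation is that the valuation $\nu(\boldsymbol{x}^{\boldsymbol{\upsilon}-\boldsymbol{p}})$ is invariant under $M$ (since $\nu$ is intrinsic and independent of the choice of basis of $\Gamma\otimes\mathbb{Q}$), so $\nu(\boldsymbol{x^\star}^{\boldsymbol{\upsilon}^\star-\boldsymbol{p}^\star})=\nu(\boldsymbol{x}^{\boldsymbol{\upsilon}-\boldsymbol{p}})$. Combining with $\nu(\boldsymbol{x^\star}^{\boldsymbol{p}^\star})=\nu(z^\star)\geq\nu(z)=\nu(\boldsymbol{x}^{\boldsymbol{p}})$ (the coordinate change $z'=z+f$ with $\nu_{\mathcal A}(f)\geq\nu(z)$ cannot decrease $\nu(z)$), one concludes $\nu(\boldsymbol{x^\star}^{\boldsymbol{\upsilon}^\star})\geq\nu(\boldsymbol{x}^{\boldsymbol{\upsilon}})\geq 0$, establishing Part 2. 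For Part 3, I will observe that $\boldsymbol{\upsilon}=\boldsymbol{0}$ is equivalent to $(\boldsymbol{\upsilon}-\boldsymbol{p},1)=(-\boldsymbol{p},1)$, i.e.\ to the vector $(\boldsymbol{\upsilon}-\boldsymbol{p},1)$ being a $k$-scalar multiple of $(\boldsymbol{p},1)$ up to sign---a condition invariant under $M$---so $\boldsymbol{\upsilon}\neq\boldsymbol{0}$ forces $\boldsymbol{\upsilon}^\star\neq\boldsymbol{0}$.

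\textbf{Main obstacle.} The delicate point is verifying that $\boldsymbol{p}$ and $\boldsymbol{\upsilon}-\boldsymbol{p}$ really do transform by the same matrix $M$ under the strict $\gamma$-preparation, and in particular that the rational contact function of $\mathcal{A}^\star$ is still of the form $z^\star/\boldsymbol{x^\star}^{\boldsymbol{p}^\star}$ with the same $z$-factor (since $z$ is untouched). This requires careful bookkeeping through Equations \eqref{eq:puiseux2} and \eqref{eq:puiseux4}, using the fact that for $\ell'$-Puiseux packages with $\ell'\leq\ell$ the transformation matrix $C$ splits as a direct sum leaving the $z$-row fixed; Part 3 is the most delicate since it requires not just numerical bounds but a linear-algebraic non-proportionality condition that must be shown intrinsic.
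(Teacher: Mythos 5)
Your opening reduction contains the decisive error: being resonant with $\chi_{{\mathcal A}^\star}(\omega)=1$ does \emph{not} force $({\mathcal A}^\star,\omega)$ to be of type \textbf{r2}. By Definition \ref{def:resonances}, resonance \textbf{r1} is defined precisely for $\chi=1$ and ramification index $d\geq 2$, and the coordinate change $z'=z+f$ can raise $\nu(z)$ (namely when $f=\lambda\boldsymbol{x}^{\boldsymbol{p}}U+\cdots$ kills the leading term of $z$, in the paper's notation the case $\mu=\lambda$), after which the new ramification index $d^\star$ may well be $\geq 2$. Excluding \textbf{r1} for $({\mathcal A}^\star,\omega)$ is the main content of Parts 2 and 3, and it is exactly where the hypothesis that $\boldsymbol{\upsilon}$ is not $\mathcal A$-negative enters: the paper shows that \textbf{r1} for $({\mathcal A}^\star,\omega)$ forces $-\boldsymbol{p}^\star=d^\star(\boldsymbol{\upsilon}A-\boldsymbol{p}A)$ and hence $\nu(\boldsymbol{x}^{\boldsymbol{\upsilon}})=\nu(z)-\nu(z')<0$, contradicting the hypothesis. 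Your plan never performs this exclusion, yet all of your subsequent computations (e.g.\ $\nu({\boldsymbol{x}^\star}^{\boldsymbol{p}^\star})=\nu(z^\star)$) silently assume $d^\star=1$.

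The same case $\nu(z')>\nu(z)$ defeats your claim that $\boldsymbol{p}$ and $\boldsymbol{\upsilon}-\boldsymbol{p}$ transform by the same matrix. Only $\boldsymbol{\tau}=\xi(\boldsymbol{\upsilon}-\boldsymbol{p})$ is transported by the basis-change matrix $A$ of the strict preparation; the new exponent $\boldsymbol{p}^\star$ is determined afresh by $\nu({z'}^{d^\star})=\nu({\boldsymbol{x}^\star}^{\boldsymbol{p}^\star})$, so $\nu({\boldsymbol{x}^\star}^{\boldsymbol{p}^\star})>\nu({\boldsymbol{x}^\star}^{\boldsymbol{p}A})$ and $\boldsymbol{\upsilon}^\star=(\boldsymbol{\upsilon}-\boldsymbol{p})A+\boldsymbol{p}^\star\neq\boldsymbol{\upsilon}A$. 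Consequently your Part 3 argument — that $\boldsymbol{\upsilon}\neq\boldsymbol{0}$ is a proportionality condition ``invariant under $M$'' — breaks down precisely in the nontrivial case; the correct argument is valuative, not linear-algebraic: from $\nu(\boldsymbol{x}^{\boldsymbol{\upsilon}})-\nu({\boldsymbol{x}^\star}^{\boldsymbol{\upsilon}^\star})=\nu(z)-\nu(z')<0$ one sees that $\boldsymbol{\upsilon}^\star=\boldsymbol{0}$ or $\boldsymbol{\upsilon}^\star$ being ${\mathcal A}^\star$-negative would force $\boldsymbol{\upsilon}$ to be $\mathcal A$-negative. Your Part 1 and the numerical inequality in Part 2 are essentially the paper's argument and are fine once $d^\star=1$ is known, but the ``main obstacle'' you correctly identified is a genuine obstacle, and the direct-sum structure of the Puiseux matrices you invoke does not resolve it.
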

\begin{proof} Recall that the coordinate change is given by $z'=z+f$, where we have $f\in k[[\boldsymbol{x},\boldsymbol{y}_{\leq \ell}]]\cap{\mathcal O}_{\mathcal A}$ and $\nu_{\mathcal A}(f)\geq \nu(z)$. In this situation, we have that $\bar\omega^{\mathcal A}_1$ and $\bar\omega^{{\mathcal A}'}_1$ have the same coefficients. More precisely
\[
\bar\omega^{\mathcal A}_1
=\frac{d\boldsymbol{x}^{\boldsymbol{\tau}}}{\boldsymbol{x}^{\boldsymbol{\tau}}}
+\xi\frac{dz}{z},\quad
\bar\omega^{{\mathcal A}'}_1
=\frac{d\boldsymbol{x}^{\boldsymbol{\tau}}}{\boldsymbol{x}^{\boldsymbol{\tau}}}
+\xi\frac{dz'}{z'},\quad
\bar\omega^{{\mathcal A}^\star}_1
=\frac{d{\boldsymbol{x}^\star}^{\boldsymbol{\tau}^\star}}{{\boldsymbol{x}^\star}^{\boldsymbol{\tau}^\star}}
+\xi\frac{dz'}{z'},
\]
with $\boldsymbol{\tau}^\star=\boldsymbol{\tau}A$, where $A$ is a matrix of non-negative integer coefficients such that $\det A=1$, given by the property that $\nu(\boldsymbol{x}^{\boldsymbol{\mu}})=\nu({\boldsymbol{x}^\star}^{\boldsymbol{\mu}A})$.

In view of Equations \eqref{eq:chilevelsr1}, \eqref{eq:chilevelsr2a} and
 \eqref{eq:chilevelsr2b} we conclude that Statement
(1) is true and it corresponds to the case $\xi=0$.

Assume now that  $({\mathcal A},\omega)$ satisfies \textbf{r2b}-$\boldsymbol{\upsilon}$. In particular the ramification index is one and $\Phi=z/\boldsymbol{x}^{\boldsymbol{p}}$. Since $\xi\ne 0$, we deduce that $({\mathcal A}^\star,\omega)$ satisfies either \textbf{r1} or \textbf{r2b}-$\boldsymbol{\upsilon}^\star$. We have
\[
\boldsymbol{\upsilon}-\boldsymbol{p}=\boldsymbol{\tau}/\xi,\quad  \boldsymbol{\upsilon}A-\boldsymbol{p}A=\boldsymbol{\tau}^\star/\xi.
\]
Write $f=\boldsymbol{x}^{\boldsymbol{p}}(\mu+\tilde f^*)+\tilde f$, where $\mu\in k$, $\nu_{\mathcal A}(\tilde f)>\nu(z)=\nu(\boldsymbol{x}^{\boldsymbol{p}})$ and  $\tilde f^*$ is in the maximal ideal of $\Omega^0_{\mathcal A}$. If $\mu\ne \lambda$, we have that $\nu(z')=\nu(z)$ and  $\Phi'=z'/\boldsymbol{x}^{\boldsymbol{p}}$.
After a strict $\gamma$-preparation the new ramification index is still one and we have $\Phi^\star=z'/{\boldsymbol{x^\star}}^{\boldsymbol{p^\star}}$, where
${\boldsymbol{p^\star}}= {\boldsymbol p}A$. We obtain the resonance condition \textbf{r2b}-$\boldsymbol{\upsilon}^\star$, where $\boldsymbol{\upsilon}^\star=\boldsymbol{\upsilon}A$.
Note that $\boldsymbol{\upsilon}^\star$ is not ${\mathcal A}^\star$-negative, otherwise we would have
\[
0>\nu({\boldsymbol x^\star}^{\boldsymbol{\upsilon^\star}})=\nu(\boldsymbol x^{\boldsymbol{\upsilon}})
\]
and $\boldsymbol{\upsilon}$ should be $\mathcal A$-negative, moreover if $\boldsymbol{\upsilon}\ne 0$ we have
$0\ne \boldsymbol{\upsilon^\star}=\boldsymbol{\upsilon}A $.
This proves Statements (2) and (3) in this case.

Assume now that $\mu=\lambda$ and hence $\nu(z')>\nu(z)$. Write the new rational contact function as $\Phi^\star={z'}^{d^\star}/{\boldsymbol{x}^\star}^{\boldsymbol{p}^\star}$.
If $({\mathcal A}^\star,\omega)$ satisfies \textbf{r1}, then $d^\star\geq 2$. By Equation \eqref{eq:chilevelsr1}, we necessarily have that
\[
-{\boldsymbol{p}^\star}=
d^\star\boldsymbol{\tau}^\star/\xi=d^\star(\boldsymbol{\upsilon} A-\boldsymbol{p}A).
\]
This implies that $\boldsymbol{\upsilon} A=\boldsymbol{p}A-(\boldsymbol{p}^\star/d^\star)$. Taking values, we obtain that
\begin{align*}
\nu(\boldsymbol{x}^{\boldsymbol{\upsilon}})&=
\nu({\boldsymbol{x}^\star}^{\boldsymbol{\upsilon}A})=
\nu({\boldsymbol{x}^\star}^{\boldsymbol{p}A})-
\nu({\boldsymbol{x}^\star}^{\boldsymbol{p}^\star/d^\star})\\
&=
\nu({\boldsymbol{x}}^{\boldsymbol{p}})-\nu(z')=\nu(z)-\nu(z')<0.
\end{align*}
Thus ${\boldsymbol{\upsilon}}$ should be ${\mathcal A}$-negative. Since $\boldsymbol{\upsilon}$ is not ${\mathcal A}$-negative, we necessarily have condition \textbf{r2b}-${\boldsymbol{\upsilon}^\star}$ for $({\mathcal A}^\star,\omega)$, in particular $d^\star=1$. In order to prove Statements (2) and (3), we have only to show that $\boldsymbol{\upsilon}^\star\ne 0$ and  $\boldsymbol{\upsilon}^\star$ is not ${\mathcal A}^\star$-negative.
From Equation \eqref{eq:chilevelsr2b}, we deduce that
$
\boldsymbol{\upsilon}^\star=
\boldsymbol{\tau}^\star/\xi+\boldsymbol{p}^\star
=\boldsymbol{\upsilon}A-\boldsymbol{p}A+\boldsymbol{p}^\star
$ and hence
\[
\boldsymbol{\upsilon}A= \boldsymbol{\upsilon}^\star +\boldsymbol{p}A-\boldsymbol{p}^\star.
\]
We have that $\boldsymbol{\upsilon}\in {\mathbb Q}^r$
 if and only
 if $\boldsymbol{\upsilon}^\star\in {\mathbb Q}^r$. In this case , taking values we have
\[
\nu(\boldsymbol{x}^{\boldsymbol{\upsilon}})=
\nu({\boldsymbol{x}^\star}^{\boldsymbol{\upsilon}A})=
\nu({\boldsymbol{x}^\star}^{\boldsymbol{\upsilon}^\star})
+\nu({\boldsymbol{x}}^{\boldsymbol{p}})
-\nu({\boldsymbol{x}^\star}^{\boldsymbol{p}^\star})=
\nu({\boldsymbol{x}^\star}^{\boldsymbol{\upsilon}^\star})+\nu(z)-\nu(z').
\]
That is, we have
$
 \nu(\boldsymbol{x}^{\boldsymbol{\upsilon}})-
\nu({\boldsymbol{x}^\star}^{\boldsymbol{\upsilon}^\star})=
\nu(z)-\nu(z')<0
$.
 If $\boldsymbol{\upsilon}^\star$ is ${\mathcal A}^\star$-negative or $\boldsymbol{\upsilon}^\star=0$, then $\boldsymbol{\upsilon}$ is  ${\mathcal A}$-negative; hence $\boldsymbol{\upsilon}^\star\ne 0$ and it is not ${\mathcal A}^\star$-negative. We have Statements (2) and (3).
\end{proof}

\begin{remark} In the situation of Proposition \ref{prop:r1ycambiodecoordenadas}, note that if $({\mathcal A}^\star,\omega)$ satisfies \textbf{r2a}, then also $({\mathcal A},\omega)$ satisfies \textbf{r2a}. In particular,  If $({\mathcal A},\omega)$ satisfies \textbf{r2b}-$\boldsymbol{\upsilon}$ it is not possible that $({\mathcal A}^\star,\omega)$ would satisfy \textbf{r2a}, hence it satisfies \textbf{r2}-${\boldsymbol{\upsilon}}^*$ or \textbf{r1}, this last possibility case only occurs when $\boldsymbol{\upsilon}$
is $\mathcal A$-negative.
\end{remark}

\begin{proposition}
\label{prop:r1ypaquetepuiseux}
 Let $({\mathcal A},\omega)$ be strictly $\gamma$-prepared with $\chi_{\mathcal A}(\omega)=1$. Consider a normalized Puiseux's package
 $({\mathcal A},\omega)\rightarrow({\mathcal A}^\star,\omega)$ and
  assume that $({\mathcal A}^\star,\omega)$ is resonant with $\chi_{{\mathcal A}^\star}(\omega)=1$.
We have
\begin{itemize}
  \item[1)] If $({\mathcal A},\omega)$ satisfies \textbf{r2a}, then $({\mathcal A}^\star,\omega)$ satisfies \textbf{r2a}.
  \item[2)] If $({\mathcal A},\omega)$ satisfies \textbf{r2b}-$\boldsymbol{\upsilon}$,  then $({\mathcal A}^\star,\omega)$ does not satisfy \textbf{r2a}.
   \item[3)] If $({\mathcal A},\omega)$ satisfies \textbf{r2b}-$\boldsymbol{\upsilon}$ and $\boldsymbol{\upsilon}$ is not ${\mathcal A}$-negative, then $({\mathcal A}^\star,\omega)$ satisfies \textbf{r2b}-$\boldsymbol{\upsilon}^\star$, where $\boldsymbol{\upsilon}^\star\ne 0$ is not ${\mathcal A}^\star$-negative.
    \item[4)] If $({\mathcal A},\omega)$ satisfies \textbf{r1}, then $({\mathcal A}^\star,\omega)$ satisfies \textbf{r2b}-$\boldsymbol{\upsilon}^\star$,  where $\boldsymbol{\upsilon}^\star\ne 0$ is not ${\mathcal A}^\star$-negative.

  \end{itemize}
  As a consequence, if $({\mathcal A}^\star,\omega)$ satisfies the resonance condition \textbf{r1}, then we necessarily have that $({\mathcal A},\omega)$ satisfies \textbf{r2b}-$\boldsymbol{\upsilon}$ where $\boldsymbol{\upsilon}$ is ${\mathcal A}$-negative.
\end{proposition}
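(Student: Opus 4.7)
My plan is to verify statements (1)--(4) by explicitly tracking the reduced critical part $\bar\omega^*$ through the Puiseux's package ${\mathcal A}\to{\mathcal A}'$ using the formulas of Section \ref{sec:EquationsforPuiseuxPackages}, and then identifying the resulting resonance type by reading off the coefficient vector of $\bar\omega^{{\mathcal A}^\star *}_\chi$ in the basis $\{dx'_j/x'_j\}_{j=1}^r\cup\{dz'/z'\}$. The key observation is that the strict $\gamma$-preparation ${\mathcal A}'\to{\mathcal A}^\star$ inside a normalized Puiseux's package is a composition of $\ell$-nested transformations, so it preserves $z^\star=z'$, it leaves the $dz'/z'$-coefficient at each level unchanged, and it transforms the $\boldsymbol{x}'$-exponents by an invertible non-negative-integer matrix $A$. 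Hence the resonance type of $({\mathcal A}^\star,\omega)$ is encoded, up to the rebasing by $A$, in the ${\mathcal A}\to{\mathcal A}'$-transform of $\bar\omega^*$ alone.

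For Statements (1)--(3) the ramification index in ${\mathcal A}$ is $d=1$, so $C$ has the block form of Equation \eqref{eq:ecuaciondigulauno}: each $\boldsymbol{x}^I$ becomes a pure $\boldsymbol{x}'$-monomial, $d\boldsymbol{x}^{\boldsymbol{\tau}}/\boldsymbol{x}^{\boldsymbol{\tau}}$ becomes $d\boldsymbol{x}'^{\boldsymbol{\tau}C_0}/\boldsymbol{x}'^{\boldsymbol{\tau}C_0}$ with no $dz'/z'$ component, and the identity $z-\lambda\boldsymbol{x}^{\boldsymbol{p}}=z'\boldsymbol{x}^{\boldsymbol{p}}$ lets one rewrite $(\Phi-\lambda)^\chi$ and $d(\Phi-\lambda)/(\Phi-\lambda)$ in terms of $z'$ and $\boldsymbol{x}'$. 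Starting from \textbf{r2a} the transform is $\boldsymbol{x}'^{IC_0}{z'}^\chi d\boldsymbol{x}'^{\boldsymbol{\tau}C_0}/\boldsymbol{x}'^{\boldsymbol{\tau}C_0}$, with vanishing $dz'/z'$-coefficient at level $\chi$, matching \eqref{eq:chilevelsr2a} and giving (1). Starting from \textbf{r2b}-$\boldsymbol{\upsilon}$ and plugging into \eqref{eq:condicionr2unificada}, I obtain
\[
\bar\omega^*=\xi\boldsymbol{x}'^{IC_0}{z'}^\chi\left(\frac{d\boldsymbol{x}'^{\boldsymbol{\upsilon}C_0}}{\boldsymbol{x}'^{\boldsymbol{\upsilon}C_0}}+\frac{dz'}{z'}\right),
\]
whose $dz'/z'$-coefficient at level $\chi$ is $\xi\boldsymbol{x}'^{IC_0}\ne 0$, ruling out \textbf{r2a} and giving (2). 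Comparing this form with \eqref{eq:chilevelsr2b} one reads $\boldsymbol{\upsilon}^\star-\boldsymbol{p}^\star=\boldsymbol{\upsilon}C_0A$, so that $\nu(\boldsymbol{x}^{\star\boldsymbol{\upsilon}^\star})=\nu(z^\star)+\nu(\boldsymbol{x}^{\boldsymbol{\upsilon}})\geq\nu(z^\star)>0$ whenever $\boldsymbol{\upsilon}$ is not $\mathcal A$-negative; hence $\boldsymbol{\upsilon}^\star$ is nonzero and not $\mathcal A^\star$-negative, giving (3).

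For Statement (4) the starting data is $\bar\omega^*=\xi z\boldsymbol{x}^{I_1}\,d\Phi/\Phi$ with $d\geq 2$. Lemma \ref{lema:transformationofamonomial} produces $b\in\mathbb{Z}$ with $z\boldsymbol{x}^{I_1}=(z'+\lambda)^b\boldsymbol{x}'^{I'_1}$, and $d\Phi/\Phi=dz'/(z'+\lambda)$, so $\bar\omega^*=\xi\boldsymbol{x}'^{I'_1}(z'+\lambda)^{b-1}\,dz'$. Expanding the unit $(z'+\lambda)^{b-1}$ in powers of $z'$ and writing ${z'}^k dz'={z'}^{k+1}(dz'/z')$, the level-$1$ contribution is $\xi\lambda^{b-1}\boldsymbol{x}'^{I'_1}\,dz'/z'$ with no $dx'_j/x'_j$ component. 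Thus the level-$\chi$ coefficient vector in $\bar\omega^{{\mathcal A}^\star *}$ has vanishing $\boldsymbol{x}$-part and nonzero last coordinate, ruling out \textbf{r2a} (last coordinate zero) and \textbf{r1} in ${\mathcal A}^\star$ (which by \eqref{eq:chilevelsr1} would force proportionality to $(-\boldsymbol{p}^\star,d^\star)$ with $\boldsymbol{p}^\star\ne\boldsymbol{0}$). So \textbf{r2b}-$\boldsymbol{\upsilon}^\star$ must hold, and \eqref{eq:chilevelsr2b} forces $\boldsymbol{\upsilon}^\star-\boldsymbol{p}^\star=\boldsymbol{0}$, i.e.\ $\boldsymbol{\upsilon}^\star=\boldsymbol{p}^\star$; since $d^\star=1$ yields $\nu(\boldsymbol{x}^{\star\boldsymbol{p}^\star})=\nu(z^\star)>0$, this vector is nonzero and not $\mathcal A^\star$-negative, giving (4). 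The final ``consequence'' is then immediate: Proposition \ref{prop:estabilidadho} forces $({\mathcal A},\omega)$ to be resonant, and (1), (3), (4) eliminate every resonance type for $({\mathcal A},\omega)$ except \textbf{r2b}-$\boldsymbol{\upsilon}$ with $\boldsymbol{\upsilon}$ $\mathcal A$-negative.

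The main technical obstacle I foresee is the bookkeeping required to ensure that the $({\mathcal A}\to{\mathcal A}')$-transform of $\bar\omega^*$ really captures the reduced critical part $\bar\omega^{{\mathcal A}^\star *}_\chi$, i.e.\ that no term of $\omega-\bar\omega^*$ can rise to the critical abscissa $\varsigma$ after the Puiseux's package and thereby spoil the coefficient-vector reading. This reduces to a level-by-level application of Proposition \ref{prop:stabilityofcriticalvalue} to the $z'$-expansion, combined with the hypothesis that $({\mathcal A},\omega)$ is strictly $\gamma$-prepared, which together keep every non-critical contribution strictly above the critical line throughout the preparation ${\mathcal A}'\to{\mathcal A}^\star$.
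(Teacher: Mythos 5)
Your proposal is correct and follows essentially the same route as the paper: both arguments track the coefficient vector of the reduced critical $1$-level through the Puiseux's package via the matrix $C$ (using its block form when $d=1$ and the identity $(-\boldsymbol{p},d)C=(0,\ldots,0,1)$ for the \textbf{r1} case) and through the strict preparation via the matrix $A$, then read off the resonance type and the value of $\boldsymbol{x}^{\star\boldsymbol{\upsilon}^\star}$ exactly as you do. Your closing concern about non-critical terms rising to the critical abscissa is handled in the paper by Lemmas \ref{lema:reduccionaomegabar}--\ref{lema:alphabar}, which is the same stability argument you sketch.
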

\begin{proof} With our hypothesis, the reduced $1$-level $\bar{\omega}^{{\mathcal A}^\star}_1$ is obtained only from $\bar{\omega}^{{\mathcal A}}_1$.  More precisely, we know that
\[
\bar\omega^{\mathcal A}_1
=\frac{d\boldsymbol{x}^{\boldsymbol{\tau}}}{\boldsymbol{x}^{\boldsymbol{\tau}}}
+\xi\frac{dz}{z},
\quad
\bar\omega^{{\mathcal A}'}_1
=\frac{d{\boldsymbol{x}'}^{\boldsymbol{\tau}'}}{{\boldsymbol{x}'}^{
\boldsymbol{\tau}'}}
+\xi'\frac{dz'}{z'},
\quad
\bar\omega^{{\mathcal A}^\star}_1
=\frac{d{\boldsymbol{x}^\star}^{\boldsymbol{\tau}^\star}}{{\boldsymbol{x}^\star}^{\boldsymbol{\tau}^\star}}
+\xi^\star\frac{dz'}{z'},
\]
where the following holds:
\begin{itemize}
\item[a)] $(\boldsymbol{\tau}',\xi')= (\boldsymbol{\tau},\xi) C$, where $C$ is the matrix that appears in Equation \eqref{eq:puiseux2}.
\item[b)] $(\boldsymbol{\tau}^\star,\xi^\star)= (\boldsymbol{\tau'}A,\xi')$, where $A$ is a matrix with nonnegative integer coefficients and $\det A=1$.
\end{itemize}

Assume first that $({\mathcal A},\omega)$ satisfies \textbf{r2}. Then the ramification index $d$ of $\mathcal A$ is one.  Hence, the rational contact function is $\Phi=z/\boldsymbol{x}^{\boldsymbol{p}}$ and the matrix $C$ has the form given in Equation \eqref{eq:ecuaciondigulauno}
\begin{equation}
\label{eq:ecuaciondigulauno2}
C=
\left(
\begin{array}{ccc}
{C_0}&\vline&{\boldsymbol{0}}\\
\hline
\boldsymbol{\tilde p}&\vline& 1\\
\end{array}
\right),\quad \boldsymbol{\tilde p}=\boldsymbol{p}C_0, \quad
C^{-1}=
\left(
\begin{array}{ccc}
{C_0^{-1}}&\vline&{\boldsymbol{0}}\\
\hline
-\boldsymbol{ p}&\vline& 1\\
\end{array}
\right).
\end{equation}
In particular, we have $\xi^\star=\xi'=\xi$. Put $C_0^\star=C_0A$. Let us recall that
\[
\nu({\boldsymbol{x}}^{\boldsymbol{\mu}})=
\nu({\boldsymbol{x}'}^{\boldsymbol{\mu}C_0})=
\nu({\boldsymbol{x}^\star}^{\boldsymbol{\mu}C_0^\star})
.
\]

$\bullet$ If $({\mathcal A},\omega)$ satisfies \textbf{r2a}, then $\xi=\xi'=\xi^\star=0$. Then $({\mathcal A}^\star,\omega)$ satisfies \textbf{r2a}. This proves Statement (1).

$\bullet$ If $({\mathcal A},\omega)$ satisfies \textbf{r2b}, then $\xi=\xi'=\xi^\star\ne0$. Hence $({\mathcal A}^\star,\omega)$ does not satisfy \textbf{r2a}.  This proves Statement (2).

$\bullet$ Let us prove Statement (3). We assume that $({\mathcal A},\omega)$ satisfies \textbf{r2b}-$\boldsymbol{\upsilon}$, where $\boldsymbol{\upsilon}$ is not ${\mathcal A}$-negative. We know that $\xi=\xi^\star\ne 0$ and that $({\mathcal A}^\star,\omega)$ does not satisfy  \textbf{r2a} nor \textbf{r1}, hence it satisfies \textbf{r2b}-$\boldsymbol{\upsilon}^\star$. Let us show that $\boldsymbol{\upsilon}^\star\ne 0$ and it is not ${\mathcal A}^\star$-negative, this proves (2). Assume that $\boldsymbol{\upsilon}^\star$ is ${\mathcal A}^\star$-negative or $\boldsymbol{\upsilon}^\star=0$. Let us note that $\boldsymbol{\upsilon}^\star-\boldsymbol{p}^\star=
\boldsymbol{\tau}^\star/\xi^\star$, on the other hand, we have
\[
\boldsymbol{\tau}^\star/\xi^\star=\boldsymbol{\tau}^\star/\xi= (\boldsymbol{\tau}+\xi\boldsymbol{p})C_0^\star/\xi=
(\boldsymbol{\tau}/\xi+\boldsymbol{p})C_0^\star=
\boldsymbol{\upsilon}C^\star_0.
\]
Then we have $\boldsymbol{\upsilon}^\star-\boldsymbol{p}^\star=\boldsymbol{\upsilon}C^\star_0$.
We conclude that $\boldsymbol{\upsilon}\in {\mathbb Q}^r$, moreover, taking values, we have that
\[
0\geq \nu({\boldsymbol{x}^\star}^{\boldsymbol{\upsilon}^\star})=
\nu({\boldsymbol{x}^\star}^{\boldsymbol{\upsilon}C^\star_0})
+ \nu({\boldsymbol{x}^\star}^{\boldsymbol{p}^\star})
=\nu({\boldsymbol{x}}^{\boldsymbol{\upsilon}})
+
\nu(z')
>
\nu({\boldsymbol{x}}^{\boldsymbol{\upsilon}}).
\]
Then $\boldsymbol{\upsilon}$ should be ${\mathcal A}$-negative. This shows Statement (2).

$\bullet$ It remains to prove Statement (4). We assume that $({\mathcal A},\omega)$ satisfies \textbf{r1}. Hence the ramification index is $d\geq 2$ and $\Phi=z^d/\boldsymbol{x}^{\boldsymbol{p}}$. By Equation \eqref{eq:chilevelsr1}, we have
\[
\boldsymbol{\tau}/\xi=-\boldsymbol{p}/d.
\]
The matrix $C$ has not the form in Equation \eqref{eq:ecuaciondigulauno2}. Anyway, we have that
\[
(\boldsymbol{\tau}',\xi')= (\xi/d) (-\boldsymbol{p},d)C= (\xi/d)(\boldsymbol{0},1).
\]
This implies that $\xi^\star=\xi'\ne 0$ and $\boldsymbol{\tau}'=\boldsymbol{0}$, thus $\boldsymbol{\tau^\star}=\boldsymbol{\tau}'A=\boldsymbol{0}$.  In particular we have neither \textbf{r1} nor \textbf{r2a}. We get the resonance condition \textbf{r2b}-$\boldsymbol{\upsilon}^\star$, with $\boldsymbol{\upsilon}^\star=\boldsymbol{p}^\star$ and $\nu(z')=\nu({\boldsymbol{x}}^{\boldsymbol{p}^\star})>0$. Thus $\boldsymbol{\upsilon}^\star$ is not ${\mathcal A}^\star$-negative and $\boldsymbol{\upsilon}^\star\ne 0$.
\end{proof}

\subsection{Never Ramified Cases and Horizontal Functions}
\label{Normalized Transformations and Conditionated Local Uniformization}
 Some of the results in this subsection may be considered as a truncated version of Classical Zariski's Local Uniformization \cite{Zar1} for the case of formal functions, although we add the property of respecting the normalized transformations used for the case of foliations.

\begin{definition}
\label{def:noramification} Consider $({\mathcal A},\omega)$ with the property of { $\chi$-fixed critical height}. We say that $({\mathcal A},\omega)$ is {\em  never ramified} if and only if for any normalized transformation
$({\mathcal A},\omega)\rightarrow ({\mathcal B},\omega)$ the $(\ell+1)$-ramification index of ${\mathcal B}$ is equal to one.
\end{definition}
In next subsections  we will reduce our study to never ramified cases.

Let us define the {\em horizontal coefficient $H_{{\mathcal A},\omega}$} as follows.  Write $\omega$ in $\mathcal A$ as
\begin{equation*}
\omega=\eta+hdz, \quad \eta=\sum_{i=1}^rf_i\frac{dx_i}{x_i}+\sum_{j=1}^{\ell}g_jdy_j.
\end{equation*}
Let $H_{\mathcal{A},\omega}$ denote the coefficient $h=\omega(\partial/\partial z)$.

  One important feature of never ramified situations is that the $H_{{\mathcal A},\omega}$ is stable in the following sense:

\begin{lemma}
 \label{lema:stableh} Consider $({\mathcal A},\omega)$ with the property of $\chi$-fixed critical height and never ramified. We have:
 \begin{itemize}
 \item[1)] If $({\mathcal A},\omega)\rightarrow ({\mathcal B},\omega)$ is a normalized coordinate change, then $H_{{\mathcal B},\omega}=H_{{\mathcal A},\omega}$.
 \item[2)] If $({\mathcal A},\omega)\rightarrow ({\mathcal B},\omega)$ is  a normalized Puiseux's package and $\Phi=z/{\boldsymbol{x}}^{\boldsymbol{p}}$ is the contact rational function in ${\mathcal A}$, then
     $H_{{\mathcal B},\omega}= {\boldsymbol{x}}^{\boldsymbol{p}}H_{{\mathcal A},\omega}$.
 \end{itemize}
\end{lemma}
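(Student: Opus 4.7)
The plan is to track the coefficient $h$ of $dz$ under each elementary piece of the normalized transformation. First note that the horizontal coefficient $H_{\mathcal{A},\omega}$ is, by definition, just the coefficient of $dz$ when $\omega$ is expanded in the basis $\{dx_i/x_i,\, dy_j,\, dz\}$ induced by the parameters. Hence it is stable under any transformation that keeps $z$ as a coordinate and acts only on the $\boldsymbol{x}$ and $\boldsymbol{y}_{\leq\ell}$. In particular, every $\ell$-nested transformation (independent blow-ups, $\ell'$-coordinate changes and $\ell'$-Puiseux's packages with $\ell'\leq \ell$) fixes $z$ and preserves the coefficient of $dz$. Consequently, the strict $\gamma$-preparation tail of any normalized transformation leaves $H$ unchanged, and it only remains to analyze the leading elementary step.

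For part (1), the leading step is an $(\ell+1)$-coordinate change $z'=z+f$ with $f\in k[[\boldsymbol{x},\boldsymbol{y}_{\leq\ell}]]\cap{\mathcal O}_{\mathcal A}$. Writing $\omega=\eta+h\,dz$ with $\eta$ having no $dz$-component, we substitute $dz=dz'-df$ to obtain
\[
\omega=(\eta-h\,df)+h\,dz'.
\]
Since $df$ involves only $d\boldsymbol{x}$ and $d\boldsymbol{y}_{\leq\ell}$, the form $\eta-h\,df$ has no $dz'$-component; hence $H_{{\mathcal A}',\omega}=h=H_{{\mathcal A},\omega}$. Combined with the preceding remark on the $\gamma$-preparation, this yields $H_{{\mathcal B},\omega}=H_{{\mathcal A},\omega}$.

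For part (2), the leading step is an $(\ell+1)$-Puiseux's package ${\mathcal A}\rightarrow {\mathcal A}'$. Since we are in a never ramified situation, the ramification index equals one, so $\Phi=z/\boldsymbol{x}^{\boldsymbol{p}}$ and $z=\boldsymbol{x}^{\boldsymbol{p}}(z'+\lambda)$ with $z'=\Phi-\lambda$. Differentiating,
\[
dz=\boldsymbol{x}^{\boldsymbol{p}}\,dz'+(z'+\lambda)\,d(\boldsymbol{x}^{\boldsymbol{p}}),
\]
so the $dz'$-component of $h\,dz$ is $h\boldsymbol{x}^{\boldsymbol{p}}\,dz'$. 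Moreover, by the block form of the matrix $C$ for $d=1$ given in Equation~\eqref{eq:ecuaciondigulauno}, each $dx_i/x_i$ is a pure combination of the $dx'_j/x'_j$ with no $d\Phi/\Phi$ contribution, and the $dy_j$ with $j\leq\ell$ are unchanged. Hence the $\eta$-part of $\omega$ contributes no $dz'$-component, and we conclude
\[
H_{{\mathcal A}',\omega}=h\,\boldsymbol{x}^{\boldsymbol{p}}=\boldsymbol{x}^{\boldsymbol{p}}\,H_{{\mathcal A},\omega}.
\]
Applying again the invariance under the subsequent strict $\gamma$-preparation, we get $H_{{\mathcal B},\omega}=\boldsymbol{x}^{\boldsymbol{p}}H_{{\mathcal A},\omega}$, as claimed.

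The only delicate point is ensuring that the $\ell$-combinatorial and independent blow-ups composing the Puiseux's package produce no $dz'$-contribution from $\eta$; this is precisely what the block-triangular structure of $C$ in the $d=1$ case encodes, via the assumption that we are in a never ramified situation.
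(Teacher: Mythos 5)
Your proof is correct and follows essentially the same route as the paper: the paper's argument is exactly the identity $h\,dz=\boldsymbol{x}^{\boldsymbol{p}}h\,dz^{\mathcal B}+(z^{\mathcal B}+\lambda)h\,d\boldsymbol{x}^{\boldsymbol{p}}$ for the Puiseux's package (using $d=1$ from the never-ramified hypothesis), with the coordinate-change and preparation cases left to the reader. You merely fill in those routine cases explicitly, including the observation that the block form of $C$ keeps the $dx_i/x_i$ free of any $d\Phi/\Phi$ contribution.
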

\begin{proof} The cases of a coordinate change or a $\gamma$-strict preparation are straighforward and left to the reader. If we consider a normalized Puiseux package, we necessarily have that resonant condition \textbf{r2} holds, since we are non ramified and $\chi$-fixed. The contact rational function is given by $\Phi=z/\boldsymbol{x}^{\boldsymbol{p}}$, where $z^{\mathcal B}=\Phi-\lambda$. Noting that
\[
hdz= zh\left(
\frac{d\Phi}{\Phi}+
\frac{d{\boldsymbol{x}}^{\boldsymbol{p}}}{{\boldsymbol{x}}^{\boldsymbol{p}}}
\right)=
\boldsymbol{x}^{p}h
dz^{\mathcal B}+(z^{\mathcal B}+\lambda)
h{d{\boldsymbol{x}}^{\boldsymbol{p}}},
\]
the result follows.
\end{proof}

Let us consider a formal function $F\in \Omega^0_{\mathcal A}$. Looking at Definition \ref{def:truncated_final_function}, we recall that $({\mathcal A}, F)$ is $\gamma$-final if either $\nu_{\mathcal A}(F)>\gamma$ or $F$ has the form
\[
F=\boldsymbol{x}^IU+\tilde F, \quad \nu(\boldsymbol{x}^I)=\nu_{\mathcal A}(F)\leq\gamma,\quad  \nu_{\mathcal A}(\tilde F)>\nu_{\mathcal A}(F),
\]
where $U\in \Omega^0_{\mathcal A}$ is a unit. We need a strong version of this  concept, where we can assume that $\tilde F=0$. We precise it in Definition
\ref{def:stronggammafinal} below:
\begin{definition}
\label{def:stronggammafinal}
 Consider a pair $({\mathcal A},F)$, where $F\in \Omega^0_{\mathcal A}$ is a formal function in $\mathcal A$. We say that $({\mathcal A}, F)$ is {\em strongly $\gamma$-final} if either $\nu_{\mathcal A}(F)>\gamma$ or $F=\boldsymbol{x}^IU$, where $U$ is a unit in $\Omega^0_{\mathcal A}$ that can be written as
 \[
 U=\xi+\tilde U,\quad \nu_{\mathcal A}(\tilde U)>0,\quad 0\ne \xi\in k.
 \]
\end{definition}
Let us note that to be strongly $\gamma$-final  is stable under any nested transformation.

 \begin{proposition}
 \label{prop:conditionatedluofafunction} Consider $({\mathcal A},\omega)$ with the property of $\chi$-fixed critical height and never ramified.
  Take a formal function $F\in k[[\boldsymbol{x},\boldsymbol{y}_{\leq\ell},z]]\subset \Omega^0_{\mathcal A}$. There is a normalized transformation
  $
  ({\mathcal A},\omega)\rightarrow ({\mathcal B},\omega)
  $
  such that $({\mathcal B}, F)$
  is strongly $\gamma$-final.
 \end{proposition}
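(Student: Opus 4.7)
The approach is to imitate the proof of Theorem \ref{teo:inductivestatement} applied to the formal function $F$ in place of $\omega$, checking that every transformation used remains normalized for $\omega$. The hypothesis of $\chi$-fixed critical height with no ramification is what makes this possible: every $\ell$-nested transformation is a degenerate normalized transformation by Remark \ref{rk: degeneratenormalizedtransforamtions}; every $(\ell+1)$-Puiseux's package has ramification index $d=1$; and every $(\ell+1)$-coordinate change is admissible, and composed with the strict $\gamma$-preparation of $\omega$ guaranteed by Theorem \ref{teo:preparation} becomes normalized.

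First, I would write
\[
F=\sum_{s\geq 0}z^s F_s,\qquad F_s\in k[[\boldsymbol{x},\boldsymbol{y}_{\leq\ell}]].
\]
Each $F_s$ involves at most $\ell$ dependent variables, so the induction hypothesis applied to the trivially integrable $1$-form $dF_s$ supplies an $\ell$-nested transformation making $F_s$ be $(\gamma-s\nu(z))$-final. Only finitely many $s$ satisfy $s\nu(z)\leq\gamma$, so iterating over these prepares all relevant levels; the composed $\ell$-nested transformation is normalized. An application of Proposition \ref{prop:monlocprincipalization} through independent blow-ups then upgrades every dominant $F_s$ to strongly $(\gamma-s\nu(z))$-final and principalizes the ideal of all $\boldsymbol{x}$-monomials in $F$ of value at most $\gamma$, yielding a common lowest monomial $\boldsymbol{x}^{I_0}$ that divides each such monomial.

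Next, I construct the Newton polygon of $F$ from the cloud $\{(\nu_{\mathcal A}(F_s),s):F_s\neq 0\}$, with critical value $\varsigma$ and critical height $\chi_F$ as in Subsection \ref{sec:newtonpolygon}. If $\varsigma>\gamma$, then $\nu_{\mathcal A}(F)>\gamma$ and $F$ is strongly $\gamma$-final recessive; if $\chi_F=0$, then $F/\boldsymbol{x}^{I_0}$ has nonzero constant term and $F$ is strongly $\gamma$-final dominant. Otherwise, apply a normalized Puiseux's package. With $d=1$ and $z=\boldsymbol{x}^{\boldsymbol{p}}(z^\star+\lambda)$, the reduced critical part of $F$ takes the form
\[
\bar F^{*}=\boldsymbol{x}^{I^*}P(z^\star+\lambda),\qquad P(w)=\sum_{s=s_0}^{\chi_F}\xi_s w^s,
\]
exactly as in Lemma \ref{lema:alphabar}; the critical height in $\mathcal A^\star$ equals the multiplicity $m$ of $\lambda$ as a root of $P$. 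Since $P$ has order $s_0\geq 0$ at $w=0$, one has $m\leq\chi_F-s_0\leq\chi_F$, with equality forcing $s_0=0$ and $P(w)=\xi_{\chi_F}(w-\lambda)^{\chi_F}$; this is the function-theoretic analogue of the resonances of Definition \ref{def:resonances}.

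In this resonant case, I perform the Tschirnhausen coordinate change $z\mapsto z'=z-\lambda\boldsymbol{x}^{\boldsymbol{p}}$: the summand $\lambda\boldsymbol{x}^{\boldsymbol{p}}$ satisfies $\nu_{\mathcal A}(\lambda\boldsymbol{x}^{\boldsymbol{p}})=\nu(z)$, so this is a legal $(\ell+1)$-coordinate change, and, composed with a strict $\gamma$-preparation, it is normalized. A direct level-by-level computation analogous to the one in the proof of Proposition \ref{prop:effect of a coordinate change} shows that for the new levels $F'_t$ of $F$ one has $\nu_{\mathcal A}(F'_t)+t\nu(z')>\varsigma$ for every $t$, so the critical value of $F$ strictly increases. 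Alternating non-resonant normalized Puiseux's packages (strictly decreasing $\chi_F$) with Tschirnhausen normalized coordinate changes (strictly increasing $\varsigma$), the invariant $(\chi_F,-\varsigma)$ descends lexicographically. The main obstacle is termination: one must show that this iteration halts with $\chi_F=0$ or $\varsigma>\gamma$. This follows from the truncation, since after the initial preparation only the finitely many terms of $F$ with explicit value at most $\gamma$ shape the Newton polygon, reducing the relevant data to a polynomial in $z$ of degree bounded by $\lfloor\gamma/\nu(z)\rfloor$; a classical Weierstrass/Zariski-style finiteness argument on this bounded data then guarantees that the procedure halts, producing the required normalized transformation ${\mathcal A}\to{\mathcal B}$.
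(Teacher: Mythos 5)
Your outline mirrors the paper's structure for the first half: decompose $F$ into $z$-levels, apply the induction hypothesis to each $F_s$ (via $dF_s$), prepare the relevant levels, principalize by independent blow-ups, and then drive the Newton polygon by alternating Puiseux's packages with Tschirnhausen coordinate changes. The difficulty is in the second half, and there your argument breaks down in a way that is precisely the crux of the whole induction scheme.

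The Tschirnhausen change you propose, $z'=z-\lambda\boldsymbol{x}^{\boldsymbol{p}}$, is nothing more than the translation step that is already built into a Puiseux's package: in the chart produced by an $(\ell+1)$-blow-up with translation one has $z^\star=z/\boldsymbol{x}^{\boldsymbol{p}}-\lambda$, so $\boldsymbol{x}^{\boldsymbol{p}}z^\star=z-\lambda\boldsymbol{x}^{\boldsymbol{p}}$. Performing your coordinate change and then a Puiseux's package is therefore the same, up to combinatorial blow-ups, as performing a single Puiseux's package. Consequently your ``alternation'' is just a sequence of normalized Puiseux's packages, and it is exactly the content of Section~8 (and of the resonance conditions \textbf{r1}, \textbf{r2}) that such a sequence may stabilize the critical height indefinitely. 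The correct Tschirnhausen used in the paper is $z'=z+\frac{1}{\chi}\,G_{\chi-1}/G_{\chi}$, where $G_{\chi-1},G_\chi$ are polynomial $\gamma$-truncations of the entire levels $F_{\chi-1},F_\chi\in k[[\boldsymbol{x},\boldsymbol{y}_{\leq\ell}]]$. This kills the whole $(\chi-1)$-level modulo $\gamma$, not merely its constant term, and is what forces the explicit jump $\varsigma_{\mathcal B}(F)\geq\varsigma_{\mathcal A}(F)+\nu(z)$ after the next Puiseux's package if the height does not drop.

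The termination argument you give in consequence does not close. Showing that $\varsigma$ strictly increases, together with the fact that the Newton polygon is determined by finitely many terms of explicit value $\leq\gamma$, does not prove the procedure halts: the increments $\varsigma^\star-\varsigma$ are of the form $\chi\,\nu(z^\star)$ and may be arbitrarily small, and the value group $\Gamma\subset{\mathbb R}$ is not well-ordered, so a strictly increasing bounded sequence of explicit values can in principle be infinite. Invoking ``a classical Weierstrass/Zariski-style finiteness argument'' is a placeholder for the missing estimate. The paper closes this by the bound $\varsigma_{\mathcal B}(F)\geq\varsigma_{\mathcal A}(F)+\nu(z)$: since $\nu(z)$ is non-decreasing under the allowed transformations, $\varsigma$ overtakes $\gamma$ after at most $\gamma/\nu(z_0)$ Tschirnhausen steps. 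You need that quantitative increase, and hence the full Tschirnhausen transformation, for the proof to be complete.

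A secondary remark: you do not verify that the $(\ell+1)$-coordinate change by $G_{\chi-1}/(\chi G_\chi)$ is legal (namely that the correction term lies in $k[[\boldsymbol{x},\boldsymbol{y}_{\leq\ell}]]\cap\mathcal O_{\mathcal A}$). This is the reason the paper inserts the intermediate step ``$F_{\chi-i}=G_i+\tilde G_i$ with $G_i$ polynomial and $\nu_{\mathcal A}(\tilde G_i)>\gamma$''; your weaker translation sidesteps this subtlety but, as above, at the cost of losing the argument.
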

 \begin{proof} We give quick indications of the proof, that follows the same lines of the case of $1$-forms, without the difficulty of the preparation.

  Let us note that the induction hypothesis allows us to make $\rho$-final any formal function $G\in k[[{\boldsymbol{x}},\boldsymbol{y}_{\leq\ell}]]$ by means of an $\ell$-nested transformation; to see this is enough to consider the differential $dG$, in view of Corollary \ref{cor:gammafinalfdf}. Moreover, these $\ell$-nested transformations may be ``integrated'' in a $\gamma$-strict preparation of $({\mathcal A},\omega)$, just by completing the $\gamma$-preparation.

 Now, write
 \[
 F=\sum_{s\geq 0}z^sF_s;\quad F_s\in k[[{\boldsymbol{x}},\boldsymbol{y}_{\leq\ell}]].
 \]
 We can perform a $\gamma$-preparation of $F$ by means of an $\ell$-nested transformation. To do this is enough to make $\gamma$-final one by one the levels $F_s$ with $s\leq \gamma\nu(z)$. This can be done with a normalized transformation and hence we have ramification index equal to one. Now, we have a critical vertex at height $\xi$.
Assume that the critical height does not drop.  By a new normalized Puiseux's package, we get the property that the main vertex coincides with the critical vertex; moreover, this property remains true since we assume that the critical height does not drop.

 By making combinatorial independent blow-ups as in Proposition \ref{prop:monlocprincipalization}, we  divide $F$ by $\boldsymbol{x}^I$ with $\nu(\boldsymbol{x}^I)=\nu_{\mathcal A}F$ and hence we obtain that $F_{\xi}$ is a unit.
  Up to performing additional $\ell$-nested transformations, we can assume that
 \[
 F_{\xi-i}=G_i+\tilde G_i, \quad \nu_{\mathcal A}(\tilde G_i)>\gamma, \; G_i\in k[\boldsymbol{x},\boldsymbol{y}_{\leq\ell}]\subset \mathcal {O}_{\mathcal A},
 \]
 for $i=0,1$.
 This property is stable under any new $\ell$-nested transformation. This allows us to make a Tschirnhausen coordinate change
 \[
 z'=z+\frac{1}{\xi}\frac{G_{\xi-1}}{G_{\xi}}.
 \]
 By classical arguments, when we perform a new $({\mathcal A},\omega)$- normalized Puiseux's package ${\mathcal A}\rightarrow{\mathcal B}$, either the critical height $\xi$ drops ( when we are touching the ``empty part'' of the $\xi-1$-level in the Newton Polygon ) or the value $\nu(z^{\mathcal B})\geq \min\{\gamma, \nu(z)\}$.
 If $\nu(z^{\mathcal B})\geq \gamma$ we are done, in the next transformation we get $\varsigma_{\mathcal A}(F)\geq \gamma$. Otherwise, we see that $\varsigma_{\mathcal B}(F)\geq\varsigma_{\mathcal A}(F)+\nu(z)$. We end in a finite number of steps.

 Once $F$ is $\gamma$-final, we write $F=\sum_{I} \boldsymbol{x^I}F_I$. Now we perform a monomialization of the ideal generated by the $\boldsymbol{x^I}$ by means of independent combinatorial blow-ups. This gives an expression of $F$ as $F=\boldsymbol{x}^IU$, where $U$ is a unit that we write $U=\xi+\tilde U$ and $\tilde U$ is in the maximal ideal, by performing a normalized transformation containing a $j$-Puiseux's package for any $j=1,2,\ldots,\ell+1$ we obtain that $\nu_{\mathcal A}(\tilde U)>0$, see Proposition \ref{prop:notfinalformsandcriticalvalue}.
 \end{proof}

 \begin{corollary}
 \label{cor:hachereducido} Consider $({\mathcal A},\omega)$ with the property of $\chi$-fixed critical height and never ramified.
There is a normalized transformation
$
({\mathcal A},\omega)\rightarrow ({\mathcal B},\omega)
$
such that $({\mathcal B}, H_{{\mathcal B},\omega})$ is strongly $\gamma$-final.
Moreover, this situation is stable under further normalized transformations.
\end{corollary}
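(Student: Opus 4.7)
The plan is to reduce the statement to Proposition \ref{prop:conditionatedluofafunction} applied to the formal function $F = H_{\mathcal{A},\omega}$, which lies in $k[[\boldsymbol{x},\boldsymbol{y}_{\leq\ell},z]]$ since $I_{\mathcal{A}}(\omega)=\ell+1$. Proposition \ref{prop:conditionatedluofafunction} furnishes a normalized transformation $(\mathcal{A},\omega)\to(\mathcal{B},\omega)$ such that $(\mathcal{B},H_{\mathcal{A},\omega})$ is strongly $\gamma$-final; the content of the corollary is to pass from this property of (the transform of) $H_{\mathcal{A},\omega}$ in $\mathcal{B}$ to the same property for $H_{\mathcal{B},\omega}$, and then to verify stability.

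To bridge the two, I would iterate Lemma \ref{lema:stableh} along an elementary factorization $\mathcal{A}=\mathcal{A}^{(0)}\to\mathcal{A}^{(1)}\to\cdots\to\mathcal{A}^{(N)}=\mathcal{B}$, obtaining
\[
H_{\mathcal{B},\omega}=M\cdot H_{\mathcal{A},\omega},
\]
where $M$ is the ordered product of the monomials ${\boldsymbol{x}^{(t)}}^{\boldsymbol{p}^{(t)}}$ contributed by the Puiseux's packages (coordinate changes contribute trivially). By Lemma \ref{lema:transformationofamonomial} applied iteratively, when $M$ is rewritten in the coordinates of $\mathcal{B}$ it takes the shape $M=\boldsymbol{x}_{\mathcal{B}}^{J}\cdot V$, where $V$ is a finite product of factors of the form $\Phi^{(t-1)}=\lambda^{(t)}+z^{(t)}$ (we use here that the never-ramified hypothesis forces $d^{(t)}=1$ throughout, so each $z^{(t-1)}$ equals the monomial ${\boldsymbol{x}^{(t-1)}}^{\boldsymbol{p}^{(t-1)}}$ times $(z^{(t)}+\lambda^{(t)})$). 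Since $\nu_{\mathcal{A}^{(t)}}(z^{(t)})>0$ by Remark \ref{rk:propiedadesecuacionespuiseux}(3) and explicit values are non-decreasing under allowed transformations (Proposition \ref{prop:stabilityofcriticalvalue}), we have $\nu_{\mathcal{B}}(z^{(t)})>0$. Expanding the product yields $V=\eta+\tilde V$ with $\eta=\prod_t(\lambda^{(t)})^{b_t}\in k^{\times}$ and $\nu_{\mathcal{B}}(\tilde V)>0$.

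Strong $\gamma$-finality of $H_{\mathcal{B},\omega}$ then follows immediately: in the recessive case $\nu_{\mathcal{B}}(H_{\mathcal{A},\omega})>\gamma$ we get $\nu_{\mathcal{B}}(H_{\mathcal{B},\omega})\geq\nu_{\mathcal{B}}(H_{\mathcal{A},\omega})>\gamma$; in the dominant case $H_{\mathcal{A},\omega}=\boldsymbol{x}_{\mathcal{B}}^{I}(\xi+\tilde U)$ with $0\neq\xi\in k$ and $\nu_{\mathcal{B}}(\tilde U)>0$, multiplying gives $H_{\mathcal{B},\omega}=\boldsymbol{x}_{\mathcal{B}}^{I+J}(\eta\xi+\tilde W)$ with $\eta\xi\neq 0$ and $\nu_{\mathcal{B}}(\tilde W)>0$, which is the strong $\gamma$-final form. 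Stability under a further normalized transformation $\mathcal{B}\to\mathcal{B}'$ is handled by exactly the same argument: strong $\gamma$-finality of $H_{\mathcal{A},\omega}$ persists under nested transformations (as noted just after Definition \ref{def:stronggammafinal}), and the relation $H_{\mathcal{B}',\omega}=M'\cdot H_{\mathcal{A},\omega}$ with $M'$ of the same monomial-times-admissible-unit shape then gives strong $\gamma$-finality of $H_{\mathcal{B}',\omega}$.

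The main obstacle, and really the only point requiring care, is the bookkeeping involved in describing $M$ in the coordinates of $\mathcal{B}$; everything comes down to showing that the cumulative unit factor $V$ keeps the special shape $\eta+\tilde V$ with $\eta\in k^{\times}$ and $\nu_{\mathcal{B}}(\tilde V)>0$. This is made transparent by the never-ramified hypothesis: since $d^{(t)}=1$ at every stage, each contribution to $V$ is simply $\lambda^{(t)}+z^{(t)}$ with the $z^{(t)}$ having strictly positive explicit value, so the $z^{(t)}$ only populate the tail $\tilde V$ and never interfere with the nonzero leading constant $\eta$.
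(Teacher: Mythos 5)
Your overall strategy is the same as the paper's: apply Proposition \ref{prop:conditionatedluofafunction} to $F=H_{{\mathcal A},\omega}$ and then transfer the conclusion to $H_{{\mathcal B},\omega}$ via the multiplicative relation of Lemma \ref{lema:stableh}. The gap is in the step you yourself single out as ``the only point requiring care''. The citation of Remark \ref{rk:propiedadesecuacionespuiseux}(3) is a misreading: that remark asserts $\nu_{{\mathcal A}'}(y_\ell)>0$ for the \emph{old} dependent variable expressed in the \emph{new} model, i.e.\ $\nu_{{\mathcal A}^{(t)}}(z^{(t-1)})>0$. The new variable $z^{(t)}$ is a dependent parameter of ${\mathcal A}^{(t)}$, so $\nu_{{\mathcal A}^{(t)}}(z^{(t)})=0$ by definition of the explicit value; and for the last $(\ell+1)$-Puiseux's package in the chain no subsequent package raises this value, so $\nu_{\mathcal B}(z^{(N)})=0$. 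Hence your claim $\nu_{\mathcal B}(\tilde V)>0$ is not justified as written. In fact, under the never-ramified hypothesis these particular factors do not even occur: when $d=1$, Equation \eqref{eq:ecuaciondigulauno} shows that the last column of $C$ above the diagonal entry vanishes, so a monomial $\boldsymbol{x}^{\boldsymbol{p}}$ pulls back to the \emph{pure} monomial $\boldsymbol{x}'^{\boldsymbol{p}C_0}$ with trivial unit factor.

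The unit factors you actually have to control come from elsewhere and are not addressed: a normalized transformation contains strict $\gamma$-preparations, which are $\ell$-nested and may include $j$-Puiseux's packages for $j\leq\ell$ whose ramification indices are unconstrained. By Lemma \ref{lema:transformationofamonomial} these contribute genuine factors $(y'_j+\lambda_j)^{b_j}$ to $V$, and for the last such package in each index $j$ the variable $y'_j$ again has explicit value exactly $0$ in $\mathcal B$, so the tail of $V$ need not have positive explicit value. This is precisely the difficulty the paper resolves at the end of the proof of Proposition \ref{prop:conditionatedluofafunction}: after reaching $H=\boldsymbol{x}^K(c+\tilde W)$ with $\tilde W$ in the maximal ideal, one appends a further normalized transformation containing a $j$-Puiseux's package for \emph{every} $j=1,\ldots,\ell+1$ and invokes Proposition \ref{prop:notfinalformsandcriticalvalue} (applicable since ${\mathfrak c}^0(\tilde W)=0$) to force $\nu(\tilde W)>0$. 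Your proof needs this additional step, both for the existence claim and for the stability claim.
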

\begin{proof} It follows
 from Lemma \ref{lema:stableh}
and
Proposition \ref{prop:conditionatedluofafunction}.
\end{proof}
\begin{remark}
\label{rk:Hstability} Take $({\mathcal A},\omega)$ with the property of $\chi$-fixed critical height and never ramified. Assume that $H_{{\mathcal A},\omega}$ is strongly $\gamma$-final and consider an  $({\mathcal A},\omega)$-normalized transformation ${\mathcal A}\rightarrow{\mathcal B}$. By Lemma \ref{lema:stableh}, we see that
\[
 \nu_{{\mathcal B}}(H_{{\mathcal B},\omega})\geq
 \nu_{{\mathcal A}}(H_{{\mathcal A},\omega}).
\]
In particular, if $({\mathcal A}, H_{{\mathcal A},\omega})$ is $\gamma$-final recessive, then $({\mathcal B}, H_{{\mathcal B},\omega})$ is also $\gamma$-final recessive. Let us say that
$({\mathcal A},\omega)$ has {\em $\gamma$-recessive horizontal stability} if $H_{{\mathcal A},\omega}$ is $\gamma$-final recessive. On the other hand, we say that $({\mathcal A},\omega)$ has {\em $\gamma$-dominant horizontal stability} if $({\mathcal B},H_{{\mathcal B},\omega})$ is $\gamma$-final dominant under any normalized transformation $({\mathcal A},\omega)\rightarrow({\mathcal B},\omega)$.
We conclude that there is a normalized transformation
$({\mathcal A},\omega)\rightarrow({\mathcal B},\omega)$ such that $({\mathcal B},\omega)$
has $\gamma$-recessive or $\gamma$-dominant horizontal stability.
\end{remark}

\subsection{First Steps in the Reduction to Critical Height One}
\label{Reduction to Critical Height One}
Let us start the proof of
Proposition \ref{prop:critical height2}. We look for a contradiction with the existence of a $\gamma$-truncated formal foliated space
 $({\mathcal A},\omega)$ with the property of $\chi$-fixed critical height, where $\chi\geq 2$. Thus, we assume we have such an $({\mathcal A},\omega)$.

If $({\mathcal A},\omega)\rightarrow ({\mathcal B},\omega)$ is a normalized transformation, then $({\mathcal B},\omega)$ also have the property of the $\chi$-fixed critical height. Then $({\mathcal B},\omega)$ has the resonance property \textbf{r2}, since $\chi\geq 2$. In particular $({\mathcal A},\omega)$ is never ramified, accordingly with Definition \ref{def:noramification}.

By Corollary  \ref{cor:hachereducido} and Remark \ref{rk:Hstability}, we can make the following assumption:
\begin{itemize}
\item[A1:] The horizontal coefficient $H_{{\mathcal A},\omega}$ is strongly $\gamma$-final and $({\mathcal A},\omega)$ has the $\gamma$-recessive or $\gamma$-dominant horizontal stability.
\end{itemize}
Note that A1 is stable under any new normalized transformations and thus $({\mathcal B},\omega)$ also satisfies A1, when $({\mathcal A},\omega)\rightarrow ({\mathcal B},\omega)$ is a normalized transformation.

\begin{lemma}  Let $({\mathcal A},\omega)\rightarrow({\mathcal B},\omega)$ be a normalized transformation that contains at least one normalized Puiseux's package.  Then
\begin{equation}
\label{eq:vertice principal}
\nu_{{\mathcal B}}(\omega)=\varsigma_{\mathcal B}(\omega)- \chi\nu(z^{\mathcal B}),
\end{equation}
where $z^{\mathcal B}$ is the $(\ell+1)$-th dependent parameter in $\mathcal B$. Moreover, the property in Equation \eqref{eq:vertice principal} is stable under further normalized transformations.
\end{lemma}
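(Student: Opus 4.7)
The plan is to isolate the first normalized Puiseux's package of the composition, apply the sandwich from Lemmas \ref{lema: control by the main height} and \ref{lema:stabilityofcriticalheight} to force the critical vertex and the main vertex to coincide right after it, and then propagate the identity through all remaining normalized transformations using Proposition \ref{prop:effect of a coordinate change} and Corollary \ref{cor:stabilizadcambiodecoordenadas}. Factor the given transformation as
\[
({\mathcal A},\omega)\to({\mathcal C},\omega)\to({\mathcal C}',\omega)\to({\mathcal A}^\star,\omega)\to({\mathcal B},\omega),
\]
where $({\mathcal A},\omega)\to({\mathcal C},\omega)$ is a (possibly trivial) string of normalized coordinate changes, ${\mathcal C}\to{\mathcal C}'$ is the first $(\ell+1)$-Puiseux's package in the composition, and $({\mathcal C}',\omega)\to({\mathcal A}^\star,\omega)$ is the strict $\gamma$-preparation completing that first normalized Puiseux's package. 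The $\chi$-fixed critical height hypothesis gives $\chi_{\mathcal C}(\omega)=\chi_{{\mathcal A}^\star}(\omega)=\chi$ with both critical values $\leq\gamma$.

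Set $\varsigma=\varsigma_{\mathcal C}(\omega)$ and $\hbar'=\hbar^\varsigma_{{\mathcal C}'}(\omega)$. Lemmas \ref{lema: control by the main height} and \ref{lema:stabilityofcriticalheight} sandwich
\[
\chi=\chi_{{\mathcal A}^\star}(\omega)\leq\hbar'\leq\chi_{\mathcal C}(\omega)=\chi,
\]
so $\hbar'=\chi$. The remark following the proof of Lemma \ref{lema: control by the main height} then asserts that $(\varsigma,\chi)$ is simultaneously the main vertex and the critical vertex of ${\mathcal N}_{{\mathcal A}^\star}(\omega)$. Reading off both abscissas yields $\nu_{{\mathcal A}^\star}(\omega)=\varsigma=\varsigma_{{\mathcal A}^\star}(\omega)-\chi\nu(z^{{\mathcal A}^\star})$, which is \eqref{eq:vertice principal} at $({\mathcal A}^\star,\omega)$.

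Finally, I propagate the identity along $({\mathcal A}^\star,\omega)\to({\mathcal B},\omega)$. For each additional normalized Puiseux's package, the sandwich argument above restarts from a strictly $\gamma$-prepared model of $\chi$-fixed critical height and delivers \eqref{eq:vertice principal}. For a normalized coordinate change ${\mathcal D}\to{\mathcal D}'$ followed by strict $\gamma$-preparation ${\mathcal D}'\to\tilde{\mathcal D}$, Proposition \ref{prop:effect of a coordinate change} preserves the abscissas $\lambda_{\cdot,\omega}(s)$ for $s\geq\chi$, so the main vertex $(\nu_{\mathcal D}(\omega),\chi)$ of ${\mathcal N}_{\mathcal D}(\omega)$ persists in ${\mathcal N}_{{\mathcal D}'}(\omega)$, and Corollary \ref{cor:stabilizadcambiodecoordenadas} shows that the ensuing $\ell$-nested strict $\gamma$-preparation preserves this vertex as the critical vertex of $\tilde{\mathcal D}$ at height $\chi$; thus $\lambda_{\tilde{\mathcal D},\omega}(\chi)=\nu_{\mathcal D}(\omega)$. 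Combined with the shrinking inequality $\nu_{\tilde{\mathcal D}}(\omega)\geq\nu_{\mathcal D}(\omega)$ and the opposite inequality $\nu_{\tilde{\mathcal D}}(\omega)\leq\lambda_{\tilde{\mathcal D},\omega}(\chi)$, this forces $\nu_{\tilde{\mathcal D}}(\omega)=\nu_{\mathcal D}(\omega)$ and locates the main vertex of $\tilde{\mathcal D}$ at height $\leq\chi$; the $\chi$-fixed critical height hypothesis guarantees a vertex at height $\chi$, so the main height is exactly $\chi$ and \eqref{eq:vertice principal} holds at $\tilde{\mathcal D}$. The delicate point is the bookkeeping that identifies the critical vertex abscissa before the coordinate change with the main vertex abscissa of $\mathcal D$ and shows that this abscissa survives both the $(\ell+1)$-coordinate change and the ensuing preparation; once this is verified, convexity of the Newton polygon together with $\chi$-fixedness closes the argument.
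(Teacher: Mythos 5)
Your proof is correct and takes essentially the approach the paper indicates (the paper itself leaves the verification to the reader, pointing to the arguments in the proof of Proposition \ref{prop:stability critical height}). Your sandwich $\chi = \chi_{{\mathcal A}^\star}(\omega) \leq \hbar' \leq \chi_{\mathcal C}(\omega) = \chi$ from Lemmas \ref{lema: control by the main height} and \ref{lema:stabilityofcriticalheight}, together with the remark after Lemma \ref{lema: control by the main height}, correctly establishes the coincidence of main and critical vertex after the first Puiseux's package, and the propagation through normalized coordinate changes via Proposition \ref{prop:effect of a coordinate change} and the dominant-vertex stability argument underlying Corollary \ref{cor:stabilizadcambiodecoordenadas} is the intended route.
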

\begin{proof}  We leave to the reader the details of this verification, based on the arguments in the proof of Proposition \ref{prop:stability critical height}.
\end{proof}
 Then, we can make another assumption:
 \begin{itemize}
\item[A2:] $\nu_{\mathcal A}(\omega)=\varsigma_{\mathcal A}(\omega)-\chi\nu(z)$.
\end{itemize}
Note also that A2 is stable under new normalized transformations.
\begin{remark}
\label{rk:alturacriticaligualprincipal}
The property A2 is equivalent to saying that the critical vertex and the main vertex coincide.
\end{remark}
Let us decompose $\omega$ into levels
$
\omega=\sum_{s\geq 0}z^s\omega_s$, $\omega_s=\eta_s+h_sdz/z
$, where we recall that $h=\sum_{s\geq 1}h_sz^{s-1}$.
\begin{lemma}
\label{lema:nuhachechi}
We have
$\nu_{\mathcal A}(h_\chi)>\varsigma_{\mathcal A}(\omega)-\chi\nu(z)$.
\end{lemma}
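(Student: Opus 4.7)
The plan is to combine the assumptions A1 and A2 with the observation that, since $\chi \geq 2$, any contribution to $h_\chi$ must involve a nontrivial power of $z$. Set $\rho := \nu_{\mathcal A}(\omega)$, which by A2 equals $\varsigma_{\mathcal A}(\omega) - \chi\nu(z)$, so the target inequality becomes $\nu_{\mathcal A}(h_\chi) > \rho$.

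First I would observe that, since $h = H_{{\mathcal A},\omega}$ is the coefficient of the basis element $dz$ in the logarithmic expression of $\omega$, one has $\nu_{\mathcal A}(h) \geq \nu_{\mathcal A}(\omega) = \rho$. Writing $h = \sum_{s\geq 1} h_s z^{s-1}$ with $h_s \in k[[\boldsymbol{x},\boldsymbol{y}_{\leq\ell}]]$ and recalling that $\nu_{\mathcal A}$ depends only on the $\boldsymbol{x}$-expansion, one gets $\nu_{\mathcal A}(h) = \min_{s\geq 1}\nu_{\mathcal A}(h_s)$, and in particular $\nu_{\mathcal A}(h_\chi) \geq \nu_{\mathcal A}(h) \geq \rho$.

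Next I would split according to the two alternatives in A1. In the recessive case, $\nu_{\mathcal A}(h) > \gamma \geq \varsigma_{\mathcal A}(\omega) > \rho$, so $\nu_{\mathcal A}(h_\chi) \geq \nu_{\mathcal A}(h) > \rho$ and we are done. In the dominant case, Definition \ref{def:stronggammafinal} gives a factorization
\[
h = \boldsymbol{x}^J U, \quad U = \xi + \tilde U, \quad 0 \ne \xi \in k, \quad \nu_{\mathcal A}(\tilde U) > 0,
\]
with $\nu(\boldsymbol{x}^J) = \nu_{\mathcal A}(h)$. Expanding $\tilde U = \sum_{k\geq 0}\tilde U^{(k)} z^k$ with $\tilde U^{(k)}\in k[[\boldsymbol{x},\boldsymbol{y}_{\leq\ell}]]$, the constant-in-$z$ piece $\xi\boldsymbol{x}^J$ contributes only to the $z^0$ coefficient of $h$; since $\chi - 1 \geq 1$, it does not contribute to the coefficient of $z^{\chi-1}$. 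Therefore $h_\chi = \boldsymbol{x}^J\tilde U^{(\chi-1)}$ and, using $\nu_{\mathcal A}(\tilde U^{(\chi-1)}) \geq \nu_{\mathcal A}(\tilde U) > 0$, we conclude
\[
\nu_{\mathcal A}(h_\chi) = \nu(\boldsymbol{x}^J) + \nu_{\mathcal A}(\tilde U^{(\chi-1)}) > \nu(\boldsymbol{x}^J) = \nu_{\mathcal A}(h) \geq \rho,
\]
as required.

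The essential input is that A1 forces the ``constant in $z$'' part of $h$ to sit entirely at level $s = 1$; the hypothesis $\chi \geq 2$ then guarantees that it does not contaminate $h_\chi$, producing the strict inequality. There is no genuine obstacle beyond this bookkeeping once A1 and A2 are in hand — the lemma is essentially the point for which A1 was imposed.
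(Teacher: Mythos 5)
Your proof is correct and reaches the conclusion by a somewhat different chain of inequalities than the paper's. Both arguments use the same two ingredients: the strongly $\gamma$-final factorization $h = \boldsymbol{x}^J U$ with $U = \xi + \tilde U$, $\xi \neq 0$, $\nu_{\mathcal A}(\tilde U) > 0$, which places the leading constant of $h$ at the $z^0$ coefficient $h_1$; and the hypothesis $\chi \geq 2$. The paper's chain is $\nu_{\mathcal A}(h_\chi) \geq \nu_{\mathcal A}(h) = \nu(\boldsymbol{x}^J) \geq \varsigma_{\mathcal A}(\omega) - \nu(z) > \varsigma_{\mathcal A}(\omega) - \chi\nu(z)$, where the bound $\nu(\boldsymbol{x}^J) \geq \varsigma_{\mathcal A}(\omega) - \nu(z)$ follows from $\nu_{\mathcal A}(h) = \nu_{\mathcal A}(h_1)$ together with the fact that the level-$1$ point of the Newton polygon lies on or above the critical line, and strictness is purely numerical ($\chi\nu(z) > \nu(z)$). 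You instead invoke assumption A2 to identify $\nu_{\mathcal A}(\omega) = \varsigma_{\mathcal A}(\omega) - \chi\nu(z)$, use the generic coefficient bound $\nu_{\mathcal A}(h) \geq \nu_{\mathcal A}(\omega)$, and obtain strictness structurally: since $\chi - 1 \geq 1$, the coefficient $h_\chi = \boldsymbol{x}^J\tilde U^{(\chi-1)}$ misses the constant $\xi\boldsymbol{x}^J$, so $\nu_{\mathcal A}(h_\chi) > \nu(\boldsymbol{x}^J)$. Your version makes fully explicit the mechanism the paper leaves implicit behind the inequality $\nu(\boldsymbol{x}^J) \geq \varsigma_{\mathcal A}(\omega) - \nu(z)$; the paper's route has the side benefit of yielding the stronger lower bound $\nu_{\mathcal A}(h) \geq \varsigma_{\mathcal A}(\omega) - \nu(z)$, which reappears in sharpened form in the computation of $\epsilon$ a few lines later.
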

\begin{proof} This is obvious if $\nu_{\mathcal A}(h)>\gamma$. When $h=U\boldsymbol{x}^J$, we have
\begin{equation*}
\nu_{\mathcal A}(h_\chi)\geq\nu_{\mathcal A}(h)=\nu(\boldsymbol{x}^J)\geq\varsigma_{\mathcal A}(\omega)-\nu(z)>\varsigma_{\mathcal A}(\omega)-\chi\nu(z),
\end{equation*}
recalling that $\chi\geq 2$.
\end{proof}
\begin{lemma}
\label{lema:descrition of the reduced critical part} The resonance condition \textbf{r2a} is satisfied for $({\mathcal A}, \omega)$. That is, the reduced critical part $\bar\omega^*$ of $\omega$ has the form
\begin{equation}
\label{eq:segmentocriticoconr2a}
\bar\omega^*
 =
 \boldsymbol{x}^{I} (z-\lambda\boldsymbol{x}^{\boldsymbol{p}})^{\chi}
 \frac{
 d\boldsymbol{x}^{\boldsymbol{\tau}}
 }{
 \boldsymbol{x}^{\boldsymbol{\tau}}
 }.
\end{equation}
\end{lemma}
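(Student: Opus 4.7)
The plan is to apply the classification of resonance conditions in Proposition \ref{prop:estabilidadho}, equivalently Definition \ref{def:resonances}, and then rule out \textbf{r1} and \textbf{r2b}-$\boldsymbol{\upsilon}$. Since $({\mathcal A},\omega)$ has the property of $\chi$-fixed critical height and is never ramified, any normalized Puiseux's package $({\mathcal A},\omega)\to({\mathcal A}^\star,\omega)$ preserves the critical height at $\chi$, so Proposition \ref{prop:estabilidadho} forces one of the three resonance conditions to hold for $({\mathcal A},\omega)$. Condition \textbf{r1} is excluded immediately by the hypothesis $\chi\geq 2$.

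To exclude \textbf{r2b}-$\boldsymbol{\upsilon}$ I will compute the explicit value of the horizontal coefficient $h_\chi$ of the $\chi$-level under this hypothesis and compare it with Lemma \ref{lema:nuhachechi}. Equation \eqref{eq:chilevelsr2b} of Remark \ref{rk:resonantreducedcriticallevel} exhibits the $z^\chi$-coefficient of $\bar\omega^*$ as $\xi\boldsymbol{x}^{I-\chi\boldsymbol{p}}\bigl(d\boldsymbol{x}^{\boldsymbol{\upsilon}-\boldsymbol{p}}/\boldsymbol{x}^{\boldsymbol{\upsilon}-\boldsymbol{p}}+dz/z\bigr)$; since $\bar\omega^*$ captures the critical part of $\omega$ up to terms of higher explicit value (Definition \ref{def:reducedcriticalpart} together with Lemma \ref{lema:reduccionaomegabar}), this means that $h_\chi=\xi\boldsymbol{x}^{I-\chi\boldsymbol{p}}+\tilde h_\chi$ with $\nu_{\mathcal A}(\tilde h_\chi)>\varsigma_{\mathcal A}(\omega)-\chi\nu(z)$. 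Because $\xi\ne 0$ and $\nu(\boldsymbol{x}^{I-\chi\boldsymbol{p}})=\varsigma_{\mathcal A}(\omega)-\chi\nu(z)$, this yields the equality $\nu_{\mathcal A}(h_\chi)=\varsigma_{\mathcal A}(\omega)-\chi\nu(z)$, contradicting the strict inequality in Lemma \ref{lema:nuhachechi}.

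The only remaining possibility is \textbf{r2a}, whose defining expression from Definition \ref{def:resonances}, rewritten through $\Phi-\lambda=z/\boldsymbol{x}^{\boldsymbol{p}}-\lambda$ as in Equation \eqref{eq:condicionr2unificada} (with the $\xi$-term absent), is precisely Equation \eqref{eq:segmentocriticoconr2a} after a relabeling $I-\chi\boldsymbol{p}\rightsquigarrow I$. The key and essentially only nontrivial step is the identification of the $(dz/z)$-coefficient at level $\chi$ in $\bar\omega^*$ with $h_\chi$ modulo higher explicit value, which is a direct unwinding of the definitions collected in Subsection \ref{The Critical Height under a Puiseuxs package}; the rest of the argument is combinatorial and relies on the already-proved Lemma \ref{lema:nuhachechi}, which is where assumption A1 and the hypothesis $\chi\geq 2$ are actually used.
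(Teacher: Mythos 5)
Your proof is correct and follows essentially the same route as the paper: exclude \textbf{r1} because $\chi\geq 2$, then exclude \textbf{r2b}-$\boldsymbol{\upsilon}$ by reading off from Equation \eqref{eq:chilevelsr2b} that the $dz/z$-coefficient of the reduced $\chi$-level would force $\nu_{\mathcal A}(h_\chi)=\varsigma_{\mathcal A}(\omega)-\chi\nu(z)$, contradicting Lemma \ref{lema:nuhachechi}. The only differences from the paper's argument are cosmetic (your explicit relabeling of the exponent $I$ and your remark on where A1 enters).
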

\begin{proof} We already know that the resonant condition \textbf{r2} is satisfied. We have only to show that condition \textbf{r2b} cannot occur. If \textbf{r2b} holds, the reduced part $\bar\omega^*_\chi$ of the $\chi$-level $\omega_\chi$ is given by
 \[
 \bar\omega^*_\chi=\xi\boldsymbol{x}^{I-\chi\boldsymbol{p}}\left(
 \frac{
 d\boldsymbol{x}^{\boldsymbol{\upsilon-p}}
 }{
 \boldsymbol{x}^{\boldsymbol{\upsilon-p}}
 }
 +\frac{dz}{z}
 \right),
 \]
 see Equation \eqref{eq:chilevelsr2b}. This implies that $\nu_{\mathcal A}(h_\chi)=\nu(\boldsymbol{x}^{I-\chi\boldsymbol{p}})=\varsigma_{\mathcal A}(\omega)-\chi\nu(z)$.
This is not compatible with the fact that $\nu_{\mathcal A}(h_\chi)>\nu(\boldsymbol{x}^I)=\varsigma_{\mathcal A}(\omega)-\chi\nu(z)$ stated in Lemma \ref{lema:nuhachechi}. Then we have \textbf{r2a} and Equation \eqref{eq:segmentocriticoconr2a} holds.
\end{proof}

By expanding the binomial $(z-\lambda\boldsymbol{x}^{\boldsymbol{p}})^{\chi}$ in Equation \eqref{eq:segmentocriticoconr2a}, we see
that each level $\omega_s$ is $(\varsigma_{\mathcal A}(\omega)-s\nu(z))$-final dominant, for $s=0,1,\ldots,\chi$.  Let us write
\[
\omega_s=
\boldsymbol{x}^{I_s}\omega_{I_s,s}+
\sum_{\nu(\boldsymbol{x}^K) > \nu(\boldsymbol{x}^{I_s})}\boldsymbol{x}^K\omega_{K,s},
\]
where $\omega_{I_s,s}$ is $0$-final dominant and $\nu(\boldsymbol{x}^{I_s})=\varsigma_{\mathcal A}(\omega)-s\nu(z)$, for $s=0,1,\ldots,\chi$.
Up to performing combinatorial blow-ups in the independent parameters $\boldsymbol{x}$ as in  Proposition \ref{prop:monlocprincipalization}, we obtain that $\boldsymbol{x}^{I_s}$ divides each one of the $\boldsymbol{x}^{K}$ and then we have
\begin{equation*}
\omega_s=\boldsymbol{x}^{I_s}\beta_s \quad \text{and} \quad \nu(\boldsymbol{x}^{I_s})=\varsigma_{\mathcal A}(\omega)-s\nu(z) \quad \text{for} \quad s=0,1,\ldots,\chi,
\end{equation*}
where $\beta_s$ is $0$-final dominant for $s=0,1,\ldots,\chi$. Let us write
\begin{equation*}
\beta_s = \alpha_s+H_s\frac{dz}{z} , \quad \eta_s = \boldsymbol{x}^{I_s}\alpha_s \quad \text{and} \quad h_s=\boldsymbol{x}^{I_s}H_s \quad \text{for} \quad s=0,1,\dots,\chi.
\end{equation*}
\begin{lemma}
\label{lema:sinetiqueta}
Let $\epsilon$ denote the value
\begin{equation*}
\epsilon=\min\{\gamma,\nu_{\mathcal A}(h)\}-\varsigma_{\mathcal A}(\omega)+\nu(z).
\end{equation*}
We have $\epsilon>0$ and $\nu_{\mathcal A}(H_s)\geq \epsilon$ for $s=0,1,\ldots,\chi$.
\end{lemma}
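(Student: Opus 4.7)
The proof will split according to the dichotomy provided by assumption A1, under which the horizontal coefficient $h = H_{{\mathcal A},\omega}$ is strongly $\gamma$-final: either $\nu_{\mathcal A}(h) > \gamma$, or $h = (\xi + \tilde U)\boldsymbol{x}^J$ with $0 \neq \xi \in k$ and $\nu_{\mathcal A}(\tilde U) > 0$. In the first (recessive) alternative everything is straightforward: since $\varsigma_{\mathcal A}(\omega) \leq \gamma$ one gets $\epsilon \geq \nu(z) > 0$, and the elementary fact that the $\boldsymbol{x}$-support of $h_s$ is contained in that of $h$ gives $\nu_{\mathcal A}(h_s) \geq \nu_{\mathcal A}(h) > \gamma$; combined with $h_s = \boldsymbol{x}^{I_s} H_s$ and $\nu(\boldsymbol{x}^{I_s}) = \varsigma_{\mathcal A}(\omega) - s\nu(z)$ this yields $\nu_{\mathcal A}(H_s) > \gamma - \varsigma_{\mathcal A}(\omega) + s\nu(z) \geq \epsilon$ for $s \geq 1$, and $H_0 = 0$ is trivial.

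For the dominant alternative, where $\epsilon = \nu(\boldsymbol{x}^J) - \varsigma_{\mathcal A}(\omega) + \nu(z)$, I will first expand $\tilde U = \sum_{k \geq 0} \tilde U_k(\boldsymbol{x}, \boldsymbol{y}_{\leq \ell}) z^k$ and note that each $\tilde U_k$ lies in the ideal $(\boldsymbol{x})$ of $\Omega^0_{\mathcal A}$ (and hence $\nu_{\mathcal A}(\tilde U_k) > 0$), which gives the level formulas $h_1 = \boldsymbol{x}^J(\xi + \tilde U_0)$ and $h_s = \boldsymbol{x}^J \tilde U_{s-1}$ for $s \geq 2$. The monomial divisibility $\boldsymbol{x}^{I_1} \mid h_1$, together with the appearance of the monomial $\xi \boldsymbol{x}^J$ in $h_1$, forces $I_1 \leq J$ componentwise, whence a priori only the weak bound $\nu(\boldsymbol{x}^J) \geq \varsigma_{\mathcal A}(\omega) - \nu(z)$.

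The heart of the argument, and the step I expect to be the main obstacle, is upgrading this to a strict inequality. I plan to invoke Lemma \ref{lema:descrition of the reduced critical part}: the reduced critical part satisfies resonance \textbf{r2a}, whose form $\boldsymbol{x}^I(z - \lambda \boldsymbol{x}^{\boldsymbol{p}})^\chi \, d\boldsymbol{x}^{\boldsymbol{\tau}}/\boldsymbol{x}^{\boldsymbol{\tau}}$ carries no $dz$-component; and since $\chi \geq 2$, the binomial expansion of $(z - \lambda \boldsymbol{x}^{\boldsymbol{p}})^\chi$ produces a nonzero contribution at level $s = 1$, so $\bar\omega^*_1$ is a nonzero multiple of $d\boldsymbol{x}^{\boldsymbol{\tau}}/\boldsymbol{x}^{\boldsymbol{\tau}}$ and in particular $\bar h_1^* = 0$. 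If we had $I_1 = J$, however, then $H_1 = \xi + \tilde U_0$ would reduce modulo the maximal ideal to $\xi \neq 0$, contradicting $\bar h_1^* = 0$. Hence $I_1 < J$ strictly in at least one coordinate, and since $\nu(x_i) > 0$ for every $i$ we obtain $\nu(\boldsymbol{x}^J) > \varsigma_{\mathcal A}(\omega) - \nu(z)$, establishing $\epsilon > 0$.

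The bounds $\nu_{\mathcal A}(H_s) \geq \epsilon$ in the dominant case will then fall out of the explicit level description: $H_0 = 0$ trivially; $\nu_{\mathcal A}(H_1) = \nu(\boldsymbol{x}^J) - (\varsigma_{\mathcal A}(\omega) - \nu(z)) = \epsilon$ exactly; and for $s \geq 2$, $\nu_{\mathcal A}(h_s) = \nu(\boldsymbol{x}^J) + \nu_{\mathcal A}(\tilde U_{s-1}) > \nu_{\mathcal A}(h)$, giving $\nu_{\mathcal A}(H_s) > \epsilon$.
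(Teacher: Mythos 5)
Your proof is correct and follows essentially the same route as the paper's: the recessive/dominant split coming from strong $\gamma$-finality of $h$, the factorization of the levels $h_s=\boldsymbol{x}^J V_s$ with $V_s$ supported away from the constant term for $s\neq 1$, and the contradiction with the \textbf{r2a} form of $\bar\omega^*$ (no $dz/z$-component at level $1$) if $H_1$ were a unit. You merely spell out the ``incompatibility with Equation \eqref{eq:segmentocriticoconr2a}'' that the paper leaves implicit, which is a welcome clarification rather than a deviation.
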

\begin{proof} If $\nu_{\mathcal A}(h)>\gamma$, we have $\epsilon= \gamma-\varsigma_{\mathcal A}(\omega)+\nu(z)\geq \nu(z)>0$. Assume that $\nu_{\mathcal A}(h)\leq \gamma$. Then $h=U\boldsymbol{x}^{J}$ where $U$ is a unit in $\Omega^0_{\mathcal A}$ and hence $\nu_{\mathcal A}(h)=\nu(\boldsymbol{x}^J)$. Write $h=\sum_{s\geq 1}z^{s-1}h_{s}$, with $h_s\in k[[\boldsymbol{x},\boldsymbol{y}_{\leq \ell}]]$. We have that $h_1=\boldsymbol{x}^JV_1$, where $V_1$ is a unit in $k[[\boldsymbol{x},\boldsymbol{y}_{\leq \ell}]]$. The $1$-level $\omega_1$ of $\omega$ is given by
\[
\omega_1=\eta_1+h_1dz/z=\boldsymbol{x}^{I_1}(\alpha_1+\boldsymbol{x}^{J-I_1}V_1dz/z).
\]
We necessarily have that $\epsilon=\nu(\boldsymbol{x}^{J-I_1})>0$, otherwise we obtain an incompatibility with Equation \eqref{eq:segmentocriticoconr2a}.
In fact, for any $s=0,1,\ldots,\chi$, we have that $h_s=\boldsymbol{x}^{J}V_s$, where $V_s\in k[[\boldsymbol{x},\boldsymbol{y}_{\leq \ell}]]$ and $V_0=0$. We obtain that
\[
H_s= \boldsymbol{x}^{J-I_s}V_s,  \quad s=0,1,\ldots,\chi.
\]
We know that $H_0=0$ and hence $\nu_{\mathcal A}(H_0)=\infty>\epsilon$. For $s\geq 1$ we have
\[
\nu_{\mathcal A}(H_s)=\nu_{\mathcal A}(\boldsymbol{x}^{J-I_s})+\nu_{\mathcal A}(V_s)\geq \nu_{\mathcal A}(\boldsymbol{x}^{J-I_s})=\nu_{\mathcal A}(h)-\varsigma_{\mathcal A}(\omega)+s\nu(z)\geq\epsilon.
\]
This ends the proof.
\end{proof}
\begin{remark} Since $\beta_s=\alpha_s+H_sdz/z$ is $0$-final dominant and $\nu_{\mathcal A}(H_s)>\epsilon$, we conclude that $\alpha_s$ is $0$-final dominant for $s=0,1,\ldots,\chi$.
\end{remark}

Lemma \ref{lema:proporcionalidad} below provides the key property we need to perform a ``useful''  Tschirnhausen transformation. Let us note that we need to invoke the $\gamma$-truncated integrability condition.
\begin{lemma}
\label{lema:proporcionalidad} For any $1\leq s\leq \chi$, we have  $\alpha_s=U_s\alpha_0+\tilde\alpha_s$, where $U_s$ is a unit in
$k[[\boldsymbol{x},\boldsymbol{y}_{\leq \ell}]]$  and $\nu_{\mathcal A}(\tilde\alpha_s)\geq \min\{\epsilon,\nu(z)\}$.
\end{lemma}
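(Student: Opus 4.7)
My plan is to reduce the statement to the valuation estimate
\[
\nu_{\mathcal A}(\alpha_0 \wedge \alpha_s) \geq \rho, \qquad \rho := \min\{\epsilon,\nu(z)\},
\]
which, combined with the fact that $\alpha_0$ is $0$-final dominant, allows me to apply Proposition \ref{prop:trucateddivision} and obtain a decomposition $\alpha_s = U_s \alpha_0 + \tilde\alpha_s$ with $\nu_{\mathcal A}(\tilde\alpha_s)\geq \rho$; the proposition supplies $U_s$ as the ratio of the $dx_1/x_1$-coefficients of $\alpha_s$ and $\alpha_0$ (after selecting a suitable unit coefficient in $\alpha_0$), and the remaining task is then to verify that this ratio is in fact a unit in $k[[\boldsymbol{x},\boldsymbol{y}_{\leq\ell}]]$.

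For the wedge estimate, writing $\eta_t = \boldsymbol{x}^{I_t}\alpha_t$ with $\nu(\boldsymbol{x}^{I_t})=\varsigma - t\nu(z)$ for $0\leq t\leq \chi$ converts the goal into
\[
\nu_{\mathcal A}(\eta_0 \wedge \eta_s)\geq 2\varsigma - s\nu(z) + \rho,
\]
which I prove by induction on $s$, using the $2$-form $\Delta_s$ of Equation \eqref{eq:condicionesinttruncada}. Isolating in the defining sum the extreme pairs $(i,j)\in\{(0,s),(s,0)\}$ and using $h_0=0$ gives
\[
\Delta_s = s\,\eta_s\wedge\eta_0 + \eta_0\wedge dh_s + h_s\,d\eta_0 + \sum_{\substack{i+j=s\\ 1\leq i,j\leq s-1}}\bigl(j\,\eta_j\wedge\eta_i + h_i\,d\eta_j + \eta_i\wedge dh_j\bigr).
\]
The boundary terms $\eta_0\wedge dh_s$ and $h_s\,d\eta_0$ admit the bound $2\varsigma - s\nu(z) + \epsilon$ by Proposition \ref{prop:explicitvalueofthedifferential} and Lemma \ref{lema:sinetiqueta}, and the same estimate applies to each middle mixed term $h_i\,d\eta_j$ and $\eta_i\wedge dh_j$. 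The delicate pieces are the middle double-wedge terms $j\,\eta_j\wedge\eta_i$ with $1\leq i,j\leq s-1$: here I invoke the inductive hypothesis $\alpha_i = U_i\alpha_0 + \tilde\alpha_i$, $\alpha_j = U_j\alpha_0 + \tilde\alpha_j$ with $\nu_{\mathcal A}(\tilde\alpha_i),\nu_{\mathcal A}(\tilde\alpha_j)\geq \rho$, so that
\[
\alpha_j\wedge\alpha_i = U_j\,\alpha_0\wedge\tilde\alpha_i + \tilde\alpha_j\wedge U_i\,\alpha_0 + \tilde\alpha_j\wedge\tilde\alpha_i
\]
(the $U_iU_j\,\alpha_0\wedge\alpha_0$ contribution vanishing identically), which yields $\nu_{\mathcal A}(\eta_j\wedge\eta_i) \geq 2\varsigma - s\nu(z) + \rho$. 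Since $\varsigma\leq\gamma$ and $s\geq 1$ imply $2\gamma \geq 2\varsigma - s\nu(z) + \rho$, combining $\nu_{\mathcal A}(\Delta_s)\geq 2\gamma$ with the estimates above and isolating $s\,\eta_s\wedge\eta_0$ produces the required bound; in characteristic zero, the integer $s\neq 0$ can be divided out.

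For the unit check, Lemma \ref{lema:descrition of the reduced critical part} expresses the reduced critical part as $\bar\omega^* = \boldsymbol{x}^I(z-\lambda\boldsymbol{x}^{\boldsymbol p})^\chi\, d\boldsymbol{x}^{\boldsymbol\tau}/\boldsymbol{x}^{\boldsymbol\tau}$ with $\boldsymbol\tau\ne\boldsymbol 0$; expanding the binomial, for every $0\leq s\leq\chi$ the reduction of $\alpha_s$ modulo the maximal ideal equals $c_s\, d\boldsymbol{x}^{\boldsymbol\tau}/\boldsymbol{x}^{\boldsymbol\tau}$ with $c_s = \binom{\chi}{s}(-\lambda)^{\chi-s}\in k^{*}$. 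Choosing an index $i$ with $\tau_i\ne 0$ and relabelling to $i=1$, the $dx_1/x_1$-coefficients of $\alpha_0$ and $\alpha_s$ reduce to the nonzero scalars $c_0\tau_1$ and $c_s\tau_1$, so both are units in $k[[\boldsymbol{x},\boldsymbol{y}_{\leq\ell}]]$ and their ratio $U_s$ is a unit. I expect the main obstacle to be the middle double-wedge terms, because the naive bound $\nu_{\mathcal A}(\eta_j\wedge\eta_i)\geq 2\varsigma - s\nu(z)$ falls short of the target by exactly $\rho$, and only the inductive hypothesis combined with the nilpotency $\alpha_0\wedge\alpha_0=0$ recovers the missing margin.
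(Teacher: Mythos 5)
Your argument follows essentially the same route as the paper's: both extract the estimate $\nu_{\mathcal A}(\alpha_s\wedge\alpha_0)\geq\min\{\epsilon,\nu(z)\}$ from $\nu_{\mathcal A}(\Delta_s)\geq 2\gamma$ by controlling the middle double-wedge terms via the inductive decomposition of the lower-index $\alpha_i$'s (and bounding the mixed and boundary terms with Lemma \ref{lema:sinetiqueta} and Proposition \ref{prop:explicitvalueofthedifferential}), then invoke Proposition \ref{prop:trucateddivision}. You additionally supply the explicit check that $U_s$ is a unit, via the binomial expansion of the reduced critical part in Lemma \ref{lema:descrition of the reduced critical part}, a point the paper's proof leaves implicit after applying the truncated division.
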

\begin{proof} Let us consider the description of the truncated integrability condition given in Subsection \ref{Truncated Integrability Condition}. In particular, recall that $\nu_{\mathcal A}(\Delta_s)\geq 2\gamma$, where
\begin{equation*}
 \Delta_s=\sum_{i+j=s}\Delta_{ij},
\quad\Delta_{ij}=j\eta_j\wedge\eta_i+h_id\eta_j+\eta_i\wedge dh_j ,
\end{equation*}
see Equation \eqref{eq:condicionesinttruncada}. For any $0\leq i,j\leq s$ such that $i+j=s$ we have
\[
\Delta_{ij}=\boldsymbol{x}^{I_i+I_j}\Xi_{ij},\quad \Xi_{ij}=
\left(j\alpha_j\wedge\alpha_i+
H_j(\frac{d\boldsymbol{x}^{I_i}}{\boldsymbol{x}^{I_i}}\wedge\alpha_i)
+\alpha_i\wedge
(H_j\frac{d\boldsymbol{x}^{I_j}}{\boldsymbol{x}^{I_j}}+dH_j)
\right).
\]
Then
$
\Xi_{ij}=\left(j\alpha_j\wedge\alpha_i+\vartheta_{ij}
\right)
$, where $\nu_{\mathcal A}(\vartheta_{ij})\geq \epsilon$. If $i+j=s$ we have that
\[
\nu(\boldsymbol{x}^{I_i+I_j})=2\varsigma_{\mathcal A}(\omega)-s\nu(z).
\]
In particular, there is $J_s$ such that $J_s=I_i+I_j$ when $i+j=s$.  Then
\begin{equation*}
\Delta_s=\boldsymbol{x}^{J_s}\Xi_s,\quad \Xi_s=\sum_{i+j=s}(j\alpha_j\wedge\alpha_i)+\vartheta_s,\quad \vartheta_s=\sum_{i+j=s}\vartheta_{ij},
\end{equation*}
where we have $\nu_{\mathcal A}(\vartheta_s)\geq \epsilon$. Let us consider the following computation:
\begin{align*}
\nu_{\mathcal A}(\Xi_s)&=\nu_{\mathcal A}(\Delta_s)-\nu(\boldsymbol{x}^{J_s})=
\\&=\nu_{\mathcal A}(\Delta_s)-2\varsigma_{\mathcal A}(\omega)+s\nu(z)\geq 2(\gamma-\varsigma_{\mathcal A}(\omega))+s\nu(z)\geq s\nu(z).
\end{align*}
Then we have $\nu_{\mathcal A}(\Xi_s)\geq s\nu(z)$.

Now, let us start the proof by finite induction on $1\leq s\leq \chi$. If $s=1$, we have
$
\Xi_1=\alpha_1\wedge\alpha_0+\vartheta_1
$.
Hence $\alpha_1\wedge\alpha_0=\Xi_1-\vartheta_1$. We deduce that \[\nu_{\mathcal A}(\alpha_1\wedge\alpha_0)\geq\min\{\epsilon,\nu(z)\}.\]  Recalling that $\alpha_0$ is $0$-final dominant, we obtain $U_1$ and $\tilde\alpha_1$
by Proposition
 \ref{prop:trucateddivision}.

 Assume  that $1<s\leq\chi$ and that the result is true for $1\leq s'<s$. Denote
 \[
 \varpi_s=  \sum_{1\leq i,j\leq s-1; i+j=s}j\alpha_j\wedge\alpha_i=
 \sum_{1\leq i,j\leq s-1; i+j=s}j(U_j\alpha_0+\tilde\alpha_j)\wedge(U_i\alpha_0+\tilde\alpha_i).
 \]
 Note that $\nu_{\mathcal A}(\varpi_s)\geq \min\{\epsilon,\nu(z)\}$. Now, we have
 $s\alpha_s\wedge\alpha_0=
 \Xi_s-\vartheta_s-\varpi_s$.
 This implies that $\nu_{\mathcal A}(\alpha\wedge\alpha_0)\geq\min\{\epsilon,\nu(z)\}$ and we obtain $U_s$ and $\tilde\alpha_s$ as for $s=1$.
\end{proof}

\subsection{Reduction to Critical Height One. Tschirnhausen Transformation}
\label{Reduction to Critical Height One 2} In this subsection,  we end the proof of Proposition \ref{prop:critical height2}.
Let us take the notations and reductions in Subsection \ref{Reduction to Critical Height One}. We know that we may assume the following additional properties:
\begin{itemize}
\item[A1:]Then main vertex and the critical vertex coincide. This is equivalent to saying that $\nu_{\mathcal A}(\omega)=\varsigma_{\mathcal A}(\omega)-\chi\nu(z)$.
\item[A2:] $H_{{\mathcal A},\omega}$ is strongly $\gamma$-final and $({\mathcal A},\omega)$ has the $\gamma$-recessive or $\gamma$-dominant horizontal stability.
\item[A3:] Up to performing a $0$-nested transformation, for any $0\leq s\leq \chi$, we have $\omega_s=\boldsymbol{x}^{I_s}\beta_s$, with $\beta_s$ being $0$-final dominant and $\nu(\boldsymbol{x}^{I_s})=\varsigma_{\mathcal A}(\omega)-s\nu(z)$.
\end{itemize}
The above properties are stable under any further normalized transformations.
Let us recall that a $0$-nested transformation can be considered a normalized transformation, see Remark \ref{rk: degeneratenormalizedtransforamtions}.

By Lemma \ref{lema:sinetiqueta}, for $0\leq s\leq\chi$ we know that
\[
\beta_s=\alpha_s+H_sdz/z,
\]
with $\nu_{\mathcal A}(H_s)\geq\min\{\nu(z),\epsilon\}$,
where $\epsilon>0$ is given by
\begin{equation*}
\epsilon=\min\{\gamma,\nu_{\mathcal A}(h)\}-\varsigma_{\mathcal A}(\omega)+\nu(z).
\end{equation*}
In particular, each $\alpha_s$ is $0$-final dominant. We also know that there are units $U_s\in k[[\boldsymbol{x},\boldsymbol{y}_{\leq \ell}]]$ such that
\[
\alpha_s=U_s\alpha_0+\tilde\alpha_s,\quad 1\leq s\leq \chi, \quad \nu_{\mathcal A}(\tilde\alpha_s)\geq\min\{\nu(z),\epsilon\}.
\]
Let us prepare the units $U_s$ to obtain a ``Tschirnhausen coordinate change''.
\begin{lemma}
\label{lema:preparingtheunits}
Up to performing an additional $\ell$-nested transformation, the formal units $U_s$ can be written as
\[
U_s=W_s+\boldsymbol{x}^{K}\widetilde{W}_s,
\]  with $\nu(\boldsymbol{x}^K)>\gamma$, $W_s\in
k[\boldsymbol{x},\boldsymbol{y}_{\leq\ell}]\subset {\mathcal O}_{\mathcal A}$ and $\widetilde{W}_s\in k[[\boldsymbol{x},\boldsymbol{y}_{\leq\ell}]]$.
\end{lemma}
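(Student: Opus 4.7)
The strategy is to iteratively extract, monomial by monomial, the ``visible'' (value $\leq \gamma$) part of each $U_s$, using the induction hypothesis together with Proposition \ref{prop:notfinalformsandcriticalvalue}. Since $U_s \in k[[\boldsymbol{x},\boldsymbol{y}_{\leq \ell}]]$, the differential $dU_s$ satisfies $I_\mathcal{A}(dU_s) \leq \ell$, so every invocation of the induction hypothesis for Theorem \ref{teo:inductivestatement} (applied via Corollary \ref{cor:gammafinalfdf}) produces only an $\ell$-nested transformation. Similarly, Proposition \ref{prop:notfinalformsandcriticalvalue} requires a sequence of $\ell'$-Puiseux's packages for $\ell' = 1,\ldots,m-r$, but only those with $\ell' \leq \ell$ act nontrivially on $U_s$, so this too amounts to an $\ell$-nested transformation.

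The iteration is organized as follows. Put $\xi_{s,0} := U_s(0) \in k^{*}$ and $F_{s,0} := U_s - \xi_{s,0} \in \mathfrak{m}_\mathcal{A}$. At step $k$, given $F_{s,k} \in \mathfrak{m}_\mathcal{A}$ with $\nu_\mathcal{A}(F_{s,k}) \leq \gamma$, first apply the induction hypothesis to make $F_{s,k}$ a $\gamma$-final formal function. If the result is $\gamma$-final recessive, we stop; otherwise, it is $\gamma$-final dominant, so there exist $I_{k+1}$ with $\nu(\boldsymbol{x}^{I_{k+1}}) = \nu_\mathcal{A}(F_{s,k})$ and a formal unit $V_{k+1}$ with $\xi_{s,k+1} := V_{k+1}(0) \neq 0$ such that
\[
F_{s,k} = \xi_{s,k+1}\,\boldsymbol{x}^{I_{k+1}} + \boldsymbol{x}^{I_{k+1}}(V_{k+1} - \xi_{s,k+1}) + R_{s,k},\qquad \nu_\mathcal{A}(R_{s,k}) > \nu(\boldsymbol{x}^{I_{k+1}}).
\]
Define $F_{s,k+1}$ as the sum of the last two terms. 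Its explicit initial form at value $\nu(\boldsymbol{x}^{I_{k+1}})$ lies in $\bar{\mathfrak{m}}_\mathcal{A}\mathcal{G}_\mathcal{A}$ because $V_{k+1}-\xi_{s,k+1}$ vanishes at the origin; hence $\mathfrak{c}^0_\mathcal{A}(F_{s,k+1}) = 0$, and Proposition \ref{prop:notfinalformsandcriticalvalue} then produces an $\ell$-nested transformation strictly increasing $\nu_\mathcal{A}(F_{s,k+1})$.

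The set $\mathcal{V}_\mathcal{A} \cap [0,\gamma]$ is finite, since $\nu(x_j) > 0$ for every $j$; the strict increase of $\nu_\mathcal{A}(F_{s,k})$ therefore forces termination after finitely many steps. At termination we obtain $U_s = \xi_{s,0} + \sum_{k=1}^{K} \xi_{s,k}\boldsymbol{x}^{I_k} + F_{s,K}$ with $\nu_\mathcal{A}(F_{s,K}) > \gamma$. Proposition \ref{prop:monlocprincipalization} then yields $F_{s,K} = \boldsymbol{x}^K \widetilde{W}_s$ with $\nu(\boldsymbol{x}^K) > \gamma$ after further independent blow-ups, and $W_s := \xi_{s,0} + \sum_{k=1}^{K}\xi_{s,k}\boldsymbol{x}^{I_k} \in k[\boldsymbol{x}] \subset k[\boldsymbol{x},\boldsymbol{y}_{\leq \ell}]$ provides the desired polynomial. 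Because the decomposition $U_s = W_s + \boldsymbol{x}^K \widetilde{W}_s$ with $\nu(\boldsymbol{x}^K) > \gamma$ is stable under any further $\ell$-nested transformation, we process the finitely many indices $s = 0,1,\ldots,\chi$ sequentially and compose all the transformations into a single $\ell$-nested one.

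The main obstacle is the delicate interplay between the induction hypothesis, which delivers only $\gamma$-finality and may leave $\nu_\mathcal{A}$ unchanged, and Proposition \ref{prop:notfinalformsandcriticalvalue}, which strictly increases $\nu_\mathcal{A}$ only when the explicit initial form lies in $\bar{\mathfrak{m}}_\mathcal{A}\mathcal{G}_\mathcal{A}$. The alternation of these two operations is exactly what is needed to push $\nu_\mathcal{A}(F_{s,k})$ strictly upward through the finite set $\mathcal{V}_\mathcal{A}\cap[0,\gamma]$; without this alternation, the procedure could stall on a dominant situation whose unit coefficient has nontrivial $\boldsymbol{y}_{\leq\ell}$-dependence and so cannot be expressed polynomially.
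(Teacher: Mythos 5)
Your proposal diverges substantially from the paper's argument, and unfortunately it contains two genuine gaps. The paper's proof is a one-shot combinatorial observation: since $\nu(x_i)>0$ and $\nu(y_j)>0$ for every $i,j$, there are only \emph{finitely many} monomials $\boldsymbol{x}^I\boldsymbol{y}_{\leq\ell}^K$ of value $\leq\gamma$, so one can simply set $W_s$ equal to the finite polynomial consisting of those monomials of $U_s$ and take $\widetilde W_s$ to be the tail (all remaining monomials already have value $>\gamma$). The $\ell$-nested transformation then consists purely of a $j$-Puiseux's package for each $j\leq\ell$ (which turns every $y_j$ into a monomial times a polynomial unit) followed by independent blow-ups for principalization. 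No application of the induction hypothesis is required at all, and the transformation used contains no coordinate changes, so $W_s$ is carried to a polynomial in the new parameters.

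Your iterative scheme has the following problems. First, the termination argument does not go through as written: the increasing sequence $\nu_{\mathcal{A}_k}(F_{s,k})$ lives in the sets $\mathcal{V}_{\mathcal{A}_k}$, which are \emph{not} fixed under the $\ell$-nested transformations you perform. Independent blow-ups and Puiseux's packages enlarge the set of visible monomial values (for example, a blow-up $x_j'=x_j/x_i$ creates the value $\nu(x_j)-\nu(x_i)$, which need not lie in $\mathcal{V}_{\mathcal{A}}$), so even though each $\mathcal{V}_{\mathcal{A}_k}\cap[0,\gamma]$ is finite, the union over $k$ can be infinite when $\Gamma$ is dense in $\mathbb{R}$, and the sequence of values can a priori accumulate strictly below $\gamma$. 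You would need a uniform lower bound on the jump at each step, and Proposition \ref{prop:notfinalformsandcriticalvalue} provides no such bound. Second, the claim that $W_s=\xi_{s,0}+\sum_k\xi_{s,k}\boldsymbol{x}^{I_k}$ is a polynomial in the final coordinates is not justified: the transformation obtained from Theorem \ref{teo:inductivestatement} is an arbitrary $\ell$-nested transformation, which may include $j$-coordinate changes $y_j'=y_j+f$ with $f$ a genuine power series (not a polynomial). A monomial $\boldsymbol{x}^{I_k}$ extracted at an intermediate step becomes, after a subsequent $j$-Puiseux's package, a new monomial times a power of $(y_j'+\lambda)$; a later coordinate change on $y_j'$ with non-polynomial $f$ then turns this into a formal series, so the final $W_s$ may fail to lie in $k[\boldsymbol{x},\boldsymbol{y}_{\leq\ell}]\subset\mathcal{O}_{\mathcal{A}}$ — and this is precisely the property the lemma needs, since $W_s$ must define a legitimate $(\ell+1)$-coordinate change later on. The paper sidesteps both difficulties by reading off $W_s$ directly from the monomial expansion of $U_s$ before performing any transformation, and by using only Puiseux's packages and independent blow-ups afterwards.
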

\begin{proof}
Write
$U_s\in k[[\boldsymbol{x},\boldsymbol{y}_{\leq \ell}]]$ in a $\gamma$-truncated way as
$
U_s=W_s+\widetilde W_s$, where  $W_s\in k[\boldsymbol{x},\boldsymbol{y}_{\leq \ell}]\subset {\mathcal O}_{\mathcal A}$,
and $\widetilde W_s$ is a formal series that we write as
\[
\widetilde W_s=\sum_{\nu(\boldsymbol{x}^I\boldsymbol{y}_{\leq \ell}^K)> \gamma}c_{IK}\boldsymbol{x}^I\boldsymbol{y}_{\leq \ell}^K.
\]
Let us perform an $\ell$-nested transformation containing a $j$-Puiseux's package for each $1\leq j\leq \ell$. Then each of the $c_{IK}\boldsymbol{x}^I\boldsymbol{y}_{\leq \ell}^K$ becomes a unit times a monomial in the independent variables, see Proposition \ref{prop:notfinalformsandcriticalvalue}. We principalize the list of of such monomials by using Proposition
\ref{prop:monlocprincipalization} and we are done.
\end{proof}

Let us perform the ``Tschirnhausen type'' coordinate change ${\mathcal A}\rightarrow {\mathcal A}'$ defined by
\begin{equation*}
z'=z-F{x}^{\boldsymbol{p}},\quad  F={W_{\chi-1}}(\chi W_\chi)^{-1}\in {\mathcal O}_{\mathcal A}\cap k[[\boldsymbol{x},\boldsymbol{y}_{\leq \ell}]]
\end{equation*}
and let us consider a $\gamma$-strict preparation $({\mathcal A}',\omega)\rightarrow ({\mathcal A}^\star,\omega)$. Lemma \ref{lema:avance epsilon} below is a key observation for finding the desired contradiction:
\begin{lemma}
\label{lema:avance epsilon} We have
$\nu(z^{\star})\geq \nu(z)+\min\{\epsilon,\nu(z)\}$.
\end{lemma}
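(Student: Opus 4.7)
The strategy is to prove $\nu(z') \geq \nu(z) + \min\{\epsilon, \nu(z)\}$ and then observe that $z^\star = z'$: the $\gamma$-strict preparation $\mathcal{A}' \to \mathcal{A}^\star$ consists of $\ell$-nested operations only, so it leaves the $(\ell+1)$-parameter unchanged. Since the ramification index is $1$, the contact function $\Phi = z/\boldsymbol{x}^{\boldsymbol{p}}$ satisfies $\nu(z) = \nu(\boldsymbol{x}^{\boldsymbol{p}})$, and writing $z' = \boldsymbol{x}^{\boldsymbol{p}}(\Phi - F)$ reduces the task to $\nu(\Phi - F) \geq \min\{\epsilon, \nu(z)\}$. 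I would split $\Phi - F = (\Phi - \lambda) - (F - \lambda)$ and bound each summand.

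The first step is to verify $F(0) = \lambda$. By Lemma \ref{lema:descrition of the reduced critical part}, the reduced critical part is $\bar\omega^* = \boldsymbol{x}^I(z - \lambda\boldsymbol{x}^{\boldsymbol{p}})^\chi \xi$, with $\xi = d\boldsymbol{x}^{\boldsymbol{\tau}}/\boldsymbol{x}^{\boldsymbol{\tau}}$. Expanding in powers of $z$ and matching with $\omega_s = \boldsymbol{x}^{I_s}(U_s \alpha_0 + \tilde\alpha_s + H_s\, dz/z)$ at the level of residue classes in $\bar\Omega^1_\mathcal{A}$, where the contributions of $\tilde\alpha_s$ and $H_s$ drop out by Lemmas \ref{lema:sinetiqueta} and \ref{lema:proporcionalidad}, yields $\bar\alpha_s = \binom{\chi}{s}(-\lambda)^{\chi-s}\xi$, and hence $U_s(0) = \binom{\chi}{s}(-\lambda)^{-s}$. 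Therefore $F(0) = W_{\chi-1}(0)/(\chi W_\chi(0)) = \lambda$, using $W_s(0) = U_s(0)$ from Lemma \ref{lema:preparingtheunits}.

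It remains to establish $\nu_\mathcal{A}(F - \lambda), \nu(\Phi - \lambda) \geq \min\{\epsilon, \nu(z)\}$. For the former, I would expand $\alpha_s = U_s \alpha_0 + \tilde\alpha_s$ beyond leading order; since $\alpha_0$ is $0$-final dominant (its $\xi$-component is a unit) and $\nu_\mathcal{A}(\tilde\alpha_s) \geq \min\{\epsilon, \nu(z)\}$ by Lemma \ref{lema:proporcionalidad}, the first nontrivial Taylor coefficient of $U_s$ is controlled by $\min\{\epsilon, \nu(z)\}$, and the polynomial approximation $U_s - W_s \in \boldsymbol{x}^K \Omega^0_\mathcal{A}$ with $\nu(\boldsymbol{x}^K) > \gamma$ (Lemma \ref{lema:preparingtheunits}) transfers this bound to $F - \lambda$. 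For the latter, a violation $\nu(\Phi - \lambda) < \min\{\epsilon, \nu(z)\}$ would, after a normalized Puiseux's package, drop the critical height via the classification in Proposition \ref{prop:estabilidadho}, contradicting the $\chi$-fixed hypothesis. Combining yields $\nu(\Phi - F) \geq \min\{\epsilon, \nu(z)\}$ and hence the claim. The main obstacle is the precise propagation of the $\tilde\alpha_s$-bound through the ratio $U_{\chi-1}/U_\chi$ to a sharp estimate on the next-order Taylor coefficient of $F$, which requires exploiting the invertibility of the $\xi$-component of $\alpha_0$ together with a careful column-wise $\boldsymbol{x}$-expansion of the division identity.
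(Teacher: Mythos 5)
Your reduction to showing $\nu(\Phi - F)\geq\min\{\epsilon,\nu(z)\}$ is sound (the strict $\gamma$-preparation is $\ell$-nested, so $z^{\star}=z'$ and $\nu(z')=\nu(z)+\nu(\Phi-F)$), but the way you propose to prove that inequality fails. Splitting $\Phi-F=(\Phi-\lambda)-(F-\lambda)$ and bounding each summand separately requires the lower bound $\nu(\Phi-\lambda)\geq\min\{\epsilon,\nu(z)\}$, and this is not available: the hypotheses only give $\nu(\Phi-\lambda)>0$. Your justification --- that a small $\nu(\Phi-\lambda)$ would force the critical height to drop by Proposition \ref{prop:estabilidadho} --- misreads that proposition: it characterizes the stabilization of the critical height purely in terms of the algebraic shape of $\bar\alpha$ (the resonance conditions), and places no constraint on the size of $\nu(\Phi-\lambda)$, which is the value of the variable produced by a plain Puiseux's package. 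The critical height can remain fixed through Puiseux's packages with $\nu(\Phi-\lambda)$ arbitrarily small; that is precisely the accumulation-of-values scenario the Tschirnhausen change is introduced to defeat. Indeed, if $\nu(\Phi-\lambda)\geq\min\{\epsilon,\nu(z)\}$ were automatic, the translation $z\mapsto z-\lambda\boldsymbol{x}^{\boldsymbol{p}}$ effected by an ordinary Puiseux's package would already advance $\varsigma$ by a uniform amount at each step and no Tschirnhausen transformation would be needed. Your decomposition discards exactly the cancellation between $\Phi$ and $F$ that the choice $F=W_{\chi-1}(\chi W_{\chi})^{-1}$ is designed to produce. (A secondary point: from your own formula $U_s(0)=\binom{\chi}{s}(-\lambda)^{-s}$ one gets $F(0)=U_{\chi-1}(0)/(\chi U_{\chi}(0))=-\lambda$, not $\lambda$.)

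The paper's proof avoids estimating $\nu(z')$ from its defining formula altogether. It computes the transformed levels $\eta'_{\chi}$ and $\eta'_{\chi-1}$, uses the identity $W_{\chi-1}-\chi FW_{\chi}=0$ together with Lemmas \ref{lema:sinetiqueta}, \ref{lema:proporcionalidad} and \ref{lema:preparingtheunits} to show that the abscissa of the $(\chi-1)$-level ends up at least $\nu(z)+\min\{\epsilon,\nu(z)\}$ to the right of that of the $\chi$-level, and only then recovers $\nu(z^{\star})$ from the Newton polygon: since $({\mathcal A}^{\star},\omega)$ must again satisfy \textbf{r2}, one has $\nu(z^{\star})=\nu_{{\mathcal A}^{\star}}(\eta^{\star}_{\chi-1})-\nu_{{\mathcal A}^{\star}}(\eta^{\star}_{\chi})$. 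The value of the new variable is thus read off from the slope of the new critical segment, which is forced by the persistence of the resonance; this indirect step is the essential mechanism, and it is what your direct approach is missing.
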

\begin{proof}
	Let us write $ \omega = \eta + hdz = \eta' + h dz' $, where $\eta' = \eta + hd(\boldsymbol{x}^{\boldsymbol{p}}F) $. Consider the decomposition in levels $ \eta =  \sum_{s=0}^{\infty}z^s\eta_s $ and $ \eta' =  \sum_{s=0}^{\infty}z'^s\eta'_s $. We have that
	\begin{align}
	\eta'_{\chi} & =  \eta_{\chi} + \boldsymbol{x}^{\boldsymbol{p}} \vartheta_{\chi} + d(\boldsymbol{x}^{\boldsymbol{p}}F) \xi_{\chi} \ , \nonumber \\
	\eta'_{\chi-1} & =  \eta_{\chi-1} - \chi \boldsymbol{x}^{\boldsymbol{p}}F \eta_{\chi} + \boldsymbol{x}^{2\boldsymbol{p}}\vartheta_{\chi-1} + d(\boldsymbol{x}^{\boldsymbol{p}}F) \xi_{\chi-1} \ , \label{eq:nuevonivelchi-1}
	\end{align}
	where $ \vartheta_{i} $ denotes the contribution to the new corresponding level coming from the levels $ \eta_s $ with $ s>\chi $, and $ \xi_{i} $ denotes the contribution coming from $ h $. We have that
	\begin{equation}\label{eq:valor_contribucion_eta}
		\nu_{\mathcal A}(\vartheta_{i}) \geq \nu_{\mathcal A}(\omega) = \nu_{\mathcal A}(\eta_{\chi}) \quad \text{for} \quad i=\chi-1,\chi \ .
	\end{equation}
	Since $ \nu_{\mathcal A}(h)\geq \varsigma_{\mathcal{A}}(\omega) - \nu(z) + \epsilon > \nu_{\mathcal A}(\eta_{\chi}) + \epsilon $ we also have that
	\begin{equation}\label{eq:valor_contribucion_h}
	\nu_{\mathcal A}(\xi_{i}) \geq \nu_{\mathcal A}(\omega) = \nu_{\mathcal A}(\eta_{\chi}) \quad \text{for} \quad i=\chi-1,\chi \ .
	\end{equation}
	In particular we obtain
	\begin{equation}\label{eq:valor_chi_estable}
		 \nu_{\mathcal A}(\eta'_{\chi}) = \nu_{\mathcal A}(\eta_{\chi}) \ .
	\end{equation}
	By Lemmas \ref{lema:proporcionalidad} and \ref{lema:preparingtheunits}, and taking into account that $ \eta_{\chi}= \boldsymbol{x}^{I_{\chi}}\alpha_{\chi}$ and $ \eta_{\chi}= \boldsymbol{x}^{I_{\chi}}\alpha_{\chi}$, where $ \boldsymbol{x}^{I_{\chi-1}}=\boldsymbol{x}^{I_{\chi}+\boldsymbol{p}} $,  we see that $\eta_{\chi-1} - \chi \boldsymbol{x}^{\boldsymbol{p}}F \eta_{\chi}$ can be written as	
	\begin{align*}
		&\boldsymbol{x}^{I_{\chi-1}}\Big((W_{\chi-1}+\boldsymbol{x}^{K}\widetilde{W}_{\chi-1})\alpha_0 + \tilde{\alpha}_{\chi-1} - \chi F\big((W_{\chi}+\boldsymbol{x}^{K}\widetilde{W}_{\chi}+ )\alpha_0+\tilde{\alpha}_{\chi}\big)\Big)= \\
		&\boldsymbol{x}^{I_{\chi-1}}\big( \boldsymbol{x}^{K}(\widetilde{W}_{\chi-1}-\chi F\widetilde{W}_{\chi})\alpha_0+  \tilde{\alpha}_{\chi-1}-\chi F\tilde{\alpha}_{\chi}\big) \ .
	\end{align*}
	Since $ \nu_{\mathcal A}(\boldsymbol{x}^{K})>\gamma $ and $ \nu_{\mathcal A}(\tilde{\alpha}_{i})>\min\{\epsilon,\nu(z)\} $ we have
	\begin{align}\label{eq:valornivelchi-1}
		\nu_{\mathcal A}(\eta_{\chi-1} - \chi \boldsymbol{x}^{\boldsymbol{p}}F \eta_{\chi}) & \geq  \nu(\boldsymbol{x}^{I_{\chi-1}}) + \min\{\gamma,\epsilon,\nu(z)\} \\
		& =  \nu_{\mathcal A}(\eta_{\chi})+\nu(z)+ \min\{\epsilon,\nu(z)\} \ . \nonumber
	\end{align}
	Now, using  \eqref{eq:valor_contribucion_eta}, \eqref{eq:valor_contribucion_h}, \eqref{eq:valor_chi_estable} and \eqref{eq:valornivelchi-1} in \eqref{eq:nuevonivelchi-1} we obtain
	\[
	\nu_{\mathcal A}(\eta'_{\chi-1})\geq \nu_{\mathcal A}(\eta'_{\chi}) + \nu(z)+\min\{ \nu(z),\epsilon \} \ .
	\]
	Finally, after performing the strict $ \gamma $-preparation, we obtain
	\[
	\nu_{\mathcal A}(\eta^{\star}_{\chi-1})\geq \nu_{\mathcal A}(\eta^{\star}_{\chi}) + \nu(z)+\min\{ \nu(z),\epsilon \} \ .
	\]
Note that condition \textbf{r2} must be satisfied for $({\mathcal A}^\star,\omega)$. In particular, we necessary have that $ \nu(z^{\star})=\nu_{\mathcal A}(\eta^{\star}_{\chi-1})-\nu_{\mathcal A}(\eta^{\star}_{\chi}) $, which is the desired property.
\end{proof}
Now, let us see how to obtain the desired contradiction. We know that $({\mathcal A}^\star,\omega)$ has the property of $\chi$-fixed critical height, and moreover the main vertex and the critical vertex are the same ones for $({\mathcal A},\omega)$ and for $({\mathcal A}^\star,\omega)$. More precisely, they are both the point
\[
(\varsigma_{\mathcal A}(\omega)-\chi\nu(z),\chi)= (\varsigma_{{\mathcal A}^\star}(\omega)-\chi\nu(z^\star),\chi).
\]
Since $ \varsigma_{\mathcal A}(\omega)-\chi\nu(z)=\varsigma_{{\mathcal A}^\star}(\omega)-\chi\nu(z^\star) $, and taking into account that $ \chi \geq 2 $, we have that
\begin{align}\label{eq:2-fixed_contradiction_1}
	\varsigma_{{\mathcal A}^\star}(\omega)-\nu(z^\star) & = \varsigma_{{\mathcal A}}(\omega)-\nu(z)+(\chi-1)(\nu(z^\star)-\nu(z))  \nonumber \\
	& \geq \varsigma_{{\mathcal A}}(\omega)-\nu(z)+(\nu(z^\star)-\nu(z)) .
\end{align}
By Lemma \ref{lema:avance epsilon}, Inequality \eqref{eq:2-fixed_contradiction_1} gives
\begin{align}\label{eq:2-fixed_contradiction_2}
	\varsigma_{{\mathcal A}^\star}(\omega)-\nu(z^\star) & \geq \varsigma_{{\mathcal A}}(\omega)-\nu(z)+\min\{\epsilon,\nu(z)\} =  \min \left\{ \epsilon +  \varsigma_{{\mathcal A}}(\omega)-\nu(z), \varsigma_{{\mathcal A}}(\omega)  \right\} \nonumber \\
	& =  \min \big\{ \min \{ \gamma, \nu_{\mathcal{A}}(h) \}, \varsigma_{{\mathcal A}}(\omega)  \big\} = \min\{ \gamma, \nu_{\mathcal{A}}(h), \varsigma_{{\mathcal A}}(\omega) \}, 
\end{align}
where we recall that $\epsilon=\min\{\gamma,\nu_{\mathcal A}(h)\}-\varsigma_{\mathcal A}(\omega)+\nu(z)$. On the other hand, since $ (\mathcal{A}^{\star},\omega) $ satisfies condition \textbf{r2}, we must have
\begin{equation}\label{eq:2-fixed_contradiction_3}
	\varsigma_{{\mathcal A}^\star}(\omega)-\nu(z^\star) < \min\{\gamma,\nu_{\mathcal{A}^{\star}}(h)\} = \min\{\gamma,\nu_{\mathcal{A}}(h)\},
\end{equation}
where the last equality is due to the fact that $ h $ is $ \gamma $-final. From Inequalities \eqref{eq:2-fixed_contradiction_2} and \eqref{eq:2-fixed_contradiction_3} we obtain
\begin{equation*}
\varsigma_{{\mathcal A}^\star}(\omega)-\nu(z^\star) \geq \varsigma_{{\mathcal A}}(\omega) .
\end{equation*}
Now, by Lemma \ref{lema:avance epsilon} we know that $ \nu(z^\star) > \nu(z) $, hence
\begin{equation*}%\label{eq:2-fixed_contradiction_4}
\varsigma_{{\mathcal A}^\star}(\omega) > \varsigma_{{\mathcal A}}(\omega) + \nu(z).
\end{equation*}
Assuming that $ (\mathcal{A},\omega) $ has the property of $ \chi $-fixed critical height, we can ite\-ra\-te the procedure performing Tschirnhausen transformations followed by strict $ \gamma $-preparations. We obtain a sequence of transformations
\[
\mathcal{A}^{\star}=\mathcal{A}^{1\star}\rightarrow \mathcal{A}^{2\star}\rightarrow \cdots \rightarrow \mathcal{A}^{n\star}\rightarrow \cdots
\]
such that
\[
\nu(z^{(i+1)\star})  > \nu(z^{i\star})  \quad \text{and} \quad  \varsigma_{{\mathcal A}^{(i+1)\star}}(\omega) > \varsigma_{{\mathcal A}^{i\star}}(\omega) + \nu(z^{i\star})
\]
for each index $ i\geq1$. Thus we have
\[
\varsigma_{{\mathcal A}^{(i+1)\star}}(\omega) > \varsigma_{{\mathcal A}}(\omega) + i\nu(z) .
\]
However, for any index $ j $ with $ \varsigma_{{\mathcal A}}(\omega) + (j-1)\nu(z) > \nu_{\mathcal{A}}(h)= \nu_{\mathcal{A}^{j\star}}(h) $, the condition \textbf{r2} can not be satisfied in $ (\mathcal{A}^{j\star},\omega) $.

We have just proved that $ (\mathcal{A},\omega) $ does not satisfy  the property of $ \chi $-fixed critical height, hence the proof of Proposition \ref{prop:critical height2} is ended.

 \subsection{Critical Height One}
 \label{Critical Height One}
  Here we give a  proof of Proposition \ref{prop:critical height1}. Thus, we assume that $({\mathcal A},\omega)$ has the property of $1$-fixed critical height and we look for a contradiction.

  As a consequence of the study of the evolution of resonances, we can do a first reduction. Let us first consider the following definitions:
  \begin{itemize}
  \item We say that $({\mathcal A},\omega)$ is  of an {\em\textbf{r2a}-resonant persistent type} if and only if $({\mathcal B},\omega)$ is \textbf{r2a} for any normalized transformation $({\mathcal A},\omega)\rightarrow({\mathcal B},\omega)$.
   \item We say that $({\mathcal A},\omega)$ is  of an {\em \textbf{r2b}$^+$-resonant persistent type} if and only if for any normalized transformation $({\mathcal A},\omega)\rightarrow({\mathcal B},\omega)$, we have that
      $({\mathcal B},\omega)$ is \textbf{r2b}-$\boldsymbol{\upsilon}$, where $\boldsymbol{\upsilon}$ is not $\mathcal B$-negative and $\boldsymbol{\upsilon}\ne 0$.
  \item We say that $({\mathcal A},\omega)$ is  of an {\em \textbf{r2b}$^{\times}$-resonant persistent type} if and only if for any normalized transformation $({\mathcal A},\omega)\rightarrow({\mathcal B},\omega)$, we have that
      $({\mathcal B},\omega)$ is \textbf{r2b}-$\boldsymbol{\upsilon}$, where $\boldsymbol{\upsilon}$ is $\mathcal B$-negative.
  \end{itemize}
  \begin{lemma}
  \label{lema:reduccionalcasor2}
  There is a normalized transformation $({\mathcal A},\omega)\rightarrow({\mathcal B},\omega)$ such that $({\mathcal B},\omega)$ is of  a resonant persistent type \textbf{r2a},  \textbf{r2b}$^{+}$ or \textbf{r2b}$^{\times}$.
  \end{lemma}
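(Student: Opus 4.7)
My approach is a case analysis driven by the transition rules in Propositions \ref{prop:r1ycambiodecoordenadas} and \ref{prop:r1ypaquetepuiseux}. The key preliminary observation is that two resonance types are \emph{absorbing} under any normalized transformation starting from a $1$-fixed critical height configuration: the type \textbf{r2a} (by Propositions \ref{prop:r1ycambiodecoordenadas}(1) and \ref{prop:r1ypaquetepuiseux}(1)), and the type \textbf{r2b}-$\boldsymbol{\upsilon}$ with $\boldsymbol{\upsilon}\ne\boldsymbol{0}$ and $\boldsymbol{\upsilon}$ not $\mathcal{A}$-negative (by Propositions \ref{prop:r1ycambiodecoordenadas}(3) and \ref{prop:r1ypaquetepuiseux}(3)). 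Reaching either of them produces, respectively, an \textbf{r2a}-persistent or an \textbf{r2b}$^+$-persistent configuration, so in either case I simply stop.

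First I would dispose of the two ``non-absorbing, non-$\times$'' configurations by a single normalized Puiseux's package. From an \textbf{r1} configuration, Proposition \ref{prop:r1ypaquetepuiseux}(4) gives an \textbf{r2b}-$\boldsymbol{\upsilon}^\star$ with $\boldsymbol{\upsilon}^\star\ne\boldsymbol{0}$ not $\mathcal{A}^\star$-negative, which is absorbing. From an \textbf{r2b}-$\boldsymbol{0}$ configuration, the vector $\boldsymbol{0}$ is (trivially) not $\mathcal A$-negative, so Proposition \ref{prop:r1ypaquetepuiseux}(3) applies and yields the same absorbing \textbf{r2b}$^+$-type conclusion.

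With these facts in place, the main step is the following exhaustive dichotomy. Either every normalized transformation $({\mathcal A},\omega)\rightarrow({\mathcal B},\omega)$ produces an \textbf{r2b}-$\boldsymbol{\upsilon}$ with $\boldsymbol{\upsilon}$ being $\mathcal{B}$-negative, in which case $({\mathcal A},\omega)$ itself is \textbf{r2b}$^{\times}$-resonant persistent and I take the trivial (identity) transformation; or else there is some normalized transformation $({\mathcal A},\omega)\rightarrow({\mathcal C},\omega)$ whose image is not of that form. Since $({\mathcal C},\omega)$ also has $1$-fixed critical height, Proposition \ref{prop:estabilidadho} forces one of the resonances \textbf{r1}, \textbf{r2a}, \textbf{r2b}-$\boldsymbol{\upsilon}$ at $\mathcal C$. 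Excluding the $\mathcal C$-negative case leaves exactly four possibilities at $({\mathcal C},\omega)$: \textbf{r2a}, \textbf{r2b}-$\boldsymbol{\upsilon}$ with $\boldsymbol{\upsilon}\ne\boldsymbol{0}$ not negative, \textbf{r1}, or \textbf{r2b}-$\boldsymbol{0}$. In the first two cases $({\mathcal C},\omega)$ is already persistent by the preliminary observation and I set $\mathcal{B}=\mathcal{C}$; in the last two cases, composing with one further normalized Puiseux's package lands in the absorbing \textbf{r2b}$^+$-persistent type.

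This argument is essentially bookkeeping on top of Propositions \ref{prop:r1ycambiodecoordenadas} and \ref{prop:r1ypaquetepuiseux}; the only delicate point is the four-way exhaustion, where one must notice that $\boldsymbol{\upsilon}=\boldsymbol{0}$ is not $\mathcal{A}$-negative (so it is not automatically covered by the $\times$-case) and therefore requires a separate Puiseux's package to be pushed into the absorbing stratum. There is no serious obstacle beyond identifying these cases correctly.
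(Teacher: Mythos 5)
Your proof is correct and takes essentially the same approach as the paper's: both are direct case analyses of the possible resonance types, driven by the transition rules in Propositions~\ref{prop:r1ycambiodecoordenadas} and~\ref{prop:r1ypaquetepuiseux}, identifying \textbf{r2a} and ``not-negative, nonzero \textbf{r2b}'' as absorbing and dispatching \textbf{r1} with one extra Puiseux's package. You are actually slightly more careful than the paper's terse proof in two small places: you explicitly note that the \textbf{r2b}-$\boldsymbol{0}$ case needs one further Puiseux's package before landing in the \textbf{r2b}$^+$ absorbing stratum (the paper glosses over the $\boldsymbol{\upsilon}\neq\boldsymbol{0}$ requirement), and the only nitpick is that for the coordinate-change half of your absorbing observation you should cite Proposition~\ref{prop:r1ycambiodecoordenadas}~(2) \emph{and}~(3), since (3) alone only yields $\boldsymbol{\upsilon}^\star\neq 0$ and not the ``not $\mathcal{A}^\star$-negative'' part.
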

  \begin{proof} Let us recall the statements of Propositions \ref{prop:r1ycambiodecoordenadas} and \ref{prop:r1ypaquetepuiseux}.
  If we can find  a normalized transformation $({\mathcal A},\omega)\rightarrow({\mathcal B},\omega)$ such that
  $({\mathcal B},\omega)$ satisfies the resonant condition \textbf{r2a}, we get the persistent type \textbf{r2a}.
  If there is a normalized transformation $({\mathcal A},\omega)\rightarrow({\mathcal B},\omega)$ such that
  $({\mathcal B},\omega)$ is \textbf{r2b}-$\boldsymbol{\upsilon}$, with $\boldsymbol{\upsilon}$ being  not ${\mathcal B}$-negative, we obtain the persistent type \textbf{r2b}$^+$. If we can get the condition \textbf{r1}, we are in the  case \textbf{r2b}$^+$ after just one normalized Puiseux's package. The only remaining possibility is to be persistently in the case \textbf{r2b}-$\boldsymbol{\upsilon}$ with ``negative'' $\boldsymbol{\upsilon}$, this is the case of the persistent type \textbf{r2b}$^\times$.
  \end{proof}
 Let us make the following assumption from now on:
\begin{itemize}
	\item[P0:]$({\mathcal A},\omega)$ is of a persistent type  \textbf{r2a}, \textbf{r2b}$^+$ or \textbf{r2b}$^\times$.
\end{itemize}
  \begin{remark} In particular, for any normalized transformation $({\mathcal A},\omega)\rightarrow ({\mathcal B},\omega)$ the ramification index of ${\mathcal B}$ is equal to one. That is $({\mathcal A},\omega)$ is never ramified.
  \end{remark}

   With the same arguments as in Subsection
\ref{Reduction to Critical Height One 2}, we assume the following
additional properties (that are stable under any further normalized transformation):
\begin{itemize}
\item[P1:] Then main vertex and the critical vertex coincide. This is equivalent to saying that $\nu_{\mathcal A}(\omega)=\varsigma_{\mathcal A}(\omega)-\nu(z)$.
\item[P2:] $H_{{\mathcal A},\omega}$ is strongly $\gamma$-final and $({\mathcal A},\omega)$ has the $\gamma$-recessive or $\gamma$-dominant horizontal stability.
\item[P3:] Up to a $0$-nested transformattion, for $s=0,1$, we have $\omega_s=\boldsymbol{x}^{I_s}\beta_s$, with $\beta_s$ being $0$-final dominant and $\nu(\boldsymbol{x}^{I_s})=\varsigma_{\mathcal A}(\omega)-s\nu(z)$.
\end{itemize}
Once we have the above reductions, we are no more interested in performing normalized Puiseux's packages. We look for a contradiction with the existence of $({\mathcal A},\omega)$ with the stated properties by performing only normalized coordinate changes. More precisely, we will contradict the property $\varsigma_{\mathcal A}(\omega)<\gamma$.

Let us introduce a new reduction that is necessary in order to apply the truncated cohomological results in Section \ref{Truncated Cohomological Statements}. Recall that the levels $\omega_0$ and $\omega_1$ of $({\mathcal A},\omega)$ are written as $ \omega_0 = \eta_0 $ and $ \omega_1=\eta_1+h_1 dz/z$, where $ h=\sum_{s\geq 1}z^{s-1}h_s $. Moreover, we have that $\omega_1=\boldsymbol{x}^{I_1}\beta_1$, $\eta_1=\boldsymbol{x}^{I_1}\alpha_1$, $h_1=\boldsymbol{x}^{I_1}H_1$, $\omega_0=\eta_0=\boldsymbol{x}^{I_0}\alpha_0$, with  $\nu(\boldsymbol{x}^{I_0})=\varsigma_{\mathcal A}(\omega)\leq\gamma$ and
 $\nu(\boldsymbol{x}^{I_0-I_1})=\nu(z)=\nu(\boldsymbol{x^p})$, where $\Phi=z/\boldsymbol{x^p}$. In particular, we have that $I_0-I_1=\boldsymbol{p}$.
 \begin{lemma}
 	Up to performing an $\ell$-nested transformation, we have a decomposition
 	\[
 	\alpha_1={\alpha}^*_1+\tilde\alpha_1,
 	\]
 	where $d{\alpha}_1^*=0$ and $\nu_{{\mathcal A}}(\tilde\alpha_1)>0$.
 \end{lemma}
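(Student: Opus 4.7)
The plan is to derive a truncated Frobenius condition on $\alpha_1$ from the integrability of $\omega$, to use it to show that the value-zero part $\alpha_1^{(0)}$ of $\alpha_1$ is exactly Frobenius integrable, to normalise $\alpha_1^{(0)}=U\theta_0$ with $U$ a unit and $\theta_0$ closed via a classical Frobenius computation, and finally to remove the non-constant part of $U$ by an $\ell$-nested transformation built from Puiseux's packages.

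First, I look at $\Theta_2=2\eta_0\wedge d\eta_2+\eta_1\wedge d\eta_1$ from Equation \eqref{eq:condicionesinttruncada}: the truncated integrability of $\omega$ gives $\nu_{\mathcal A}(\Theta_2)\geq 2\gamma$. The reductions P1 and P3 provide $\nu_{\mathcal A}(\eta_0)=\varsigma_{\mathcal A}(\omega)$ and, by the main vertex coinciding with the critical vertex at height $1$ together with positive convexity of the Newton polygon, $\nu_{\mathcal A}(\eta_s)\geq\varsigma_{\mathcal A}(\omega)-\nu(z)$ for $s\geq 2$, so $\nu_{\mathcal A}(\eta_0\wedge d\eta_2)\geq 2\varsigma_{\mathcal A}(\omega)-\nu(z)$. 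Combined with $2\gamma\geq 2\varsigma_{\mathcal A}(\omega)$ and the identity $\eta_1\wedge d\eta_1=\boldsymbol{x}^{2I_1}\alpha_1\wedge d\alpha_1$ (the cross term $\alpha_1\wedge(d\boldsymbol{x}^{I_1}/\boldsymbol{x}^{I_1})\wedge\alpha_1$ vanishes by antisymmetry), this yields
\[
\nu_{\mathcal A}(\alpha_1\wedge d\alpha_1)\geq \nu(z)>0.
\]

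Next, I split $\alpha_1=\alpha_1^{(0)}+R$ by setting $\boldsymbol{x}=0$ in each coefficient of $\alpha_1$; then $\alpha_1^{(0)}$ has coefficients in $k[[\boldsymbol{y}_{\leq\ell}]]$ and $\nu_{\mathcal A}(R)>0$. Expanding $\alpha_1\wedge d\alpha_1$ and observing that every cross term has positive explicit value, I find $\nu_{\mathcal A}(\alpha_1^{(0)}\wedge d\alpha_1^{(0)})>0$; but this $3$-form has coefficients in $k[[\boldsymbol{y}]]$, so its explicit value is either $0$ or $\infty$, whence $\alpha_1^{(0)}\wedge d\alpha_1^{(0)}=0$ exactly. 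Writing $\alpha_1^{(0)}=\sum_i F_i\,dx_i/x_i+\sum_j G_j\,dy_j$, the $0$-final dominance of $\alpha_1$ makes some $F_{i_0}$ a unit in $k[[\boldsymbol{y}_{\leq\ell}]]$; WLOG $i_0=1$. A standard Frobenius computation, extracting the components $du_{i_1}\wedge du_{i_2}\wedge dy_j$ and $du_i\wedge dy_k\wedge dy_j$ of $\alpha_1^{(0)}\wedge d\alpha_1^{(0)}$, gives $\partial_{y_j}(F_i/F_1)=0$ and closedness of $\sum_j(G_j/F_1)\,dy_j$ in $\boldsymbol{y}_{\leq\ell}$. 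Hence $F_i=c_iF_1$ with $c_i\in k$, and by Poincaré's Lemma, $\sum_j(G_j/F_1)\,dy_j=d\Psi$ for some $\Psi\in k[[\boldsymbol{y}_{\leq\ell}]]$; setting $U:=F_1$ and $\theta_0:=\sum_i c_i\,dx_i/x_i+d\Psi$, we obtain $\alpha_1^{(0)}=U\theta_0$ with $d\theta_0=0$.

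Finally, I eliminate the non-constant part of $U$. Write $U=u_0+V$ with $u_0=U(0)\in k\setminus\{0\}$ and $V\in\mathfrak{m}_{k[[\boldsymbol{y}_{\leq\ell}]]}$; then $\nu_{\mathcal A}(V)=0$ but $\mathfrak{c}^0_{\mathcal A}(V)=0$ since $V(\boldsymbol 0)=0$. By Proposition \ref{prop:notfinalformsandcriticalvalue}, a finite composition of $j$-Puiseux's packages for $j=1,\ldots,\ell$ produces an $\ell$-nested transformation $\mathcal A\to\mathcal A'$ with $\nu_{\mathcal A'}(V)>0$. Setting $\alpha_1^*:=u_0\theta_0$ gives a closed $1$-form (constant times a closed form), and $\tilde\alpha_1:=\alpha_1-\alpha_1^*=V\theta_0+R$ satisfies $\nu_{\mathcal A'}(\tilde\alpha_1)>0$ via the newly acquired positive value of $V$ and the stability of $\nu_{\mathcal A}(R)$ under allowed transformations (Proposition \ref{prop:stabilityofcriticalvalue}). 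The main obstacle is the Frobenius normalisation of the second step: while classical in spirit, it must be carried out cleanly in the mixed logarithmic/regular setting, and the subsequent trick of splitting off the constant term $u_0$ of $U$ is what allows one to upgrade a $U d\Psi$-type expression into a genuinely closed form modulo positive value.
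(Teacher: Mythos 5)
Your proof is correct in outline but it takes a genuinely different, and considerably heavier, route than the paper's. The paper's argument never invokes the integrability of $\omega$: it takes $\alpha_1^*$ to be the residual part $\sum_{i=1}^r f_i(\boldsymbol{0})\,dx_i/x_i$, where $f_i$ is the coefficient of $dx_i/x_i$ in $\alpha_1$. This is a $k$-linear combination of the closed forms $dx_i/x_i$, so $d\alpha_1^*=0$ for free, while $\tilde\alpha_1:=\alpha_1-\alpha_1^*$ has ${\mathfrak c}^1_{\mathcal A}(\tilde\alpha_1)=0$ because all its logarithmic coefficients vanish at the origin; Proposition \ref{prop:notfinalformsandcriticalvalue}, run with $j$-Puiseux's packages for $j=1,\dots,\ell$ only (which suffices because $\tilde\alpha_1$ involves only $\boldsymbol{x}$ and $\boldsymbol{y}_{\leq\ell}$), then yields $\nu_{\mathcal A'}(\tilde\alpha_1)>0$. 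Your route instead extracts $\nu_{\mathcal A}(\alpha_1\wedge d\alpha_1)\geq\nu(z)$ from the truncated integrability of $\omega$, shows that the value-zero part $\alpha_1^{(0)}$ is exactly Frobenius integrable, and performs a full normalization $\alpha_1^{(0)}=U\theta_0$ before peeling off $\alpha_1^*=u_0\theta_0$. What you gain is a larger closed piece that also absorbs the exact part $u_0\,d\Psi$; what you pay is a long detour through integrability, Frobenius reduction and Poincar\'e's Lemma that the statement does not demand --- the lemma only needs \emph{some} closed $\alpha_1^*$ with $\nu_{\mathcal A'}(\alpha_1-\alpha_1^*)>0$, and the residual part already supplies it.

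Two points should be tightened. First, from Equation \eqref{eq:condicionesinttruncada} one has $\Theta_2=\eta_0\wedge d\eta_2+\eta_1\wedge d\eta_1+\eta_2\wedge d\eta_0$, not $2\eta_0\wedge d\eta_2+\eta_1\wedge d\eta_1$; the two outer summands are distinct, although both obey the same lower bound on explicit value, so the estimate you draw survives unchanged. More substantively, your Frobenius step tacitly assumes that $\alpha_1$ is $0$-final dominant, so that some coefficient $F_{i_0}$ is a unit. That can fail: in the resonance case \textbf{r2b}-$\boldsymbol{p}$ the $0$-final dominance of $\beta_1$ is carried entirely by $H_1$ and $\alpha_1$ itself is not $0$-final dominant (see Remark \ref{rk:alpha1finaldominant}). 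You should separate out that case; it is trivial, since then one may take $\alpha_1^*=0$, $\tilde\alpha_1=\alpha_1$, and Proposition \ref{prop:notfinalformsandcriticalvalue} directly gives $\nu_{\mathcal A'}(\tilde\alpha_1)>0$.
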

 \begin{proof}
 Write $\alpha_1=\alpha^*_1+\tilde\alpha_1$, where $d\alpha_1^*=0$ and $\tilde \alpha_1$ is not $0$-final dominant. By Proposition \ref{prop:notfinalformsandcriticalvalue}, we obtain $\nu_{\mathcal A}(\tilde\alpha_1)>0$ by a suitable $\ell$-nested transformation.
 \end{proof}
Recall that the $\ell$-nested transformations are a degenerate type of normalized coordinate changes, see Remark \ref{rk: degeneratenormalizedtransforamtions}.
 \begin{lemma}
 Assume that  $
 \alpha_1={\alpha}^*_1+\tilde\alpha_1
 $, where
 $d{\alpha_1^*}=0$
 and
 $\nu_{{\mathcal A}}(\tilde\alpha_1)>0$.
 Let $({\mathcal A},\omega)\rightarrow ({\mathcal A}^\star,\omega)$ be a normalized coordinate change, containing  a suitable $0$-nested transformation in order to get the property {\rm P3}. The $1$-level $\omega_1^\star$ of $({\mathcal A}^\star,\omega)$ is given by $\omega^\star_1={\boldsymbol x^\star}^{I_1^\star}\alpha_1^\star$, where
 \[
 \alpha^\star_1={\alpha^\star}^*_1+\tilde\alpha^\star_1,\quad \nu_{{\mathcal A}^\star}(\tilde\alpha^\star_1)\geq \min\{\nu(z), \nu_{{\mathcal A}}(\tilde\alpha_1)\}
 \]
and $d({\alpha^\star}^*_1)=0$
 \end{lemma}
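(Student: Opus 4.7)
The plan is to apply the explicit transformation formulas of Equation \eqref{eq:lelvesprima1} (from Proposition \ref{prop:effect of a coordinate change}) to the level $s=1$, and combine them with the reduction P1 (main vertex $=$ critical vertex at ordinate $\chi=1$). The formulas give
\[
\eta_1' = \eta_1 - \sum_{t \geq 2}\bigl[\,t\,f(-f)^{t-2}\eta_t + (t-1)(-f)^{t-2}h_t\,df\,\bigr] =: \eta_1 + R_1.
\]
The key quantitative input is the following sharpening of the bound on $R_1$. By P1, the leftmost vertex of ${\mathcal N}_{\mathcal A}(\omega)$ is $(\nu(\boldsymbol{x}^{I_1}), 1) = (\varsigma_{\mathcal A}(\omega)-\nu(z),1)$, so every cloud point $(\lambda_t,t)$ with $t\geq 2$ satisfies $\lambda_t \geq \nu(\boldsymbol{x}^{I_1})$; equivalently, $\nu_{\mathcal A}(\eta_t),\,\nu_{\mathcal A}(h_t) \geq \varsigma_{\mathcal A}(\omega)-\nu(z)$ for all $t\geq 2$. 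Coupling this with $\nu_{\mathcal A}(f),\,\nu_{\mathcal A}(df) \geq \nu(z)$ (the latter via Proposition \ref{prop:explicitvalueofthedifferential}), every summand of $R_1$ has value
\[
\geq (t-1)\nu(z) + \bigl(\varsigma_{\mathcal A}(\omega)-\nu(z)\bigr) = \varsigma_{\mathcal A}(\omega) + (t-2)\nu(z) \geq \varsigma_{\mathcal A}(\omega),
\]
so $\nu_{\mathcal A}(R_1) \geq \varsigma_{\mathcal A}(\omega) = \nu(\boldsymbol{x}^{I_1}) + \nu(z)$.

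Next I would observe that the normalized coordinate change preserves the critical value $\varsigma_{\mathcal A}(\omega)$ (Proposition \ref{prop:effect of a coordinate change}) and, under the hypothesis of $1$-fixed critical height, also preserves the main abscissa $\nu_{\mathcal A}(\omega)$ (by stability, Proposition \ref{prop:stabilityofcriticalvalue}). Since the critical vertex satisfies $\nu_{\mathcal A^\star}(\omega) = \varsigma_{\mathcal A}(\omega) - \nu(z^\star)$, this forces $\nu(z^\star) = \nu(z)$. By Lemma \ref{lema:transformationofamonomial} the $0$-nested part of the transformation carries $\boldsymbol{x}^{I_1}$ to a monomial $\boldsymbol{x^\star}^{J}$ with $\nu(\boldsymbol{x^\star}^{J}) = \nu(\boldsymbol{x}^{I_1}) = \varsigma_{\mathcal A}(\omega) - \nu(z^\star)$, and principalization in the strict preparation gives $\boldsymbol{x^\star}^{I_1^\star} = \boldsymbol{x^\star}^J = \boldsymbol{x}^{I_1}$. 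Dividing the identity $\eta_1^\star = \eta_1' = \boldsymbol{x}^{I_1}\alpha_1^* + \boldsymbol{x}^{I_1}\tilde\alpha_1 + R_1$ by $\boldsymbol{x^\star}^{I_1^\star}$ yields
\[
\alpha_1^\star = \alpha_1^* + \tilde\alpha_1 + R_1/\boldsymbol{x}^{I_1}.
\]

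I would then set ${\alpha_1^\star}^* := \alpha_1^*$, which is closed because the independent blow-ups act on the $\boldsymbol{x}$-variables only and the coordinate change $z' = z+f$ does not affect $\alpha_1^*$ (a form with coefficients in $k[[\boldsymbol{x},\boldsymbol{y}_{\leq\ell}]]$), so $d{\alpha_1^\star}^* = d\alpha_1^* = 0$. Setting $\tilde\alpha_1^\star := \tilde\alpha_1 + R_1/\boldsymbol{x}^{I_1}$ and invoking the bound from the first paragraph together with the stability of explicit values under allowed transformations (Proposition \ref{prop:stabilityofcriticalvalue}) gives
\[
\nu_{\mathcal A^\star}(\tilde\alpha_1^\star) \geq \min\bigl\{\nu_{\mathcal A^\star}(\tilde\alpha_1),\ \nu_{\mathcal A^\star}(R_1) - \nu(\boldsymbol{x}^{I_1})\bigr\} \geq \min\{\nu_{\mathcal A}(\tilde\alpha_1),\ \nu(z)\},
\]
which is the desired inequality.

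The main obstacle is the sharp estimate $\nu_{\mathcal A}(R_1) \geq \varsigma_{\mathcal A}(\omega)$ in the first paragraph: the naive bound from the critical line alone yields only $\nu_{\mathcal A}(R_1) \geq \varsigma_{\mathcal A}(\omega) - \nu(z)$, which, after dividing by $\boldsymbol{x}^{I_1}$, would give $\tilde\alpha_1^\star$ of nonnegative explicit value — too weak. The improvement by an additional $\nu(z)$ comes precisely from the hypothesis P1 that the main vertex equals the critical vertex at ordinate $1$, which is what forces every higher level $t\geq 2$ to lie on or to the right of the vertical line $\lambda = \nu(\boldsymbol{x}^{I_1})$ rather than merely on or above the critical line $\lambda = \varsigma - t\nu(z)$.
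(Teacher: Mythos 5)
The key step of your proof---the transformation formula for $\eta_1'$ extracted from Equation \eqref{eq:lelvesprima1} and the sharp bound $\nu_{\mathcal A}(R_1)\geq\varsigma_{\mathcal A}(\omega)$ obtained from property P1 (main vertex $=$ critical vertex at ordinate one)---is correct and is the essential idea. The estimate $\nu_{\mathcal A}(R_1/\boldsymbol{x}^{I_1})\geq\nu(z)$ that follows is exactly what makes $\min\{\nu(z),\nu_{\mathcal A}(\tilde\alpha_1)\}$ appear. However, there are two places where the argument as written is not sound.

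First, you claim that the normalized coordinate change ``preserves the critical value $\varsigma_{\mathcal A}(\omega)$'' by Proposition \ref{prop:effect of a coordinate change}, and deduce $\nu(z^\star)=\nu(z)$. But that proposition only gives $\varsigma_{{\mathcal A}'}(\omega)\geq\varsigma_{\mathcal A}(\omega)$ (item (2) there), and under a coordinate change $z'=z+f$ with cancellation in the $0$-level, both $\varsigma$ and $\nu(z')$ can strictly increase; indeed Proposition \ref{prop:epsilonalturauno} later is precisely about forcing $\nu(z^\star)>\nu(z)$. The conclusion you actually need, namely $\nu(\boldsymbol{x^\star}^{I_1^\star})=\nu(\boldsymbol{x}^{I_1})$, should instead be obtained directly: the $1$-level is $\lambda_{{\mathcal A},\omega}(1)$-dominant by P3, and by Proposition \ref{prop:effect of a coordinate change} (items (3) and the last assertion, valid for $s\geq\chi=1$) and Corollary \ref{cor:stabilityofcriticalvalue} the dominance and the abscissa of the $1$-level are preserved through the coordinate change and the strict $\gamma$-preparation.

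Second, and more seriously, you pass from ${\mathcal A}'$ to ${\mathcal A}^\star$ treating $\boldsymbol{x^\star}^{I_1^\star}=\boldsymbol{x}^{I_1}$ as an identity of elements and concluding ${\alpha_1^\star}^*=\alpha_1^*$, hence $d{\alpha_1^\star}^*=0$ for free. This is fine as long as the strict $\gamma$-preparation $({\mathcal A}',\omega)\rightarrow({\mathcal A}^\star,\omega)$ involves only independent blow-ups and $\ell'$-coordinate changes, where by Lemma \ref{lema:transformationofamonomial} the unit factor $U'$ is $1$. But a strict $\gamma$-preparation is an $\ell$-nested transformation and may contain $\ell'$-Puiseux's packages for $\ell'\leq\ell$ (these are needed, for instance, to re-prepare the $0$-level after the cancellation caused by the coordinate change). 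In that case Lemma \ref{lema:transformationofamonomial} gives $\boldsymbol{x}^{I_1}=U^\star\boldsymbol{x^\star}^{J}$ with a genuine unit $U^\star$ (a product of factors $(y'_{j}+\lambda_j)^{b_j}$), so the division produces $\alpha_1^\star=U^\star\bigl(\alpha_1^*+\tilde\alpha_1+R_1/\boldsymbol{x}^{I_1}\bigr)$, and $U^\star\alpha_1^*$ is not closed. Splitting $U^\star=c+\tilde U^\star$ with $c\in k^*$ the residue does not rescue the argument either, since $\tilde U^\star$ is in the maximal ideal but typically has explicit value $0$, which would destroy the desired lower bound on $\nu_{{\mathcal A}^\star}(\tilde\alpha_1^\star)$. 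Your proof needs to address this unit: for instance, one must argue that the Puiseux's packages appearing in the preparation can be chosen so that the unit factor on the $1$-level is controlled (e.g. absorbed by a further $\ell$-nested transformation that raises the explicit value of $U^\star-c$ beyond $\min\{\nu(z),\nu_{\mathcal A}(\tilde\alpha_1)\}$, together with an application of Proposition \ref{prop:notfinalformsandcriticalvalue}). As written, this gap is not closed, and the assertion $d({\alpha_1^\star}^*)=0$ is not established in general.
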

 \begin{proof} Left to the reader.
 \end{proof}
\begin{remark}
\label{rk:alpha1finaldominant}
Write $\alpha_1=\alpha^*_1+\tilde\alpha_1$, where $\nu_{\mathcal A}(\tilde\alpha_1)>0$. Recall that $({\mathcal A},\omega)$ is \textbf{r2a} or \textbf{r2b}-$\boldsymbol{\upsilon}$, and that the contact rational function is $\Phi=z/\boldsymbol{x^p}$. In view of Equation \eqref{eq:condicionr2unificada2}, we have that the following statements are equivalent
\begin{itemize}
 \item $\alpha_1$ is $0$-final dominant.
 \item $\alpha_1^*$ is $0$-final dominant.
 \item $({\mathcal A},\omega)$ is \textbf{r2a} or it is \textbf{r2b}-$\boldsymbol{\upsilon}$, with $\boldsymbol{\upsilon}\ne\boldsymbol{p}$.
 \end{itemize}
\end{remark}
From now on, we additionally assume the following property:
\begin{itemize}
	\item[P4:]$\alpha_1=\alpha^*_1+\tilde\alpha_1$, where $d\alpha^*_1=0$ and $\nu_{\mathcal A}(\tilde\alpha_1)>0$.
\end{itemize}
Proposition \ref{prop:epsilonalturauno} below is the key observation we need to find the desired contradiction:
\begin{proposition}\label{prop:epsilonalturauno}
Assume that $({\mathcal A},\omega)$ has the property of the $1$-fixed critical height and properties \textnormal{P0}, \textnormal{P1}, \textnormal{P2}, \textnormal{P3} and \textnormal{P4}. There is a procedure to obtain a positive real number $\epsilon_{\mathcal A}(\omega)>0$ from $({\mathcal A},\omega)$ with the following property: there is a normalized coordinate change $({\mathcal A},\omega)\rightarrow ({\mathcal A}^\star,\omega)$ such that $({\mathcal A}^\star,\omega)$ satisfies the properties \textnormal{P0}, \textnormal{P1}, \textnormal{P2}, \textnormal{P3}, \textnormal{P4}, and moreover
  \[
  \nu(z^\star)\geq \nu(z)+\epsilon_{\mathcal A}(\omega), \quad \epsilon_{{\mathcal A}^\star}(\omega)\geq\epsilon_{\mathcal A}(\omega).
  \]
\end{proposition}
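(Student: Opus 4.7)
The plan is to adapt the Tschirnhausen strategy from the $\chi\geq 2$ case to $\chi=1$, replacing the proportionality-via-integrability argument (Lemma \ref{lema:proporcionalidad}) by a cohomological decomposition drawn from Corollary \ref{cor:logpoincare3}, which is precisely what the truncated Poincaré-type results in Section \ref{Truncated Cohomological Statements} were designed for. First, I would exploit the truncated integrability condition at level one: from Equation \eqref{eq:condicionesinttruncada} one has $\Delta_1=\eta_1\wedge\eta_0+h_1 d\eta_0+\eta_0\wedge dh_1$ with $\nu_{\mathcal A}(\Delta_1)\geq 2\gamma$, and after dividing by $\boldsymbol{x}^{I_0+I_1}$, this becomes a relation
\[
\nu_{\mathcal A}\bigl(\alpha_1\wedge\alpha_0+H_1\,d_{\boldsymbol{p}}\alpha_0+\alpha_0\wedge dH_1+\text{log terms}\bigr)\geq 2\gamma-\nu(\boldsymbol{x}^{I_0+I_1}),
\]
where by P2 the coefficient $H_1$ is strongly $\gamma$-final, so either it is recessive (and drops out of the relation up to a controlled error) or it contributes a unit monomial term that I can absorb into an $\alpha$-exterior derivative.

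Next, I would use property P4, which says $\alpha_1=\alpha_1^*+\tilde\alpha_1$ with $d\alpha_1^*=0$ and $\nu_{\mathcal A}(\tilde\alpha_1)>0$, together with Remark \ref{rk:alpha1finaldominant} ensuring $\alpha_1^*$ is $0$-final dominant in the cases \textbf{r2a} and \textbf{r2b}-$\boldsymbol{\upsilon}$ with $\boldsymbol{\upsilon}\ne\boldsymbol{p}$; the case $\boldsymbol{\upsilon}=\boldsymbol{p}$ must be treated separately using the explicit form in Equation \eqref{eq:condicionr2unificada2}. Applying Corollary \ref{cor:logpoincare3} with $\alpha=\alpha_1^*$ to the $1$-form $\alpha_0$, the integrability relation above yields a decomposition
\[
\alpha_0 = d_{\alpha_1^*}F + \boldsymbol{\tilde x}^{-\boldsymbol{\mu}}\frac{d\boldsymbol{x}^{\boldsymbol{\lambda}}}{\boldsymbol{x}^{\boldsymbol{\lambda}}}+\tilde\alpha_0,
\]
with $\nu_{\mathcal A}(\tilde\alpha_0)$ bounded below by a positive quantity $\epsilon$ built from $\gamma-\varsigma_{\mathcal A}(\omega)+\nu(z)$, from $\nu_{\mathcal A}(\tilde\alpha_1)$, and from $\nu_{\mathcal A}(H_1)$; the residual part vanishes or is combinatorial. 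The function $F$, suitably multiplied by $\boldsymbol{x}^{\boldsymbol{p}}$, is the candidate for the Tschirnhausen coordinate change $z^\star=z-F\boldsymbol{x}^{\boldsymbol{p}}$ (after trimming $F$ beyond $\gamma$ as in Lemma \ref{lema:preparingtheunits}).

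I would then verify, in a manner parallel to Lemma \ref{lema:avance epsilon} but now using the cohomological decomposition of $\alpha_0$ rather than the proportionality $\alpha_s=U_s\alpha_0+\tilde\alpha_s$, that the new $0$-level $\eta_0'$ after the change satisfies $\nu_{\mathcal A'}(\eta_0')\geq\varsigma_{\mathcal A}(\omega)+\epsilon$, so that after a strict $\gamma$-preparation either $\varsigma_{{\mathcal A}^\star}(\omega)>\gamma$ (and we are done because the $1$-fixed hypothesis is violated) or the main abscissa increases by $\epsilon$, forcing $\nu(z^\star)\geq\nu(z)+\epsilon$ by property P1, which is inherited by $({\mathcal A}^\star,\omega)$. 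The invariant $\epsilon_{\mathcal A}(\omega)$ would be defined as
\[
\epsilon_{\mathcal A}(\omega) = \min\bigl\{\nu(z),\;\gamma-\varsigma_{\mathcal A}(\omega)+\nu(z),\;\nu_{\mathcal A}(\tilde\alpha_1),\;\nu_{\mathcal A}(H_1)-(\varsigma_{\mathcal A}(\omega)-\nu(z))\bigr\},
\]
and its stability under the transformation follows because $\varsigma_{\mathcal A^\star}(\omega)=\varsigma_{\mathcal A}(\omega)$, $\nu(z^\star)\geq\nu(z)$, and the error pieces $\tilde\alpha_1,H_1$ only gain value under a coordinate change.

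The main obstacle will be the case of resonance type \textbf{r2b}$^{\times}$ with $\boldsymbol{\upsilon}$ being $\mathcal A$-negative, where $\alpha_1^*$ may degenerate so that the residual part $d\boldsymbol{x}^{\boldsymbol{\mu}}/\boldsymbol{x}^{\boldsymbol{\mu}}$ extracted from Corollary \ref{cor:logpoincare3} interacts nontrivially with $\boldsymbol{x}^{\boldsymbol{p}}$; the subtle point is to verify that the cohomological $F$ still produces a non-degenerate Tschirnhausen shift and does not merely cancel against the resonant part. Handling this case will require a careful bookkeeping of exponents, using the explicit description of the resonant reduced critical level in Equation \eqref{eq:condicionr2unificada2} to identify the precise monomial to be cancelled, and invoking property P2 (strongly $\gamma$-final horizontal coefficient) to control the $dz/z$ component throughout.
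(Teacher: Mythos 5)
Your broad plan — exploit the $\Delta_1$ integrability relation, extract a closed form, apply a truncated Poincar\'e-type decomposition, and use the resulting primitive for a Tschirnhausen change $z^\star=z-F\boldsymbol{x^p}$ — is indeed what the paper does, but two of your structural choices create real gaps.

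\textbf{The \textbf{r2a} case cannot be handled by Corollary~\ref{cor:logpoincare3}.} When $H_1$ is recessive, the term $H_1\,d\alpha_0$ in the integrability relation is small and \emph{drops out}: all one retains is $\nu_{\mathcal A}(\alpha_1\wedge\alpha_0)\geq\epsilon_1$. This is a \emph{wedge} condition, not a bound on $d_{\alpha_1^*}\alpha_0=\alpha_1^*\wedge\alpha_0+d\alpha_0$; the derivative $d\alpha_0$ is left completely uncontrolled. So your plan to ``apply Corollary~\ref{cor:logpoincare3} with $\alpha=\alpha_1^*$'' has no hypothesis to feed it here. The correct tool, as the paper uses, is the truncated De Rham--Saito division of Proposition~\ref{prop:trucateddivision}, which takes only a wedge bound as input and produces $\alpha_0=U\alpha_1+\tilde\alpha_0$.

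\textbf{In the \textbf{r2b} cases, the closed form you should feed to Corollary~\ref{cor:logpoincare3} is not $\alpha_1^*$.} After dividing through by the unit $H_1$, the integrability relation naturally gives $\nu_{\mathcal A}(\sigma\wedge\alpha_0+d\alpha_0)\geq\epsilon_2$ where $\sigma=\xi^{-1}\alpha_1^*+d\boldsymbol{x^p}/\boldsymbol{x^p}$, so the relevant closed $0$-final dominant form is $\sigma$, whose residual part is $d\boldsymbol{x^{\upsilon}}/\boldsymbol{x^{\upsilon}}$ (hence $0$-final dominant exactly when $\boldsymbol{\upsilon}\ne\boldsymbol{0}$ — not $\boldsymbol{\upsilon}\ne\boldsymbol{p}$, which is the content of Remark~\ref{rk:alpha1finaldominant} and is beside the point here). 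Consequently the delicate case analysis your proof must run is governed by whether $\boldsymbol{\upsilon}\in{\mathbb Z}^r_{\leq 0}$ (so a residual $\boldsymbol{x}^{-\boldsymbol{\upsilon}}\sum\mu_i\,dx_i/x_i$ appears in the decomposition of $\alpha_0$) and then by comparing $\nu(\boldsymbol{x}^{-\boldsymbol{\upsilon}})$ against $\epsilon_2$ to show that the leftover residual either gains value or forces $\boldsymbol{\upsilon}'=\boldsymbol{0}$, which contradicts persistence of the type \textbf{r2b}$^{+}$ or \textbf{r2b}$^{\times}$. Your ``main obstacle'' paragraph gestures at this but misidentifies both the residue-carrying form and the branching condition, and the extra $\gamma$-dependent term in your candidate $\epsilon_{\mathcal A}(\omega)$ is extraneous — the paper's $\epsilon_1,\epsilon_2$ are intrinsic to $\omega$, which is what makes the iteration argument close.
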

\begin{proof} In view of Equation \eqref{eq:condicionesinttruncada2}, we have that
$\nu_{\mathcal A}(\Delta_1)\geq 2\gamma$, where $\Delta_1= \eta_1\wedge \eta_0 + h_1 d \eta_0 + \eta_0 \wedge d h_1 $. By expanding $\Delta_1$, we obtain
  \begin{align*}
  	\Delta_1
  	& =  \boldsymbol{x}^{I_1+I_0}\Big( \alpha_1\wedge \alpha_0 + H_1\big(\frac{d\boldsymbol{x}^{I_0}}{\boldsymbol{x}^{I_0}}\wedge \alpha_0 + d\alpha_0\big) + \alpha_{0}\wedge \big( H_1 \frac{d\boldsymbol{x}^{I_1}}{\boldsymbol{x}^{I_1}} + dH_1 \big) \Big) \\
  	& =  \boldsymbol{x}^{I_1+I_0} \Big( \big(\alpha_1 + H_1 \frac{d\boldsymbol{x^p}}{\boldsymbol{x^p}} - dH_1 \big)\wedge \alpha_0 + H_1 d \alpha_{0} \Big) \ ,
  \end{align*}
where we recall that $\boldsymbol{p} = I_0-I_1$. Since $ 2\gamma - \nu(\boldsymbol{x}^{I_1+I_0}) \geq \nu(z) $, we have
\begin{equation}
\label{eq:valorDelta1}
  	\nu_{\mathcal A}\Big( \big(\alpha_1 + H_1 \frac{d\boldsymbol{x^p}}{\boldsymbol{x^p}} - dH_1 \big)\wedge \alpha_0 + H_1 d \alpha_{0}\Big) \geq \nu(z) \ .
  \end{equation}
Let us consider the cases when $({\mathcal A},\omega)$ is \textbf{r2a},  \textbf{r2b}$^{+}$ or \textbf{r2b}$^{\times}$. We know that they are independent situations and it is enough to show the existence of $\epsilon_{\mathcal A}(\omega)$ with the desired properties in each of the three cases.

 Assume that $({\mathcal A},\omega)$ is \textbf{r2a}. Let us see that $\nu_{\mathcal A}(H_1)\geq 0$. Since the reduced form $\bar\omega^*_1$ of the level is given by $\bar\omega^*_1=d\boldsymbol{x^\tau}/\boldsymbol{x^\tau}$, it is not possible for  $H_1$ to be a unit, hence $\nu_{\mathcal A}(H_1)>0$, since $h=H_{{\mathcal A},\omega}$ is strongly $\gamma$-final. Then, in this case $\alpha_1$ is $0$-final dominant.
Let us fix $\epsilon_1=\epsilon_{\mathcal A}(\omega)>0$ such that
\[
\epsilon_1\leq\min\{\nu(z),\nu_{\mathcal A}(H_1)\}.
\]
We have that $\nu_{\mathcal A}(\alpha_1\wedge\alpha_0)\geq\epsilon_1$, in view of Equation \eqref{eq:valorDelta1}.
By the truncated proportionality considered in Proposition \ref{prop:trucateddivision}, there is a unit $U\in k[[\boldsymbol{x},\boldsymbol{y}_{\leq\ell}]]$ and a $ 1 $-form $\tilde\alpha_0$ such that
\[
\alpha_0=U\alpha_1+\tilde\alpha_0,\quad \nu_{\mathcal A}(\tilde\alpha_0)\geq\epsilon_1.
\]
Up to perform an $\ell$-nested transformation containing $j$-Puiseux's packages for any $j=1,2,\ldots,\ell$, we can assume that $U\in k[\boldsymbol{x},\boldsymbol{y}_{\leq\ell}]$ and hence $U$ is a unit in ${\mathcal O}_{\mathcal A}$ (to see this, write $U=U^*+\tilde U$, where $U^*$ is a polynomial and $\tilde U$ belongs to a enough bigger power the maximal ideal such that we get $\nu_{\mathcal A}(\tilde U)>\epsilon_1$ by applying Proposition \ref{prop:notfinalformsandcriticalvalue}). Now, we consider the coordinate change $ \mathcal{A} \rightarrow \mathcal{A}' $ given by
\[
 z'= z + U^*\boldsymbol{x^p}.
\]
We obtain that $ \nu_{{\mathcal A}'}(\eta'_0) \geq \nu_{{\mathcal A}'}(\eta'_1)+\nu(z)+\epsilon_1 $. After a strict $ \gamma $-preparation $ \mathcal{A}' \rightarrow \mathcal{A}^{\star} $ we must have
\[
\nu(z^{\star}) = \nu_{{\mathcal A}^{\star}}(\eta^{\star}_0)-\nu_{{\mathcal A}^{\star}}(\eta^{\star}_1)\geq \nu(z)+\epsilon_1.
\]
Moreover, we still have $\epsilon_1\leq\min\{\nu(z^\star),\nu_{{\mathcal A}^\star} (H_1)\}$. This ends the proof of this case.

 Assume that $({\mathcal A},\omega)$ is  \textbf{r2b}$^{+}$ or \textbf{r2b}$^\times$. In particular $({\mathcal A},\omega)$ is \textbf{r2b}-$\boldsymbol{\upsilon}$ with $\boldsymbol{\upsilon}\ne 0$.
  In this situation $H_1$ is a unit that we can write $H_1=\xi+\tilde H_1$, with $\nu_{\mathcal A}(\tilde H_1)>0$ and $0\ne\xi\in k$. We see this looking, as before, to the reduced part $\bar\omega^*_1$ of the $1$-level $\omega_1$ of $\omega$ in $\mathcal A$. Let us recall the decomposition $\alpha_1=\alpha_1^*+\tilde\alpha_1$, where $d\alpha^*_1=0$ and $\nu_{\mathcal A}(\tilde\alpha_1)>0$. From Equation \eqref{eq:valorDelta1}, we obtain that
 \begin{equation*}
  	  \nu_{\mathcal A}\big((\xi^{-1}\alpha^*_1 +  \frac{d\boldsymbol{x^p}}{\boldsymbol{x^p}} )\wedge \alpha_0 +  d \alpha_{0}\big) \geq \epsilon_2 \ ,
  \end{equation*}
  where we denote $ \epsilon_2= \min\{\nu_{\mathcal A}(\tilde{\alpha}_1),\nu_{\mathcal A}(\tilde{H}_1),\nu(z)\}$. Let us consider the $1$-form $\sigma$ given by
  \[
  \sigma = \xi^{-1}\alpha^*_1 + d\boldsymbol{x^p}/\boldsymbol{x^p}.
  \]
 Looking at Equation \eqref{eq:condicionr2unificada2} we see that
  $
  \sigma = {d \boldsymbol{x^\upsilon}}/{\boldsymbol{x^\upsilon}} + \tilde{\sigma}
  $,
  where $ \tilde{\sigma} $ is not $ 0 $-final dominant. Since $\boldsymbol{\upsilon}\ne \boldsymbol{0}$, then $\sigma$ is $0$-final dominant. Moreover, we have that
  $ d\sigma=0$ and
  $
  \nu_{\mathcal A}(d_{\sigma}(\alpha_0))\geq \epsilon_2
  $.
  Thus, we can apply Corollary \ref{cor:logpoincare3}.  We have two possibilities to consider:

 {\em First case:} $\boldsymbol{\upsilon}\notin {\mathbb Z}^r_{\leq 0}$. By
  Corollary \ref{cor:logpoincare3}, there is a formal function $U$ such that
  \[
  \alpha_0= U\sigma+dU+\tilde\alpha_0,\quad \nu_{\mathcal A}(\tilde\alpha_0)\geq \epsilon_2.
  \]
  Note that $U$ must be a unit, since $({\mathcal A},\omega)$ is \textbf{r2b}-$\boldsymbol{\upsilon}$. As before, we can assume that $U\in {\mathcal O}_{\mathcal A}$ up to an additional $\ell$-nested transformation and modulo $\gamma$-truncation, we do not detail this part. Now, we perform the coordinate change
  \[
  z'=z+\xi^{-1}U\boldsymbol{x^p}.
  \]
  We obtain that
  $\nu_{{\mathcal A}'}(\eta'_0) \geq \nu_{{\mathcal A}'}(\eta'_1)+\nu(z)+\epsilon_2
  $. After a strict $ \gamma $-preparation $ \mathcal{A}' \rightarrow \mathcal{A}^{\star} $ we have
  \[
   \nu(z^{\star}) = \nu_{{\mathcal A}^{\star}}(\eta^{\star}_0)-\nu_{{\mathcal A}^{\star}}(\eta^{\star}_1)\geq \nu(z)+\epsilon_2.
  \]
The situation repeats with the same $\epsilon_2$.

{\em Second case: }$ \boldsymbol{\upsilon} \in \mathbb{Z}_{\leq 0}^{r}$. Then Corollary \ref{cor:logpoincare3} allows to write  $ \alpha_0 $ as
  \[
  \alpha_0 = U \sigma + dU + \boldsymbol{x}^{-\boldsymbol{\upsilon}}\sum_{i=1}^{r}\mu_i \frac{dx_i}{x_i} +\tilde{\alpha}_0 \ , \quad \nu_{\mathcal A}(\tilde{\alpha}_0)\geq \epsilon_2 \ .
  \]
If all the $\mu_i=0$, we proceed as before. Assume that $\boldsymbol{\mu}\ne\boldsymbol{0}$ and $ \nu(\boldsymbol{x}^{-\boldsymbol{\upsilon}}) \geq \epsilon_2$. Consider the coordinate change $ \mathcal{A} \rightarrow \mathcal{A}' $ given by $ z'= z + \xi^{-1} U\boldsymbol{x^p} $. We have that
  \begin{equation}\label{eq:upsilon_negativo}
  	 z \big(\alpha_1+\xi \frac{dz}{z}\big) + \boldsymbol{x^p}\alpha_0= z' \big(\alpha^*_1+\xi \frac{dz'}{z'}\big) + \boldsymbol{x}^{\boldsymbol{p}-\boldsymbol{\upsilon}}\sum_{i=1}^{r}\mu_i \frac{dx_i}{x_i} + \boldsymbol{x}^{\boldsymbol{p}}(\tilde{\alpha}_0-\xi^{-1}U\tilde{\alpha}_1) \ .
  \end{equation}
  Since $ \nu(\boldsymbol{x}^{-\boldsymbol{\upsilon}}) \geq \epsilon_2$, by Equation \eqref{eq:upsilon_negativo}, we have that
  $ \nu_{{\mathcal A}'}(\eta'_0)\geq \nu_{{\mathcal A}'}(\eta'_0) + \nu(z) + \epsilon_2.
  $
  Therefore, after a strict $ \gamma $-preparation $ \mathcal{A}' \rightarrow \mathcal{A}^{\star} $ we must have
  \[
  \nu(z^{\star}) = \nu_{E^{\star}}(\eta^{\star}_0)-\nu_{E^{\star}}(\eta^{\star}_1)\geq \nu(z)+\epsilon_2.
  \]
  Moreover, we can restart with the same $\epsilon_2$.
It remains to consider the possibility $\boldsymbol{\mu}\ne\boldsymbol{0}$ and $ \nu(\boldsymbol{x}^{-\boldsymbol{\upsilon}}) <\epsilon_2$. Let us show that this situation does not happen. Since $ \nu(\boldsymbol{x}^{-\boldsymbol{\upsilon}}) < \epsilon_2$, by Equation \eqref{eq:upsilon_negativo} the new $ 0 $-level has the form
  \[
  \eta'_0 = \boldsymbol{x}^{I_1 + \boldsymbol{p}-\boldsymbol{\upsilon}}\sum_{i=1}^{r}\mu_i \frac{dx_i}{x_i} + \tilde{\eta}'_0 \ , \quad \nu_{{\mathcal A}'}( \tilde{\eta}'_0) >  \nu(\boldsymbol{x}^{I_1 + \boldsymbol{p}-\boldsymbol{\upsilon}}) \ .
  \]
  Then $\eta'_0$ is $ \nu(\boldsymbol{x}^{I_1+ \boldsymbol{p}-\boldsymbol{\upsilon}}) $-final dominant and thus $(\mathcal{A}',\omega) $ is strictly $ \gamma $-prepared without  performing any $ \ell $-nested transformation. Since condition \textbf{r2b} is satisfied, we must have that \[ \nu(z')= \nu_{{\mathcal A}'}(\eta'_0)-\nu_{{\mathcal A}'}(\eta'_1) =  \nu(\boldsymbol{x}^{\boldsymbol{p}-\boldsymbol{\upsilon}}).  \]
  Hence $ \boldsymbol{p}' = \boldsymbol{p}-\boldsymbol{\upsilon} $. This implies that $ \boldsymbol{\upsilon}'=\boldsymbol{0} $, that is we obtain a resonance \textbf{rb2}-$\boldsymbol{0}$, but this does not occur when we are in the persistent situations  \textbf{r2b}$^{+}$ or \textbf{r2b}$^\times$.
The proof is completed.
\end{proof}

By applying finitely many times Proposition  \ref{prop:epsilonalturauno}, we obtain a normalized transformation $({\mathcal A},\omega)\rightarrow ({\mathcal B},\omega)$ such that
$\nu_{\mathcal B}(z^{\mathcal B})>\gamma$. But we know that
\[
\varsigma_{\mathcal B}(\omega)\geq \chi_{\mathcal B}(\omega)\nu(z^{\mathcal B})=\nu(z^{\mathcal B})>\gamma.
\]
This is a contradiction. In this way, the proof of Proposition \ref{prop:critical height1} is ended.

\section{Local Uniformization of  Rational Foliations}
\label{Local Uniformization of  Rational Foliations}
In this section we show that Theorem \ref{teo:formalforms} implies Theorem \ref{teo:mainmain}. More precisely, we show how to obtain pre-simple points as they were defined in the Introduction. In view of the results in \cite{FeD}, this is enough,  since we can pass from pre-simple to simple points.

Before considering the proof of Theorem \ref{teo:mainmain}, we are going to develop a ``non-truncated'' version of the results in Section \ref{Control by the Critical Height}.

\subsection{Stable Foliated Spaces} Consider a parameterized formal foliated space $({\mathcal A},\omega)$ with $I_{\mathcal A}(\omega)=\ell+1$. Recall that $\omega\wedge d\omega=0 $.
We say that $({\mathcal A},\omega)$ is {\em $\gamma$-stable} if and only if $({\mathcal A},\omega)$ is strictly $\gamma$-prepared and $\varsigma_{\mathcal A}(\omega)\leq\gamma$. Note that if $({\mathcal A},\omega)$ is $\gamma$-stable and $\gamma'\geq\gamma$, then we also have that
$({\mathcal A},\omega)$ is $\gamma'$-stable. We say that $({\mathcal A},\omega)$ is {\em stable} if and only if there is $\gamma$ such that  $({\mathcal A},\omega)$ is  $\gamma$-stable. A {\em stable normalized transformation} is a transformation
\[
({\mathcal A},\omega)\rightarrow ({\mathcal B},\omega)
\]
that is a normalized transformation with respect to a truncation value $\gamma$ for which $({\mathcal A},\omega)$ and $({\mathcal B},\omega)$ are $\gamma$-stable. In Lemma \ref{lema:nontruncatedstability} we show that we have ``enough'' stable normalized transformations:
\begin{lemma}
\label{lema:nontruncatedstability}
Let us assume that $({\mathcal A},\omega)$ is $\gamma_0$-stable and let ${\mathcal A}\rightarrow {\mathcal A}'$ be a $(\ell+1)$-Puiseux's package or a $(\ell+1)$-coordinate change, assuming that the coordinate change is trivial or the ramification index of $\mathcal A$ is one. There is $\gamma_1\geq\gamma_0$ such that if we perform a strict $\gamma_1$-preparation $({\mathcal A}',\omega)\rightarrow ({\mathcal A}_1,\omega)$, then $({\mathcal A}_1,\omega)$ is $\gamma_1$-stable and moreover
$
\chi_{\mathcal A}(\omega)\geq\chi_{{\mathcal A}_1}(\omega)
$.
\end{lemma}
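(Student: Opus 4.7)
Since $\omega\wedge d\omega=0$, the pair $({\mathcal A},\omega)$ satisfies $\gamma$-truncated Frobenius integrability for every $\gamma\in{\mathbb R}$, so the Preparation Theorem \ref{teo:preparation} and the control results of Section \ref{Control by the Critical Height} are available at any truncation value. The plan is to choose $\gamma_1\geq\gamma_0$ large enough that any strict $\gamma_1$-preparation of $({\mathcal A}',\omega)$ produces a dominant critical vertex on or below the $\gamma_1$-line, and such that the composite transformation can be recognised as a $\gamma_1$-normalized Puiseux's package (resp.\ $\gamma_1$-normalized coordinate change, the ramification-index hypothesis of the lemma being exactly the one required by Definition \ref{def:normalized transformation}); Proposition \ref{prop:stability critical height} then delivers the critical-height bound.

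First I analyse the polygon of $({\mathcal A}',\omega)$. In the Puiseux case, Proposition \ref{pro:valorcriticopuiseux} yields $\nu_{{\mathcal A}'}(\omega)\geq\varsigma_{\mathcal A}(\omega)$; in the coordinate-change case, Proposition \ref{prop:effect of a coordinate change} preserves $\varsigma_\delta({\mathcal N}_{{\mathcal A}'}(\omega))=\varsigma_\delta({\mathcal N}_{\mathcal A}(\omega))$ for $\delta\leq\nu(z)$ and, in particular, preserves the critical vertex of $({\mathcal A},\omega)$ as a vertex of ${\mathcal N}_{{\mathcal A}'}(\omega)$ of ordinate at most $\chi_{\mathcal A}(\omega)$. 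In either case, letting $s_0$ be the ordinate of the critical vertex of $({\mathcal A},\omega)$, the level $\omega_{s_0}$ keeps a finite explicit value $\varsigma_0:=\nu_{{\mathcal A}'}(z\omega_{s_0})$. I then set $\gamma_1:=\max\{\gamma_0,\,\varsigma_0+s_0\,\nu(z^{{\mathcal A}'})\}$.

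Now take any strict $\gamma_1$-preparation $({\mathcal A}',\omega)\to({\mathcal A}_1,\omega)$ furnished by Theorem \ref{teo:preparation}. The induction hypothesis applied to $\eta_{s_0}$ (which involves $\leq\ell$ dependent variables and carries the truncated integrability inherited from $\omega$ via Lemma \ref{lema:aproximacion}) allows the preparation to render $\omega_{s_0}$ as a $\varsigma_0$-final, hence $(\gamma_1-s_0\nu(z^{{\mathcal A}'}))$-dominant, level; by the maximality built into Proposition \ref{prop:dominantpreparation}, every strict $\gamma_1$-preparation must also display this dominance. Therefore the $\gamma_1$-dominant cloud of points is non-empty, and Proposition \ref{prop:caracterizacionpreparacion} gives $\varsigma_{{\mathcal A}_1}(\omega)\leq\varsigma_0+s_0\,\nu(z^{{\mathcal A}'})\leq\gamma_1$, so that $({\mathcal A}_1,\omega)$ is $\gamma_1$-stable. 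To establish $\chi_{{\mathcal A}_1}(\omega)\leq\chi_{\mathcal A}(\omega)$, I prepend to the elementary step a strict $\gamma_1$-preparation ${\mathcal A}\to\widetilde{\mathcal A}$: since the critical vertex of $({\mathcal A},\omega)$ already belongs to a dominant level and dominance is preserved by $\ell$-nested transformations (Proposition \ref{prop:stabilityofcriticalvalue}), one has $\varsigma_{\widetilde{\mathcal A}}(\omega)=\varsigma_{\mathcal A}(\omega)$ and $\chi_{\widetilde{\mathcal A}}(\omega)=\chi_{\mathcal A}(\omega)$; the sequence $\widetilde{\mathcal A}\to{\mathcal A}_1$ is now genuinely a $\gamma_1$-normalized Puiseux's package or coordinate change, so Proposition \ref{prop:stability critical height} concludes $\chi_{{\mathcal A}_1}(\omega)\leq\chi_{\widetilde{\mathcal A}}(\omega)=\chi_{\mathcal A}(\omega)$.

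The main obstacle is to force the vertex at ordinate $s_0$ to become $\gamma_1$-dominant inside \emph{every} strict $\gamma_1$-preparation, not merely inside a prepared one of my choosing; Proposition \ref{prop:dominantpreparation} only guarantees a maximal set of dominant levels without naming which. The resolution is the device already used in the proof of Proposition \ref{prop:epsilonpreparation}: the induction hypothesis on $\eta_{s_0}$ together with the truncated integrability of $\omega$ shows that $\omega_{s_0}$ can, by an $\ell$-nested transformation, be rendered $(\gamma_1-s_0\nu(z^{{\mathcal A}'}))$-dominant, so this level must appear in every maximal-dominant configuration.
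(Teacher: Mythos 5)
The central gap in your proposal is a confusion between the level decomposition of $\omega$ with respect to $z$ in $\mathcal A$ and the level decomposition with respect to $z'$ in ${\mathcal A}'$. After a $(\ell+1)$-Puiseux's package, the $(\ell+1)$-th dependent parameter changes from $z$ to $z'=\Phi-\lambda$, and the two decompositions $\omega=\sum_s z^s\omega_s$ and $\omega=\sum_s {z'}^s\omega'_s$ are genuinely different: by Equation \eqref{eq:puiseux2}, the parameters $x_i$ of $\mathcal A$ become monomials in $\boldsymbol{x}',\Phi=z'+\lambda$, so the coefficients of $\omega_{s_0}$ (which lie in $k[[\boldsymbol{x},\boldsymbol{y}_{\leq\ell}]]$) acquire $z'$-dependence when rewritten in ${\mathcal A}'$-coordinates, and in particular $I_{{\mathcal A}'}(\eta_{s_0})$ can equal $\ell+1$ rather than $\leq\ell$. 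Consequently $\omega_{s_0}$ is \emph{not} an $s_0$-level of $({\mathcal A}',\omega)$, the pair $(\nu_{{\mathcal A}'}(z\omega_{s_0}),\,s_0)$ is not a point of $\operatorname{Cl}_{{\mathcal A}'}(\omega)$, Definition \ref{def:finallevels} does not apply to it in ${\mathcal A}'$, Proposition \ref{prop:dominantpreparation} cannot declare it dominant, and the induction hypothesis cannot be applied to $\eta_{s_0}$ as a form in $\leq\ell$ dependent variables. All the steps of your argument that are built on tracking $\omega_{s_0}$ into ${\mathcal A}'$ therefore collapse in the Puiseux's-package case. (In the coordinate-change case $z'=z+f$ the two decompositions are related by Proposition \ref{prop:effect of a coordinate change}, and your claims there are essentially recoverable; the difficulty is isolated to the Puiseux step.)

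What is actually needed, and what the paper uses, is Lemma \ref{lema:stabilityofcriticalheight}: it is precisely the computation, via the rational contact function $\Phi=z^d/\boldsymbol{x^p}$ and the matrix $C$, that shows $\hbar^{\varsigma}_{{\mathcal A}'}(\omega)\leq\chi_{\mathcal A}(\omega)$ \emph{with respect to the ${\mathcal A}'$-level decomposition}. From this one obtains a \emph{genuine} cloud point of $({\mathcal A}',\omega)$ at some ordinate $s\leq\chi_{\mathcal A}(\omega)$ and abscissa $\varsigma_{\mathcal A}(\omega)$, whose dominance is preserved under the strict $\gamma_1$-preparation; that gives the bound $\varsigma_{{\mathcal A}_1}(\omega)\leq\varsigma_{\mathcal A}(\omega)+\chi_{\mathcal A}(\omega)\,\nu(z')$, from which a correct explicit $\gamma_1$ follows. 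The paper's own proof takes exactly this view: it names an explicit $\gamma_1$ in terms of $\nu_{\mathcal A}(\omega)$, $\chi_{\mathcal A}(\omega)$, $\nu(z)$ and $\nu(z')$, notes that $\gamma_0$-stability is retained at any larger truncation value so that the composite is a $\gamma_1$-normalized transformation, and then reads off the critical-height inequality from Proposition \ref{prop:stability critical height}. Your prepending trick ${\mathcal A}\to\widetilde{\mathcal A}$ does not repair the problem either, because the given elementary step is ${\mathcal A}\to{\mathcal A}'$, not $\widetilde{\mathcal A}\to{\mathcal A}'$; one would have to replace ${\mathcal A}'$ by a transform of $\widetilde{\mathcal A}$ and then argue that the final object is still a strict $\gamma_1$-preparation of the original ${\mathcal A}'$, which is not immediate.
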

\begin{proof}
Let $\rho$ and $\rho'$ denote the values $\nu(z)$ and $\nu(z')$ respectively, and let $ \chi $ denote the critical height $\chi_{{\mathcal A}}(\omega)$. In the case of a Puiseux's package, it is enough to take $\gamma_1\geq\gamma_0$ with
\[
\gamma_1\geq \nu_{\mathcal A}(\omega)+ \chi_0(\rho+\rho').
\]
In the case of a coordinate change, we take $\gamma_1\geq\gamma_0$ with $\gamma_1\geq \nu_{\mathcal A}(\omega)+ \chi_0\rho'$. The inequality $
\chi_{\mathcal A}(\omega)\geq\chi_{{\mathcal A}_1}(\omega)
$ follows from Proposition \ref{prop:stability critical height}, since $({\mathcal A},\omega)\rightarrow ({\mathcal A}_1,\omega)$ is a $\gamma_1$-normalized transformation.
\end{proof}

By Proposition \ref{prop:stability critical height}, we know that $\chi_{\mathcal B}(\omega)\leq \chi_{\mathcal A}(\omega)$, for any stable normalized trasformation $({\mathcal A},\omega)\rightarrow({\mathcal B},\omega)$.
We say that a stable $({\mathcal A},\omega)$ has the property of {\em non-truncated $\chi_0$-fixed critical height} if and only if $\chi_{{\mathcal B}}(\omega)=\chi_0$ for any stable normalized transformation $({\mathcal A},\omega)\rightarrow({\mathcal B},\omega)$.

\begin{proposition}
\label{pro:nontruncated reduction to height one} Consider a stable $({\mathcal A},\omega)$ with the property of {non-truncated $\chi_0$-fixed critical height}, where $\chi_0\geq 2$. Up to performing a stable normalized transformation $({\mathcal A},\omega)\rightarrow ({\mathcal A}^\star,\omega)$ we get that $({\mathcal A}^\star,\omega)$ satisfies the following property:
\begin{quote}
``For any stable normalized transformation $({\mathcal A}^\star,\omega)\rightarrow ({\mathcal B},\omega)$ and any $\gamma\in {\mathbb R}$, we have that $({\mathcal B},{H}_{\mathcal B}(\omega))$ is not $\gamma$-final dominant.''
\end{quote}
\end{proposition}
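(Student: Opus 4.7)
The plan is to show that under the hypothesis of non-truncated $\chi_0$-fixed critical height with $\chi_0\geq 2$, the horizontal coefficient $H_{\mathcal A}$ must in fact vanish, after which the conclusion is automatic with $\mathcal A^\star=\mathcal A$. The argument proceeds by contradiction and reduces the non-truncated statement to the $\gamma$-truncated setting of Section \ref{Control by the Critical Height}, where Proposition \ref{prop:critical height2} already rules out $\chi$-fixed critical height for $\chi\geq 2$.

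First, I would observe that $\chi_0\geq 2$ together with the resonance analysis of Subsection 8.7 (in particular Lemma \ref{lema:descrition of the reduced critical part}) forces the resonance condition \textbf{r2a} at every stable normalized transformation from $({\mathcal A},\omega)$. In particular the ramification index is always one, so $({\mathcal A},\omega)$ is never ramified in the sense of Definition \ref{def:noramification}, and Lemma \ref{lema:stableh} applies: normalized coordinate changes preserve $H$ while normalized Puiseux's packages multiply $H$ by a monomial. In particular, $H_{\mathcal A}=0$ if and only if $H_{\mathcal B}=0$ for every $({\mathcal B},\omega)$ reachable from $({\mathcal A},\omega)$ by a stable normalized transformation.

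Suppose, towards a contradiction, that $H_{\mathcal A}\neq 0$. Combining Proposition \ref{prop:conditionatedluofafunction} with Corollary \ref{cor:hachereducido}, there is a stable normalized transformation $({\mathcal A},\omega)\to({\mathcal B},\omega)$ after which $H_{\mathcal B}$ is strongly $\gamma$-final, with either $\gamma$-dominant or $\gamma$-recessive horizontal stability as in Remark \ref{rk:Hstability}. Since $H_{\mathcal B}\neq 0$ has finite explicit value, choosing $\gamma\geq\nu_{\mathcal B}(H_{\mathcal B})$ rules out the recessive case and forces $H_{\mathcal B}$ to be $\gamma$-final dominant; by Corollary \ref{cor:stabilityofcriticalvalue} this dominance is preserved under all further stable normalized transformations issuing from $({\mathcal B},\omega)$. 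Enlarging $\gamma$ if necessary via Lemma \ref{lema:nontruncatedstability} and a subsequent strict $\gamma$-preparation, we may assume that $({\mathcal B},\omega)$ is $\gamma$-stable. Then every $\gamma$-normalized transformation between $\gamma$-stable spaces is a stable normalized transformation, so the non-truncated $\chi_0$-fixed critical height property of $({\mathcal B},\omega)$, inherited from $({\mathcal A},\omega)$, specialises to the $\gamma$-truncated $\chi_0$-fixed critical height property of Definition \ref{def:fixedcriticalheight}. Proposition \ref{prop:critical height2} directly yields a contradiction, since no strictly $\gamma$-prepared foliated space has this property when $\chi_0\geq 2$.

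Hence $H_{\mathcal A}=0$, and the same holds for every $\mathcal B$ reachable from $\mathcal A$; the conclusion then follows with $\mathcal A^\star=\mathcal A$, because $\nu_{\mathcal B}(H_{\mathcal B})=\infty>\gamma$ for all $\gamma\in\mathbb R$ and therefore $({\mathcal B},H_{\mathcal B})$ is $\gamma$-final recessive rather than dominant. The main technical subtlety is the passage from the non-truncated to the $\gamma$-truncated framework: one must choose $\gamma$ large enough to accommodate simultaneously the strong $\gamma$-finality of $H$ obtained from Proposition \ref{prop:conditionatedluofafunction}, the $\gamma$-stability of $({\mathcal B},\omega)$, and the $\gamma$-normalization of all intermediate transformations, which is precisely what Lemma \ref{lema:nontruncatedstability} allows one to arrange in a single stable normalized transformation.
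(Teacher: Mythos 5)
There is a genuine gap. Your strategy is to show $H_{\mathcal A}=0$ and reduce to the truncated Proposition \ref{prop:critical height2}; neither step survives scrutiny, and the conclusion you aim for is in fact stronger than the statement.

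First, the target. The proposition does \emph{not} assert $H_{\mathcal A}=0$: it only asserts that $H_{\mathcal B}$ can never be brought to a $\gamma$-final \emph{dominant} form by a stable normalized transformation. Since $\omega$ here is a \emph{formal} (not necessarily rational) differential form, $H_{\mathcal A}$ can be a nonzero formal series whose explicit value $\nu_{\mathcal B}(H_{\mathcal B})$ keeps climbing past any prescribed $\gamma$ as you normalize; such an $H$ is never $\gamma$-final dominant, yet nonzero. (The paper uses this precise distinction in the final section: there $h$ is rational, $\nu(h)<\infty$, and that extra finiteness is what rules out $\chi_0\geq 2$.) So you are trying to prove something that is false in the generality of the statement.

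Second, the step ``choosing $\gamma\geq\nu_{\mathcal B}(H_{\mathcal B})$ rules out the recessive case'' is circular: Proposition \ref{prop:conditionatedluofafunction} takes $\gamma$ as input and produces $\mathcal B$ as output, so you cannot pick $\gamma$ in terms of $\nu_{\mathcal B}(H_{\mathcal B})$. The dichotomy of Remark \ref{rk:Hstability} genuinely has two branches, and the $\gamma$-recessive horizontal stability branch is not excluded by $H\neq 0$ in the formal setting; it is exactly the branch the proposition must accommodate.

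Third, the specialisation to Definition \ref{def:fixedcriticalheight} does not go through at a fixed $\gamma$. The $\gamma$-truncated $\chi_0$-fixed critical height property demands $\varsigma_{\mathcal B}(\omega)\leq\gamma$ for \emph{every} $\gamma$-normalized transformation, but nothing in the non-truncated hypothesis bounds $\varsigma_{\mathcal B}(\omega)$ by $\gamma$; on the contrary, the Tschirnhausen iteration of Subsections \ref{Reduction to Critical Height One} and \ref{Reduction to Critical Height One 2} increases $\varsigma$ past any prescribed truncation value. A $\gamma$-normalized transformation can therefore land outside the $\gamma$-stable world, outside the scope of the non-truncated hypothesis, and Proposition \ref{prop:critical height2} does not apply.

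The paper's own proof avoids all three problems: it does \emph{not} invoke Proposition \ref{prop:critical height2} as a black box. Instead it assumes, for contradiction, that for some reachable $\mathcal B$ and some $\gamma$ the horizontal coefficient $H_{\mathcal B}$ becomes $\gamma$-final dominant (hence, after normalization, a monomial times a unit with fixed explicit value $\nu_{\mathcal B}(H_{\mathcal B})<\infty$); it then reruns the Tschirnhausen iteration with the untruncated quantity $\epsilon=\nu_{\mathcal B}(H_{\mathcal B})-\varsigma_{\mathcal B}(\omega)+\nu(z^{\mathcal B})$ in place of $\min\{\gamma,\nu_{\mathcal B}(H_{\mathcal B})\}-\varsigma_{\mathcal B}(\omega)+\nu(z^{\mathcal B})$, and derives the contradiction from $\varsigma_{{\mathcal B}^{(n)}}(\omega)-\nu(z^{{\mathcal B}^{(n)}})\geq\nu_{{\mathcal B}^{(n)}}(H_{{\mathcal B}^{(n)}})$, which is independent of any ambient $\gamma$. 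That is the essential modification you are missing: the contradiction must be expressed in terms of the stable quantity $\nu_{\mathcal B}(H_{\mathcal B})$ alone, not in terms of crossing a fixed truncation $\gamma$.
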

\begin{proof}
It is enough to work like in Subsections \ref{Reduction to Critical Height One} and \ref{Reduction to Critical Height One 2}, with the following adaptations:
\begin{itemize}
\item About the results in Subsection \ref{Reduction to Critical Height One}:
We assumed that the horizontal coefficient  $H_{\mathcal{B}}(\omega)$ was $ \gamma $-final. Instead of that, we use that $H_{\mathcal{B}}(\omega)$ is a monomial times a unit.
 In Lemma \ref{lema:sinetiqueta}, when we define the value $ \epsilon $, we must put \[
    \epsilon = \nu_{\mathcal A}(H_{\mathcal{B}}(\omega))-\varsigma_{\mathcal B}(\omega)+\nu(z^{\mathcal{B}}), \]
    instead of $ \epsilon=\min\{\gamma,\nu_{\mathcal A}(H_{\mathcal{B}}(\omega))\}-\varsigma_{\mathcal B}(\omega)+\nu(z^{\mathcal{B}}) $.

\item About results in Subsection \ref{Reduction to Critical Height One 2}:
Every time $ \epsilon $ appears, recall that it is defined without taking any value $ \gamma $ into account. In the last part of Subsection \ref{Reduction to Critical Height One 2}, we find a contradiction by increasing the value of the $ (l+1) $-th dependent parameter until $ \varsigma_{{\mathcal B}^{(n)}}(\omega)>\gamma $. Instead of this, the contradiction appears when \[  \varsigma_{{\mathcal B}^{(n)}}(\omega)-\nu(z^{{\mathcal B}^{(n)}}) \geq \nu_{{\mathcal B}^{(n)}}(H_{\mathcal{B}}(\omega)).\] 
\end{itemize}
\end{proof}
\begin{proposition}
\label{pro:nontruncated height one} Consider a stable $({\mathcal A},\omega)$ with the property of {non-truncated $1$-fixed critical height}. There is a stable normalized transformation $({\mathcal A},\omega)\rightarrow ({\mathcal B},\omega)$  such that  $({\mathcal B},\omega)$ has the resonance property \textbf{r2a} or \textbf{r2b}.
\end{proposition}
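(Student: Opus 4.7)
The plan is to deduce the statement from two results already established in the excerpt: Proposition \ref{prop:estabilidadho}, which classifies the possible shapes of $\bar\alpha$ when the critical height is preserved under a Puiseux's package, and Proposition \ref{prop:r1ypaquetepuiseux}(4), which says that a Puiseux's package starting from an \textbf{r1} situation lands in \textbf{r2b}. The non-truncated character of the statement is handled via Lemma \ref{lema:nontruncatedstability} by enlarging the truncation $\gamma$ as needed.

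First I would fix $\gamma_0$ so that $(\mathcal{A},\omega)$ is $\gamma_0$-stable, and then apply Lemma \ref{lema:nontruncatedstability} to obtain a truncation $\gamma_1\geq\gamma_0$ for which a single $(\ell+1)$-Puiseux's package ${\mathcal A}\to{\mathcal A}'$ followed by a strict $\gamma_1$-preparation $({\mathcal A}',\omega)\to({\mathcal A}^\star,\omega)$ is a stable normalized transformation. By the non-truncated $1$-fixed critical height hypothesis, $\chi_{{\mathcal A}^\star}(\omega)=\chi_{{\mathcal A}}(\omega)=1$. Combining Lemma \ref{lema:stabilityofcriticalheight}, which gives $\hbar^\varsigma_{{\mathcal A}'}(\omega)\leq\chi_{{\mathcal A}}(\omega)$, with Lemma \ref{lema: control by the main height}, which gives $\chi_{{\mathcal A}^\star}(\omega)\leq\hbar^\varsigma_{{\mathcal A}'}(\omega)$, all inequalities are equalities and so $\hbar^0_{{\mathcal A}'}(\bar\alpha)=1=\chi_{{\mathcal A}}(\omega)$. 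Proposition \ref{prop:estabilidadho} then forces $({\mathcal A},\omega)$ to satisfy one of the resonance conditions \textbf{r1}, \textbf{r2a} or \textbf{r2b}.

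If $({\mathcal A},\omega)$ already satisfies \textbf{r2a} or \textbf{r2b}, then the trivial stable normalized transformation given by the strict $\gamma_0$-preparation of $({\mathcal A},\omega)$ itself (which is again $({\mathcal A},\omega)$) supplies the desired $({\mathcal B},\omega)$. If $({\mathcal A},\omega)$ satisfies \textbf{r1}, then by Proposition \ref{prop:r1ypaquetepuiseux}(4) the stable normalized Puiseux's package $({\mathcal A},\omega)\to({\mathcal A}^\star,\omega)$ constructed above already satisfies \textbf{r2b}-$\boldsymbol{\upsilon}^\star$ with $\boldsymbol{\upsilon}^\star\ne 0$ not ${\mathcal A}^\star$-negative, and setting ${\mathcal B}={\mathcal A}^\star$ concludes the argument.

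The main obstacle is ensuring that Propositions \ref{prop:estabilidadho} and \ref{prop:r1ypaquetepuiseux}, which are formulated in the $\gamma$-truncated setting of Section \ref{Control by the Critical Height}, apply verbatim here. This is not a serious difficulty: the resonance conditions of Definition \ref{def:resonances} are intrinsic features of the reduced critical part $\bar\omega^*$ and of the ramification index, neither of which depends on the truncation value once $\gamma$ is large enough to make both $({\mathcal A},\omega)$ and the transformed object stable. Lemma \ref{lema:nontruncatedstability} supplies such a $\gamma$ uniformly, so that all the truncated arguments can be invoked without change.
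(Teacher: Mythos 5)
Your argument is correct and follows essentially the same route the paper takes: the paper's one-line proof invokes Lemma \ref{lema:reduccionalcasor2}, whose proof rests on exactly the chain you have unpacked (Proposition \ref{prop:estabilidadho} to conclude resonance of $({\mathcal A},\omega)$ once the critical height persists under a Puiseux's package, then Proposition \ref{prop:r1ypaquetepuiseux} to convert \textbf{r1} into \textbf{r2b}), together with Lemma \ref{lema:nontruncatedstability} to stay in the stable range. One small gap worth making explicit: Proposition \ref{prop:r1ypaquetepuiseux}(4) requires the hypothesis that $({\mathcal A}^\star,\omega)$ is itself resonant, which your text does not verify; it follows by running the same Proposition \ref{prop:estabilidadho} argument once more starting from $({\mathcal A}^\star,\omega)$, which inherits the non-truncated $1$-fixed critical height property (enlarging $\gamma$ via Lemma \ref{lema:nontruncatedstability} as before).
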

\begin{proof}
Consequence of Lemma \ref{lema:reduccionalcasor2}.
\end{proof}
\begin{remark}
 \label{rk:alturaunofin}
 The cases of \textbf{r2a} or \textbf{r2b} with $\chi_{\mathcal A}(\omega)=1$ correspond to pre-simple points, when the critical vertex coincide with the main vertex, up to perform independent blow-ups to allow a division by a monomial.
\end{remark}
\subsection{Rational Foliations}
Let us start with a projective variety variety $ M $, with $K=k(M)$  and an rational codimension one foliation ${\mathcal F}\subset\Omega_{K/k}$. Up to performing appropriate blow-ups, we can assume that the center $P$ of $R$ in $M$ is a non-singular point of $M$ and we have a locally parameterized model  $\mathcal{A}=(\mathcal{O}_\mathcal{A};\boldsymbol{x},\boldsymbol{y})$ adapted to $R$ such that ${\mathcal O}_{M,P}={\mathcal O}_{\mathcal A}$. Now, we consider a nonzero differential $1$-form $\omega\in {\Omega}^1_{\mathcal A}\cap\mathcal F$.  Let us remark that
\[
\omega\in \Omega^1_{{\mathcal O}_{\mathcal A}/k}[\log \boldsymbol{x}]\subset \Omega^1_{\mathcal A}.
\]
Then $\omega$ is a Frobenius integrable rational differential $1$-form. In particular, we see that the coefficients of $\omega$ belong to ${\mathcal O}_{\mathcal A}\subset K$.
Our objective is to perform blow-ups of $M$ to get that $\omega$ satisfies the definition
of pre-simple point given in the Introduction. The kind of blow-ups we perform are indicated by the corresponding allowed transformations of locally parameterized models, hence the centers of the blow-ups are non-singular and of codimension two, when we localise the situation at the center of the valuation.

 We have a parameterized formal foliated space $(\mathcal{A},\omega)$, hence it is
 $\gamma$-truncated for any $\gamma \in \mathbb{R}$. Thus we are in the conditions of Theorem \ref{teo:formalforms}, for any $ \gamma\in {\mathbb R} $. Applying Theorem \ref{teo:formalforms}, there are two possibilities:
\begin{itemize}
\item[a)] There are $ \rho \in \mathbb{R} $ and an allowed transformation $ \mathcal{A}\rightarrow \mathcal{A}' $ such that $ (\mathcal{A}',\omega) $ is $ \rho $-final dominant.
\item[b)] For any $ \rho \in \mathbb{R} $, there is an allowed transformation $ \mathcal{A}\rightarrow \mathcal{A}' $ such that $ (\mathcal{A}',\omega) $ is $ \rho $-final recessive.
\end{itemize}
In case a), we perform independent blow-ups $\mathcal{A}'\rightarrow \mathcal{A}''$, as in Proposition \ref{prop:monlocprincipalization}, to obtain that $ \omega = {\boldsymbol{x''}^I}{\tilde\omega} $, where $ \tilde{\omega} \in \Omega^1_{\mathcal{A}''} $ is $0$-final dominant. This situation corresponds with a pre-simple corner as defined in the Introduction.

It remains to consider the case b). This case corresponds to a situation of ``infinite value'' of $\omega$, in particular it does not appear when $\omega=df$, for $f\in {\mathcal O}_{\mathcal A}$. Nevertheless, it can happen for instance for an Euler's Equation, where the differential form is of polynomial type.

Let us work by induction on ${I}_{\mathcal A}(\omega)$, see Subsection \ref{Induction Structure}. If $I_{\mathcal A}(\omega)=0$, we end by Corollary \ref{cor:principalizacion} and in fact we are in the case a). Assume ${I}_{\mathcal A}(\omega)=\ell+1$, and take notations as in Subsection \ref{Induction Structure}.
Let us write
\[
\omega = \sum_{i=1}^{r} f_i \frac{dx_i}{x_i} + \sum_{j=1}^{\ell} g_j dy_j + h dz \ ,
\]
where $z$ is the last dependent parameter we consider. Recall that $h\in {\mathcal O}_{\mathcal A}\subset K$ and thus we have $\nu(h)<\infty$, or $h=0$.

If $h=0$, the integrability condition $\omega\wedge d\omega=0$ shows the existence of rational function $F$ and a rational $1$-differential form $\eta$ such that $I_{\mathcal A}(\eta)\leq\ell$ and
\[
\omega=F\eta.
\]
We end by induction.

Then, we can suppose that $h\ne 0$ and hence $\nu(h)<\infty$. Consider the (formal) level decomposition
\[
\omega =\sum_{s=0}^\infty\omega_s= \sum_{s=0}^{\infty} z^s ( \eta_s + h_s \frac{dz}{z}) \ .
\]
For any index $ s $, we can apply Theorem \ref{teo:functions} to $ h_s $. We have two options:
\begin{itemize}
\item[i)] There are $ \rho \in \mathbb{R} $ and an $ \ell $-nested transformation $ \mathcal{A}\rightarrow \mathcal{B} $ such that $ (\mathcal{B},h_s) $ is $ \rho $-final dominant.
\item[ii)] For any $ \rho \in \mathbb{R}, $ there is an $ \ell $-nested transformation $ \mathcal{A}\rightarrow \mathcal{B} $ such that $ (\mathcal{B},h_s) $ is $ \rho $-final recessive.
\end{itemize}
Let us denote $ k_0 = \min\{s; \ h_s \text{ satisfies i)}\} $. Let us check that $ k_0 < \infty $. Suppose that $ k_0 = \infty $ and let $ \rho = \nu(h) $. Consider an $ \ell $-nested transformation $ \mathcal{A}\rightarrow \mathcal{B} $ such that $ h_s $ is $ \rho $-final recessive for any $ s \leq  \rho / \nu(z) $. After a $(\ell+1)$-Puiseux's package $ \mathcal{B}\rightarrow \mathcal{C} $ we get $ \nu_{{\mathcal C}}(h)>\rho=\nu(h)$, which is a contradiction.

By performing an $\ell$-nested transformation, we assume that $({\mathcal A},h_{k_0})$ is $\rho_0$-final dominant, where we take $\rho_0=\nu_{\mathcal A}(h_{k_0})$.

Take $ \gamma_1 > \rho_0 + k_0 \nu(z) $. Since $ (\mathcal{A},\omega) $ is satisfies the $\gamma_1$-truncated integrability condition, by Theorem \ref{teo:preparation}
%(DE HECHO HABRÍA QUE MODIFICAR ESE ENUNCIADO O SI NO CITAR TAMBIÉN LOS REFERENTES A LA %PSEUDO-PREPARACIÓN),
there is a strict $\gamma_1$-preparation $ \mathcal{A}\rightarrow \mathcal{A}_1 $ of $ \omega $. The abscissa of the $k_0$-level of $({\mathcal A}_1,\omega)$ is smaller or equal than $\rho_0=\nu_{{\mathcal A}}(h_{k_0})=\nu_{{\mathcal A}_1}(h_{k_0})$.  This implies that
\[
\varsigma_{{\mathcal A}_1}(\omega)\leq\gamma_1.
\]
Then $({\mathcal A}_1,\omega)$ is stable and we are able to apply Propositions \ref{pro:nontruncated reduction to height one} and \ref{pro:nontruncated height one}.

Up to a stable normalized transformation, we can assume that $({\mathcal A}_1,\omega)$ has the property of $\chi_0$-fixed critical height.

The case $\chi_0\geq 2$ does not happen. Indeed, by
Proposition \ref{pro:nontruncated reduction to height one}, the critical height cannot stabilize in $\chi_0\geq 2$, since the horizontal coefficient $h$ is nonzero and belongs to the local ring ${\mathcal O}_{\mathcal A}\subset K$. Then, it has a well defined finite value and thus we can transform it into a unit times a monomial. To see this, we can apply Proposition \ref{prop:conditionatedluofafunction} with respect to $\gamma$ with $\gamma>\nu(h)$.

 Note that $\chi_0\geq 1$ since we are in case  b), see also Proposition \ref{prop:critical heightcero}. If $\chi_0=1$, by Proposition \ref{pro:nontruncated height one} and  Remark \ref{rk:alturaunofin} we get a pre-simple point.

The proof of Theorem \ref{teo:mainmain} is ended.

\end{document}